\makeatletter\@addtoreset{equation}{section}\makeatother
\makeatletter\@addtoreset{figure}{section}\makeatother
\makeatletter\@addtoreset{table}{section}\makeatother
\newtheorem{theorem}{Theorem}[section]
\newtheorem{lemma}[theorem]{Lemma}
\newtheorem{corollary}[theorem]{Corollary}
\theoremstyle{definition}
\newtheorem{definition}[theorem]{Definition}
\theoremstyle{remark}
\newtheorem{remark}[theorem]{Remark}
\newcommand{\R}{{\mathbb R}}
\newcommand{\C}{{\mathbb C}}
\newcommand{\Z}{{\mathbb Z}}
\newcommand{\Proj}{{\mathbb P}}
\newcommand{\op}[1]{\!\!\mathop{\rm ~#1}\nolimits}
\newcommand{\scriptop}[1]{\!\!\mathop{\mbox{\rm \scriptsize ~#1}}\nolimits}
\newcommand\PI{\hbox{P}_{\scriptsize
       \hbox{I}}}
\begin{document}

\title[Okamoto's space for $\PI$ in Boutroux coordinates]{Okamoto's space for the first 
Painlev\'e equation\\ in Boutroux coordinates}
\author{J.J. Duistermaat}\thanks{Johannes (Hans) Jisse Duistermaat passed away unexpectedly in March 2010. He was affiliated with the Mathematical Institute, University of Utrecht, P.O. Box 80.010, 3508 TA Utrecht, The Netherlands (from 1974 until 2010). Between 2004 and 2010, his research was stimulated by a KNAW professorship.}
\author{N. Joshi}\thanks{The collaborative research reported in this paper began at the Isaac Newton Institute for Mathematical Sciences, Cambridge University, UK in May 2009. The first version of this paper was prepared in June 2009. We met again in Leuven, Belgium in July 2009 and continued our collaboration over email. Several successive updates of the paper were prepared by Hans  with the last version being dated February 2010. While the research results reported here remain mostly unchanged from that version, minor errors have been corrected and the paper has been restructured to include an abstract and an introduction. Moreover, the detailed explicit calculations of the resolutions of singularities were moved into an appendix to ease the flow of the exposition. Some informal comments, more befitting a working document between two collaborators, have been removed. Nalini Joshi wishes to record her deep gratitude to Hans for his remarkable, revelatory, collaborative friendship. Her research was supported by the Australian Research Council grant \# DP0985615. }
\address{School of Mathematics and Statistics F07, The University of Sydney, NSW 2006, Australia\\ Tel: +61 2 9351 2172\\ Fax: +61 2 9351 4534}
\email{nalini.joshi@sydney.edu.au}.
\begin{abstract}
We study the completeness and connectedness of asymptotic behaviours of solutions of the first Painlev\'e equation  $\op{d}^2y/\op{d}\! x^2=6\, y^2+ x$, in the limit $x\to\infty$, $x\in\C$. This problem arises in various physical contexts including the critical behaviour near gradient catastrophe for the focusing nonlinear Schr\"odinger equation. We prove that the complex limit set of solutions is non\--empty, compact and invariant under the flow of the limiting autonomous Hamiltonian system, that the infinity set of the vector field is a repellor for the dynamics and obtain new proofs for solutions near the equilibrium points of the autonomous flow. The results rely on a realization of Okamoto's space, i.e., the space of  initial values compactified and regularized by embedding in $\C\Proj 2$ through an explicit construction of nine blow-ups. 
\end{abstract}
\keywords{Asymptotics, initial-value space, the first Painlev\'e equation}
\subjclass[2000]{34M55; 58F30; 34C40}

\date{}
\maketitle
\large
\section{Introduction}
In this paper, we consider the completeness and connectedness of the asymptotic behaviours of the first Painlev\'e equation
\begin{equation}
\frac{\op{d}^2y}{\op{d}\! x^2}=6\, y^2+ x, 
\label{PI}
\end{equation}
in the limit $x\to\infty$, $x\in\C$. The first Painlev\'e equation arises in many physical contexts, as a reduction of the Korteweg-de Vries equation, in the double scaling limit of random matrix models and in the critical behaviour near the point of \lq\lq gradient catastrophe\rq\rq\ of the solution to the Cauchy problem for the focusing nonlinear Schr\"odinger equation \cite{dub}. 

The asymptotic limit $x\to\infty$ of Equation (\ref{PI}) was first studied in 1913 by Boutroux \cite{boutroux}, who provided a transformation of variables that makes the asymptotic behaviours explicit. It is known that all solutions of (\ref{PI}) are meromorphic in $\C$ with double movable poles, i.e., with locations that change with initial conditions. Boutroux found that locally, in each patch near infinity,  the general solutions are given to leading-order by elliptic functions. More detailed results about how the local asymptotic behaviours of solutions change slowly as $x$ moves near infinity were provided by  Joshi and Kruskal \cite{jk88,jk92}, who constructed a complex multiple-scales method to carry out asymptotic analysis along a large circle in the complex plane for the first and second Painlev\'e equations. Such local behaviours were used in the Riemann-Hilbert method, which was applied to deduce connections between behaviours valid along special directions approaching infinity (see the review by Kitaev \cite{kitaev94}). 

In addition to the two-parameter solutions asymptotic to elliptic-function behaviours, Boutroux identified five one-parameter family of solutions asymptotic to algebraic power expansions in certain sectors of angle $4\pi/5$ in $\C$. He called them {\em tronqu\'ee} or  truncated solutions. In each family of tronqu\'ee solutions, there is a unique solution whose algebraic expansion is valid in a sector of angle $8\pi/5$. Boutroux called these {\em tritronqu\'ee} or triply truncated solutions. In the literature, this term has come to be associated with the unique tritronqu\'ee solution that is real on the real line; each of the other four such solutions can be obtained from this one by a discrete symmetry of Equation (\ref{PI}) corresponding to rotating variables in $\C$. This real tritronqu\'ee solution appears as a distinguished solution in various physical problems (see, e.g., Dubrovin et al. \cite{dub}). In the form $Y_{tt}=6 Y^{2}-t$ (for $y(x)=Y(-t)$), Joshi and Kitaev \cite{jkit} constructed a sequence of solutions that converge to the tritronqu\'ee solution on the whole positive real axis and proved that the tritronqu\'ee solution has no poles whatsoever on the positive real axis. O. and R. Costin \cite{costin2} applied Borel-summation methods to deduce many complex properties of the tronqu\'ee solutions.  

However, while it is known that the solution space of Equation (\ref{PI}) is connected, through Okamoto's \cite{okamoto79} compactification and regularization of the space of initial-values, there has been no investigation (to our knowledge) of the completeness or connectedness of the known asymptotic behaviours of the solutions of this equation (or of any of the six Painlev\'e equations). We tackle this problem by undertaking asymptotic analysis in Okamoto's  space. Our approach relies on explicit resolution of singularities in an asymptotic version of this space. We note that although we focus on the first Painlev\'e equation in this paper, our approach can be extended to the other Painlev\'e equations.

\subsection{Outline of the paper}
The paper is organized as follows. In \S\ref{boutrouxsec}, we recast Equation (\ref{PI}) as a Hamiltonian system, provide a rescaling of it under Boutroux's transformation of variables, and summarize known properties of solutions. The resolution of singularities of this Boutroux-Painlev\'e system is provided explicitly in Appendix A where we carry out the sequence of changes of variables necessary first to compactify and then blow up the nine base points of the system in $\C\Proj 2$. The construction shows that the vector field is infinite on the union $I:=\cup_{i=0}^{8} L_{j}$ of nine complex projective lines. 

We show in \S\ref{polesec} that the Boutroux-Painlev\'e vector field is regular and transverse to the last complex projective line, $L_{9}$, in $S_{9}$. It is shown here that the Taylor expansion of the flow around a point on this line provides us with the Laurent expansion of the solutions $y(x)$ near a pole. In \S\ref{repelsec}, we consider the vector field near the infinity set $I$ and show that this is a repellor for the flow. We also construct the limit set for each solution and show that it is a non-empty, compact and connected subset of $S_{9}$, which remains invariant under the autonomous flow. As a corollary, we prove that every solution of Equation (\ref{PI}) must have an infinite number of poles in the complex plane. Finally, in \S\ref{eqsec}, we consider the Boutroux-Painlev\'e system near the equilibria of the autonomous limit system and prove several results about {\em tronqu\'ee} solutions, ending with a determination of their sequence of poles near the boundaries of pole-free sectors, by using classical methods. 

\subsection{Notation}\label{notation}
The Painlev\'e flow is denoted $(y_{1}, y_{2})$, where $y=y_1$, $\op{d}\! y/\op{d}\! x=y_2$. In Boutroux's coordinates, this flow is transformed to $(u_{1}, u_{2})$, where $u_1(z)=x^{-1/2}\, y_1(x)$, $u_2(z)=x^{-3/4}\, y_2(x)$, with $(4/5)\, x^{5/4}=z$. We embed the resulting Painlev\'e\--Boutroux flow into $\C\Proj 2$  by identifying the affine coordinates $(u_{1}, u_{2})$ with homogeneous coordinates as follows
\begin{eqnarray*}
 [1:u_{1}:u_{2}]&=&[u_1^{-1}:1:u_1^{-1}\,u_2]=:[u_{021}:1:u_{022}]\\
 &=&[u_2^{-1}:u_1\,u_2^{-1}:1]=:[u_{031}:u_{032}:1]
 \end{eqnarray*}
The line at infinity given respectively by $u_{021}=0$, $u_{031}=0$ is denoted by $L_0$.  For $0\le i \le 8$, corresponding to the $i$-th stage of the regularizing (blow-up) sequence, we denote a base point by $b_{i}$, the exceptional line attached to that base point by $L_{i+1}$ and the coordinates in the $j$-th chart of the $i$-th blowup by $(u_{ij1}, u_{ij2})$. In each coordinate chart, the Jacobian of the coordinate change from $(u_1, u_2)$ to $(u_{ij1}, u_{ij2})$ is denoted by
\begin{equation*}
 w_{ij}=\frac{\partial u_{ij1}}{\partial u_1}\,\frac{\partial u_{ij2}}{\partial u_2}-\frac{\partial u_{ij1}}{\partial u_2}\,\frac{\partial u_{ij2}}{\partial u_1}.
\end{equation*}
The last space $S_{9}$ constructed by this sequence of blow-ups is Okamoto's \lq\lq space of initial values.\rq\rq\ The regularization of the limiting autonomous system obtained as $z\to\infty$ is the same as that of the related Weierstrass cubic curve. We add the superscript ${ell}$ to the above notation for the base points and lines and a superscript $0$ to the vector field to refer to the corresponding objects in the resulting anti-canonical pencil.
\section{Boutroux scaling}
\label{boutrouxsec}
Equation (\ref{PI}) can be viewed, upon the substitutions 
$y=y_1$, $\op{d}\! y/\op{d}\! x=y_2$, as a Hamiltonian system 
$\op{d}\! y_1/\op{d}\, x=\partial H/\partial y_2$, 
$\op{d}\! y_2/\op{d}\! x=\, -\partial H/\partial y_1$ with an 
$x$\--dependent Hamiltonian function 
\begin{equation}
H=H(x,\, y_1,\, y_2):={y_2}^2/2-2\, {y_1}^3-x\, y_1.
\label{HPI}
\end{equation}
The function $H$    
is a weighted homogeneous polynomial 
in the sense that if we substitute 
$x=\lambda ^4\,\xi$, $y_1=\lambda ^2\, u_1$, and 
$y_2=\lambda ^3\, u_2$, then 
$H=\lambda ^6\, ({u_2}^2/2-2\, {u_1}^3-\xi\, u_1)$. 
We have $\xi =1$ if and only if $x=\lambda ^4$, when 
$\lambda =x^{1/4}$, $y_1=x^{1/2}\, u_1$, 
$u_1=x^{-1/2}\, y_1$, $y_2=x^{3/4}\, u_2$, 
and $u_2=x^{-3/4}\, y_2$. If $x=x(z)$ and a dot means 
differentiation with respect to $z$, then 
$\dot{u}_1=\dot{x}\, (-(1/2)\, x^{-1}\, u_1+x^{1/4}\, u_2)$. 
If we choose $(4/5)\, x^{5/4}=z$ then $\dot{x}\, x^{1/4}=1$, 
and the Painlev\'e system takes the form  
\begin{equation}
\begin{array}{lll}
\dot{u}_1&=&u_2-2\, (5\, z)^{-1}\, u_1,\\
\dot{u}_2&=&6\, {u_1}^2+1-3\, (5\, z)^{-1}\, u_2.
\end{array}
\label{u01dot}
\end{equation}
This is an order $z^{-1}$ perturbation of the 
Hamiltonian system with Hamiltonian function 
equal to the $z$\--independent {\em energy function} 
\begin{equation}
\begin{array}{llll}
&E&:=&{u_2}^2/2-2\, {u_1}^3-u_1,\\
\mbox{\rm where}\quad&&&\\
&\dot{E}&=&(5\, z)^{-1}
\, (2\, u_1+12\, {u_1}^3-3\, {u_2}^2)
=\, -(5\, z)^{-1}\, (6\, E+4\, u_1).
\end{array} 
\label{qeq}
\end{equation}
It implies Boutroux's second order differential equation 
\begin{equation}
\ddot{u}_1=6\, {u_1}^2+1-z^{-1}\,\dot {u}_1+\, 4\, (5\, z)^{-2}\, u_1
\label{boutrouxeq}
\end{equation}
for $u_1$. These transformations have been used by 
Boutroux \cite{boutroux} in order to investigate 
the asymptotic behavior of the solutions of the 
Painlev\'e equation when $x\to\infty$. 

Because $\int_{z_0}^{\infty}\, z^{-1}\,\op{d}\! z=\infty$, 
we cannot straightforwardly conclude that solutions 
of the Boutroux system (\ref{u01dot}) converge to 
solutions of the autonomous Hamiltonian system when $z\to\infty$.  
Actually they don't: we will see that each solution 
of (\ref{u01dot}) converges to different solutions 
of the autonomous limit system, depending on the 
path along which $z$ runs to infinity. 

\begin{remark}
If $a$ and $b$ are nonzero complex constant complex numbers, then 
the substitutions $y(x)=a\, \eta (\xi )$, $\xi =b\, x$ turn 
(\ref{PI}) into the 
differential equation $\op{d}^2\eta /\op{d}\!\xi ^2
=\alpha\,\eta ^2+\beta\,\xi$, where 
$\alpha =6\, a/b^2$ and $\beta =1/a\, b^3$, 
or equivalently $b=(6/\alpha\,\beta )^{1/5}$ and  
$a=\alpha\, b^2/6$. 

Boutroux \cite[p. 311]{boutroux} 
took the first Painlev\'e equation with the constants 
$\alpha =6$ and $\beta =\, -6$, 
or equivalently $b=(-1/6)^{1/5}$ and $a=b^2$, 
as his point of departure, and applied the 
substitutions $X=(4/5)\,\xi ^{5/4}$ and $\eta =\xi ^{1/2}\, Y$ 
in order to arrive at the differential equation 
$\op{d}^2Y/\op{d}\! X^2=6\, Y^6-6-(1/X)\,\op{d}\! Y/\op{d}\! X
+(4/(5\, X)^2)\, Y$. Therefore the translation from 
Boutroux's notation to ours is $X=(-1/6)^{1/4}\, z$,  
$Y=(-1/6)^{-1/2}\, u=(-1/6)^{-1/2}\, u_1$, and  
$Y'=(-1/6)^{-3/4}\, \dot{u}=(-1/6)^{-3/4}\, 
(u_2-2\, (5\, z)^{-1}\, u_1)$. 
An expression which plays a central role in Boutroux 
\cite[\S 7--11]{boutroux} is  
\[
(Y')^2-4\, Y^3+12\, Y
=(-1/6)^{-3/2}\, (\dot{u}^2-\, 4\, u^3-2\, u).
\]

Joshi and Kruskal \cite{jk88}, \cite{jk92} took the 
first Painlev\'e equation with the constants 
$\alpha =3/2$ and $\beta =\, -3/2$, or equivalently 
$b=(-8/3)^{1/5}$ and $a=b^2/4$, as their point of departure, 
and applied the substitutions 
$Z=(4/5)\,\xi ^{5/4}$ and $\eta =\xi ^{1/2}\, U$, 
where they actually wrote $z$ and $u$ instead of $Z$ and $U$, 
respectively. Therefore the translation from their $z$ and $u$  
to ours is $Z=(-8/3)^{1/4}\, z$,  
$U=4\, (-8/3)^{-1/2}\, u=\,\pm\op{i}\,\sqrt{6}\, u_1$, and  
$\op{d}\! U/\op{d}\! Z=4\, (-8/3)^{-3/4}\,\dot{u}
=4\, (-8/3)^{-4/3}\, (u_2-2\, (5\, z)^{-1}\, u_1)$. 
A central role is played in \cite{jk88}, \cite{jk92} by the function   
\[
{\mathcal E}:=((\op{d}\! U/\op{d}\! Z)^2-U^3+3\, U)/2
=2^{-1/2}\, (-3)^{3/2}\, (\dot{u}^2/2-2\, u^3-u). 
\]   

The functions $(Y')^2-4\, Y^3+12\, Y$ 
and ${\mathcal E}$ are closely related to the energy function 
$E$ in (\ref{qeq}), as 
\begin{equation}
\dot{u}^2/2-2\, u^3-u
=E-2\, (5\, z)^{-1}\, u_1\, u_2+2\, (5\, z)^{-2}\, {u_1}^2.
\label{qeqdotu}
\end{equation}
\label{boutrouxjkconstantsrem}
\end{remark}

\begin{remark}
The Boutroux substitutions $x=\bigl((5/4)\, z\bigr)^{4/5}$ with 
inverse $z=(4/5)\, x^{5/4}$, and $y(x)=x^{1/2}\, u(z)$ $=x^{1/2}\, u\bigl((4/5)\, x^{5/4}\bigr)$ $=\bigl((5/4)\, z\bigr)^{2/5}\, u(z)$ with inverse 
$u(z)=x^{-1/2}\, y(x)$ are singular at $x=0$ and 
correspondingly $z=0$. These substitutions introduce 
multi\--valuedness of the solutions $u(z)$ of the 
Boutroux\--Painlev\'e equation (\ref{boutrouxeq}) when 
$z$ runs around the origin in the complex plane, 
where the solutions $y(x)$ of the Painlev\'e equation 
(\ref{PI}) 
are single\--valued. 

More precisely, every local solution $y(x)$ of (\ref{PI}) 
extends to a single\--valued 
meromorphic function 
on the whole complex $x$\--plane, where the poles 
are of order two and have leading coefficient equal to 1. 
This is the {\em Painlev\'e property} in its strongest form; 
see \cite[Remark 1.1]{d} 
for some remarks on its proofs in the literature. 

The equation $u(z)
=((5/4)\, z)^{-2/5}\, y\bigl(\bigl((5/4)\, z\bigr)^{4/5}\bigr)$, 
in combination with the single\--valuedness of $y(x)$, 
implies that the analytic continuation of $u(z)$ 
along the path $z\,\op{e}^{\,\scriptop{i}\,\theta }$,  
avoiding the poles of $u(z)$,  
returns to its opposite if $\theta\in\R$ runs from 
$0$ to $5/4$ times $2\pi$. This may be expressed by the formula 
\begin{equation}
u_1(z\, e^{5\pi\scriptop{i}/2})=\, -\,u_1(z),
\quad u_2(z\, e^{5\pi\scriptop{i}/2})=\op{i}u_2(z), 
\label{uangle5/4} 
\end{equation}
where the second equation follows from the first, 
in view of the first equation in (\ref{u01dot}). 
This observation has been used in Joshi and Kruskal 
\cite[Sec. 5]{jk88} as a consistency check for their 
asymptotic results for $u(z)$ for large $|z|$. 
\label{painleveboutrouxrem}
\end{remark} 

\begin{remark}
Each solution $y(x)$ of the Painlev\'e equation (\ref{PI}) 
has a convergent Laurent expansion 
\[
y(x)=\sum_{n=n_0}^{\infty}\, y_n\, x^n
\] 
for $0<|x|<<1$, when  
$u(z)=\bigl((5/4)\, z\bigr)^{-2/5}\, y(x)
=((5/4)\, z)^{-2/5}\, y(((5/4)\, z)^{4/5})$ implies 
the convergent power series 
\[
u(z)=\sum_{n=n_0}^{\infty}\, y_n\, \bigl((5/4)\, z\bigr)^{(-2+ 4\, n)/5}
\] 
for $0<|z|<<1$. We have the following cases. 
\begin{itemize}
\item[i)] $y(0)$ is finite, when $n_0=0$. 
The Painlev\'e equation 
$y''=6\, y^2+x$ is equivalent to the recursive equations 
\begin{eqnarray}
n\, (n-1)\, y_n&=&6\,\sum_{m=0}^{n-2}\, y_{n-2-m}\, y_m
\quad\mbox{\rm for}\quad n\geq 2,\; n\neq 3,
\label{yn}\\
y_3&=&2\, y_0\, y_1+1/6
\label{y3}
\end{eqnarray}
for the coefficients $y_n$. The mapping which assigns to 
the solution $y(x)$ the complex numbers $y_0=y(0)$ and $y_1=y'(0)$ 
is bijective from the set of all regular solutions $y(x)$ 
near $x=0$ onto $\C ^2$. 
Subcases: 
\begin{itemize}
\item[ia)] $y_0=y_1=0$, when (\ref{yn}) for $n=2$ yields that 
$y_2=0$, whereas (\ref{y3}) implies that $y_3=1/6$. 
An induction on $n$ yields that 
$y_n=0$ unless $n\in 3+5\,\Z$, as 
$n-2-m=3+5\, k$ and $m=3+5\, l$ imply that $n=3+5\, (k+l+1)$. 
Because $-2+4\, (3+5\, j)=10\, (2\, j+1)$, 
it follows that $u(z)=\sum_{j=0}^{\infty}\, y_{3+5\, j}\, 
((5/4)\, z)^{2\, (2\, j+1)}$. In particular this solution 
$u(z)$ is single\--valued. 
\item[ib)] $y_0=0$ and $y_1\neq 0$, 
when $u(z)=y_1\, ((5/4)\, z)^{2/5}+\op{O}(z^2)$ as $z\to 0$. 
\item[ic)] $y_0\neq 0$, when 
$u(z)=y_0\, ((5/4)\, z)^{-2/5}+\op{O}(z^{2/5})$ as $z\to 0$. 
\end{itemize}
\item[ii)] $y(x)$ has a pole at $x=0$, when 
$n_0=\, -2$, $y_{-2}=1$, $y_{-1}=y_0=y_1=y_2=0$, and  
$y_3=\, -1/6$. The mapping which assigns to the solution $y(x)$ 
the coefficient $y_4$ is bijective from the set of solutions 
with a pole at $x=0$ onto $\C$, see for instance 
\cite[the text following (11.3)]{d}. For $n\geq 5$ 
the Painlev\'e equation $y''=6\, y^2+x$ implies the recursive equations 
\begin{equation}
(n\, (n-1)-12)\, y_n=6\,\sum_{m=3}^{n-2}\, y_{n-2-m}\, y_m. 
\label{ynpole}
\end{equation}
Subcases: 
\begin{itemize}
\item[iia)] $y_4=0$. As in ia) it follows from 
(\ref{ynpole}) by induction on $n$ that $y_n=0$ unless $n\in 3+5\,\Z$, 
and it follows that $u(z)=\sum_{j=\, -1}^{\infty}\, y_{3+5\, j}\, 
((5/4)\, z)^{2\, (2\, j+1)}$. In particular this solution 
$u(z)$ is single\--valued. 
\item[iib)] $y_4\neq 0$, when 
$u(z)=((5/4)\, z)^{-2}
-(1/6)\, ((5/4)\, z)^{2}
+ y_4\, ((5/4)\, z)^{14/5}
+\op{O}(z^{18/5})$ as $z\to 0$. 
\end{itemize}
\end{itemize}
It follows that the solution $u(z)$ of the Boutroux\--Painlev\'e 
equation (\ref{boutrouxeq}) is not single\--valued, unless 
we are in the cases ia) or iia). 
That is, the solution $y(x)$ of (\ref{PI}) 
is either equal to the unique regular solution near $x=0$ 
for which $y(0)=y'(0)=0$, or $y(x)$ is the unique 
solution of (\ref{PI}) with a pole at 
$x=0$ such that $y_4=0$. 

If $y(x)$ is a solution of (\ref{PI}), and 
$a\in\C$ is a fifth root of unity, that is, $a^5=1$, 
then $x\mapsto a^{-1}\, y(a^2\, x)$ is also a solution 
of (\ref{PI}). The solutions 
$y(x)$ in ia) and iia), corresponding to 
the single\--valued solutions $u(z)$ of (\ref{boutrouxeq}), are exactly 
the solutions which are invariant under this 
five\--fold symmetry, that is, which satisfy 
$y(x)=a^{-1}\, y(a^2\, x)$ for every fifth root of 
unity $a$, as this means that in the Laurent expansion 
of $y(x)$ only the powers $x^j$ appear such that 
$2\, j-1\in 5\,\Z$ $\Leftrightarrow$ $2\, j-1\in 
5\, (2\,\Z +1)$ $\Leftrightarrow$ $j\in 5\, Z+3$.  
The solutions $y(x)$ in ia) and iia) appear in  
Boutroux \cite[p. 336, 337]{boutroux}. 
\label{intsymrem}
\end{remark}

\begin{remark}
If $y(x)$ is a solution of the  
Painlev\'e equation (\ref{PI}), and 
$a\in\C$ is a fifth root of unity, that is, $a^5=1$, 
then $x\mapsto a^{-1}\, y(a^2\, x)$ is also a solution 
of (\ref{PI}). 

The solutions 
$y(x)$ in ia) and iia) of Remark \ref{intsymrem}, corresponding to 
the single\--valued solutions $u(z)$ of the 
Boutroux\--Painlev\'e equation (\ref{boutrouxeq}), are exactly 
the solutions which are invariant under this 
five\--fold symmetry, that is which satisfy 
$y(x)=a^{-1}\, y(a^2\, x)$ for every fifth root of 
unity $a$, as this means that in the Laurent expansion 
of $y(x)$ only the powers $x^j$ appear such that 
$2\, j-1\in 5\,\Z$ $\Leftrightarrow$ $2\, j-1\in 
5\, (2\,\Z +1)$ $\Leftrightarrow$ $j\in 5\, Z+3$.  
This explains why Boutroux \cite[p. 336, 337]{boutroux} 
called the solutions $y(x)$ in ia) and iia) 
of Remark \ref{intsymrem} the 
{\em symmetric solutions}. 
\label{symrem}
\end{remark}

\section{The poles in Okamoto's space}
\label{polesec}
In this section, we consider the Boutroux\--Painlev\'e vector field in Okamoto's space $S_{9}$ constructed explicitly in Appendix A. This construction shows that the vector field has no base points 
in $S_9$, is infinite along 
the configuration $I:=\bigcup_{i=0}^8\, L_i^{(9-i)}$ 
of nine complex projective lines, and regular 
in $S_9\setminus I$. For this reason we will call the set 
$I$ the {\em infinity set} of the vector field.  

The change of coordinates to the coordinate chart $(u_{911},\, u_{912})$ is shown in Appendix A to be
\begin{eqnarray}
\nonumber u_{911}(z)&=&{u_1(z)}^{-9}\, u_2(z)\, 
(-32\, {u_1(z)}^7\, u_2(z)-4\, {u_1(z)}^3\, {u_2(z)}^5
+{u_2(z)}^7+256\, (5\, z)^{-1}\, {u_1(z)}^8),\\
\nonumber u_{912}(z)&=&u_1(z)\, {u_2(z)}^{-1},\\ 
u_{1}(z)&=&u_{912}(z)^{-2}\, 
(4+32\, u_{912}(z)^4+u_{911}(z)\, u_{912}(z)^6
-256\, (5\, z)^{-1}\, u_{912}(z)^5)^{-1}
\label{uu91eq}\\
\nonumber u_2(z)&=&{u_{912}(z)}^{-3}\, 
(4+32\, {u_{912}(z)}^4+u_{911}(z)\, {u_{912}(z)}^6
-256\, (5\, z)^{-1}\, {u_{912}(z)}^5)^{-1}
\end{eqnarray}

The set of points in $S_9\setminus I$ which project to 
$L_0$, the set where $(u_1,\, u_2)$ is infinite, 
is equal to $L_9\setminus I$. Because 
$L_9\cap I=L_9\cap L_8^{(1)}$ 
consists of one point, $L_9\setminus I$ is 
isomorphic to the affine complex plane. 
The regular vector field in $S_9\setminus I$ 
is nonzero at and 
transversal to $L_9\setminus I$. 
A solution crosses the complex line $L_9\setminus L_8^{(1)}$ 
at the time $z=\zeta$, if and only if 
$u(z)=u_1(z)$ becomes infinite as $z\to\zeta$. 
The whole set $L_9\setminus I$  is visible in the 
coordinate chart $(u_{911},\, u_{912})$, where it 
is the line $u_{912}=0$, parametrized by $u_{911}\in\C$. 
Because $u(z)=u_{1}(z)$ given by Equation (\ref{uu91eq})
is a rational expression in $z$, $u_{911}(z)$, and $u_{922}(z)$, 
and the solution $z\mapsto (u_{911}(z),\, u_{912}(z))$ 
of the regular non\--autonomous system is a complex analytic function in 
a neighborhood of $z=\zeta$ with 
$u_{912}(\zeta )=0$ and $a:=u_{911}(\zeta )\in\C$, 
it follows that the solution $z\mapsto u(z)$ of the 
Boutroux\--Painlev\'e equation is a meromorphic function 
in a neighborhood of $z=\zeta$, with a pole of order two. 
For this reason the line $L_9\setminus I$ is called 
the {\em pole line}. 

Consider the vector field $(\dot{u}_{911}, \dot{u}_{912})$ which is given in Section \ref{lastblowup} of Appendix A. We recall it here for ease of reference by the reader:
\begin{eqnarray*}
\dot{u}_{911}&=&(4+32\, {u_{912}}^4+u_{911}\, {u_{912}}^6
-256\, (5\, z)^{-1}\, {u_{912}}^5)^{-1}\\
&&\times\, 
[u_{912}\, 
(-2^{11}-2^6\cdot 5\, u_{911}\, {u_{912}}^2
+2^{13}\cdot 7\, {u_{912}}^4
-3^2\, {u_{911}}^2\, {u_{912}}^4\\
&&+2^{12}\, u_{911}\, {u_{912}}^6
+2^{16}\cdot 3\, {u_{912}}^8
+2^3\cdot 3^2\, {u_{911}}^2\, {u_{912}}^8\\
&&+2^{12}\cdot 5\, u_{911}\, {u_{912}}^{10}
+2^6\cdot 11\, {u_{911}}^2\, {u_{912}}^{12}
+2^3\, {u_{911}}^3\, {u_{912}}^{14})\\
&&-2\, (5\, z)^{-1}\, 
(2^2\cdot 3\, u_{911}
-2^{12}\cdot 3^2\, {u_{912}}^2
-2^5\cdot 3^2\cdot 7\, u_{911}\, {u_{912}}^4\\
&&+2^{15}\cdot 3\cdot 5\, {u_{912}}^6
+3\, {u_{911}}^2\, {u_{912}}^6
+2^{10}\cdot 17\, u_{911}\, {u_{912}}^8\\
&&+2^{17}\cdot 19\, {u_{912}}^{10}
+2^{13}\cdot 3\cdot 7\, u_{911}\, {u_{912}}^{12}
+2^7\cdot 23\, {u_{911}}^2\, {u_{912}}^{14})\\
&&+2^9\, (5\, z)^{-2}\, {u_{912}}^3
\, (-2^6\cdot 3\cdot 5
+3\, u_{911}\, {u_{912}}^2\\
&&+2^{13}\, {u_{912}}^4
+2^{14}\cdot 5\, {u_{912}}^8
+2^8\cdot 11\, u_{911}\, {u_{912}}^{10})\\
&&-2^{24}\cdot 7\, (5\, z)^{-3}\, {u_{912}}^{12}],\\
\dot{u}_{912}&=&-(4+32\, {u_{912}}^4+u_{911}\, {u_{912}}^6
-256\, (5\, z)^{-1}\, {u_{912}}^5)^{-1}\\
&&\times\, [2-2^4\, {u_{912}}^4
-u_{911}\, {u_{912}}^6+2^8\, {u_{912}}^8
+2^3\, u_{911}\, {u_{912}}^{10}\\
&&+2^{10}\, {u_{912}}^{12}
+2^6\, u_{911}\, {u_{912}}^{14}
+{u_{911}}^2\, {u_{912}}^{16}\\
&&-(5\, z)^{-1}\, u_{912}\, 
(2^2-2^5\cdot 7\, {u_{912}}^4
+u_{911}\, {u_{912}}^6\\
&&+2^{11}\, {u_{912}}^8
+2^{14}\, {u_{912}}^{12}
+2^9\, u_{911}\, {u_{912}}^{14})\\
&&+2^8\, (5\, z)^{-2}\, {u_{912}}^6\, (1+2^8\, {u_{912}}^8)].
\end{eqnarray*}

It follows from the equation for $\dot{u}_{912}$ that the 
coefficients of $(z-\zeta )^i$ in the Taylor expansion at $z=\zeta$ of 
$u_{912}(z)$ do not depend on $a$ for $1\leq i\leq 6$, when 
(\ref{uu91eq}) shows that the coefficients of 
$(z-\zeta )^j$ in the Laurent expansion at $z=\zeta$ 
of $u(z)$ do not depend on $a$ for $-2\leq j\leq 3$. 
Substitution of the Taylor expansion at $z=\zeta$ of order 
$i$ of $u_{912}(z)$ in the formula for $\dot{u}_{912}$ 
yields the Taylor expansion at $z=\zeta$ of order $i$ 
of $\dot{u}_{912}(z)$ of order $i$, hence the Taylor expansion 
at $z=\zeta$ of order $i+1$ of $u_{912}(z)$, as 
long as $i\leq 5$. Then substitution of $u_{911}(\zeta )=a$ in the 
formula for $\dot{u}_{912}$ yields the coefficient 
of $(z-\zeta )^6$ in the Taylor expansion at $z=\zeta$ of 
$\dot{u}_{912}(z)$, hence of $(z-\zeta )^7$ in the 
Taylor expansion at $z=\zeta$ of $u_{912}(z)$, 
when (\ref{uu91eq}) yields the coefficients of 
$(z-\zeta )^j$ in the Laurent expansion 
at $z=\zeta$ of $u(z)$ for $-2\leq j\leq 4$. 
This yields  
\begin{eqnarray}
u_{911}(z)&=&a+\op{O}(z-\zeta ),\nonumber\\
u_{912}(z)&=&-\frac12\, (z-\zeta )
-\frac{1}{2^2\cdot 5\cdot\zeta}\, (z-\zeta )^2
+\frac{3}{2^2\cdot 5^2\cdot\zeta ^2}\, (z-\zeta )^3
-\frac{3\cdot 7}{2^3\cdot 5^3\cdot\zeta ^3}\, (z-\zeta )^4
\nonumber\\
&&+\left(\frac{3\cdot 7\cdot 19}{2^3\cdot 5^5\cdot\zeta ^4}
+\frac{1}{2\cdot 5}\right)\, (z-\zeta )^5
-\left(\frac{3\cdot 7\cdot 19}{2\cdot 5^6\cdot\zeta ^5}
-\frac{41}{2^2\cdot 3\cdot 5^2\cdot\zeta}\right)\, (z-\zeta )^6
\nonumber\\
&&+\left(\frac{3\cdot19\cdot29}{2\cdot 5^7\cdot\zeta ^6}
-\frac{41}{2\cdot 3\cdot 5^3\cdot\zeta ^2}
+\frac{3\, a}{2^9\cdot 7}
\right)\, (z-\zeta )^7+\op{O}((z-\zeta )^8),
\nonumber
\end{eqnarray}
\begin{eqnarray}
u(z)&=&(z-\zeta )^{-2}-\frac{1}{5\cdot\zeta}\, (z-\zeta )^{-1}
+\frac{3}{2^2\cdot 5\cdot\zeta ^2}
-\frac{31}{2\cdot 5^3\cdot\zeta ^3}\, (z-\zeta )
\nonumber\\
&&+\left(\frac{19\cdot 283}{2^4\cdot 5^5\cdot\zeta ^4}-\frac{1}{2\cdot 5}\right)
\, (z-\zeta )^2
-\left(\frac{3\cdot 11\cdot 727}{2^4\cdot 5^6\cdot \zeta ^5}
+\frac{11}{2\cdot 4\cdot 5^2\cdot\zeta}\right)
\, (z-\zeta )^3
\nonumber\\
&&+\left(\frac{197\cdot 443}{2^6\cdot 5^6\cdot\zeta ^6}
+\frac{29}{2^3\cdot 3\cdot 5^2\cdot\zeta ^2}
-\frac{a}{2^8\cdot 7}\right)\, (z-\zeta )^4+\op{O}((z-\zeta )^5).  
\label{laurentu}
\end{eqnarray}

The anticanonical pencil has a base point at 
the point $^{(1)}b_8^{\,\scriptop{ell}}$ 
determined by the equations 
$u_{921}=256\, (5\, z)^{-1}$, $u_{922}=0$ which is the lift  
to $S_9$ of the point $b_8^{\,\scriptop{ell}}$ 
of the anticanonical pencil in $S_8$. 
The blowing up of $S_8$ in 
the point $b_8$, which is not the base point of the 
anticanonical pencil, causes $E\, w_{92}$ to be 
infinite along 
$L_9\setminus L_8^{(1)}$, the line determined by the equation $u_{921}=0$. 
In turn this forces the energy function $E(z)$ to have a pole 
at the point $z=\zeta$ where the solution $u(z)$ of the 
Boutroux\--Painlev\'e equation has a pole. 

The equations for $w_{91}$, $E\, w_{91}$, and 
$\dot{E}\, w_{91}$ (given in Section \ref{lastblowup}) imply in combination with the Taylor expansion 
for $u_{911}(z)$ and $u_{912}(z)$ in (\ref{laurentu}) that 
$E=\, -2^2\, (5\,\zeta )^{-1}\, (z-\zeta )^{-1}
+2^{-7}\, a-22\, (5\,\zeta )^{-2}+\op{O}(z-\zeta )$ 
and $\dot{E}=\, -2^2\, (5\,\zeta )^{-1}\, 
(z-\zeta )^{-2}\, (1+\op{O}((z-\zeta )^2))$. 
Combination of these asymptotic expansions for $E$ and $\dot{E}$ 
leads to 
\begin{equation}
E(z)=\, -2^2\, (5\,\zeta )^{-1}\, (z-\zeta )^{-1}
+2^{-7}\, a-22\, (5\,\zeta )^{-2}+\op{O}((z-\zeta )/\zeta ), 
\label{qexpansion}
\end{equation}
where the remainder term is uniform for bounded $\zeta ^{-1}$ 
and $a$. It follows that the energy $E(z)$, although it has a pole 
of order one at $z=\zeta$, is close to 
$2^{-7}\, a$ if $|z-\zeta |$ is large compared to $1/|\zeta |$, 
and $E(z)=2^{-7}\, a+\op{O}(\zeta ^{-1})$ if $z-\zeta$ and 
$(z-\zeta )^{-1}$ are bounded. That is, for large $|z|$, 
$E(z)$ is well approximated 
by $2^{-7}\, a$ as soon as $z$ leaves the disc centered 
at $z=\zeta$ with radius of small order $1/|\zeta |$, 
where the approximation improves when $|z-\zeta |$ 
increases to order one. 

\section{The solutions near the set where the vector field is infinite}
\label{repelsec}
In this section, we consider the vector field near the infinity set $I$ and show that it is repelling for the flow. We also construct the limit set for each solution and show that it is a non-empty, compact and connected subset of $S_{9}$ that remains invariant under the autonomous flow. As a corollary, we prove that every solution of Equation (\ref{PI}) must have an infinite number of poles in the complex plane. 

Let ${\mathcal S}$ denote the fiber bundle of the surfaces 
$S_9=S_9(z)$, $z\in\C\setminus\{ 0\}$, in which the 
time\--dependent Painlev\'e vector field $v_z$, in the Boutroux 
scaling, defines a regular (= holomorphic) 
one\--dimensional vector subbundle ${\mathcal P}$ of ${\mathcal S}$. 
For each $z\in\C\setminus\{ 0\}$, let    
$I(z):=\bigcup_{i=0}^8\, L_i^{(9-i)}(z)$ 
be the {\em infinity set}, the set of all points in $S_9(z)$ 
where $v_z$ is infinite, that is, where ${\mathcal P}$ 
is ``vertical'' (or  tangent to the fiber). 
If ${\mathcal I}$ denotes the union in ${\mathcal S}$ 
of all $I(z)$, $z\in\C\setminus\{ 0\}$, then 
${\mathcal S}\setminus {\mathcal I}$ is Okamoto's \lq\lq space  
of initial conditions\rq\rq , fibered by the surfaces 
$S_9(z)\setminus I(z)$, the open subset of 
${\mathcal S}$ of all points in ${\mathcal S}$ where ${\mathcal P}$ 
is transversal to the fibers, and therefore defines a 
regular infinitesimal connection in the bundle 
of the $S_9(z)\setminus I(z)$, $z\in\C\setminus\{ 0\}$. 
Instead of using the coordinate\--invariant description 
of a bundle of surfaces with a connection, we will 
analyse the asymptotic behavior, for $|z|\to\infty$, 
of the solutions of the Painlev\'e equation in the Boutroux scaling, 
by studying the $z$\--dependent vector field in the coordinate systems 
introduced in Section \ref{boutrouxsec}. The solution curve 
in ${\mathcal S}$ will we denoted by $\gamma =\gamma (z)$, 
whereas the corresponding solution of the Boutroux\--Painlev\'e 
differential equation is denoted by $u(z)$. 
Note that $u(z)$ is equal to the first coordinate $u_1(z)$ 
of $\gamma (z)$ in the $(u_1,\, u_2)$ coordinate system, 
the $01$\--coordinate system.  

In this section we begin with an asymptotic description 
of the solutions near the locus ${\mathcal I}$ 
where the vector field is infinite. In the notation we 
often drop the dependence on $z$ of the surfaces $S_9(z)$.  
All order estimates will be uniform in $z$ for $z$ bounded away 
from zero. Near the part $I\setminus L_8^{(1)}
=\bigcup_{i=0}^6\, L_i^{(9-i)}\cup 
(L_7^{(2)}\setminus L_8^{(1)})$ 
of $I$ we will use the function 
$1/E$, where $E$ is the energy, as an indicator for the distance 
to $I$, whereas near the remaining 
part $L_8^{(1)}$ of $I$ we switch to 
$w_{92}$ in the $92$\--coordinate system. See the first statement in 
Lemma \ref{E8E4simlem}. The function 
$1/E$ is no longer useful as an indicator function 
near $L_8^{(1)}$ because $L_8^{(1)}$ 
contains the lift $(b_8^{\,\scriptop{ell}})^{(1)}$ 
to $S_9$ of the base point $b_8^{\,\scriptop{ell}}$ 
of the anticanonical pencil, and 
$E$ takes all finite values near $(b_8^{\,\scriptop{ell}})^{(1)}$. 
One of the points of the proof is that $w_{92}$ is approximately constant 
when the solution runs closely along $L_8^{(1)}$.  
\begin{lemma}
Let 
\[
I^6:=\bigcup_{i=0}^6\, L_i^{(9-i)}.
\]
For every $\epsilon >0$ there exists a neighborhood 
$U$ of $I^6$ in $S_9$ such that 
$|(\dot{E}/E)/(-6/5\, z)-1|<\epsilon$ in $U$ and for all 
$z\in\C\setminus\{ 0\}$. For every compact subset 
$K$ of $L_7^{(2)}\setminus L_8^{(1)}$ there exists a 
neighborhood $V$ of $K$ in $S_9$ and a constant $C>0$ such that 
$|(\dot{E}/E)\, z|\leq C$ in $V$  and for all 
$z\in\C\setminus\{ 0\}$. 
\label{dotq/qlem}
\end{lemma}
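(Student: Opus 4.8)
The plan is to reduce both estimates to a single statement about the meromorphic function $g:=u_1/E$ on $S_9$, and then to extract the behaviour of $g$ along the lines of $I$ from the explicit resolution in Appendix~A.

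\emph{Reduction.} By the last identity in (\ref{qeq}), $\dot E=-(5z)^{-1}(6E+4u_1)$, hence
\[
\frac{\dot E/E}{-6/(5z)}-1=\frac23\,\frac{u_1}{E},\qquad
\Bigl(\frac{\dot E}{E}\Bigr)z=-\frac65\Bigl(1+\frac23\,\frac{u_1}{E}\Bigr).
\]
So the first claim is equivalent to $|g|<\tfrac32\epsilon$ on a neighbourhood $U$ of $I^6$, and the second to $|g|$ being bounded on a neighbourhood $V$ of $K$. Here $g$ is the pull-back to $S_9$ of the $z$-independent rational function $u_1/(u_2^2/2-2u_1^3-u_1)$ on $\C\Proj 2$, and on a neighbourhood of $I^6\cup(L_7^{(2)}\setminus L_8^{(1)})$ the resolution -- in particular the coordinate charts used there -- does not involve $z$, since the blow-up centres that this locus meets are among $b_0,\dots,b_6$, which do not depend on $z$. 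Consequently $g$ on such a neighbourhood is $z$-independent, and any neighbourhood bound obtained there holds for all $z\in\C\setminus\{0\}$ at once.

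\emph{Geometry of $g$.} It then suffices to prove that $g$ is holomorphic on a neighbourhood of $I^6$ and vanishes identically on $I^6$, and that $g$ has no pole on $L_7^{(2)}\setminus L_8^{(1)}$. For the holomorphy, the polar divisor of $g$ on $S_9$ is supported on $L_9\cup L_8^{(1)}\cup\widetilde{\{E=0\}}$, where $\widetilde{\{E=0\}}$ is the strict transform of the cubic $\{E=0\}$; as $\{E=0\}$ is the member $E=0$ of the anticanonical pencil, $\widetilde{\{E=0\}}$ meets $I$ only at the pencil's base point $(b_8^{\,\scriptop{ell}})^{(1)}\in L_8^{(1)}$, so it is disjoint from $I^6$ and from $L_7^{(2)}$; since moreover $L_9$ is disjoint from $I^6\cup L_7^{(2)}$ and $L_8^{(1)}$ meets $L_7^{(2)}$ in a single point, $g$ is holomorphic near $I^6$ and near every compact $K\subset L_7^{(2)}\setminus L_8^{(1)}$. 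For the vanishing, on $\C\Proj 2$ one has $\operatorname{div}(g)=\{u_1=0\}+2L_0-\{E=0\}$; the three components meet at the single point $b_0=[0:0:1]$ from which all nine blow-ups emanate, with $\{E=0\}$ an inflectional tangent of $L_0$ there ($\{E=0\}$ being $u_{031}=4u_{032}^3+\cdots$ near $b_0$, so that $(\{E=0\}\cdot L_0)_{b_0}=3$). Following the cascade -- in which $b_0,\dots,b_7$ lie on the successive strict transforms of $\{E=0\}$, $b_1,\dots,b_7$ lie on the previous exceptional line, and $\widetilde{L_0}$ leaves the chain after $b_2$ because its contact with $\{E=0\}$ at $b_0$ is exactly $3$ -- and applying the blow-up rule $\operatorname{ord}_{L_{i+1}}(\pi^*g)=\operatorname{mult}_{b_i}(\operatorname{div}_0 g)-\operatorname{mult}_{b_i}(\operatorname{div}_\infty g)$ at each step, one gets $\operatorname{ord}_{L_i^{(9-i)}}(g)\ge1$ for $0\le i\le6$ (the values are $2,2,3,4,3,2,1$), $\operatorname{ord}_{L_7^{(2)}}(g)=0$, and a simple pole of $g$ along $L_8^{(1)}$. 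These last two facts are precisely why the lemma splits into two parts and why $1/E$ ceases to be a useful indicator near $L_8^{(1)}$. (Equivalently, all of this can be checked directly by substituting the Appendix~A expressions for $u_1,u_2$ -- or for the regular combinations $u_1\,w_{ij}$ and $E\,w_{ij}$ -- into $g=u_1/E$ in each of the finitely many charts meeting $I$.)

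\emph{Conclusion and main difficulty.} $I^6$ is a finite union of projective lines, hence compact; covering it by finitely many of the above charts and shrinking gives a neighbourhood $U$ with $|g|<\tfrac32\epsilon$, so $|(\dot E/E)/(-6/(5z))-1|=\tfrac23|g|<\epsilon$ on $U$ for all $z\ne0$. For compact $K\subset L_7^{(2)}\setminus L_8^{(1)}$, the holomorphy statement gives a neighbourhood $V$ of $K$, disjoint from $L_8^{(1)}$, with $|g|\le M$ on $V$, whence $|(\dot E/E)z|=\tfrac65\,|1+\tfrac23 g|\le\tfrac65(1+\tfrac23 M)=:C$ on $V$. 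The main work is the second step -- the bookkeeping of the nine blow-ups: tracking, in each chart of Appendix~A, which exceptional line one sits over, with what multiplicity the numerator divisor $\{u_1=0\}+2L_0$ and the denominator divisor $\{E=0\}$ meet each successive centre, and the contact pattern of $\{E=0\}$ with $L_0$ and the later exceptional lines. The reduction and the compactness argument are routine.
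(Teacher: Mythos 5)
Your proposal is correct, and its key step is the same as the paper's: the identity $\dot E=-(5z)^{-1}(6E+4u_1)$ from (\ref{qeq}) reduces both assertions to smallness, respectively boundedness, of $u_1/E$; the paper works with exactly this quantity in the form $r:=(5z(\dot E/E)+6)/8=-2u_1/E$. Where you genuinely differ is in how the behaviour of $u_1/E$ along $I$ is established. The paper simply displays $r$ in each of the ($z$-independent) coordinate charts covering $I^6\cup L_7^{(2)}$ (the expressions $r_{02},\dots,r_{72}$ in its proof) and reads off that $r$ vanishes on every component of $I^6$ and stays finite on $L_7^{(2)}\setminus L_8^{(1)}$, concluding by compactness. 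You instead compute the divisor of $u_1/E$ on $S_9$ by pushing $\{u_1=0\}+2L_0-\{E=0\}$ through the blow-up cascade; your multiplicity bookkeeping is right and agrees with the Appendix data, where $u_1w_{ij}/(Ew_{ij})$ is, up to units, $u_{021}^2$, $u_{121}^2$, $u_{221}^3$, $u_{312}^4$, $u_{421}^3$, $u_{521}^2$, $u_{621}$, then of order $0$ in $u_{721}$ and of order $-1$ in $u_{821}$, confirming your list $2,2,3,4,3,2,1$, the order $0$ along $L_7^{(2)}$, and the simple pole along $L_8^{(1)}$ (and $L_9$) — which is indeed the structural reason the second statement excludes $L_8^{(1)}$ and the paper later abandons $1/E$ as a distance indicator there. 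The divisor route buys a global explanation and a compact summary of where $u_1/E$ degenerates, at the cost of the intersection-theoretic tracking of the strict transforms of $\{E=0\}$ and $L_0$ through the infinitely near points; the paper's route avoids that bookkeeping entirely by direct substitution in finitely many charts, an equivalence you yourself point out. Your treatment of the uniformity in $z$ (only $b_8$ depends on $z$, and the ninth blow-up does not affect a neighbourhood of $I^6$ or of a compact $K\subset L_7^{(2)}\setminus L_8^{(1)}$, so $u_1/E$ there is a $z$-independent rational function) is the correct observation, and is precisely what makes the paper's $z$-free formulas $r_{ij}$ yield estimates valid for all $z\in\C\setminus\{0\}$.
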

\begin{proof}
Because $I^6$ is compact, it suffices to prove that 
every point of it has a neighborhood in $S_9$ in which the 
estimate holds. 
The quantity $r:=(5\, z\, (\dot{E}/E)+6)/8
=\, -2\, u_1/E$ is equal to 
\begin{eqnarray*}
r_{02}&=&{u_{021}}^2/(4+2\, {u_{021}}^2-u_{021}\, {u_{022}}^2),\\
r_{03}&=&{u_{031}}^2\, u_{032}
/(-u_{031}+2\, {u_{031}}^2\, u_{032}+4\, {u_{032}}^3),\\
r_{11}&=&{u_{111}}^2\, {u_{112}}^2
/(-u_{111}+4\, {u_{112}}^2+2\, {u_{111}}^2\, {u_{112}}^2),\\
r_{12}&=&{u_{121}}^2\, u_{122}
/(-1+2\, {u_{121}}^2\, u_{122}+4\, {u_{121}}^2\, {u_{122}}^3),\\
r_{21}&=&{u_{211}}^2\, {u_{212}}^3
/(-u_{211}+4\, u_{212}+2\, {u_{211}}^2\, {u_{212}}^3),\\
r_{22}&=&{u_{221}}^3\, {u_{222}}^2
/(-1+4\, u_{221}\, {u_{222}}^2+2\, {u_{221}}^3\, {u_{222}}^2),\\
r_{31}&=&{u_{311}}^2\, {u_{312}}^4
/(4-u_{311}+2\, {u_{311}}^2\, {u_{312}}^4),\\
r_{32}&=&{u_{321}}^4\, {u_{322}}^3
/(-1+4\, u_{322}+2\, {u_{321}}^4\, {u_{322}}^3),\\
r_{41}&=&{u_{412}}^3\, (4+u_{411}\, u_{412})^2
/(-u_{411}+2\, {u_{412}}^3\, (4+u_{411}\, u_{412})^2),\\
r_{42}&=&{u_{421}}^3\, (4+u_{421})^2\, {u_{422}}^4
/(-1+2\, {u_{421}}^3\, (4+u_{421})^2\, {u_{422}}^4),\\
r_{51}&=&{u_{512}}^2\, (4+u_{511}\, {u_{512}}^2)^2
/(-u_{511}+2\, {u_{512}}^2\, (4+u_{511}\, {u_{512}}^2)^2),\\
r_{52}&=&{u_{521}}^2\, {u_{522}}^3\, (4+{u_{521}}^2\, u_{522})^2
/(-1+2\, {u_{521}}^2\, {u_{522}}^3\, (4+{u_{521}}^2\, u_{522})^2),\\
r_{61}&=&u_{612}\, (4+u_{611}\, {u_{612}}^3)^2
/(-u_{611}+2\, u_{612}\, (4+u_{611}\, {u_{612}}^3)^2),\\
r_{62}&=&u_{621}\, {u_{622}}^2\, (4+{u_{621}}^3\, {u_{622}}^2)^2
/(-1+2\, u_{612}\, {u_{622}}^2\, (4+{u_{621}}^3\, {u_{622}}^2)^2),\\
r_{71}&=&(4+u_{711}\, {u_{712}}^4)^2
/(-u_{711}+2\, (4+u_{711}\, {u_{712}}^4)^2),\\
r_{72}&=&u_{722}\, (4+{u_{721}}^4\, {u_{722}}^3)^2
/(-1+2\, u_{722}\, (4+{u_{721}}^4\, {u_{722}}^3)^2)
\end{eqnarray*}
in the coordinate charts which cover $I^6$. 
The part $L_0^{(9)}\setminus L_3^{(6)}$ 
of $I^6$ is equal to the line $u_{021}=0$ 
on which $r_{02}=0$. The part $L_1^{(8)}\setminus L_2^{(7)}$ 
of $I^6$ is equal to the line $u_{121}=0$ 
on which $r_{12}=0$. The part $L_2^{(7)}\setminus L_3^{(6)}$ 
of $I^6$ is equal to the line $u_{221}=0$ 
on which $r_{22}=0$. The part $L_3^{(6)}\setminus 
(L_4^{(5)}\cup L_2^{(1)})$ of $I^6$ 
is equal to the part $u_{311}\neq 4$ 
on the line $u_{312}=0$ on which $r_{31}=0$. 
The part $L_3^{(6)}\setminus 
(L_4^{(5)}\cup L_0^{(9)})$ of $I^6$ 
is equal to the part $u_{322}\neq 1/4$ 
on the line $u_{321}=0$ on which $r_{32}=0$. 
The part $L_4^{(5)}\setminus L_5^{(4)}$ of $I^6$ 
is equal to the line $u_{421}=0$ on which $r_{42}=0$. 
The part $L_5^{(4)}\setminus L_6^{(3)}$ of $I^6$ 
is equal to the line $u_{521}=0$ on which $r_{52}=0$. 
The part $L_6^{(3)}\setminus L_7^{(2)}$ of $I^6$ 
is equal to the line $u_{621}=0$ on which $r_{62}=0$. 
The part $L_6^{(3)}\setminus L_5^{(4)}$ 
of $I^6$ 
is equal to the line $u_{722}=0$ on which $r_{72}=0$. 
This covers all of $I^6$, and the proof of the 
first statement in the lemma is complete. 

For the second statement we observe that 
$L_7^{(2)}\setminus (L_6^{(3)}\cup L_8^{(1)})$ is the line 
$u_{712}=0$, $u_{711}\neq 32$ on which $r_{71}=16/(u_{711}-32)^2$, 
whereas $u_{721}=0$, $u_{722}\neq 1/32$, on which 
$r_{72}=16\, u_{722}/(32\, u_{722}-1)^2$, is an  
open neighborhood of $L_7^{(2)}\cap L_6^{(3)}$ in $L_7^{(2)}$. 
Note that $r$ becomes infinite when approaching 
$L_7^{(2)}\cap L_6^{(3)}$ on $L_7^{(2)}$, which is why 
$L_7^{(2)}$ cannot be included in the first statement of the lemma. 
\end{proof}

The function $|d|$ in the following lemma 
will be used as a measure for the distance 
to the infinity set $I$ of the vector field. 
\begin{lemma}
Suppose $z$ is bounded away from zero. 
Let $q:=2\, E$. 
There exists a continuous complex valued function $d$ on 
a neighborhood of $I$ in $S_9$ such that 
$d=q^{-1}$ in a neighborhood in $S_9$ of $I\setminus 
L_8^{(1)}$, 
$d=w_{92}$ in a neighborhood in $S_9$ of the remaining part 
$L_8^{(1)}\setminus L_7^{(2)}$ of $I$, 
and $q\, d\to 1$, $d/w_{92}\to 1$ when approaching 
$L_8^{(1)}\setminus L_7^{(2)}$. 

If the solution at the complex time $z$ is 
sufficiently close to a point of $L_8^{(1)}\setminus L_7^{(2)}$ $($parametrized by coordinate $u_{921}$ $)$, then there exists a unique $\zeta\in\C$ such that 
$|z-\zeta |=\op{O}(|d(z)|\, |u_{921}(z)|)$, where $d(z)$ is small and $|u_{921}(z)|$ is bounded, and 
$u_{921}(\zeta )=0$, that is, the solution of the 
Boutroux\--Painlev\'e equation has a pole at $z=\zeta$. 
In the sequel we write $\delta :=d(\zeta )
=w_{92}(\zeta )=2^6\, u_{922}(\zeta )$, 
and consider $\delta \to 0$.
We have $d(z)/\delta\sim 1$. 
For large finite $R_8\in\R _{>0}$, the connected component 
of $\zeta$ in $\C$ of the set of all $z\in\C$ such that 
$|u_{921}(z)|\leq R_8$ is an approximate disc $D_8$ with center 
at $\zeta$ and radius $\sim 2^{-5}\, |\delta |\, R_8$, 
and $z\mapsto u_{921}(z)$ is a complex analytic diffeomorphism 
from $D_8$ onto $\{ u\in\C\mid |u|\leq R_8\}$. 

For $i$ decreasing from 7 to 4 we use the coordinate 
$u_{(i+1)\, 21}$ in order to parametrize 
$L_i^{(9-i)}\setminus L_{i-1}^{(10-i)}$, where 
$u_{(i+1)\, 21}=0$  
corresponds to the intersection point of $L_i^{(9-1)}$ with 
$L_{i+1}^{(8-i)}$. The point on $L_{i+1}^{(8-i)}\setminus L_i^{(9-i)}$ 
with coordinate $u_{(i+2)\, 21}$ runs to the same intersection 
point when $|u_{(i+2)\, 21}|\to\infty$. 
For large finite $R_i\in\R _{>0}$, 
the connected component of $\zeta$ in $\C$ 
of the set of all $z\in\C$ such that the solution at the 
complex time $z$ is close to $L_i^{(9-i)}\setminus L_{i-1}^{(10-i)}$, 
with $|u_{(i+1)\, 21}(z)|\leq R_i$, but not close to 
$L_{i+1}^{(8-i)}$, is the complement of 
$D_{i+1}$ in an approximate disc $D_i$ with center at $\zeta$ 
and radius $\sim (2^{3-i}\, |\delta |\, R_i)^{1/(9-i)}$, 
where we note that $|\delta |^{1/(9-i)}/|\delta |^{1/(9-(i+1))}
=|\delta |^{-1/(9-i)\, (8-i)}>>1$. More precisely, 
$z\mapsto u_{(i+1)\, 21}$ defines a $(9-i)$\--fold covering 
from the annular domain $D_i\setminus D_{i+1}$ onto 
the complement in $\{ u\in\C\mid |u|\leq R_i\}$ 
of an approximate disc with center at the origin and small 
radius $\sim (2^{-6}\, |\delta |\, {R_{i+1}}^{9-i})^{1/(8-i)}$, 
where $u_{(i+1)\, 21}(z)\sim -2^{i-3}\,\delta\, (z-\zeta )^{9-i}$. 

For all $z\in D_4$, the largest approximate disc, we have 
$|z-\zeta |<<|\zeta |$ and $d(z)/\delta\sim 1$. 
\label{E8E4simlem}
\end{lemma}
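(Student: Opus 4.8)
The plan is to build the chain of approximate discs $D_8 \supset D_7 \supset \cdots \supset D_4$ inductively, starting from the pole at $z = \zeta$ and working outward, keeping track at each stage of how $|z - \zeta|$ compares to $|\delta|$ and hence to $|\zeta|$. First I would establish the innermost estimate on $D_8$: near the pole line $L_8^{(1)} \setminus L_7^{(2)}$, the indicator $d = w_{92}$ is approximately constant (as announced in the paragraph before Lemma \ref{E8E4simlem}), so $d(z)/\delta \sim 1$ on $D_8$, and the differential equation for $u_{921}$ along the solution shows that $z \mapsto u_{921}(z)$ is a biholomorphism onto $\{|u| \le R_8\}$ with $D_8$ an approximate disc of radius $\sim 2^{-5}|\delta| R_8$; since $R_8$ is a fixed finite constant and $|\delta| \to 0$, this already gives $|z - \zeta| = \op{O}(|\delta|) << |\zeta|$ on $D_8$, the base case.

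Next I would run the descending induction on $i$ from $7$ down to $4$. Assuming the statement holds on $D_{i+1}$ — in particular $d(z)/\delta \sim 1$ and $|z - \zeta| << |\zeta|$ there — I would analyze the solution on the annular region where it is close to $L_i^{(9-i)} \setminus L_{i-1}^{(10-i)}$ but not close to $L_{i+1}^{(8-i)}$. The key local computation is the asymptotic relation $u_{(i+1)21}(z) \sim -2^{i-3}\,\delta\,(z-\zeta)^{9-i}$ stated in the lemma: this comes from integrating the $z$-dependent vector field in the $(i+1)$-st coordinate chart, where to leading order $\dot u_{(i+1)21}$ is a monomial in $u_{(i+1)21}$ reflecting that $L_i^{(9-i)}$ is the $(9-i)$-th exceptional line, with the constant of proportionality fixed by matching to $D_{i+1}$ at the inner boundary (where $|u_{(i+2)21}| \to \infty$, i.e. $u_{(i+1)21}$ is small). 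From $|u_{(i+1)21}(z)| \le R_i$ one then reads off that the outer boundary of $D_i$ is an approximate circle of radius $\sim (2^{3-i}|\delta| R_i)^{1/(9-i)}$, and the inner boundary matches the outer boundary of $D_{i+1}$; the covering degree $9-i$ is exactly the vanishing order. Throughout, one must check $d(z)/\delta \sim 1$ persists — this follows because on $D_i \setminus D_{i+1}$ the solution stays close to $I$, the energy $E = q/2$ is large there, and the estimate $|(\dot E/E)/(-6/(5z)) - 1| < \epsilon$ from Lemma \ref{dotq/qlem} controls the variation of $q$ (hence of $d = q^{-1}$) over a region of $z$-diameter $<< |\zeta|$, so the relative change in $d$ is $\op{O}(|z-\zeta|/|\zeta|) = o(1)$.

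Finally, having reached $D_4$, the largest disc, I would collect the radius estimates: the radius of $D_4$ is $\sim (2^{-1}|\delta| R_4)^{1/5}$, and since $|\delta| \to 0$ while $R_4$ is a fixed finite constant, this radius is $<< 1 << |\zeta|$; hence $|z - \zeta| << |\zeta|$ for all $z \in D_4$. Together with the fact, just argued in the inductive step, that $d(z)/\delta \sim 1$ holds on each annulus $D_i \setminus D_{i+1}$ for $i = 4, \dots, 8$ and on $D_8$ itself, we conclude $d(z)/\delta \sim 1$ throughout $D_4$, which is the assertion.

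The main obstacle I expect is the bookkeeping in the inductive step: verifying that the leading-order monomial form of the vector field in the $(i+1)$-chart really does integrate to $u_{(i+1)21} \sim -2^{i-3}\delta(z-\zeta)^{9-i}$ with the stated constant, and in particular that the inner and outer boundaries of consecutive discs match up consistently (the algebra $|\delta|^{1/(9-i)} / |\delta|^{1/(8-i)} >> 1$ must be used to know the inner radius is genuinely small compared to the outer radius, so the annulus is nonempty and the covering picture makes sense). A secondary subtlety is making the passage between indicator functions rigorous — $1/E$ is the right indicator away from $L_8^{(1)}$ but degenerates near it, whereas $w_{92}$ is the right one near $L_8^{(1)}$ — and Lemma \ref{E8E4simlem}'s own first paragraph (the existence of the continuous $d$ with $qd \to 1$, $d/w_{92} \to 1$ on the overlap) is exactly what lets one glue the estimates across that transition; one must be careful to invoke it at the right moment, namely when $D_8$ (governed by $w_{92}$) meets $D_7$ (governed by $1/E$).
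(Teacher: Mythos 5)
Your overall strategy is the same as the paper's (induction outward from $D_8$ through the charts, near-constancy of the indicator, integration of the leading-order flow), but two steps as you describe them would not go through. First, the assertion that ``to leading order $\dot u_{(i+1)\,21}$ is a monomial in $u_{(i+1)\,21}$'' cannot be read off the chart vector field: what the chart gives is $\dot u_{(i+1)\,21}\sim -(9-i)\,2^{-1}\,u_{(i+1)\,22}^{-1}$, which involves the \emph{other} coordinate, and matching to $D_{i+1}$ at the inner boundary only supplies the initial condition $|u_{(i+1)\,21}(z_0)|^{8-i}\sim 2^{-6}|\delta|\,R_{i+1}^{\,9-i}$, not the equation. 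The missing idea is to eliminate $u_{(i+1)\,22}$ by combining $q\,w_{(i+1)\,2}\sim 1$ with the explicit Jacobian $w_{(i+1)\,2}\sim 2^6\,u_{(i+1)\,21}^{\,8-i}\,u_{(i+1)\,22}^{\,9-i}$ and the near-constancy $q\sim\delta^{-1}$; this yields $u_{(i+1)\,22}^{-1}\sim(2^6\delta^{-1}u_{(i+1)\,21}^{\,8-i})^{1/(9-i)}$, hence $\op{d}(u_{(i+1)\,21}^{1/(9-i)})/\op{d}\!z\sim -2^{-1+6/(9-i)}\delta^{-1/(9-i)}$, which is where both the exponent $9-i$ and the constant come from (and which gives $u_{(i+1)\,21}\sim 2^{i-3}\delta^{-1}(z-\zeta)^{9-i}$, consistent with the radius $\sim(2^{3-i}|\delta|R_i)^{1/(9-i)}$; the $\delta$ in the lemma's displayed relation is a slip for $\delta^{-1}$, so do not build on it literally).

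Second, your control of $d$ on the annuli via the estimate $|(\dot E/E)/(-6/5z)-1|<\epsilon$ of Lemma \ref{dotq/qlem} fails precisely on the first annulus $D_7\setminus D_8$: that estimate holds only on a neighborhood of $I^6=\bigcup_{i\le 6}L_i^{(9-i)}$ and is false near $L_7^{(2)}$, where $\dot q/q\sim -6(5z)^{-1}-2^7(5z)^{-1}u_{821}^{-1}$ and, at the entry point next to $L_8^{(1)}$, $u_{821}$ is of order $|\delta|$, so the extra term is enormous and even the bound $|z\,\dot E/E|\le C$ is not available there. The paper closes this by rewriting the bad term as $-2^8(5z)^{-1}\dot u_{822}$ and integrating by parts, using $|u_{822}|\lesssim R_8^{-1}\ll 1$, to get $q(z)/q(z_0)\sim 1$; without some such argument your induction never gets started at $i=7$. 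Relatedly, the first paragraph of the lemma (the existence of the glued indicator $d$ with $q\,d\to 1$ and $d/w_{92}\to 1$ near $L_8^{(1)}\setminus L_7^{(2)}$) is itself part of what must be proved, so it cannot be ``invoked at the right moment''; it follows from $q\,w_{92}\sim 1-2^8(5z)^{-1}u_{921}^{-1}$ together with $u_{822}=1/u_{811}$, $u_{811}=u_{921}-2^8(5z)^{-1}$, which show $q\,w_{92}\to 1$ as the solution approaches the corner $L_8^{(1)}\cap L_7^{(2)}$ and thereby justify the continuous interpolation between $w_{92}$ and $q^{-1}$.
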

\begin{proof}
Recall that $L_8^{(1)}\setminus L_7^{(2)}$ is determined by the equation 
$u_{922}=0$ and is parametrized by $u_{921}\in\C$. Moreover,  
$L_9$ minus one point not on $L_8^{(1)}$ 
corresponds to $u_{921}=0$ and is parametrized by 
$u_{922}$. 
For the study of the solutions near the part $L_8^{(1)}\setminus L_7^{(2)}$ 
of $I$, we use the coordinates $(u_{921},\, u_{922})$. 
Asymptotically for $u_{922}\to 0$ and bounded 
$u_{921}$, $z^{-1}$ 
we have 
\begin{eqnarray}
\dot{u}_{921}&\sim&-2^{-1}\, {u_{922}}^{-1},
\label{dotu921sim}\\
w_{92}&\sim&2^6\, u_{922},
\label{w92u922sim}\\ 
\dot{w}_{92}/w_{92}&=&6\, (5\, z)^{-1}+\op{O}({u_{922}}^2)
=6\, (5\, z)^{-1}+\op{O}({w_{92}}^2),
\label{dotw92sim}\\
q\, w_{92}&\sim&1-2^8\, (5\, z)^{-1}\, {u_{921}}^{-1}.
\label{qw92sim} 
\end{eqnarray}
It follows from (\ref{dotw92sim}) that, as long as the solution 
is close to a given large compact subset of $L_8^{(1)}\setminus L_7^{(2)}$, 
$w_{92}(z)=(z/\zeta )^{6/5}\, w_{92}(\zeta )\, (1+\op{o}(1))$, 
where $z/\zeta\sim 1$ if and only if $|z-\zeta |<< |\zeta |$. 
In view of (\ref{w92u922sim}), in this situation, $u_{922}$ is 
approximately equal to a small constant, when (\ref{dotu921sim}) 
yields that $u_{921}(z)\sim u_{921}(\zeta )-2^{-1}\, 
{u_{922}}^{-1}\, (z-\zeta )$, and it follows that 
$u_{921}(z)$, the affine coordinate on $L_8^{(1)}\setminus L_7^{(2)}$, 
fills an approximate disc centered at $u_{921}(\zeta )$ with 
radius $\sim R$ if $z$ runs over an approximate disc centered at 
$\zeta$ with radius $\sim 2\, |u_{922}|\, R$. Therefore, if 
$|u_{922}(\zeta )|<<1/|\zeta |$, the solution at complex 
times $z$ in a $D$ centered at $\zeta$ with radius 
$\sim 2\, |u_{922}|\, R$ has the properties that along it 
$u_{922}(z)/u_{922}(\zeta )\sim 1$ and that $z\mapsto u_{921}(z)$ 
is a complex analytic diffeomorphism from $D$ onto an 
approximate disc centered at $u_{921}(\zeta )$ with radius 
$\sim R$. If $R$ is sufficiently large, we have 
$0\in u_{921}(D)$, that is, the solution of the 
Boutroux-Painlev\'e equation has a pole at 
a unique point in $D$. After having established this fact,  
we can arrange that $u_{921}(\zeta )=0$, that is, 
the center $\zeta$ of $D$ is equal to the pole point. 
As long as $|z-\zeta |<<|\zeta |$, we have $d(z)/d(\zeta)\sim 1$, i.e.,
$2^6\, u_{922}(z)/\delta\sim w_{92}(z)/\delta\sim 1$ and  
$u_{921}(z)\sim -2^{-1}\, 
{u_{922}}^{-1}\, (z-\zeta )\sim -2^5\,\delta ^{-1}\, (z-\zeta )$, 
where for a large finite $R_8\in\R _{>0}$ the equation  
$|u_{921}(z)|=R_8$ corresponds to 
$|z-\zeta |\sim 2^{-5}\, |\delta |\, R_8$, 
which is still small compared to $|\zeta |$ if $|\delta |$ is 
sufficently small. It follows that the connected component 
$D_8$ of $\zeta$ of the set of all $z\in\C$ such that 
$|u_{921}(z)|\leq R_8$ is an approximate disc 
with center at $\zeta$ and small radius $\sim 2^{-5}\, |\delta |\, R_8$. 
More precisely, $z\mapsto u_{921}(z)$ is a complex analytic 
diffeomorphism from $D_8$ onto $\{ u\in\C\mid |u|\leq R_8\}$, 
and $d(z)/\delta\sim 1$ for all $z\in D_8$. 
The function $q(z)$ has a simple pole at $z=\zeta$, 
but it follows from (\ref{qw92sim}) that $q(z)\, w_{92}(z)\sim 1$ 
as soon as $1>>|z^{-1}\, u_{921}(z)^{-1}|
\sim |\zeta ^{-1}\, 2\, u_{922}(\zeta )\, (z-\zeta )^{-1}|
=2^{-5}\, |\delta |/|\zeta\, (z-\zeta )|$, 
that is, when $|z-\zeta |>> 2^{-5}\, |\delta |/|\zeta |$. 
As the approximate radius of $D_8$ is $2^{-5}\, |\delta |\, R_8
>>2^{-5}\, |\delta |/|\zeta |$ because $R_8>>1/|\zeta |$, 
we have $q(z)\, w_{92}(z)\sim 1$ for $z\in D_8\setminus D_9$, 
where $D_9$ is a disc centered at $\zeta$ with small radius 
compared to the radius of $D_8$. 

The set $L_7^{(2)}\setminus L_6^{(3)}$ is visible 
in the coordinate system 
$(u_{821},\, u_{822})$, where it 
corresponds to the equation 
$u_{822}=0$ and is parametrized by $u_{821}\in\C$. 
The set $L_8^{(1)}$ minus one point 
corresponds to $u_{821}=0$ and is parametrized by 
$u_{822}\in\C$. 
It follows from the equations which express $(u_{811}, 
\, u_{812})$ and  
$(u_{821},\, u_{822})$ in terms of 
$(u_{711},\, u_{712})$ that 
$u_{822}=1/u_{811}$. Also,   
$u_{811}=u_{921}-2^8\, (5\, z)^{-1}$ which implies that $u_{921}\to\infty$ if and only 
if $u_{822}\to 0$. That is, the point near    
$L_8^{(1)}$ approaches the intersection point 
with $L_7^{(2)}$, 
when (\ref{qw92sim}) implies that $q\, w_{92}\to 1$. 
Therefore the functions $q^{-1}$ and $w_{92}$ can be glued together 
by means of a continuous interpolation 
to a continuous function $d$ as asserted in the lemma. 

Asymptotically for $u_{822}\to 0$ and 
bounded $u_{821}$ and $z^{-1}$, we have 
\begin{eqnarray}
\dot{u}_{821}&\sim&-{u_{822}}^{-1},
\label{dotu821sim}\\
\dot{u}_{822}&\sim&2^{-1}\, {u_{821}}^{-1},
\label{dotu822sim}\\
w_{82}&\sim&2^6\, u_{821}\, {u_{822}}^2,
\label{w82sim}\\
q\, w_{82}&\sim&1,
\label{qw82sim}\\
\dot{q}/q=\dot{E}/E&\sim&-6\, (5\, z)^{-1}-2^7\, (5\, z)^{-1}\, {u_{821}}^{-1}.
\label{dotq/qsimi=7} 
\end{eqnarray} 
It follows from (\ref{dotq/qsimi=7}) and (\ref{dotu822sim}) 
that $\dot{q}/q\sim -6\, (5\, z)^{-1}-2^8\, (5\, z)^{-1}\, \dot{u}_{822}$, 
hence 
\begin{eqnarray*}
\log (q(z_1)/q(z_0))&\sim&\log ((z_1/z_0)^{-6/5})\\
&&-(2^8/5)\, ({z_1}^{-1}\, u_{822}(z_1)-{z_0}^{-1}\, u_{822}(z_0)
+\int_{z_0}^{z_1}\, z^{-2}\, u_{822}(z)\,\op{d}\! z).
\end{eqnarray*}
Therefore $q(z_1)/q(z_0)\sim 1$, if for all $z$ on the segment 
from $z_0$ to $z_1$ we have $|z-z_0|<<|z_0|$ and 
$|u_{822}(z)|<<|z_0|$. We choose $z_0$ on the boundary 
of $D_8$, when $d(z_0)^{-1}\, \delta\sim 
q(z_0)\,\delta\sim q(z_0)\, w_{92}(z_0)\sim 1$, 
and $|u_{921}(z_0)|=R_8$ implies that  
$|u_{822}(z_0)|\sim {R_8}^{-1}<<1$. Furthermore, 
(\ref{w82sim}) and (\ref{qw82sim}) imply that 
$|u_{821}(z_0)|\sim 2^{-6}\, |w_{82}(z_0)|\, |u_{822}(z_0)|^{-2}
\sim 2^{-6}\, |\delta |\, {R_8}^{-2}$, which is small when 
$|\delta |$ is sufficiently small. 
Because 
$D_8$ is an approximate disc with center at $\zeta$ 
and small radius $\sim 2^{-5}\, |\delta |\, R_8$, 
and  $R_8>>|\zeta |^{-1}$, we have that $|u_{921}(z)|\geq R_8>>1$ 
hence $|u_{822}(z)|<<1$ if $z=\zeta +r\, (z_0-\zeta )$, $r\geq 1$, 
and $|z-z_0|/|z_0|=(r-1)\, |1-\zeta /z_0|<<1$ if 
$r-1$ is small compared to the large number $1/|1-\zeta /z_0|$. 

Then equations (\ref{qw82sim}), (\ref{w82sim}), and $q\sim\delta ^{-1}$  
yield ${u_{822}}^{-1}\sim (\delta ^{-1}\, 2^6\, 
u_{821})^{1/2}$, which in combination 
with (\ref{dotu821sim}) leads to 
$2\, \op{d}({u_{821}}^{1/2})/\op{d}\! z
=\, -2^3\, \delta ^{-1/2}$,
hence 
$u_{821}(z)^{1/2}
\sim u_{821}(z_0)^{1/2}-2^2
\,\delta ^{-1/2}\, (z-z_0)$, 
and therefore 
$u_{821}(z)\sim 2^4\,\delta ^{-1}\, (z-z_0)^2$ 
if $|z-z_0|>>|u_{821}(z_0)|^{1/2}$. 
For large finite $R_7\in\R _{>0}$ the equation 
$|u_{821}(z)|=R_7$ corresponds to 
$|z-z_0|\sim (2^{-4}\, |\delta |\, R_7)^{1/2}$, 
which is still small compared to $|z_0|\sim |\zeta |$, and therefore 
$|z-\zeta |\leq |z-z_0|+|z_0-\zeta |<<|\zeta |$. 
This proves the statements about the behavior of the 
solution near $L_7^{(2)}\setminus L_6^{(3)}$. 

The statements for 
$4\leq i\leq 6$ about the behavior of the solutions 
near the part $L_i^{(9-i)}\setminus L_{i-1}^{(10-i)}$ 
of $I$ will be proved by induction over decreasing $i$. 
The set $L_i^{(9-i)}\setminus L_{i-1}^{(10-i)}$ is visible 
in the coordinate system 
$(u_{(i+1)\, 21},\, u_{(i+1)\, 22})$, where it 
corresponds to the equation 
$u_{(i+1)\, 22}=0$ and is parametrized by $u_{(i+1)\, 21}\in\C$. 
The set $L_{i+1}^{(8-i)}$ minus one point 
corresponds to $u_{(i+1)\, 21}=0$ and is parametrized by 
$u_{(i+1)\, 22}\in\C$. 
It follows from the equations which express $(u_{(i+1)\, 11}, 
\, u_{(i+1)\, 12})$ and  
$(u_{(i+1)\, 21},\, u_{(i+1)\, 22})$ in terms of 
$(u_{i\, 11},\, u_{i\, 12})$ that 
$u_{(i+1)\, 22}=1/u_{(i+1)\, 11}$,  
$u_{711}=u_{821}+32$, $u_{611}=u_{721}$, and $u_{511}=u_{621}$. 
This shows that $u_{(i+2)\, 21}\to\infty$ if and only 
if $u_{(i+1)\, 22}\to 0$, that is, the point near    
$L_{i+1}^{(8-i)}$ approaches the intersection point 
with $L_i^{(9-i)}$. 

Asymptotically for $u_{(i+1)\, 22}\to 0$ and 
bounded $u_{(i+1)\, 21}$ and $z^{-1}$, we have 
\begin{eqnarray}
\dot{u}_{(i+1)\, 21}&\sim&-(9-i)\, 2^{-1}\, {u_{(i+1)\, 22}}^{-1},
\label{dotu(i+1)21sim}\\
w_{(i+1)\, 2}&\sim&2^6\, {u_{(i+1)\, 21}}^{8-i}\, {u_{(i+1)\, 22}}^{9-i},
\label{w(i+1)2sim}\\
q\, w_{(i+1)2}&\sim&1,
\label{qw(i+1)2sim}\\
\dot{q}/q=\dot{E}/E&\sim&-6\, (5\, z)^{-1}. 
\label{dotq/qsim} 
\end{eqnarray} 

Assume that $|u_{(i+2)\, 21}(z_0)|=R_{i+1}>>1$, 
where the induction hypothesis yields that 
$|z_0-\zeta |<<|\zeta |$ and 
$1/(q(z_0)\,\delta )\sim d(z_0)/\delta\sim 1$. 
It follows from 
(\ref{w(i+1)2sim}), (\ref{qw(i+1)2sim}), and $|u_{(i+1)\,22}(z_{0})|\sim 1/|u_{(i+2)\,21}(z_{0})|$,
that 
\[
|u_{(i+1)\, 21}(z_0)|^{8-i}\sim 
2^{-6}\, |q(z_0)|^{-1}\, |u_{(i+1)\, 22}|^{i-9}
\sim 2^{-6}\, |\delta|\, {R_{i+1}}^{9-i}, 
\]
which is small if $|\delta |$ is sufficiently small. 

It follows from (\ref{dotq/qsim}) that 
$q(z)/\delta \sim q(z)/q(z_0)\sim 1$ along a solution near 
$L_i^{(9-i)}\setminus L_{i-1}^{(10-i)}$, 
as long as $|z-z_0|<<|z_0|$. 
Then (\ref{qw(i+1)2sim}) and (\ref{w(i+1)2sim}) 
yield 
\[
{u_{(i+1)\, 22}}^{-1}\sim (\delta ^{-1}\, 2^6\, 
{u_{(i+1)\, 21}}^{8-i})^{1/(9-i)}, 
\]
which in combination 
with (\ref{dotu(i+1)21sim}) leads to 
\[
(9-i)\,\op{d}({u_{(i+1)\, 21}}^{1/(9-i)})/\op{d}\! z
=\, -(9-i)\, 2^{-1+6/(9-i)}\, \delta ^{-1/(9-i)},
\]
hence 
\[
u_{(i+1)\, 21}(z)^{1/(9-i)}
\sim u_{(i+1)\, 21}(z_0)^{1/(9-i)}-2^{(i-3)/(9-i)}
\,\delta ^{-1/(9-i)}\, (z-z_0), 
\]
and therefore 
$u_{(i+1)\, 21}(z)\sim 2^{i-3}\,\delta ^{-1}\, (z-z_0)^{9-i}$ 
if $|z-z_0|>>|u_{(i+1)\, 21}(z_0)|^{1/(9-i)}$. 
For large finite $R_i\in\R _{>0}$ the equation 
$|u_{(i+1)\, 21}(z)|=R_i$ corresponds to 
$|z-z_0|\sim (2^{3-i}\, |\delta |\, R_i)^{1/(9-i)}$, 
which is still small compared to $|z_0|\sim |\zeta |$, and therefore 
$|z-\zeta |\leq |z-z_0|+|z_0-\zeta |<<|\zeta |$. 
\end{proof}

The following corollary implies that the infinity set 
$I$ of the vector field is repelling. 
This in turn implies that every solution which starts 
in Okamoto's space $S_9\setminus I$ remains 
there for all complex nonzero times. 
\begin{corollary}
For every $\epsilon _1>0$, $0<\epsilon _2<6/5$, and 
$0<\epsilon _3<1$, 
there exists a $\delta\in\R _{>0}$ such that for every 
solution we have that if $|z_0|\geq\epsilon _1$ 
and $|d(z_0)|<\delta $, we have the following conclusions. 
Let $\rho$ denote the supremum of all $r>|z_0|$ 
such that $|d(z)|<\delta$ whenever $|z_0|\leq |z|\leq r$.  
Then 
\begin{itemize}
\item[i)] $\rho$ is bounded above by the inequality 
$\delta\geq |d(z_0)|\, (\rho /|z_0|)^{6/5-\epsilon _2}\, (1-\epsilon _3)$.  
\item[ii)] If $|z_0|\leq |z|\leq\rho$, then 
$d(z)=d(z_0)\, (z/z_0)^{6/5+\varepsilon _2(z)}\, (1+\varepsilon _3(z))$, 
where $|\varepsilon _2(z)|\leq\epsilon _2$ and 
$|\varepsilon _3(z)|\leq\epsilon _3$. 
\item[iii)] If $|z|\geq\rho$ then 
$|d(z)|\geq\delta\, (1-\epsilon _3)$. 
\end{itemize}
\label{6/5cor}
\end{corollary}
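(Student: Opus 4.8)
The engine is a single differential estimate for the indicator $d$ along a solution, of the shape
\[
\frac{\dot d}{d}=\frac{6}{5z}+(\text{error}),
\]
valid wherever the solution is close to $I$. On the part $I^6$ this is the first statement of Lemma~\ref{dotq/qlem} rewritten via $d=q^{-1}=(2E)^{-1}$ and $\dot d/d=-\dot E/E$: there the error is $\tfrac{6}{5z}\,\eta(z)$ with $|\eta|$ as small as we please on a small enough neighbourhood. On $L_8^{(1)}\setminus L_7^{(2)}$, where $d=w_{92}$, it is (\ref{dotw92sim}): there the error is the purely additive $\op{O}({w_{92}}^2)$. The two descriptions are compatible across $L_7^{(2)}\cap L_8^{(1)}$ because $q\,w_{92}\to1$ there, so the glued $d$ of Lemma~\ref{E8E4simlem} is a legitimate distance indicator. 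The one place the estimate degrades is a neighbourhood of $L_7^{(2)}$, where Lemma~\ref{dotq/qlem} only gives $|(\dot E/E)\,z|\le C$; there I would invoke Lemma~\ref{E8E4simlem}, which shows that a solution crosses the $L_7^{(2)}$-region through the annulus $D_7\setminus D_8$, of diameter $\op{O}(\delta^{1/2})$, so that the passage contributes only $\op{O}(\delta^{1/2}/\epsilon_1)$ to $\log d$. Accordingly I would fix $\delta$ so small that $\{\,|d|\le\delta\,\}$ lies inside the neighbourhoods furnished by the two lemmas, that $|\eta|<\tfrac12$ (indeed as small as desired) on the $I^6$-part, and that, by Lemma~\ref{E8E4simlem}, $\{\,|d|\le\delta\,\}$ is a disjoint union of small topological discs $D_4(\zeta)$, one about each pole $\zeta$ of $u$, each of radius $\ll|\zeta|$ and carrying $d/d(\zeta)\sim1$.

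\emph{Monotonicity} then yields conclusion (iii) and the description of $\rho$. Along the outward radial path, parametrised by $r=|z|$, the estimate gives $\Rl\bigl(z\,\dot d/d\bigr)=\tfrac65+\Rl\bigl(z\cdot(\text{error})\bigr)\ge\tfrac35>0$ wherever $|d|<\delta$ — the multiplicative $\eta$-error keeps this $\ge\tfrac65(1-|\eta|)>\tfrac35$, the additive $\op{O}({w_{92}}^2)$-error is dominated once $\delta$ is small, and the $L_7^{(2)}$-term is confined to its tiny arc. Hence $\log|d|$ is strictly increasing in $r$ on $\{\,|d|<\delta\,\}$, so $d$ cannot cease to be defined while $|d|<\delta$, whence $|d(z)|<\delta$ precisely for $|z_0|\le|z|<\rho$ and, by continuity, $|d|=\delta$ at $|z|=\rho$. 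If some $z_1$ with $|z_1|\ge\rho$ had $d(z_1)$ defined and $|d(z_1)|<\delta(1-\epsilon_3)<\delta$, running $r$ backwards from $z_1$ — along which $|d|$ decreases — would keep $|d|<\delta$ down to $r=|z_0|$, contradicting the maximality of $\rho$. Once the estimate is in hand this step is routine.

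\emph{Integration of the estimate} yields (ii) and (i). For $|z_0|\le|z|\le\rho$ the arc from $z_0$ to $z$ stays in $\{\,|d|\le\delta\,\}$, hence in a single disc $D_4(\zeta)$; in particular it does not wind about the origin, so $\log(z/z_0)$ is a genuine small quantity. Integrating $\dot d/d$ along the arc, and splitting it into the $I^6$-portions, the $L_8^{(1)}$-portion, and the single $L_7^{(2)}$-crossing, gives
\[
\log\frac{d(z)}{d(z_0)}=\frac{6}{5}\log\frac{z}{z_0}+A(z)+B(z),
\]
where $|A(z)|\le C_0\,|\eta|_\infty\,|\log(z/z_0)|$ comes from the multiplicative error on the $I^6$-portions ($C_0$ absolute, $|\eta|_\infty$ as small as desired) and $|B(z)|=\op{O}(\delta^{1/2}/\epsilon_1)$ collects the additive $\op{O}({w_{92}}^2)$ error near $L_8^{(1)}$ and the $L_7^{(2)}$-crossing. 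I would then set $\varepsilon_2(z):=A(z)/\log(z/z_0)$ for $z\ne z_0$, $\varepsilon_2(z_0):=0$, and $1+\varepsilon_3(z):=\exp B(z)$, so that $d(z)=d(z_0)\,(z/z_0)^{6/5+\varepsilon_2(z)}(1+\varepsilon_3(z))$ with $|\varepsilon_2(z)|\le C_0|\eta|_\infty\le\epsilon_2$ and $|\varepsilon_3(z)|\le2|B(z)|\le\epsilon_3$ for $\delta$ small; this is (ii). Finally, taking moduli in (ii) at a point with $|z|=\rho$, where $|d|=\delta$, and using $\rho/|z_0|\ge1$ with $\Rl\varepsilon_2\ge-\epsilon_2$ along the radial arc, gives $\delta\ge|d(z_0)|\,(\rho/|z_0|)^{6/5-\epsilon_2}(1-\epsilon_3)$, which is (i). The principal obstacle, I expect, is precisely the uniform control near $L_7^{(2)}$ — where the $6/5$-law breaks down pointwise and must be traded for the ``negligible $z$-time spent there'' estimate of Lemma~\ref{E8E4simlem} — together with the bookkeeping that apportions the error correctly between the exponent perturbation $\varepsilon_2$ and the multiplicative factor $1+\varepsilon_3$.
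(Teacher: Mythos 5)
There is a genuine gap, and it sits exactly where you locate the ``principal obstacle''. Your integration step rests on the premise that for $|z_0|\le|z|\le\rho$ the arc from $z_0$ to $z$ lies in $\{|d|\le\delta\}$ and ``hence in a single disc $D_4(\zeta)$'', so that there is a single $L_7^{(2)}$-crossing, $\log(z/z_0)$ is small, and the crossing contributes one bounded term $B(z)=\op{O}(\delta^{1/2}/\epsilon_1)$. This misreads Lemma~\ref{E8E4simlem}: the discs $D_8\subset\cdots\subset D_4$ are the complex times at which the solution is close to the components $L_4^{(5)},\ldots,L_8^{(1)}$ attached to \emph{one} pole $\zeta$; they are not the set where $|d|\le\delta$. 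By the very definition of $\rho$, the solution satisfies $|d(z)|<\delta$ on the whole annulus $|z_0|\le|z|<\rho$, and the point of the corollary is that $\rho/|z_0|$ may be as large as roughly $(\delta/|d(z_0)|)^{5/6}$; during that time the solution passes near an approximate lattice of poles with spacing of order $|d|^{1/6}$ (Lemma~\ref{qinftyperiodlem} and the discussion after Lemma~\ref{L3lem}), so there are in general very many $L_7^{(2)}\cup L_8^{(1)}$-passages, $\log(z/z_0)$ is not small, and the arc certainly need not stay in one $D_4$. Your bookkeeping would have to sum the error over $\sim$ (path length)$/|d|^{1/3}$ crossings, and inside each crossing you cannot use a differential inequality for $d$ at all: near $L_7^{(2)}$ one only has $|z\,\dot E/E|\le C$ with $C$ not comparable to $6/5$, and where $d$ is the continuous interpolation between $q^{-1}$ and $w_{92}$ no formula for $\dot d$ exists. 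For the same reason the pointwise claim $\Rl(z\,\dot d/d)\ge 3/5$ on $\{|d|<\delta\}$, on which your monotonicity argument and conclusion (iii) rest, is not available.

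The paper's proof circumvents both difficulties: since the times at which the solution is \emph{not} near $I^6$ form discs of radius $\op{O}(|d|^{1/2})$ whose mutual distances are at least of order $|d|^{1/3}$, one can join $z_0$ and $z_1$ by a path avoiding these discs which is $\op{C}^1$-close to the logarithmic spiral $t\mapsto e^{l_0+t(l_1-l_0)}$; along it Lemma~\ref{dotq/qlem} applies uniformly and gives $q(z_1)=q(z_0)\,(z_1/z_0)^{-6/5+\op{o}(1)}\,(1+\op{o}(1))$, while the passages near $L_7^{(2)}\cup L_8^{(1)}$ are controlled not by a differential estimate but by the statement of Lemma~\ref{E8E4simlem} that the ratio of $d$ stays close to $1$ throughout each $D_4$. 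Conclusions i)--iii) are then read off from the resulting two-sided law $d(z_1)=d(z_0)\,(z_1/z_0)^{6/5+\op{o}(1)}\,(1+\op{o}(1))$, valid whenever $|d|<\delta$ for all $|z|$ between $|z_0|$ and $|z_1|$, with no monotonicity along radial rays required. If you want to salvage your route of integrating along the actual path, you must replace the ``single crossing'' premise by a sum over all pole passages, using the spacing $\gtrsim|d|^{1/3}$ and the $d$-ratio statement on each $D_4$, and check that the accumulated error is $\op{o}(1)$ per unit of $\log|z|$ so it can be absorbed into $\varepsilon_2$.
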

\begin{proof}
It follows from Lemma \ref{E8E4simlem} that for every solution 
close to $I$ the set of all not too small 
complex times $z$ such that the solution is not near 
$I^6$ is a union of approximate discs 
of radius of order $|d|^{1/2}$ where the distance 
between the discs is at least of order $|d|^{1/3}$, 
where $|d|^{1/3}/|d|^{1/2}>>1$. Therefore, if the solution 
is near $I^6$ at the complex times $z_0$ and $z_1$, 
and is near $I$ for all complex times 
$z$ such that $|z|$ is between $|z_0|$ and $|z_1|$, there 
is a path $\gamma$ from $z_0$ to $z_1$ such that for all 
$z$ on $\gamma$ we have that the solution at time $z$ 
is near $I^6$, and $\gamma$ is $\op{C}^1$ 
close to the path $[0,\, 1]\ni t\mapsto e^{l_0+t\, (l_1-l_0)}$, 
where $l_i=\log z_i$. Then Lemma \ref{dotq/qlem} 
implies that $q(z_1)=q(z_0)\, (z_1/z_0)^{-6/5+\op{o}(1)}\, 
(1+\op{o}(1))$, 
hence $d(z)=d(z_0)\, (z/z_0)^{6/5+\op{o}(1)}\, (1+\op{o}(1))$. 
Because Lemma \ref{E8E4simlem} implies that the ratio 
between the values of $d$ remains close to 1 if the solution 
stays close to $L_7^{(2)}\cup L_8^{(1)}$, it follows that 
$d(z_1)=d(z_0)\, (z_1/z_0)^{6/5+\op{o}(1)}\, (1+\op{o}(1))$ 
if the solution is close to $I$ at all 
complex times $z$ such that $|z|$ is between $|z_0|$ and $|z_1|$. 
The corollary follows from these estimates. 
\end{proof}
\begin{remark}
The substitutions 
\begin{eqnarray*}
u_1(z)&=&((5/4)\, z)^{-2/5}\, 
y_1(((5/4)\, z)^{4/5}),\\ 
u_2(z)&=&((5/4)\, z)^{-3/5}\, y_2(((5/4)\, z)^{4/5})
\end{eqnarray*} 
in the beginning of Section \ref{boutrouxsec} 
lead in combination with (\ref{qeq}) to 
\begin{eqnarray*}
E(z)&=&2^{-1}\, ((5/4)\, z)^{-6/5}\, y_2(((5/4)\, z)^{4/5})^2\\
&&-2\, ((5/4)\, z)^{-6/5}\, y_1(((5/4)\, z)^{4/5})^3
-((5/4)\, z)^{-2/5}\, y_1(((5/4)\, z)^{4/5}).
\end{eqnarray*}
Because the solution $(y_1(x),\, y_2(x))$ of the 
Painlev\'e system is single valued, we have 
the analytic continuation formula 
\begin{equation}
E(z\,\op{e}^{5\pi\scriptop{i}/2})=\, -E(z),
\label{qangle5/4}
\end{equation}
analogous to (\ref{uangle5/4}). 
Because also $(z\,\op{e}^{5\pi\scriptop{i}/2})^{-6/5}
=\, -z^{-6/5}$, the asymptotic formula 
$E(z)/E(z_0)\sim (z/z_0)^{-6/5}$ along solutions close 
to the part $I\setminus (L_8^{(1)}\cup L_7^{(2)})$ 
of the inifinity set $I$ 
is consistent with (\ref{qangle5/4}).  
\label{not6/5powerrem}
\end{remark}

Because the substitutions of coordinates in 
Section \ref{boutrouxsec} depend in a polynomial way on 
$z^{-1}$, the bundle of the complex projective algebraic 
surfaces $S_9(z)$, $z\in\C\setminus\{ 0\}$ extends 
to a complex analytic family ${\mathcal S}_9=S_9(z)$, 
$z\in\Proj^1\setminus\{ 0\}$, where the complex projective 
line $\Proj ^1$ is identified with the Riemann sphere 
$\C\cup\{\infty\}$. The surface $S_9(\infty )$ over the 
point $\infty\in\Proj ^1$ is obtained by blowing up 
$\Proj ^2$ nine times as in the definition of 
$S_9(z)$, where in the formulas for the base point $b_8(z)$ 
and the coordinate systems $(u_{911},\, u_{912})$ 
and $(u_{921},\, u_{922})$ the coefficient $1/z$ 
is replaced by zero. Because $b_8(\infty )=b_8^{\,\scriptop{ell}}$, 
the base point of the anticanonical pencil defined by $w$ and $E\, w$, 
the limit surface $S_9(\infty )$ is equal to the rational elliptic 
surface obtained by blowing up the base points of the 
anticanonical pencil. 
The Boutroux\--Painlev\'e vector field 
converges for $z\to\infty$ to the vector field of the 
autonomous Hamiltonian system 
$\dot{u}_1=u_2$, $\dot{u}_2=6\, {u_1}^2+1$ 
with Hamiltonian function equal to the energy function $E$ 
in (\ref{qeq}). 
The function $u_1(z)$ satisfies the Weierstrass 
equation $(\dot{u}_1)^2-4\, {u_1}^3-2\, u_1=2\, E$, 
which is why in the sequel we will use the function 
$q:=2\, E$ instead of the energy function $E$. 
The function $q$ defines the elliptic fibration 
$q:S_9(\infty )\to\Proj ^1$, where the fiber 
$I(\infty )=q^{-1}(\{\infty\})
=\lim_{z\to\infty}\, I(z)$ 
over $q=\infty$ is a singular fiber of Kodaira type 
~$\op{II}^*$. The vector field of the autonomous 
Hamiltonian system is regular in the limit fiber  
$S_9(\infty )\setminus I(\infty )$ 
of Okamoto's space of initial conditions, and infinite 
on $I(\infty )$. The function $q$ is constant 
on its solution curves, and each non\--singular fiber 
is an elliptic curve where the time parameter of the 
solution leads to an identification of the fiber 
with $\C /P(q)$, where $P(q)$ denotes the period lattice 
of the flow at the level $q$. The $\, -1$ curve $L_9(\infty )$ 
which appears at the last, the ninth blowup is a global 
holomorphic section for the elliptic fibration. Starting at 
the complex time $z=0$ on the unique intersection point 
of the level curve with $L_9(\infty )$, the period lattice 
$P(q)$ is equal to the set of all $z\in\C$ such that 
the solution of the autonomous Hamiltonian system 
hits $L_9(\infty )$. 
In view of the Weierstrass 
equation $(\dot{u}_1)^2-4\, {u_1}^3-2\, u_1=q$, and the fact that 
$u_1(z)$ has a pole at $z=\zeta$ if and only if 
$\zeta\in P(q)$, the $u_1$\--coordinate of this 
solution is equal to the Weierstrass $\wp$ function 
of the lattice $P(q)$, and we recover the fact that 
hitting $L_9(\infty )$ corresponds to $u_1$ having a pole.   
The equilibrium points of the autonomous 
Hamiltonian system are the points in the affine 
$(u_1,\, u_2)$\--charts determined by the equations 
$u_2=0$ and $6\, {u_1}^2+1=0$. The corresponding singular values 
are $q=\, -4\, u_1\, (-1/6)-2\, u_1
=\, -(4/3)\, u_1=\,\pm\op{i}\,\sqrt{8/27}$. 
For each of these two finite singular values of $q$ we have a 
singular fiber of Kodaira type ~$\op{I}_1$. Therefore 
the configuration of the singular fibers of 
the rational elliptic surface $S_9(\infty )$ is 
~$\op{II}+\op{I}_1+\op{I}_1$, the second item in 
Persson's list \cite[pp. 7--14]{persson}
of configurations of singular fibers of rational elliptic surfaces. 
It also occurs on p. 121 in the classification 
of Schmickler\--Hirzebruch \cite{schmickler} 
of all elliptic fibrations over $\Proj ^1$ with 
at most three singular fibers.  

The following definition is a complex version of the 
concept of limit sets in dynamical systems. 
\begin{definition}
For every solution $\C\setminus\{ 0\}
\ni z\mapsto U(z)\in S_9(z)\setminus I(z)$, 
let $\Omega _U$ 
denote the set of all $s\in S_9(\infty )
\setminus I(\infty )$ such that there 
exists a sequence $z_j\in\C$ with the property that 
$z_j\to\infty$ and $U(z_j)\to s$ as $j\to\infty$. 
The subset $\Omega _U$ of $S_9(\infty )
\setminus I(\infty )$ is called the 
{\em limit set of the solution $U$}. 
\end{definition}
Corollary \ref{limitsetcor} below is analogous to Coddington and Levinson 
\cite[Th. 1.1 and 1.2 in Ch.16]{cl}. 
\begin{corollary}
There exists a compact subset $K$ of 
$S_9(\infty )\setminus I(\infty )$ 
such that for every solution $U$ the limit set $\Omega _U$ 
is contained in $K$. The limit set $\Omega _U$ 
is a non\--empty, compact and connected subset of $K$, 
invariant under the flow of the autonomous Hamiltonian 
system on $S_9(\infty )\setminus I(\infty )$.
For every neighborhood $A$ of $\Omega _U$ in ${\mathcal S}_9$ 
there exists an $r>0$ such that 
$U(z)\in A$ for every $z\in\C$ such that $|z|>r$. 
If $z_j$ is any sequence in $\C\setminus\{ 0\}$ 
such that $z_j\to\infty$ as $j\to\infty$, then there 
is a subsequence $j=j(k)\to\infty$ as $k\to\infty$ and an $s\in\Omega _U$ 
such that $U(z_{j(k)})\to s$ as $k\to\infty$. 
Finally, for every solution $U$ the limit set 
$\Omega _U$  is invariant under the transformation $T$ 
of $S_9(\infty )$ which in the coordinate system 
$(u_1,\, u_2)$ is given by 
$(u_1,\, u_2)\mapsto (-u_1,\,\op{i} u_2)$, when 
$q\mapsto -q$ and $E\mapsto -E$. 
\label{limitsetcor}
\end{corollary}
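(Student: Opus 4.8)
\medskip\noindent\textbf{Proof plan.} The plan is to adapt the classical theory of $\omega$\--limit sets of Birkhoff and of Coddington--Levinson \cite{cl} to this complex, non\--autonomous setting. Two ingredients proved above carry the argument. First, the uniform repulsion of the infinity set (Corollary~\ref{6/5cor}, resting on Lemma~\ref{E8E4simlem}) confines every solution, for $|z|$ large, to a fixed compact subset of $\mathcal S_9\setminus\mathcal I$. Second, since the coordinate changes depend polynomially on $1/z$, the Boutroux--Painlev\'e vector field $v_z$ converges as $z\to\infty$, uniformly on compact subsets of $S_9(\infty)\setminus I(\infty)$, to the autonomous Hamiltonian vector field $v_\infty$. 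I also record that, because $q=2E$ is constant along the autonomous flow and its fibres in $S_9(\infty)$ are compact curves contained in $S_9(\infty)\setminus I(\infty)$ on which $v_\infty$ is regular, the autonomous flow $\phi_t$ is complete on $S_9(\infty)\setminus I(\infty)$, hence defined for all $t\in\C$.

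\medskip\noindent\textbf{Confinement and the compact set $K$.} From Corollary~\ref{6/5cor} I would fix $\epsilon_1,\epsilon_2,\epsilon_3$ and the resulting $\delta>0$, set $c:=\delta(1-\epsilon_3)$ and $\mathcal U:=\{|d|<c\}$, which by Lemma~\ref{E8E4simlem} (where $d$ vanishes exactly on $\mathcal I$) is an open neighbourhood of $\mathcal I$ in $\mathcal S_9$. Applying part~(iii) of Corollary~\ref{6/5cor} at the first radius (if any) at which a solution enters $\mathcal U$ shows that the solution then stays outside $\{|d|<c\}$ for all larger $|z|$; hence every solution satisfies $U(z)\notin\mathcal U$ once $|z|$ is large (the threshold depending on the solution, but $c$ and $\mathcal U$ not). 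Put $K:=S_9(\infty)\setminus\mathcal U$: it is closed in the compact surface $S_9(\infty)$ and disjoint from $I(\infty)$, hence compact in $S_9(\infty)\setminus I(\infty)$. Since the family $\mathcal S_9$ has compact fibres, its restriction over a compact neighbourhood of $\infty$ in $\Proj^1$ is compact, and for $|z|$ large $U(z)$ lies in the part of that restriction outside $\mathcal U$, a fixed compact set; any accumulation point $s$ of $U(z_j)$ with $z_j\to\infty$ therefore lies in $S_9(\infty)\setminus\mathcal U=K$. This gives the first assertion and $\Omega_U\subset K$; the same compactness gives $\Omega_U\ne\emptyset$ and the existence, for any sequence $z_j\to\infty$, of a subsequence along which $U$ converges into $\Omega_U$, while a routine diagonal argument shows $\Omega_U$ is closed in $K$, hence compact. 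For the neighbourhood statement: if some open neighbourhood $A$ of $\Omega_U$ in $\mathcal S_9$ were avoided by $U(z)$ along a sequence $z_j\to\infty$, a subsequence of $U(z_j)$ would converge to a point of $\Omega_U\subset A$, a contradiction.

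\medskip\noindent\textbf{Invariance under the autonomous flow --- the main step.} Let $s\in\Omega_U$, choose $z_j\to\infty$ with $U(z_j)\to s$, and fix $t\in\C$. Along the straight segment from $z_j$ to $z_j+t$, on which $|z|$ is huge, I would compare the curve $\tau\mapsto U(z_j+\tau t)$, $\tau\in[0,1]$ (a solution of $\dot U=v_z(U)$), with the autonomous curve $\tau\mapsto\phi_{\tau t}(s)$. The latter has compact image in $S_9(\infty)\setminus I(\infty)$, hence a compact neighbourhood $N$ there, on which $v_\infty$ is Lipschitz and, for $|z|$ large, $v_z$ is uniformly close to $v_\infty$. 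A Gr\"onwall estimate, together with a bootstrap keeping $U(z_j+\tau t)$ inside $N$ for $j$ large (its starting point $U(z_j)$ tends to $s\in N$), then yields $U(z_j+t)\to\phi_t(s)$. Since $z_j+t\to\infty$, this shows $\phi_t(s)\in\Omega_U$; as $t\in\C$ was arbitrary, $\Omega_U$ is invariant under $\phi$.

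\medskip\noindent\textbf{Connectedness, invariance under $T$, and the main obstacle.} For connectedness, suppose $\Omega_U=A_1\sqcup A_2$ with $A_1,A_2$ nonempty, compact and disjoint, and take disjoint open $\epsilon$\--neighbourhoods $A_1',A_2'$ in $\mathcal S_9$. By the neighbourhood statement $U(z)\in A_1'\cup A_2'$ for $|z|>r$; but $\{z\in\C:|z|>r\}$ is connected, so its continuous image under $U$ lies entirely in $A_1'$ or entirely in $A_2'$, forcing all accumulation points of $U$ at $\infty$ into one of $A_1,A_2$ --- contradicting that both are nonempty subsets of $\Omega_U$. For the symmetry: after $z$ winds five times around the origin the solution returns to itself (Remark~\ref{painleveboutrouxrem}), so on the natural fivefold covering, parametrised by $\zeta$ with $\zeta^5$ proportional to $z$, the solution is single valued and the analytic continuation identity (\ref{uangle5/4}) becomes the plain identity $U(e^{\pi\scriptop{i}/2}\zeta)=T(U(\zeta))$, consistently with (\ref{qangle5/4}) and with the direct check that $T$ sends $q\mapsto -q$ and $E\mapsto -E$. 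Hence if $\zeta_j\to\infty$ with $U(\zeta_j)\to s$, then $e^{\pi\scriptop{i}/2}\zeta_j\to\infty$ and $U(e^{\pi\scriptop{i}/2}\zeta_j)\to T(s)$, so $T(s)\in\Omega_U$; since $T$ is a homeomorphism of finite order and $T(\Omega_U)\subset\Omega_U$, we conclude $T(\Omega_U)=\Omega_U$. The crux, I expect, is the invariance step: the claim $U(z_j+t)\to\phi_t(s)$ must be justified using both the uniform\--in\--$z$ convergence $v_z\to v_\infty$ on a fixed compact neighbourhood of the autonomous orbit through $s$ and the a priori confinement of $U$ to such a neighbourhood for large $j$ --- the first from the explicit $1/z$\--dependence of the charts, the second from Corollary~\ref{6/5cor} --- whereas the remaining assertions are the standard limit\--set machinery, here simplified by the connectedness of $\{|z|>r\}$.
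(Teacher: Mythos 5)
Your proposal is correct and follows essentially the same route as the paper: confinement away from the infinity set via Corollary~\ref{6/5cor} and the indicator $d$, invariance under the autonomous flow via the $\op{C}^1$ convergence of $v_z$ to the limit field together with continuous dependence (your Gr\"onwall-plus-bootstrap comparison is exactly what the paper's citation of Coddington--Levinson supplies), and $T$\--invariance from (\ref{uangle5/4}), which you merely make explicit through the fivefold cover. The only cosmetic difference is that the paper obtains non\--emptiness, compactness, connectedness and the neighbourhood property simultaneously by writing $\Omega_U$ as the intersection of the nested compact connected sets $\Omega_{U,\, r}=\overline{U(\{ |z|\geq r\})}$, whereas you prove connectedness and the neighbourhood statement by separate, equivalent separation and subsequence arguments.
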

\begin{proof}
For any $\delta ,\, r\in\R _{>0}$, let 
$K_{\delta ,\, r}$ denote the set of all 
$s\in S_9(z)$ such that $|z|\geq r$ and 
$|d(s)|\geq\delta$. Because ${\mathcal S}_9$ 
is a complex analytic family over 
$\Proj ^1\setminus\{ 0\}$ of compact 
surfaces $S_9(z)$, $z\in\Proj ^1\setminus\{ 0\}$, 
$K_{\delta ,\, r}$ is a compact subset 
${\mathcal S}_9$. Furthermore $K_{\delta ,\, r}$ is disjoint 
from union of the infinity sets $I(z)$, 
$z\in\Proj ^1\setminus\{ 0\}$, and therefore 
$K_{\delta ,\, r}$ is a compact subset of 
Okamoto's space ${\mathcal S}_9\setminus {\mathcal S}_{9,\,\infty}$, 
where the latter is viewed as a complex analytic family of 
non\--compact surfaces over $\Proj ^1\setminus\{ 0\}$. 
When $r\uparrow\infty$, the sets 
$K_{\delta ,\, r}$ shrink to the set 
$K_{\delta ,\,\infty}$ of all $s\in S_9(\infty )$ 
such that $|d(s)|\geq\delta$, which is a compact subset 
of $S_9(\infty )\setminus I(\infty )$. 

It follows from Corollary \ref{6/5cor} 
that there exists $\delta\in\R _{>0}$ such that 
for every solution $U$ there exists $r_0\in\R _{>0}$ 
with the property that $U(z)\in K_{\delta ,\, r_0}$ for every 
$z\in\C$ such that $|z|\geq r_0$. In the sequel, let 
$r\geq r_0$, when it follows from the definition 
of $K_{\delta ,\, r}$ that $U(z)\in K_{\delta ,\, r}$ 
whenever $|z|\geq r$. 
Let $Z_r:=\{ z\in\C\mid |z|\geq r\}$ and let 
$\Omega _{U,\, r}$ denote the closure of $U(Z_r)$ 
in ${\mathcal S}_9$. Because $Z_r$ is connected 
and $U$ is continuous, $U(Z_r)$ is connected, 
hence its closure $\Omega _{U,\, r}$ is connected. 
Because $U(Z_r)$ is contained in the compact 
subset $K_{\delta ,\, r}$, its closure $\Omega _{U,\, r}$ 
is contained in $K_{\delta ,\, r}$, and therefore 
$\Omega _{U,\, r}$ is a non\--empty compact 
and connected subset of ${\mathcal S}_9\setminus 
{\mathcal S}_{9,\,\infty}$. Because the intersection 
of a decreasing sequence of non\--empty compact and connected 
sets is non\--empty, compact, and connected, and the 
sets $\Omega _{U,\, r}$ decrease to $\Omega _U$ as 
$r\uparrow\infty$, it follows that $\Omega _U$ is a 
non\--empty, compact and connected subset of 
${\mathcal S}_9$. Because $\Omega _{U,\, r}\subset K_{\delta ,\, r}$ 
for all $r\geq r_0$, and the sets $K_{\delta ,\, r}$ shrink to 
the compact subset $K_{\delta ,\,\infty}$ of 
$S_9(\infty )\setminus I(\infty )$ as $r\uparrow\infty$, 
it follows that $\Omega _U\subset K_{\delta ,\,\infty}$. 
This proves the first statement in the corollary 
with $K=K_{\delta ,\,\infty}$. Because $\Omega _U$ is 
the intersection of the decreasing family of compact sets $
\Omega _{U,\, r}$, there exists for every neighborhood 
$A$ of $\Omega _U$ in ${\mathcal S}_9$ an $r>0$ such that 
$\Omega _{U,\, r}\subset A$, hence $U(z)\in A$ for every 
$z\in\C$ such that $|z|\geq r$. If $z_j$ is any sequence 
in $\C\setminus\{ 0\}$ such that $|z_j|\to\infty$, then 
the compactness of $K_{\delta ,\, r}$, in combination with 
$U(Z_r)\subset K_{\delta ,\, r}$, implies that 
there is a subsequence 
$j=j(k)\to\infty$ as $k\to\infty$ and an $s\in K_{\delta ,\, r}$ 
such that $U(z_{j(k)})\to s$ as $k\to\infty$, when it follows 
from the definition of $\Omega _U$ that $s\in\Omega _U$. 

We next prove that $\Omega _U$ is invariant under the flow 
$\Phi ^t$ of the autonomous system Hamiltonian system.
Let $s\in\Omega _U$, when there is a sequence $z_j\in\C\setminus\{ 0\}$ 
such that $z_j\to\infty$ and $U(z_j)\to s$ as $j\to\infty$. 
Because the $z$\--dependent vector field of the 
Boutroux\--Painlev\'e system converges in $\op{C}^1$ to 
the vector field of the autonomous Hamiltonian system 
as $z\to\infty$, 
it follows from the continuous dependence on initial 
data and parameters for first order ordinary differential 
equations, see for instance Coddington and Levinson 
\cite[Th. 7.4 in Ch. 1]{cl}, that the distance between  
$U(z_j+t)$ and $\Phi ^t(U(z_j))$ converges to zero as 
$j\to\infty$. Because $\Phi ^t(U(z_j))\to\Phi ^t(s)$ and 
$z_j\to\infty$ as $j\to\infty$, it follows that 
$U(z_j+t)\to \Phi ^t(s)$ and $z_j+t\to\infty$ as $j\to\infty$, 
hence $\Phi ^t(s)\in\Omega _U$. 

The invariance of $\Omega _U$ under the transformation $T$ 
follows from (\ref{uangle5/4}) and 
(\ref{qeq}). 
\end{proof}
\begin{corollary}
Every solution of the first Painlev\'e equation has 
infinitely many poles. 
\label{infpolecor}
\end{corollary}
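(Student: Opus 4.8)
The plan is to argue by contradiction. Suppose some solution $y(x)$ of (\ref{PI}) has only finitely many poles. In the Boutroux scaling this says there is $r>0$ such that the solution curve $U(z)$ does not meet the pole line $L_9(z)\setminus I(z)$ whenever $|z|>r$, i.e.\ $u(z)=u_1(z)$ is holomorphic there. Since $U(z)$ never enters $I(z)$ --- this is the content of the repelling property underlying Corollary \ref{6/5cor} --- the solution then stays, for all $|z|>r$, inside $S_9(z)\setminus\bigl(I(z)\cup L_9(z)\bigr)$, which is exactly the affine $(u_1,u_2)$-chart $\C^2$.

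First I would transfer this to the limit set by showing $\Omega_U\cap L_9(\infty)=\emptyset$. By Corollary \ref{limitsetcor}, $\Omega_U\subset S_9(\infty)\setminus I(\infty)$, and since $L_9(\infty)\cap I(\infty)$ is a single point it suffices to exclude points of $L_9(\infty)\setminus I(\infty)$. Suppose $s$ were such a point and pick $z_j\to\infty$ with $U(z_j)\to s$. Work in the chart $(u_{911},u_{912})$, where $L_9\setminus I$ is the line $\{u_{912}=0\}$; evaluating the $\dot u_{912}$-formula of Section \ref{polesec} at $u_{912}=0$ gives $\dot u_{912}=-1/2\neq0$ for every $z$, so by continuity the Boutroux--Painlev\'e field is transverse to $L_9\setminus I$ in a fixed neighbourhood of $s$, uniformly for $|z|$ large. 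A routine estimate then produces, for $j$ large, a time $\zeta_j$ with $|\zeta_j-z_j|=\op{O}(|u_{912}(z_j)|)\to0$ at which $U(\zeta_j)\in L_9(\zeta_j)\setminus I(\zeta_j)$; but then $|\zeta_j|>r$ for $j$ large, contradicting the finiteness assumption. Hence $\Omega_U\subset S_9(\infty)\setminus\bigl(I(\infty)\cup L_9(\infty)\bigr)=\C^2$.

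Next I would exploit that $\Omega_U$ is closed and invariant under the autonomous flow $\Phi^t$ (Corollary \ref{limitsetcor}). If $s\in\Omega_U$ were not an equilibrium, then $q(s)$ would be finite, since $\Omega_U\cap I(\infty)=\emptyset$ and $I(\infty)=q^{-1}(\infty)$; and the closure of the $\Phi^t$-orbit of $s$ would be the whole fibre $q^{-1}(q(s))$ of the elliptic fibration $q:S_9(\infty)\to\Proj^1$ --- on a regular level the time parameter identifies the fibre with $\C/P(q(s))$, and on the two singular levels $q=\pm\op{i}\sqrt{8/27}$ (Kodaira type $\op{I}_1$) it surjects onto the dense smooth locus. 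That fibre meets the holomorphic section $L_9(\infty)$, so $\Omega_U$, being closed and $\Phi^t$-invariant, would contain a point of $L_9(\infty)$, contradicting the previous paragraph. Therefore every point of $\Omega_U$ is an equilibrium of $\Phi^t$, i.e.\ $\Omega_U\subset\{(\op{i}/\sqrt6,0),(-\op{i}/\sqrt6,0)\}$, the two points with $u_2=0$, $6\,u_1^2+1=0$.

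Finally, $\Omega_U$ is non-empty and connected, so it must be a single one of these two equilibria; but Corollary \ref{limitsetcor} also asserts that $\Omega_U$ is invariant under the transformation $T:(u_1,u_2)\mapsto(-u_1,\op{i}u_2)$, which interchanges the two equilibria. This contradiction shows that $y(x)$ has infinitely many poles. I expect the main obstacle to be the exclusion $\Omega_U\cap L_9(\infty)=\emptyset$: that is the step where the transversality of the vector field to the pole line, established in $S_9$ in Section \ref{polesec}, must be carried to the limit $z\to\infty$ and combined with a continuous-dependence argument converting a limit point on $L_9(\infty)$ into an honest pole at a large finite complex time.
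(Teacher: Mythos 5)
Your proposal is correct, and its opening and closing steps coincide with the paper's: both arguments use the transversality of the vector field to the pole line to convert a limit point on $L_9$ into an actual pole at large finite $|z|$ (excluding $L_9(\infty)$ from $\Omega_U$), and both finish by playing the symmetry $T:(u_1,u_2)\mapsto(-u_1,\op{i}u_2)$ against the fact that $\Omega_U$ reduces to a single equilibrium. The middle step, however, is genuinely different. The paper observes that $\Omega_U$ is then a compact subset of the affine $(u_1,u_2)$\--chart, so $u_1(z),u_2(z)$ stay bounded for large $|z|$; the theorem on removable singularities makes them holomorphic in $1/z$ at $1/z=0$, hence convergent, so $\Omega_U$ is a single point which must be an equilibrium by invariance. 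You instead rule out non\--equilibrium points of $\Omega_U$ by the orbit structure of the autonomous integrable system: on a regular level the complex\--time orbit is the whole elliptic fibre, which meets the section $L_9(\infty)$, and on the two $\op{I}_1$ levels the separatrix solutions are degenerate elliptic functions which still have poles, so again the orbit meets $L_9(\infty)$; closedness and flow\--invariance of $\Omega_U$ then give the contradiction, and connectedness pins down a single equilibrium. Your route avoids the removable\--singularity argument but requires reading the invariance in Corollary \ref{limitsetcor} as invariance under the flow for \emph{complex} time $t$ (the real\--time orbit closure on an elliptic fibre can be a loop missing the section, so real\--time invariance alone would not suffice); this stronger invariance is indeed justified, since the proof there works verbatim for complex $t$ and the relevant fibres are disjoint from $I(\infty)$, so the autonomous flow on them is complete, but it deserves an explicit remark. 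The paper's route is more elementary at this point and yields the extra information that a hypothetical solution with finitely many poles would actually converge to an equilibrium as $|z|\to\infty$; yours trades that for a structural argument about the fibration which needs no asymptotic analysis of $u_1,u_2$ at all.
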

\begin{proof}
Let $u(z)$ be a solution  of  the Boutroux-Painlev\'e  equation 
with only finitely many poles, 
$U(z)$ the corresponding solution of the system in
$S_9\setminus S_{9, \infty}$, and $\Omega _U$ the limit set
of $U$. According to Corollary \ref{limitsetcor}, $\Omega _U$ is a compact
subset of $S_9\setminus S_{9, \infty}$. If $\Omega _U$ intersects
the pole line $L_9$ in a point $p$, then there exist $z$ with
$|z|$ arbitrarily large such that $U(z)$ is arbitrarily close to $p$,
when the transversality of the vector field to the pole line implies
that $U(\zeta) \in L_9$ for a unique $\zeta$ near $z$, which means
that $u(z)$ has a pole at $z =\zeta$. As this would imply that $u(z)$ has 
infinitely many poles, it follows that $\Omega _U$ is a compact
subset of $S_9 \setminus (S_{9, \infty} \cup L_9)$. However,
$S_{9, \infty} \cup L_9$ is equal to the set of all points in $S_9$
which lie over the line $L_0$ at infinity in the complex projective
plane, and therefore $S_9 \setminus (S_{9, \infty} \cup L_9)$
is the affine $(u_1, u_2)$ coordinate chart, of which $\Omega _U$
is a compact subset, which implies that 
$u_1(z)= u(z)$ and $u_2(z)$ remain bounded for large $|z|$.
In view of the theorem on removable singularities it follows that
$u_1(z)$ and $u_2(z)$ are equal to holomorphic functions of
$1/z$ in a neighborhood of $1/z = 0$, which in turn implies
that there are complex numbers $u_1(\infty ), u_2(\infty ) $
such that $u_1(z) \to u_1(\infty )$ and $u_2(z) \to u_2(\infty )$
as $|z| \to \infty$. In other words,
$\Omega_U = \{ (u_1(\infty ),\, u_2(\infty ))\}$.
Because the limit set $\Omega_U$ is invariant under the
autonomous Hamiltonian system and contains only one point,
this point is an equilibrium point of the autonomous Hamiltonian
system. That is, $u_2(\infty ) = 0$ and $u_1(\infty )$ is equal to
one of the two zeros $c$ of $u \mapsto 6\, u^2+1$. 
According to the last statement in Corollary \ref{limitsetcor}, 
$(-c,\, 0)\in\Omega _U$ if $(c,\, 0)\in\Omega _U$, 
where $(-c,\, 0)\neq (c,\, 0)$ because $c\neq 0$. 
This contradiction with $\Omega_U =\{ (c,\, 0)\}$ 
completes the proof. 
\end{proof}
In Lemma \ref{polelem}  
more information will be given about the asymptotic 
distribution for large $|\xi |$ of the poles $\xi$ of 
the solutions $y(x)$ of the first Painlev\'e equation. 
In the remainder of this section we discuss, for the solutions 
$U(z)$ close to the infinity set $I$, 
the asymptotic behavior of the set of $z$ such that 
$U(z)$ is close to $L_i^{(9-i)}$ for $0\leq i\leq 3$, 
extending the description for $4\leq i\leq 8$ in 
Lemma \ref{E8E4simlem}. 

As in Lemma \ref{E8E4simlem}, one finds 
concentric approximate discs $D_1$, $D_2$, and 
approximate discs $D_0$ of small radii 
such that the connected component of the set 
of all $z\in\C$ such that the solution in $S_9$ is close to 
$L_1^{(8)}\setminus L_2^{(7)}$, $L_2^{(7)}\setminus 
(L_1^{(8)}\cup L_3^{(6)})$, and $L_0^{(9)}\setminus 
L_3^{(6)}$ is equal to $D_1$, $D_2\setminus D_1$, and 
$D_0$, respectively. 

More precisely, $L_1^{(8)}\setminus L_2^{(7)}$ is visible 
in $(u_{121},\, u_{122})$ chart, where it is defined by 
$u_{121}=0$ and parametrized by $u_{122}$. 
We have $\dot{u}_{122}\sim {u_{121}}^{-1}={w_{12}}^{-1/2}
\sim q^{1/2}=d^{-1/2}$, where $\dot{q}/q
=\dot{E}/E\sim -6\, (5\, z)^{-1}$, 
hence $d$ is approximately constant. Therefore each connected 
component of the set of all $z\in\C$ such that 
$|u_{122}(z)|\leq R_1$ is an approximate disc $D_1$ of radius 
$\sim |d|^{1/2}\, R_1$, and $z\mapsto u_{122}(z)$ is a 
complex analytic diffeomorphism 
from $D_1$ onto $\{ u\in\C\mid |u|\leq R_1\}$. 
Furthermore, $L_2^{(7)}\setminus L_3^{(6)}$ is 
visible in $(u_{221},\, u_{222})$ chart, where it is defined by 
$u_{221}=0$ and parametrized by $u_{222}$, whereas 
the part of $L_1^{(8)}$ in this chart is defined by 
$u_{222}=0$ and parametrized by $u_{221}=u_{111}
=u_{031}\, {u_{032}}^{-1}={u_{122}}^{-1}$, hence 
$|u_{122}|>R_1$ corresponds to $|u_{221}|<{R_1}^{-1}$. 
We have $\dot{u}_{222}\sim 2\, {u_{221}}^{-1}
=2\, {w_{22}}^{-1/4}\, {u_{222}}^{1/2}$, where 
$w_{22}\sim q^{-1}=d$ and $\dot{q}/q=\dot{E}/E\sim -6\, (5\, z)^{-1}$ 
yields that $({u_{222}}^{1/2})^{\bullet}
\sim d^{-1/4}$ with approximately constant $d$, 
hence $u_{222}(z)\sim (u_{222}(z_0)^{1/2}+d^{-1/4}\, (z-z_0))^2$.  
Let $R_2$ be a large finite positive real number. 
As $|u_{221}|<{R_1}^{-1}$ corresponds to 
$|u_{222}|\sim |d|^{1/2}\, |u_{221}|^{-2}> |d|^{1/2}\, {R_1}^2$, 
and $|d|^{1/4}/|d|^{1/2}>>1$, 
the mapping $z\mapsto u_{222}(z)$ is a twofold covering 
from the complement of $D_1$ in an approximate disc $D_2$ 
of radius $\sim |d|^{1/4}\, {R_2}^{1/2}$ onto 
$\{ u\in\C\mid |d|^{1/2}\, {R_1}^2<|u|\leq R_2\}$. 
Finally $L_0^{(9)}\setminus L_3^{(6)}$ is visible 
in the $(u_{021},\, u_{022})$ chart, where it is defined by 
$u_{021}=0$ and parametrized by $u_{022}$. 
We have $\dot{u}_{022}\sim 6\, {u_{021}}^{-1}
=\, -6\, {w_{02}}^{-1/3}$, 
where $w_{02}\sim 4\, q^{-1}=\, 4\, d$, and 
$\dot{q}/q=\dot{E}/E\sim -6\, (5\, z)^{-1}$ yields that  
$d$ is approximately constant, with 
$\dot{u}_{022}\sim -2^{1/3}\, 3\, d^{-1/3}$. 
Let $R_0$ be a large 
finite positive real constant. Then 
each connected 
component of the set of all $z\in\C$ such that 
$|u_{122}(z)|\leq R_0$ is an approximate disc 
$D_0$ of radius 
$\sim 2^{-1/3}\, 3^{-1}\, |d|^{1/3}\, R_0$, 
and $z\mapsto u_{122}(z)$ is a 
complex analytic diffeomorphism 
from $D_0$ onto $\{ u\in\C\mid |u|\leq R_0\}$. 

In order to understand the location of the 
concentric discs $D_8\subset D_7\subset D_6\subset D_5\subset D_4$, 
the concentric discs $D_1\subset D_2$ and the 
discs $D_0$ in the complex $z$\--plane, we 
first describe the situation for the solutions of the autonomous 
Hamiltonian system, which have a similar behavior near the infinity set 
$I$, but in addition has the function $q=2\, E$ 
as a constant of motion. Recall that for each  
$q\in\C\setminus\{\pm\op{i}\,\sqrt{8/27}\}$ the level set 
in $S_9(\infty )$ of the function $q$ is an  
elliptic curve $C_q$ such that $z\mapsto\Phi ^z(\sigma (q))$ 
defines an isomorphism from $\C /P(q)$ onto $C_q$. 
Here $\Phi ^z$ denotes the flow of the autonomous 
Hamiltonian system defined by the function $E=q/2$, 
$\sigma (q)$ is the unique point in $C_q\cap L_9(\infty )$, 
and $P(q)$ is the period lattice of the flow at the level $q$. 
In the $(u_1,\, u_2)$ coordinate system, the 
first coordinate $u(z)=u_1(\Phi ^z(\sigma (q)))$ of $\Phi ^z(\sigma (q))$ 
is the solution of the autonomous differential equation 
$\op{d}^2u/\op{d}\! z^2=6\, u^2+1$ such that 
$\dot{u}^2-4\, u^3-2\, u=q$ and $u(z)$ has a pole 
at $z\in P(q)$, and no other poles.  
Therefore $u(z)=\wp _{_{P(q)}}(z)$, the Weierstrass 
$\wp$ function defined by the lattice $P(q)$. 

Locally the period lattice has a $\Z$\--basis $p_1(q)$, 
$p_2(q)$ depending in a complex analytic fashion on $q$. 
The period functions $p(q)=p_i(q)$ 
satisfy the homogeneous linear second order differential equation 
\begin{equation}
\frac{\op{d}^2p}{\op{d}\! q^2}
+\frac{54\, q}{8+27\, q^2}\,\frac{\op{d}\! p}{\op{d}\! q} 
+\frac{15}{4\, (8+27\, q^2)}\, p=0,  
\label{pf}
\end{equation}
as an application of Bruns \cite[p. 237, 238]{bruns} 
to $g_2(q)=\, -2$ and $g_3(q)=\, -q$. 
\begin{lemma}
For large $|q|$ the period lattice $P(q)$ has a 
$\Z$\--basis of the form 
\begin{equation}
\begin{array}{lll}
p_1(q)&=&q^{-1/6}\, a(1/q)+q^{-5/6}\, b(1/q),\\ 
p_2(q)&=&q^{-1/6}\,\op{e}^{2\pi\scriptop{i}/6}\, a(1/q)
+q^{-5/6}\, \op{e}^{-2\pi\scriptop{i}/6}\, b(1/q), 
\end{array}
\label{p1p2II*}
\end{equation}
where $a\,$ and $b\,$ are complex analytic 
functions on an open neighborhood of the origin in the 
complex plane such that 
$a(0)=\, -\op{i}\, 2^{-1}\, \pi ^{-1}\,\Gamma (1/3)^3$ and 
$b(0)=\op{i}\,  2^{4}\, 3^{-3/2}\,\pi ^2\,\Gamma (1/3)^{-3}$. 
Here $\Gamma$ denotes Euler's Gamma function. 
The differential equation 
(\ref{pf}) in combination with the explicit values of 
$a(0)$ and $b(0)$ leads to a successive determination of the coefficients 
in the Taylor expansions at the origin of the 
functions $a$ and $b$. 

For $0\leq i\leq 8$ and $i\neq 3$ 
the point $\Phi ^z(\sigma (q))$ is near $L_i^{(9-i)}$ if 
and only if $z$ belongs to the aforementioned approximate discs 
$D_i$ with $\delta =q^{-1}$. The centers of the concentric discs 
$D_8\subset D_7\subset D_6\subset D_5\subset D_4$ 
are at the points of the period lattice $P(q)$ that are 
the pole points of the solution $u(z)=\wp _{_{P(q}}(z)$ 
of the autonomous differential equation $\ddot{u}=6\, u^2+1$. 
The centers of the concentric discs $D_1\subset D_2$ 
are at the zeros of $u(z)=\wp _{_{P(q)}}(z)$. The zeros of $u(z)$ 
are close of order smaller than 
$|q|^{-1/6}$ to the points $(p_1(q)+p_2(q))/3$ and 
$2\, (p_1(q)+p_2(q))/3$ modulo $P(q)$. 
The centers of the discs $D_0$ are at the zeros of the 
derivative $\dot{u}(z)=\op{d}\wp _{_{P(q)}}(z)/\op{d}\! z$, and close 
of order smaller than $q^{-1/6}$ to the points 
$p_1(q)/2$, $p_2(q)/2$, and 
$(p_1(q)+p_2(q))/2$ modulo $P(q)$. 
\label{qinftyperiodlem}
\end{lemma}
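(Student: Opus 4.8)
The plan is to reduce everything to the equianharmonic Weierstrass curve by a Boutroux‑type rescaling. For the autonomous system $\ddot u=6\,u^2+1$ the solution on the level $q$ is $u(z)=\wp _{P(q)}(z)$ with $\dot u^2-4\,u^3-2\,u=q$, i.e.\ $g_2=-2$, $g_3=-q$. Substituting $u=q^{1/3}\,v$, $z=q^{-1/6}\,w$ turns this into $(\op{d}v/\op{d}w)^2=4\,v^3+2\,q^{-2/3}\,v+1$, so $v(w)=\wp _{\Lambda (q)}(w)$ with $\Lambda (q):=q^{1/6}\,P(q)$, and $\Lambda (q)\to\Lambda _0$ as $q\to\infty$, where $\Lambda _0$ is the lattice of the equianharmonic curve $(\op{d}v/\op{d}w)^2=4\,v^3+1$ (the case $g_2=0$, $g_3=-1$). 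Hence $P(q)=q^{-1/6}\,\Lambda (q)\sim q^{-1/6}\,\Lambda _0$ to leading order; since $\Lambda _0$ is hexagonal, hence invariant under multiplication by $\op{e}^{2\pi\scriptop{i}/6}$, this already gives the leading terms of (\ref{p1p2II*}) with $a(0)$ a primitive generator of $\Lambda _0$ and $p_2/p_1\to\op{e}^{2\pi\scriptop{i}/6}$. For the full expansion I would invoke the Picard--Fuchs equation (\ref{pf}): it has a regular singular point at $q=\infty$ whose indicial equation for the exponent $\rho$ (with $p\sim q^{\rho}$) is $\rho ^2+\rho +5/36=0$, with roots $-1/6$ and $-5/6$. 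Their difference $2/3$ is not an integer, so there are no logarithmic terms and the two Frobenius solutions are $q^{-1/6}$ and $q^{-5/6}$ times functions holomorphic in $1/q$; thus every period, in particular $p_1(q)$ and $p_2(q)$, has the asserted form, and substituting this ansatz into (\ref{pf}) gives a recursion determining all Taylor coefficients of $a$ from $a(0)$ and all those of $b$ from $b(0)$. The phases $\op{e}^{\pm 2\pi\scriptop{i}/6}$ distinguishing $p_2$ from $p_1$ encode the order‑six monodromy $q\mapsto\op{e}^{2\pi\scriptop{i}}q$ about $q=\infty$ (under which $q^{-1/6}\mapsto\op{e}^{-2\pi\scriptop{i}/6}q^{-1/6}$, $q^{-5/6}\mapsto\op{e}^{2\pi\scriptop{i}/6}q^{-5/6}$); that $p_1(q),p_2(q)$ form a $\Z$‑basis of $P(q)$ for all large $|q|$, on a neighborhood of $q=\infty$ slit along a ray, follows because the cycles $\gamma _1,\gamma _2\subset C_q$ defining them can be chosen to be a $\Z$‑basis of $H_1(C_q,\Z)$, which persists under the continuous deformation in $q$.

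It remains to identify $a(0)$ and $b(0)$. I would compute $a(0)$ as a period $\oint (4\,v^3+1)^{-1/2}\,\op{d}v$ of the equianharmonic curve, reducing the integral by a standard substitution to a multiple of $B(1/6,1/2)=\Gamma (1/6)\,\Gamma (1/2)/\Gamma (2/3)$, and then using $\Gamma (1/2)=\sqrt\pi$, $\Gamma (1/6)=2^{-1/3}\,(3/\pi )^{1/2}\,\Gamma (1/3)^2$ and $\Gamma (1/3)\,\Gamma (2/3)=2\pi /\sqrt 3$, together with the orientation of the relevant cycle, to obtain $a(0)=\,-\op{i}\,2^{-1}\,\pi ^{-1}\,\Gamma (1/3)^3$. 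For $b(0)$, expanding $\oint (4\,v^3+1+2\,q^{-2/3}\,v)^{-1/2}\,\op{d}v$ to first order in $q^{-2/3}$ shows that $b(0)=\,-\oint v\,(4\,v^3+1)^{-3/2}\,\op{d}v$, a period of the second kind, which I would evaluate either directly as a Beta integral or, more economically, from the Legendre relation between the periods and quasi‑periods of $\Lambda _0$ (equivalently from the constant in the Wronskian of (\ref{pf}), which is $\propto (8+27\,q^2)^{-1}$), obtaining $b(0)=\op{i}\,2^4\,3^{-3/2}\,\pi ^2\,\Gamma (1/3)^{-3}=\op{i}\,2\,\pi ^{-1}\,\Gamma (2/3)^3$.

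For the location of the discs I would transfer, via the rescaling, the behavior of $u(z)=\wp _{P(q)}(z)$ near its special points into the blow‑up charts of Appendix~A, using that for the autonomous elliptic solution $d\sim q^{-1}$ on the relevant part of $I$, so that Lemma~\ref{E8E4simlem} applies with $\delta =q^{-1}$. Near a pole of $\wp _{P(q)}$ --- a point of the lattice $P(q)$ --- the flow climbs the chain $L_3^{(6)}\to L_4^{(5)}\to\cdots\to L_8^{(1)}\to L_9$, so for $4\leq i\leq 8$ the set of $z$ with $\Phi ^z(\sigma (q))$ close to $L_i^{(9-i)}$ is the annulus $D_i\setminus D_{i+1}$ (the disc $D_8$ when $i=8$) of the concentric family $D_4\supset\cdots\supset D_8$ centered at that lattice point. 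Tracking the point $(u_1,u_2)=(0,\pm q^{1/2})$ --- a zero of $u=\wp$ --- through the charts shows that there the flow runs out along the length‑two arm $L_3^{(6)}\to L_2^{(7)}\to L_1^{(8)}$, so for $i=1,2$ the corresponding $z$‑sets are $D_2\setminus D_1$ and $D_1$; likewise $(u_1,u_2)=(e,0)$, where $\dot u=\wp '$ vanishes and $e$ is a root of $4\,u^3+2\,u+q$, gives the length‑one arm $L_3^{(6)}\to L_0^{(9)}$ and hence the disc $D_0$. The centers of $D_8\subset\cdots\subset D_4$ are the poles of $\wp _{P(q)}$; the centers of $D_1\subset D_2$ are the zeros of $\wp _{P(q)}$, which lie within $\op{o}(|q|^{-1/6})$ of $(p_1+p_2)/3$ and $2(p_1+p_2)/3$ modulo $P(q)$ (the discrepancy being the $\op{O}(q^{-5/6})$ effect of the non‑hexagonality of $P(q)$); and the centers of the discs $D_0$ are the zeros of $\wp '_{P(q)}$, i.e.\ the half‑periods $p_1/2$, $p_2/2$, $(p_1+p_2)/2$ modulo $P(q)$, again up to an error $\op{o}(|q|^{-1/6})$ coming from the disc radii and the $\op{O}(q^{-2/3})$ deformation of the curve. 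The value $i=3$ is excluded because $L_3^{(6)}$ is the trivalent node, of highest multiplicity $6$, in the $\op{II}^*$ fiber: the set of $z$ for which $\Phi ^z(\sigma (q))$ is close to $L_3^{(6)}$ is the complement in a fundamental domain of $P(q)$ of all the small discs above, and is therefore not an approximate disc.

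The conceptual skeleton --- the rescaling, the Frobenius analysis of (\ref{pf}), and the continuity arguments --- is routine. I expect the real work to be, first, pinning down $a(0)$ and $b(0)$ with the correct powers of $2$, $3$ and the factors $\pm\op{i}$, which is entirely a matter of fixing the orientation of the period cycles and assembling the Gamma‑function identities; and second, the chart‑by‑chart verification in Appendix~A that a zero of $\wp$ corresponds to the arm ending in $L_1^{(8)},L_2^{(7)}$ and a zero of $\wp '$ to the arm ending in $L_0^{(9)}$ --- straightforward given the explicit resolution already in place, but tedious.
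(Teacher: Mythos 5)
Your route is the paper's route: rescale to the equianharmonic curve $\dot v^2=4v^3+1$ to get the leading hexagonal behaviour of $P(q)$; use the regular singular point of (\ref{pf}) at $q=\infty$ with indicial roots $-1/6,\,-5/6$ (not differing by an integer) to upgrade two\--term asymptotics to the exact form (\ref{p1p2II*}); evaluate $a(0)$, and after a first\--order expansion also $b(0)$, as Beta integrals (the paper first converts the second\--kind integral by an integration by parts, you propose the Legendre/Wronskian relation as an alternative — a legitimate variant); and obtain the disc statements by reading off in the charts of Appendix~A that the centers of $D_1$ and $D_0$ are the zeros of $u$ and of $\dot u$, and locating these via the rescaled limit (the paper packages this as the sixfold branched covering $t\mapsto 4+v(t)^{-3}$ of $L_3^{(6)}$, whose ramification points of orders $6,3,2$ lie over the centers of the $D_8,\dots,D_4$ chain, of $D_1\subset D_2$, and of $D_0$), which is what your arm\--by\--arm description amounts to.

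The one step you have not justified is that $p_1(q)$ and $p_2(q)$ form a $\Z$\--basis of $P(q)$. You define $p_2$ as the monodromy continuation of $p_1$ and then say the underlying cycles ``can be chosen to be a $\Z$\--basis of $H_1(C_q,\Z)$''; but once $p_1$ is pinned down as the period over the explicit cycle used to compute $a(0)$, there is no freedom left, and for a hexagonal lattice the pair $(v,\,\op{e}^{2\pi\scriptop{i}/6}\, v)$ spans a sublattice of index $|v|^2/|v_{\min}|^2$, so it is a basis only when $v$ is a shortest vector (taking $v$ equal to the sum of two adjacent shortest vectors already gives index $3$). Hence your argument needs either a verification that the limiting period $a(0)$ is a minimal vector of $\Lambda_0$, or what the paper actually does: exhibit a second explicit loop on $\dot v^2=v^3+1$, check that the two loops meet transversally in the single point $(v,\dot v)=(-1,0)$, so their intersection number is $\pm 1$ and they form a homology basis, and only then identify the second period asymptotically with $\op{e}^{2\pi\scriptop{i}/6}$ times the first. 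A smaller point: with your orientation of the monodromy ($q\mapsto\op{e}^{2\pi\scriptop{i}}q$, so $q^{-1/6}\mapsto\op{e}^{-2\pi\scriptop{i}/6}\,q^{-1/6}$ and $q^{-5/6}\mapsto\op{e}^{2\pi\scriptop{i}/6}\,q^{-5/6}$) the continuation of $p_1$ carries the phases conjugate to those in (\ref{p1p2II*}); you must continue $1/q$ around the origin in the opposite direction to land on the stated $p_2$.
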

\begin{proof}
Any period along a closed path $\gamma _q$
on the curve $\dot{u}^2=4\, u^3+2\, u+q$
is equal to 
\[
p=\oint\, (4\, u^3+2\, u+q)^{-1/2}\,\op{d}\! u
=4^{-1/3}\, q^{1/3}\, q^{-1/2}\,\int_{\gamma}\, (v^3+2\cdot 4^{-1/3}\, 
q^{-2/3}\, v+1)^{-1/2}\,\op{d}\! v.
\]
Asymptotically for $q\to\infty$ we have 
\begin{eqnarray*}
(v^3+2\cdot 4^{-1/3}\, q^{-2/3}\, v+1)^{-1/2}
&=&(v^3+1)^{-1/2}\, (1+2\cdot 4^{-1/3}\, q^{-2/3}\, v/(v^3+1))^{-1/2}\\
&=&(v^3+1)^{-1/2}-4^{-1/3}\, q^{-2/3}\, (v^3+1)^{-3/2}\, v
+\op{O}(q^{-4/3}). 
\end{eqnarray*}
Furthermore $\op{d}(v^3+1)^{-1/2}/\op{d}\! v
=\, -\frac12\, (v^3+1)^{-3/2}\, 3\, v^2$, 
and therefore an integration by parts yields 
\[ 
\oint\, (v^3+1)^{-3/2}\, v\,\op{d}\! v
=\, -(2/3)\, \oint\, ((v^3+1)^{-1/2})'\, v^{-1}\,\op{d}\! v
=-(2/3)\,\oint\, (v^3+1)^{-1/2}\, v^{-2}\,\op{d}\! v.
\]
Therefore $p(q)=q^{-1/6}\,\alpha +q^{-5/6}\,\beta
+\op{O}(q^{-3/2})$, where 
\[
\alpha =2^{-2/3}\,\oint\, (v^3+1)^{-1/2}\,\op{d}\! v
\quad\mbox{\rm and}\quad
\beta =\, 2^{-1/3}\, 3^{-1}
\, \oint\, (v^3+1)^{-1/2}\, v^{-2}\,\op{d}\! v, 
\]
where the integration is over a closed 
path on the elliptic curve $\dot{v}^2=v^3+1$ 
homotopic to $\gamma _q$. Here the elliptic curve is 
the compact one obtained by adding one point at infinity to the 
curve $\dot{v}^2=v^3+1$ in the affine $(v,\,\dot{v})$ plane. 
In the next computations we use the well\--known formulas 
\begin{eqnarray}
\Gamma (p)&:=&\int_0^{\infty}\,\op{e}^{-t}\, t^{p-1}\,\op{d}\! t,
\label{Gammadef}\\
\Gamma (p+1)&=&p\,\Gamma (p),
\label{Gammap+1}\\
\Gamma (\frac12)&=&\pi ^{1/2},
\label{Gamma1/2}\\
\op{B}(p_1,\, p_2)&:=&
\int_0^1\, t^{p_1-1}\, (1-t)^{p_2-1}\,\op{d}\! t
\nonumber\\
&=&\int_0^{\infty}\, s^{p_1-1}\, (s+1)^{-p_1-p_2}\,\op{d}\! s
=\frac{\Gamma (p_1)\,\Gamma (p_2)}{\Gamma (p_1+p_2)},
\label{Beta}\\
\Gamma (2\, p)&=&2^{2\, p-1}\,\pi ^{-1/2}\,\Gamma (p)
\,\Gamma (p+\frac12),\quad\mbox{\rm and} 
\label{legendreduplication}\\
\Gamma (p)\,\Gamma (1-p)&=&\frac{\pi}{\sin (\pi\, p)}
\label{Gammap1-p}
\end{eqnarray}
for Euler's Gamma function, where 
(\ref{Beta}), (\ref{legendreduplication}), and 
(\ref{Gammap1-p}) are Euler's Beta function, 
Legendre's duplication formula, and the reflection formula for the 
Gamma function, respectively. The second identity in 
(\ref{Beta}) follows from the substitution of variables 
$t=s/(s+1)$

As our first loop we take the closed path 
which doubly covers the real $v$\--interval from $-\infty$ to $-1$. 
The substitution of variables $v=\, -(s+1)^{1/3}$ 
then yields in view of (\ref{Beta}) and the other identities 
for the Gamma function that 
$\alpha =\,\pm\op{i}\, 2^{1/3}\, 3^{-1}\,\op{B}(1/2,\, 1/6)
=\,\pm\op{i}\, 2^{-1}\, \pi ^{-1}\,\Gamma (1/3)^3$ 
and $\beta =\,\mp\op{i}\, 2^{2/3}\, 3^{-2}\,\op{B}(1/2,\, 5/6)
=\,\mp\op{i}\,  2^{4}\, 3^{-3/2}\,\pi ^2\,\Gamma (1/3)^{-3}$. 

As our second loop we take the closed path in $\dot{v}^2=v^3+1$ 
which doubly covers the real $v$\--interval from 
$-1$ to $0$ followed by the straight line in the 
$v$\--plane from $0$ to $\op{e}^{-2\pi\scriptop{i}/6}$, 
where the substitution of variables $w=\op{e}^{2\pi\scriptop{i}/3}\, v$ 
shows that the integral of $(w^3+1)^{-1/2}\,\op{d}\! w$ over the 
the second interval is equal to minus the integral 
of $(v^3+1)^{-1/2}\,\op{d}\! v$ over the first interval. 
The first and the second loop in $\dot{v}^2=v^3+1$ intersect each 
other once, at the point $(v,\,\dot{v})=(-1,\, 0)$, where the 
intersection is transversal, and therefore the intersection 
number of the first loop with the second loop is equal to 
$\pm 1$, where the sign depends on the choices of the 
orientations of the loops. as the elliptic curve is 
a real two\--dimensional torus, its first homology group 
is isomorphic to $\Z ^2$, when the fact that the intersection 
number of the two loops is $\pm 1$ implies that the 
homology classes of the two loops form a $\Z$\--basis 
of the first homology group of the elliptic curve. This implies 
in turn that the corresponding periods, asymptotically equal to 
the integrals of $4^{-1/3}\, q^{-1/6}\,
\dot{v}^{-1}\,\op{d}\! v$ over these loops, form 
a $\Z$\--basis of the period lattice $P(q)$. 

The substitution of variables $v=\, -s^{1/3}$ 
yields that the integral of $(v^3+1)^{-1/2}\,\op{d}\! v$ 
over the real $v$\--interval from $-1$ to $0$ is equal to 
$3^{-1}\,\int_0^1\, (1-s)^{-1/2}\, s^{-2/3}\,\op{d}\! s
=3^{-1}\,\op{B}(1/2,\, 1/3)$, and therefore the integral 
over the second loop leads to 
$\alpha =(1-\op{e}^{2\pi\op{i}/3})\, 2^{1/3}\, 3^{-1}
\,\op{B}(1/2,\, 1/3)$. As 
\[
\frac{\op{B}(1/2,\, 1/3)}{\op{B}(1/2,\, 1/6)}
=\frac{\Gamma (1/2)\,\Gamma (1/3)\, \Gamma (2/3)}
{\Gamma (1/2)\,\Gamma (1/6)
\,\Gamma (5/6)}=\frac{\sin (\pi /6)}{\sin (\pi /3)}=3^{-1/2}, 
\] 
and $(1-\op{e}^{2\pi\op{i}/3})\, 3^{-1/2}=
\, -\op{i}\,\op{e}^{2\pi\scriptop{i}/6}$, we arrive 
at the conclusion that, if in the above $\pm$ we choose the 
minus sign, the second period is asymptotically equal to 
$\op{e}^{2\pi\scriptop{i}/6}$ times the first period. 

Because the differential equation (\ref{pf}) for the periods 
has $q=\infty$ as a regular singular point, and 
the two solutions 
$\lambda =\, -1/6$ and $\lambda =\, -5/6$ of its 
indicial equation $\lambda\, (\lambda -1)
+2\,\lambda +5/36=0$ do not 
differ by an integer, each solution of (\ref{pf}) is 
of the form $q^{-1/6}\, a(1/q)+q^{-5/6}\, b(1/q)$, 
where $a\,$ and $b\,$ are complex analytic functions on 
a neighborhood of the origin, and the solution is uniquely 
determined by $a(0)$ and $b(0)$.  
See for instance Coddington and Levinson \cite[Ch. 4, Sec. 4]{cl}. 
It follows that our first period fits the description for 
$p_1(q)$. The analytic continuation of $p_1(q)$, when the small 
$1/q$ runs around the origin once, 
is equal to $p_2(q)$. Because $p_2(q)$ 
asymptotically agrees with our second period, it 
is equal to it. Therefore the periods $p_1(q)$ and $p_2(q)$ 
described in the lemma form 
a $\Z$\--basis of the period lattice $P(q)$. This completes the proof 
of the first paragraph in the lemma. 

For the second paragraph we observe that we took as the 
center of the discs $D_8$ the pole points. 
The line $L_1^{(8)}\setminus L_2^{(7)}$ is visible in the 
$(u_{121},\, u_{122})$ chart, where it is defined by 
$u_{121}=0$ and is parametrized by $u_{122}$. 
As $u_{121}={u_2}^{-1}$ and $u_{122}=u_1$, it follows 
that the centers of the discs $D_1$, which correspond 
to $u_{122}=0$, correspond to the zeros of $u(z)=\wp _{_{P(q)}}(z)$. 
The line $L_0^{(9)}\setminus L_3^{(6)}$ is 
visible in the $(u_{021},\, u_{022})$ chart, where it is 
defined by $u_{021}=0$ and parametrized by $u_{022}$. 
As $u_{021}={u_1}^{-1}$ and $u_{022}={u_1}^{-1}\, u_2$, 
it follows that the centers of the discs $D_0$ correspond to the 
zeros of the derivative 
$u_2(z)=\dot{u}(z)=\op{d}\wp _{_{P(q)}}(z)/\op{d}\! z$ of the 
solution $u(z)=\wp _{_{P(q)}}(z)$ of the autonomous 
differential equation $\ddot{u}=6\, u^2+1$. 

For large $|q|$ and $z$ not in one of the aforementioned 
small discs $D_i$, the solution of the autonomous Hamiltonian system 
in $S_9\setminus I$ is close to the part 
$L_3^{(6)}$ of $I$. In the $(u_{311},\, u_{312})$ chart, 
$L_3^{(6)}$ is defined by $u_{312}=0$ and parametrized by $u_{311}$. 
The base point $(u_{311},\, u_{312})=(4,\, 0)$ corresponds 
to $L_3^{(6)}\cap L_4^{(5)}$, the origin 
$(u_{311},\, u_{312})=(0,\, 0)$ to $L_3^{(6)}\cap L_0^{(6)}$, 
whereas $(u_{311},\, u_{312})=(\infty ,\, 0)$ corresponds to 
$(u_{321},\, u_{322})=(0,\, 0)$ hence to 
$L_3^{(6)}\cap L_2^{(7)}$. We have 
$u_{311}(z)=u(z)^{-3}\,\dot{u}(z)^2$ and 
$\dot{u}^2-4\, u^3-2\, u=q=$ constant, which suggests 
the rescaling $z=z_0+q^{-1/6}\, t$ and 
$u(z)=q^{1/3}\, v(q^{1/6}\, (z-z_0))$. 
Then $u_{311}=v^{-3}\, (v')^2$, 
$v''=6\, v^2+q^{-2/3}$, and 
$(v')^2=4\, v^3+1+2\, q^{-2/3}\, v$, 
hence $u_{311}=4+v^{-3}+2\, q^{-2/3}\, v^{-2}$. 
In the limit $q=\infty$ this leads to 
$u_{311}(t)=v(t)^{-3}\, v'(t)^2=4+v(t)^{-3}$. The equation 
$(v')^2=4\, v^3+1$ has a regular hexagonal period lattice $P$, 
and we arrange that $v(t)=\wp (t)$ is the solution 
with its poles at the points of $P$. As the poles 
have order two, it follows that the mapping 
$\C /P\to L_3^{(6)}:t+P\mapsto u_{311}(t)$ is a 
sixfold branched covering, where near the point $t+P=0+P$ 
the mapping behaves as $u_{311}(t)-4\sim t^6$. 
At this ramification point $t+P$ all the six branches 
come together, where the image point $u_{311}=4$ 
corresponds to the centers of the discs $D_8$. 
There are three ramification points $t$ where   
$v\in\C\setminus\{ 0\}$, $v'=0$, at each of which 
$u_{311}'=0$ and $u_{311}''=2\, v^{-3}\, v''=12\, v^{-1}\neq 0$, 
which means that at each of these ramification points 
two branches come together. Both ramification points 
ly over $u_{311}=0$, corresponding to the centers of the 
discs $D_0$. The only other ramification points 
$t$ occur when $v=0$. There are two of these and at each 
one three branches come together. Both these 
ramification points ly over $u_{311}=\infty$, corresponding to 
the centers of the discs $D_1$. 

The group of deck transformations of the aforementioned 
branched covering is isomorphic to $\Z /6\,\Z$ and 
generated by $T:z+P\mapsto\op{e}^{2\pi\scriptop{i}/6}\, z+P$, 
which is an automorphism of $\C /P$ because 
$P$ has a $\Z$\--basis consisting of $p_1\in\C\setminus\{ 0\}$ 
and $p_2=\op{e}^{2\pi\scriptop{i}/6}\, p_1$. 
The ramification points of order 6 are the 
fixed points in $\C /P$ of $T$, which is the 
single point $0+P$. The ramification points 
of order 3 are the fixed points of $T^2$ which are 
no fixed points of $T$, which are the two 
points $(p_1+p_2)/3+P$ and $2\, (p_1+p_2)/3+P$. 
The ramification points of order 2 are the fixed points 
of $T^3:t+P\mapsto -t+P$ which are no fixed points of $T$, 
which are the three points $p_1/2+P$, $p_2/2+P$, and 
$(p_1+p_2)/2+P$. 
\end{proof}

Figure \ref{latticefig} exhibits the 
complex times $z$ for which the solutions 
of the autonomous Hamiltonian system for large $q$ 
are near the various irreducible components of 
the singular fiber $I(\infty )$ over $q=\infty$. 
The solid dots represent the concentric discs 
$D_i$, $4\leq i\leq 8$, centered at the points of the period 
lattice $P(q)$ where $u_1$ has a pole, with respective radii 
$\sim |q|^{-1/(9-i)}\, r_i$ for large finite $r_i$, 
where the distance between the points of $P(q)$ the period lattice 
is of order $|q|^{-1/6}\gg |q|^{-1/5}\gg \ldots \gg |q|^{-1}$. 
The solution is near $L_8^{(1)}$ when $z\in D_8$, 
and near $L_i^{(9-i)}\setminus L_{i+1}^{(10-i)}$ 
when $z\in D_i\setminus D_{i+1}$ for 
$4\leq i\leq 7$.  
The double circles represent the concentric disks 
$D_1$ and $D_2$, centered at the zeros of $u_1$, 
approximately equal to the points  
$(p_1(q)+p_2(q))/3$ and $2\, (p_1(q)+p_2(q))/3$ modulo $P(q)$,  
and of radius $\sim |q|^{-1/2}\, r_1$ and $\sim |q|^{-1/4}\, r_2$ 
for large finite $r_i$, respectively. The solution is near 
$L_1^{(8)}$ when $z\in D_1$, and near $L_2^{(7)}\setminus L_1^{(8)}$ 
when $z\in D_2\setminus D_1$. 
The single circles represent the concentric discs 
$D_0$ centered at the zeros of $u_2=\dot{u}_1$, approximately equal to 
the points $p_1(q)/2$, $p_2(q)/2$, $(p_1(q)+p_2(q))/2$ 
modulo $P(q)$ with radius $\sim |q|^{-1/3}\, r$ for a large finite $r$. 
The solution is near $L_0^{(9)}$ when 
$z\in D_0$. When $z$ is in the complement of all the 
aforementioned discs, which happens for most of the complex times, 
the solution is near the component $L_3^{(6)}$ 
of $I(\infty )$. 

Figure \ref{wplevelsfig} is a contour plot of the absolute 
value of the Weierstrass $\wp$ function defined by the 
regular hexagonal lattice generated by $1$ and 
$\frac12+\frac12\,\sqrt{3}\,\op{i}$. The areas close to the 
lattice points = the pole points of $\wp (z)$ should have 
been blacker than black, because the second order 
poles are very big. Instead the computer program has left these areas blank. 
The solid level curve is the one of the level at the 
zeros of $\wp '(z)$, the saddle points of $|\wp (z)|$. 
For our $u(z)=\wp _{_{P(q)}}(z)$ the lattice $P(z)$ is asymptotically 
equal to $q^{-1/6}$ times a standard regular hexagonal lattice 
$P_0$, and therefore $u(z)\sim q^{1/3}\,\wp _{_{P_0}}(z)$. 
It follows that the pits in the mountain landscape of $|u(z)|$ 
in which the zeros of $u(z)$ ly are separated from each other by 
mountain ridges in which the heights of the passes, situated 
at the zeros of $\op{d}\! u(z)/\op{d}\! z$, are of 
large order $|q|^{1/3}$.  

\begin{figure}[p]
\centering
\includegraphics[width=9cm]
{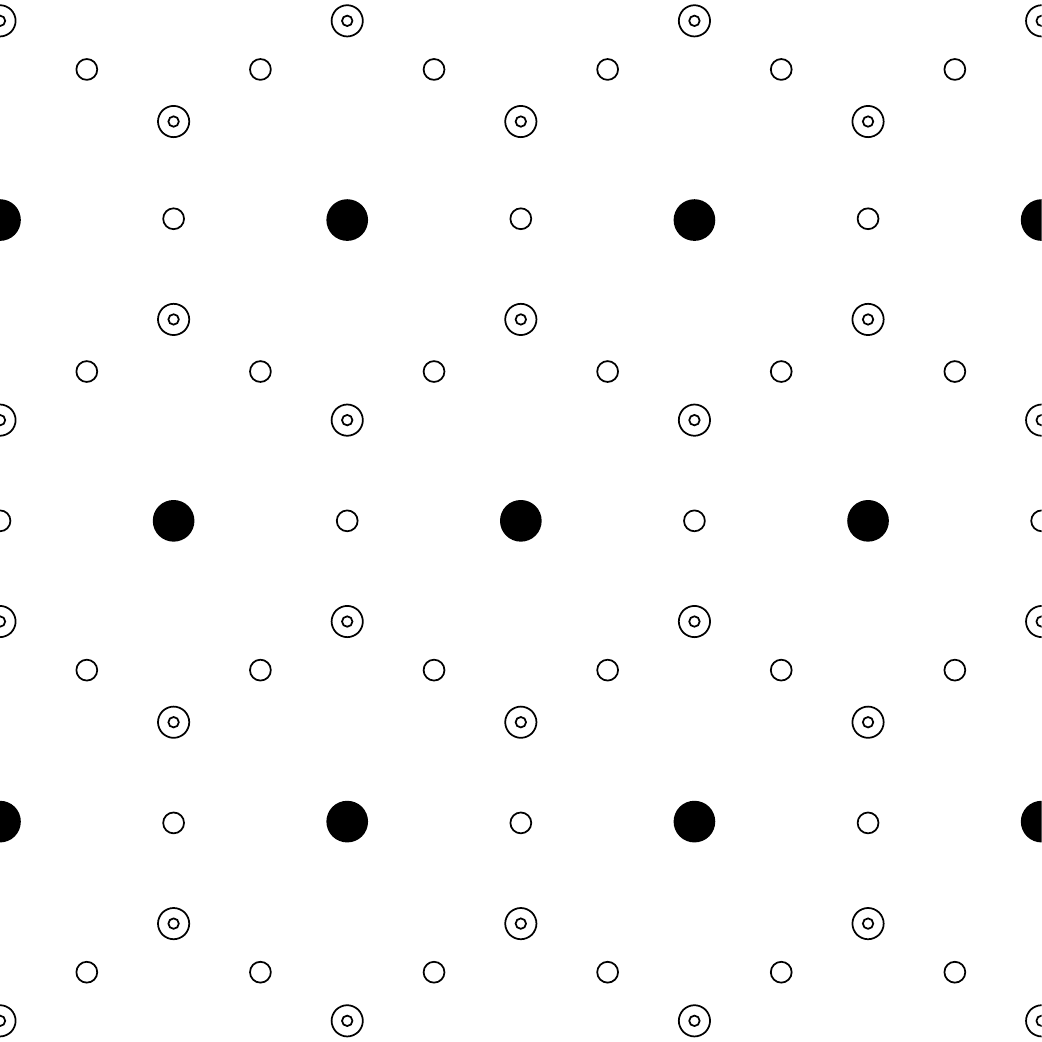}
\caption{Near the components of the infinity set} 
\label{latticefig}
\end{figure}

\begin{figure}[p]
\centering
\includegraphics[width=9cm]
{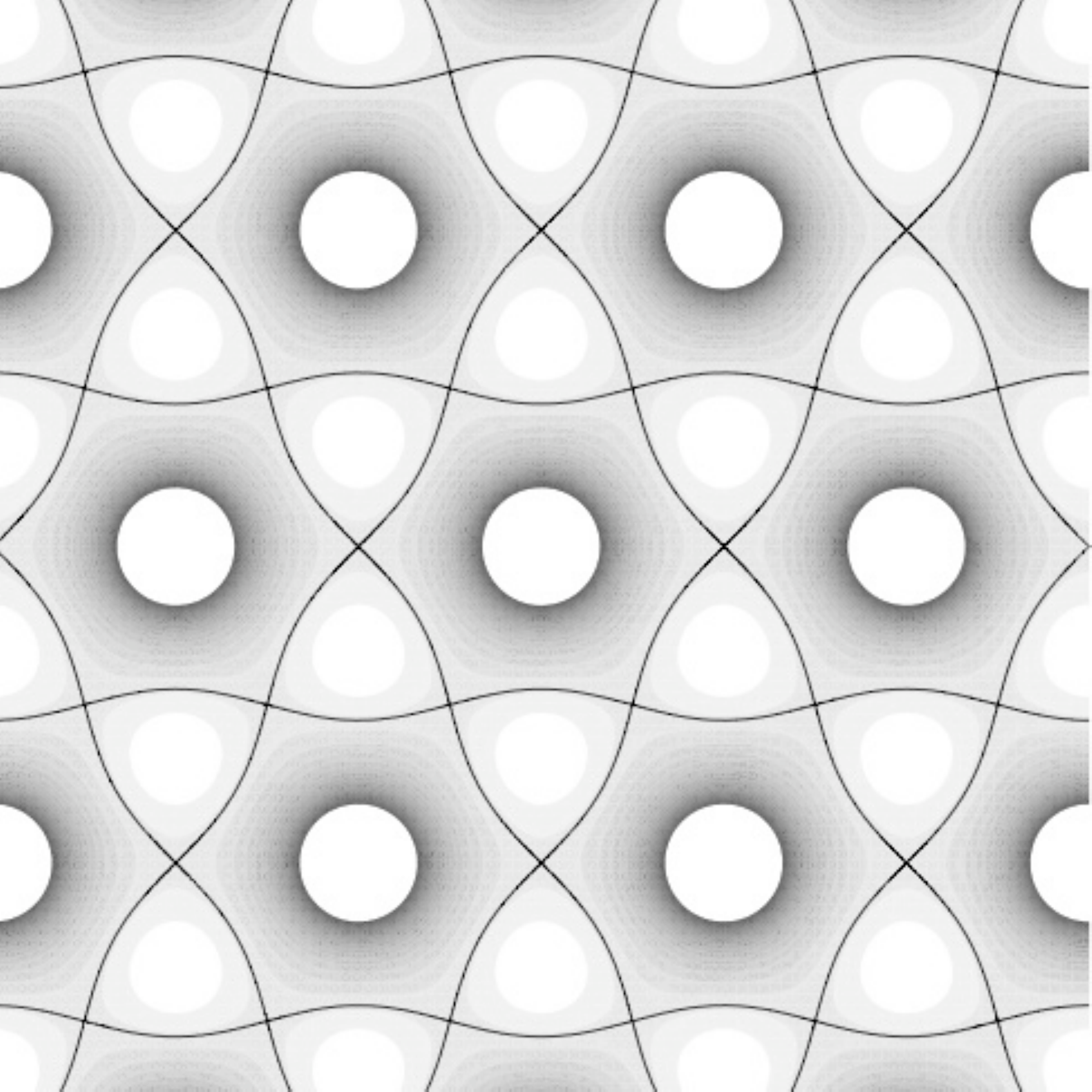}
\caption{The absolute value of the Weierstrass $\wp$ function} 
\label{wplevelsfig}
\end{figure}

The following lemma implies that the solutions near the part 
$L_3^{(6)}\setminus (L_0^{(9)}\cup L_2^{(7)}\cup 
L_4^{(5)})$ of $I$ 
of the non\--autonomous Boutroux\--Painlev\'e system 
closely follow the solutions of the autonomous 
Hamiltonian system. 
\begin{lemma}
Let $K$ be a compact subset of 
$L_3^{(6)}\setminus (L_0^{(9)}\cup L_2^{(7)}\cup 
L_4^{(5)})$ and $R\in\R _{>0}$. Then there exists a neighborhood 
$U$ of $K$ in $S_9$ and a constant $C$ such that 
the distance between $x(z)$ and $x_0(z)$ is 
$\leq C\, |z_0|^{-1}\, |q|^{-1/6}$ if $|z_0|\geq C$, 
$|z-z_0|\leq R$,  
$x_0$ is a solution of the autonomous Hamiltonian system 
such that $x_0(z_0)\in U$ and $x_0(z)\in U$, and 
$x$ is the solution of the Boutroux\--Painlev\'e system 
such that $x(z_0)=x_0(z_0)$. 
\label{L3lem}
\end{lemma}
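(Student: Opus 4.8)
The plan is to reduce the statement to a single Gr\"onwall estimate, carried out after a rescaling, in the coordinate chart $(u_{311},\, u_{312})$. This chart covers a full neighbourhood $U$ of $K$ precisely because $K$ avoids the three points $L_3^{(6)}\cap L_0^{(9)}$, $L_3^{(6)}\cap L_2^{(7)}$, $L_3^{(6)}\cap L_4^{(5)}$ at which it degenerates; in it $L_3^{(6)}$ is the line $u_{312}=0$, the affine coordinate along it is $u_{311}=u_1^{-3}u_2^{2}$ (so $u_1=u_{311}u_{312}^{-2}$, $u_2=u_{311}^{2}u_{312}^{-3}$), and $K$ lies over a compact set $K'\subset\C\setminus\{0,4\}$ of the $u_{311}$-line. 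On $U$ one has $q=2E\asymp u_{311}^{3}(u_{311}-4)\, u_{312}^{-6}$, so $|q|$ is large and $|u_{312}|\asymp|q|^{-1/6}$; the energy $q$ is constant along $x_0$ and varies by a factor $1+\op{O}(1/|z_0|)$ along $x$, so $|q|$ in the statement is unambiguous up to constants. Because the coordinate changes of \S\ref{boutrouxsec} depend polynomially on $1/z$, the Boutroux--Painlev\'e field $v_z$ and the autonomous field $v_\infty$ satisfy $v_z-v_\infty=\op{O}(1/z)$ uniformly on $U$: although $v_z$ and $v_\infty$ separately have components of size $\asymp|q|^{1/6}$ near $L_3^{(6)}$, their difference does not, the large part being common to both.

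I would then rescale by the weights of Lemma \ref{qinftyperiodlem}: set $t=q^{1/6}(z-z_0)$, $v_1=q^{-1/3}u_1$, $v_2=q^{-1/2}u_2$, with $q$ the conserved value of $2E$ along $x_0$. Substituting into (\ref{u01dot}), the autonomous system becomes the $q$-independent model Weierstrass flow $\op{d}\! v_1/\op{d}\! t=v_2$, $\op{d}\! v_2/\op{d}\! t=6v_1^{2}+q^{-2/3}$, whose orbits are translations on the elliptic curve $v_2^{2}=4v_1^{3}+1+2q^{-2/3}v_1$ and hence are non-expanding, while the non-autonomous system becomes
\[
\op{d}\! v_1/\op{d}\! t=v_2-\tfrac{2}{5z}\,q^{-1/6}v_1,\qquad
\op{d}\! v_2/\op{d}\! t=6v_1^{2}+q^{-2/3}-\tfrac{3}{5z}\,q^{-1/6}v_2,
\]
up to $\op{O}(z^{-2})$ terms carrying yet more negative powers of $q$. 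On the region where the solution stays near $K$ --- i.e.\ where $v_1$ is bounded and bounded away from $0$ and $v_2$ is bounded, equivalently $u_{311}=v_1^{-3}v_2^{2}$ lies near $K'$ --- the difference of the two \emph{rescaled} fields is $\op{O}(|q|^{-1/6}/|z_0|)$, uniformly, and the rescaled autonomous field there has bounded $C^{1}$-norm. This rescaling is essential rather than cosmetic: in the raw chart the autonomous field is stiff, with Lipschitz constant $\asymp|q|^{1/3}$, so a direct Gr\"onwall argument there would break down.

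The decisive point is that the hypotheses --- $x_0$ staying in $U$ for $|z-z_0|\le R$ --- already force the rescaled time to remain bounded. Along $x_0$ one has, to leading order, $u_{311}(z)=4+\wp_{P_0}\!\bigl(q^{1/6}(z-z_{*})\bigr)^{-3}$, with $z_{*}$ a pole of $u_1$ and $P_0$ the model hexagonal lattice, so that $\bigl|\op{d}\! u_{311}/\op{d}\! z\bigr|=3|q|^{1/6}\,|\wp^{-4}\wp'|\,(1+\op{o}(1))\ge c_K\,|q|^{1/6}$ wherever $u_{311}$ lies near $K'$: that $K'$ avoids $u_{311}=4$ and $u_{311}=\infty$ keeps $\wp$ bounded and bounded away from $0$, and that $K'$ avoids $u_{311}=0$ keeps $\wp'$ bounded away from $0$. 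A Cauchy estimate for the bounded holomorphic function $z\mapsto u_{311}(x_0(z))$ on the disc $|z-z_0|\le R$, throughout which $x_0\in U$, then gives $c_K|q|^{1/6}\le\op{diam}(K')/R$, hence $R\le C_K|q|^{-1/6}$ and $|t|\le C_K$ on the whole disc. On this $\op{O}(1)$-long interval in $t$ I would run Gr\"onwall for the two rescaled systems, which agree at $t=0$: the fields differ by $\op{O}(|q|^{-1/6}/|z_0|)$, the rescaled autonomous field has bounded $C^{1}$-norm near the bounded $x_0$-orbit, and a short continuation argument confines $x$ to that neighbourhood, giving $|v_1(x(z))-v_1(x_0(z))|+|v_2(x(z))-v_2(x_0(z))|\le C|q|^{-1/6}/|z_0|$. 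Transferring back through the identities $u_{311}=v_1^{-3}v_2^{2}$ and $u_{312}=q^{-1/6}v_1^{-2}v_2$, and using that on $\overline{U}$ the distance in $S_9$ is equivalent to the Euclidean distance in $(u_{311},u_{312})$, one obtains $\op{dist}(x(z),x_0(z))=\op{O}(|q|^{-1/6}/|z_0|)$, which is the claim; the factor $|z_0|^{-1}$ is simply that the perturbation is $\op{O}(1/z)$ with $z\sim z_0$.

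The main obstacle I expect is making the two scalings match: the gain of the factor $|q|^{-1/6}$ is genuine only because the $\op{O}(1/z)$ perturbation becomes an $\op{O}(|q|^{-1/6}/z)$ perturbation in the rescaled picture \emph{while} the rescaled time interval stays $\op{O}(1)$. If one used $|z-z_0|\le R$ alone and allowed $|t|$ to grow like $|q|^{1/6}R$, Gr\"onwall would only deliver the weaker bound $\op{O}(1/|z_0|)$; so establishing that staying near $K$ genuinely confines $z$ to a disc of radius $\asymp|q|^{-1/6}$ --- which is exactly where the exclusion of all three special values of $u_{311}$ is used --- is the crux of the argument.
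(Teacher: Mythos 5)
There is a genuine gap, and it sits exactly at what you call the decisive point. The lemma does not assume that $x_0(\zeta)\in U$ for \emph{every} $\zeta$ with $|\zeta-z_0|\le R$; it only assumes $x_0(z_0)\in U$ and $x_0(z)\in U$ at the two endpoints. In the regime the lemma is made for --- $R$ a fixed positive number chosen before $U$, and $|q|$ large throughout $U$ --- the autonomous solution has periods of order $|q|^{-1/6}$ in $z$ (Lemma \ref{qinftyperiodlem}), so between $z_0$ and $z$ it winds around its elliptic curve of order $|q|^{1/6}\, R$ times, repeatedly leaving every neighborhood of $L_3^{(6)}$ (near the discs $D_i$) and returning; this is precisely how the lemma is used in the paragraph following it, to compare the positions of the discs of $x$ and of $x_0$ over a $z$\--interval of fixed length. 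Your Cauchy\--estimate argument that the hypotheses force $R\le C_K\, |q|^{-1/6}$ therefore rests on a misreading of the hypotheses (under your reading the statement would in fact become vacuous for fixed $R$ and $|q|$ large), and without that step your own accounting shows the proof collapses: in your fully rescaled variables $(t,\, v_1,\, v_2)$ the perturbation is only $\op{O}(|q|^{-1/6}/|z|)$ per unit $t$\--time, and over the genuine rescaled time span $|t-t_0|\lesssim |q|^{1/6}\, R$ a Gr\"onwall estimate returns at best $\op{O}(R/|z_0|)$, as you concede in your final paragraph. So what you prove is only the restricted statement for $|z-z_0|=\op{O}(|q|^{-1/6})$, i.e.\ over about one period, not the lemma.

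The paper obtains the factor $|q|^{-1/6}$ differently, without shrinking the time window: it reparametrizes time only (multiplying the vector field by $q^{-1/6}$) and keeps the chart coordinates $(u_{311},\, u_{312})$, in which distances near $K$ are the relevant distances in $S_9$. In that chart the nonautonomous term is absent from the $u_{311}$\--equation and enters the $u_{312}$\--equation of (\ref{31rescaledsystem}) with a factor ${u_{312}}^2\sim q^{-1/3}$, so the difference of the two rescaled fields is $\op{O}(z^{-1}\, q^{-1/3})$ --- one power of $q^{-1/6}$ smaller than what you see in $(v_1,\, v_2)$ --- and integrating this over $t-t_0=\op{O}(q^{1/6}\, (z-z_0))$ with $|z-z_0|\le R$ gives the asserted $\op{O}(|q|^{-1/6}\, |z_0|^{-1})$. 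The structural reason is that the $z^{-1}$\--perturbation in (\ref{u01dot}) is $-\frac{2}{5z}\, u_1\,\partial _{u_1}-\frac{3}{5z}\, u_2\,\partial _{u_2}$, the generator of the Boutroux weighted scaling: it annihilates the weight\--zero coordinate $u_{311}={u_1}^{-3}{u_2}^2$ and acts on $u_{312}$ as $(5z)^{-1}u_{312}$, which is already of size $q^{-1/6}$ near $L_3^{(6)}$. If you insist on your variables $(v_1,\, v_2)$, you would have to exploit exactly this cancellation in the scale\--invariant direction before applying Gr\"onwall; a bound on $\| v-v_0\|$ alone transfers to $u_{311}=v_1^{-3}v_2^2$ without any gain and so cannot yield the stated estimate over fixed $R$. (Minor point: $u_{312}=q^{-1/6}\, v_1/v_2$, not $q^{-1/6}\, v_1^{-2}v_2$.)
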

\begin{proof}
$L_3^{(6)}\setminus (L_0^{(9)}\cup L_2^{(7)}\cup 
L_4^{(5)})$ is visible in the $(u_{311},\, u_{312})$ chart, 
where it is determined by $u_{312}=0$ and parametrized 
by $u_{311}$. As $u_{311}=\infty$, $u_{311}=0$, and $u_{311}=4$ 
correspond to $L_2^{(9)}$, $L_0^{(9)}$, 
and $L_4^{(5)}$, respectively, we keep $u_{311}$ 
bounded and bounded away from $0$ and $4$. 
We multiply the vector field by the scalar factor 
$q^{-1/6}={u_{311}}^{1/2}\, u_{312}\, 
(u_{311}-4-2\, {u_{311}}^2\, {u_{312}}^4)^{-1/6}$, 
which amounts to a time reparametrization 
along each solution curve. Lemma \ref{qinftyperiodlem} 
implies that the distance between the periods 
of the solutions of the autonomous system is of order 
$|q|^{-1/6}$. Lemma \ref{dotq/qlem} implies that,  
along the solutions near $L_3^{(6)}$, we have  
$q(z)=q(z_0)\, (z/z_0)^{-6/5+\op{o}(1)}$, which  
is asymptotically constant for $|z-z_0|<<|z_0|$.  
This leads to the time rescaled system 
\begin{equation}
\begin{array}{lll}
\op{d}\! u_{311}/\op{d}\! t
&=&{u_{311}}^{1/2}\, (12-3\, u_{311}+2\, {u_{311}}^2\, {u_{312}}^4)
\, (u_{311}-4-2\, {u_{311}}^2\, {u_{312}}^4)^{-1/6}\\
&\sim& -3\, {u_{311}}^{1/2}\, (u_{311}-4)^{5/6},\\
\op{d}\! u_{312}/\op{d}\! t
&=&{u_{311}}^{-1/2}\, u_{312}\, 
(-6+u_{311}-{u_{311}}^2\, {u_{312}}^4)
\, (u_{311}-4-2\, {u_{311}}^2\, {u_{312}}^4)^{-1/6}\\
&&+(5\, z)^{-1}\, {u_{311}}^{1/2}\, {u_{312}}^2
\, (u_{311}-4-2\, {u_{311}}^2\, {u_{312}}^4)^{-1/6}. 
\end{array}
\label{31rescaledsystem}
\end{equation}
Actually, we have six vector fields, one for each of the sixth 
roots of $1/q$, corresponding to the fact that the curves 
$q$ equal to a large constant pass six times near 
$L_3^{(6)}\setminus (L_0^{(9)}\cup L_2^{(7)}\cup L_4^{(5)})$. 
The autonomous Hamiltonian system is obtained 
from (\ref{31rescaledsystem} by deleting 
the $z$\--dependent term 
from the equation for  $\op{d}\! u_{312}/\op{d}\! t$. 
As $u_{312}\sim q^{-1/6}\, {u_{311}}^{-1/2}\, (u_{311}-4)^{1/6}$, 
the difference between the two vector fields 
is $\sim (5\, z)^{-1}\, q^{-1/3}\, 
{u_{311}}^{-1/2}\, (u_{311}-4)^{1/6}
=\op{O}(z^{-1}\, q^{-1/3})$, and therefore 
$|x(z)-x_0(z)|=\op{O}(q^{-1/3}\, z^{-1}\, (t-t_0))$, 
where $t-t_0=\op{O}(q^{1/6}\, (z-z_0))$. 
\end{proof}

It follows from the first equation in 
(\ref{31rescaledsystem}) that, for the solutions 
near $L_3^{(6)}\setminus (L_0^{(9)}\cup L_2^{(7)}\cup L_4^{(5)})$ 
of both the non\--autonomous and the autonomous system, 
the $t$\--times needed to go from a position with 
first coordinate $u_{311}$ to a position near 
$L_3^{(6)}\cap L_0^{(9)}$, $L_3^{(6)}\cap L_2^{(7)}$, 
and $L_3^{(6)}\cap L_4^{(5)}$ are asymptotically equal to 
the integral of 
$\, -3^{-1}\, U^{-1/2}\, (U-4)^{-5/6}$ 
over the $U$\--interval from $u_{311}$ to $0$, $\infty$, 
and $4$, respectively. Therefore the distances 
between the discs $D_i$ for $x$ and the discs $D_i$ 
for $x_0$ are of smaller order than $|q|^{1/6}$. 
As the distances between the discs for $x_0$ 
are of order $|q|^{1/6}$, where near $L_3^{(6)}$ we 
have $q=d$, see Lemma \ref{E8E4simlem},  the discs for $x$ have 
asymptotically the same relative position as the discs 
for $x_0$. Therefore Lemma \ref{L3lem} leads 
to a description of the solutions of the 
Boutroux\--Painlev\'e system near $I$ 
which closely resembles the description of the 
the solutions of the autonomous Hamiltonian 
system near $S_{9,\infty}$. Lemma \ref{dotq/qlem} implies 
for the solutions 
of the Boutroux\--Painlev\'e system near $L_3^{(6)}$ we have 
$q(z)\sim q(z_0)\, (z/z_0)^{-6/5+\op{o}(1)}$ hence the order 
$|q(z)|^{-1/6}\sim |q(z_0)|^{-1/6}\, (|z|/|z_0|)^{1/5+\op{o}(1)}$ 
of the distances between the discs 
increases slowly with growing $|z|\geq |z_0|>>1$, 
until the solution has left the neighborhood of 
$I$ in $S_9$ where the estimates hold.  

\begin{remark}
Boutroux \cite[bottom of p. 310]{boutroux} 
claimed that the quantity $(Y')^2-4\, Y^3+12\, Y$, 
see Remark \ref{boutrouxjkconstantsrem}, remains bounded 
when $X$ runs to infinity along any path with bounded 
argument not passing through any pole point of $Y(X)$. 
As the function $q$ has a simple pole at every pole point 
of $u(z)$, where $u_1(z)\, u_2(z)$ and 
$u_1(z)^2$ have poles of order 5 and 4, respectively, 
it follows from (\ref{qeqdotu}) that $(Y')^2-4\, Y^3+12\, Y$ 
has a pole of order 5 at every pole point of $Y(X)$, and therefore 
the claim of Boutroux can only be valid if the final $X$ 
stays sufficiently far away from the pole points of $Y(X)$. 
Boutroux referred for the proof to \cite[\S 10]{boutroux}, 
which in turn refers back to the estimates on 
\cite[p. 296, 297]{boutroux}. 
No proof is given for 
the existence of paths along which estimates of the form 
\cite[(17) on p. 296]{boutroux} hold, and of which the 
concatenation is the desired path in the $X$\--plane 
running to infinity. In particular no analysis is given of 
the existence of paths along which estimates of the form 
\cite[(17) on p. 296]{boutroux} hold when $|D|$ is large. 
Boutroux did not give an explicit $6/5$ power law as 
in Corollary \ref{6/5cor}, although the last term in the 
right hand side of \cite[(36) on p. 321]{boutroux} contains a 
factor $X^{-6/5}$ which Boutroux used in order to argue that 
the quantity $(Y')^2-4\, Y^3+12\, Y$ remains bounded. 

The asymptotic formula of Joshi and Kruskal 
\cite[(5.18) with $c=6/5$]{jk92} 
implies that the solution ${\mathcal E}$ 
of the averaged equation 
for their quantity $E$, which corresponds to our energy 
function $E$, satisfies 
${\mathcal E}(z)\sim {\mathcal E}(z_0)\, (z/z_0)^{-6/5+\op{o}(1)}$ 
for large $|{\mathcal E}|$. Joshi and Kruskal \cite{jk92} did not 
provide estimates for $E$ in terms 
of ${\mathcal E}$. It follows from 
(\ref{qexpansion}) that $E$ has a pole of order one 
at the pole points $z=\zeta$, 
and Lemma \ref{L3lem} together with the paragraph 
preceding it imply that for the solutions near the 
infinity set the pole points form an approximate regular hexagonal 
lattice with distance between the pole points of order $|E|^{-1/6}$. 
Therefore we cannot have 
$E(z)\sim E(z_0)\, (z/z_0)^{-6/5+\op{o}(1)}$ 
for all large $|E|$. However, Lemma \ref{E8E4simlem} and 
Corollary \ref{6/5cor} imply this estimate  
if $z$ stays away from the pole points at a distance 
of larger order than $|d|$, when $|d|\sim |2\, E|^{-1}<<|E|^{-1/6}$. 
\label{infqlitrem}
\end{remark}

\section{Near the equilibria of the limit system}
\label{eqsec}
In this section, we consider the Boutroux-Painlev\'e system near the equilibria of its autonomous limit and prove several results about the solutions near these equilibria. These are the solutions called {\em tronqu\'ee} by Boutroux that are asymptotically free of poles near infinity in certain sectors. We use classical methods to determine these properties and end  with a determination of their sequences of poles near the boundaries of pole-free sectors.

The limit system 
\begin{equation}
\begin{array}{lll}
\dot{u}_1&=&u_2,\\
\dot{u}_2&=&6\, {u_1}^2+1,
\end{array}
\label{u01dotlim}
\end{equation}
obtained from (\ref{u01dot}) by substituting  
$z=\infty$, has two equilibrium points, 
determined by $u_1=\epsilon\,\op{i}/\sqrt{6}$, $u_2=0$, 
where $\epsilon\in\{ -1,\, 1\}$. 
The linearization of the vector field 
at these equilibrium points is 
given by $(\delta u_1,\,\delta u_2)\mapsto 
(\delta u_2,\, 12\, u_1\, \delta u_1)
=(\delta u_2,\, \epsilon\, 2\, \sqrt{6}\,\op{i}\,\delta u_1)$, 
which has the eigenvalues 
\begin{equation}
\lambda _{\pm}=\pm\, (24)^{1/4}\,\op{e}^{\pi\scriptop{i}\, 
(1/2-\epsilon /4)}, 
\label{lambdapm}
\end{equation}
with the corresponding eigenspaces determined by 
$\delta u_2=\lambda _{\pm}\, \delta u_1$. 

Because the vector field in $S_9(\infty )\setminus 
I(\infty )$ has no zeros on the 
pole line $L_9(\infty )$, and $S_9(\infty )\setminus 
(I(\infty )\cup L_9(\infty ))$ 
is equal to the coordinate neighborhood where 
$u_1$ and $u_2$ are finite, these are the only 
equilibrium points of the autonomous Hamiltonian 
system in $S_9(\infty )\setminus 
I(\infty )$. The values of the function $q:=2\, E$ 
in (\ref{qeq}) at the equilibrium points are equal to 
$-\epsilon\, (2/3)^{3/2}\op{i}$, and the curves 
$q=\, -\,\epsilon\, (2/3)^{3/2}\op{i}$ are the only singular 
level curves of the function $q$ in $S_9(\infty )\setminus 
I(\infty )$. Both these singular fibers 
of the elliptic fibration $q:S_9(\infty )\to\Proj ^1$ 
are of Kodaira type $\op{I}_1$. The fiber 
$q=\infty$, equal to the infinity set $I(\infty )$ 
of the Hamiltonian vector field defined by the Hamiltonian function 
$E=q/2$, is of Kodaira type $\op{II}^*$.  
The configuration of the singular fibers 
of the elliptic fibration $q:S_9(\infty )\to\Proj ^1$ 
is $\,\op{II}^*+\op{I}_1+\op{I}_1$.  

\subsection{Perturbation of a system with a hyperbolic 
equilibrium point}
\label{perturbss} 
With the substitution $t=\lambda_+\, z$ and an affine 
change of coordinates $(u_1,\, u_2)\mapsto (p^+,\, p^-)$ 
which maps $(\epsilon\,\op{i}/\sqrt{6},\, 0)$ to $(0,\, 0)$ 
and the eigenvectors of the linearization of (\ref{u01dotlim}) 
at $(\epsilon\,\op{i}/\sqrt{6},\, 0)$ for the eigenvalues 
$\lambda _+$ and $\lambda _-$ to $(1,\, 0)$ and $(0,\, 1)$, 
respectively, the system (\ref{u01dot}) is transformed 
to a system 
\begin{equation}
\op{d}\! p/\op{d}\! t=v(t^{-1},\, p)   
\label{cssystem}
\end{equation} 
such that the right hand side satisfies the conditions 
in Lemma \ref{csestlem} below. 
In the remainder of this section we discuss arbitrary systems 
(\ref{cssystem}) which satisfy the conditions 
in Lemma \ref{csestlem}.  
In the lemmas \ref{stablelem} -- \ref{bdyasymlem}, we  
describe the solutions $p(t)$ of (\ref{cssystem}) 
which remain bounded for all $t$ 
in an unbounded domain in the complex plane, 
where the domain is increased step by step. 

The conclusions of the lemmas \ref{stablelem} -- \ref{bdyasymlem} 
follow from O. and R. Costin \cite[Th. 1 and 2]{costin2}, 
which deals with systems (\ref{cssystem}) in arbitrary dimensions 
$n$, where $v$ is complex analytic on an 
open neighborhood of $(0,\, 0)$ in $\C\times\C ^n$, 
$v(0,\, 0)=0$, and less special assumptions are made 
on $L_0:=\partial v(0,\, p)/\partial p|_{p=0}$. 
The proof of \cite[Th. 1]{costin2} uses Borel summation. 
Our proofs, a sequence  
of variations on the method of Cotton, 
are more classical, where our step by step approach 
may be helpful in understanding all the aspects of 
the final picture. 
As we only need the case 
that $n=2$ and $L_0$ has the eigenvalues $\pm 1$, 
and a wide generalization would require more elaborate 
notations and proofs, we did not attempt to write down 
the latter. In the next paragraph we summarize the 
results of the lemmas \ref{stablelem} -- \ref{bdyasymlem}.  

In Lemma \ref{stablelem} we prove that the solutions $p(t)$ 
of (\ref{cssystem}) which remain small on half lines 
$t_0+\R _{\geq 0}$ form a 
one\--parameter family parametrized by the complex number $p(t_0)^-$. 
Lemma \ref{centerlem} yields unique solutions 
$p_{\uparrow}(t)$ and $p_{\downarrow}(t)$ which remain small 
on horizontal axes 
in the upper and lower complex half plane, respectively. 
In Lemma \ref{centerVlem} it is established that for every 
$\epsilon >0$ there exists an $r>0$ such that the $p_{\uparrow}(t)$ 
have a common complex analytic extension to the domain defined 
by the inequalities  
$|t|>r$ and $\, -\pi/2 +\epsilon <\op{arg}t
<3\pi/2-\epsilon$, and that this common extension has 
an asymptotic expansion in strictly negative powers of $t$ 
as $|t|\to\infty$. Lemma \ref{alphalem} states that there exists  
an $\alpha\in\C$ such that for each 
solution $p(t)$ in Lemma \ref{stablelem} there exist  
$\eta, r>0$ such that $p(t)$ has a complex 
analytic extension to a small solution of (\ref{cssystem}) 
on the domain $R_{\eta ,\, r}$ 
defined by $|t|>r$, $\, -\pi <\op{arg}t<\pi$, 
and $|\tau (t)|<\eta$, where $\tau (t):=\op{e}^{-t}\, t^{\alpha}$. 
Furthermore, there exist unique $C\in\C$ and $d_j\in\C ^2$, 
$j\in\Z _{\geq 0}$, such that on every subdomain $\Sigma$ where 
$\op{e}^{-t}$ is of smaller order than every negative power 
of $t$ as $|t|\to\infty$ the function 
$p(t)-p_{\uparrow}(t)$ has the asymptotic expansion 
$p(t)-p_{\uparrow}(t)\sim C\,\op{e}^{-t}\, t^{\alpha}
\,\sum_{j=0}^{\infty}\, t^{-j}\, d_j$. Here the $d_j$ 
do not depend on the choice of the solution $p(t)$ in 
Lemma \ref{stablelem}. With a similar argument as in the 
proof of Lemma \ref{alphalem}, we obtain that in a subdomain 
where $C_1\, |t|^{-\epsilon _1}<|\op{e}^{-t}|<C_2\, |t|^{-\epsilon _2}$ 
for strictly positive $C_1,\, C_2,\, \epsilon _1,\,\epsilon _2$, 
we have an asymptotic expansion of the form 
$p(t)\sim\sum_{h,\, i\in\Z _{\geq 0}}\,\tau (t)^h\, t^{-i}\, p_{h,\, i}$  
as $|t|\to\infty$. Here the coefficients 
$p_{h,\, i}$ satisfy $p_{h,\, i}=C^h\, c_{h,\, i}$, 
where the coefficients $c_{h,\, i}$ do not depend on the choice 
of the solution $p(t)$ in Lemma \ref{cssystem}. 
Lemma \ref{Fjlem} states that 
for each $i$ the series $F_i(\tau ):=\sum_{h\geq 0}\,\tau ^h\, p_{h,\, i}$ 
converges for sufficiently small $|\tau |$, when Lemma 
\ref{bdyasymlem} implies that we have an asymptotic expansion 
of the form $p(t)\sim\sum_{i\geq 0}\, F_i(\tau (t))\, t^{-i}$ 
as $|t|\to\infty$ in a domain where $\op{Im}t\geq 0$ and 
$F_0(\tau (t))$ remains bounded, extending far into the domain where 
$p(t)$ is bounded away from zero. This expansion will 
lead to the asymptotic determination of 
\cite[Proposition 15]{costin2} of the sequence of 
poles of the truncated solutions of (\ref{PI}) in 
Lemma \ref{polelem}. At some places our statements 
are more precise and our proofs more complete than 
those in \cite{costin2}. 

\begin{lemma}
Assume that $v=(v^+,\, v^-)$ is a $\C ^2$\--valued 
complex analytic function on an open neighborhood $D$ of the 
origin in $\C ^3$ such that 
$v^{\pm}(u,\, p)=\,\pm\, p^{\pm}+w(u,\, p)$, 
$w(0,\, 0)=0$, and $\partial w(0,\, p)/\partial p|_{p=0}=0$.  
Here $p=(p^+,\, p^-)\in\C ^2$ and 
$\| p\| :=\max\{ |p^+|,\, |p^-|\}$. 
Then there exist  
strictly positive real numbers $\delta _0$, $\epsilon _0$, 
$C_1$, $C_2$, $C_3$, and $C_4$ 
such that 
$\| w(u,\, p)\|\leq 
C_1\,\| p\| ^2+C_2\, |u|$, and $\|\partial w(u,\, p)/\partial p\| 
\leq C_3\,\| p\| +C_4\, |u|$  
if $\| p\| <\delta _0$ and $|u|<\epsilon _0$. Here the 
last condition implies the preceding one for $C_1=C_3/2$ 
and a suitable $C_2$. In the sequel we will take 
$D$ equal to the set of all $(u,\, p)\in\C\times\C ^2$ 
such that $|u|<\epsilon _0$ and $\| p\| <\delta _0$. 
For solutions $p$ of (\ref{cssystem}) we will 
always require that $|t|>1/\epsilon _0$ and 
$\| p(t)\| <\delta _0$ for all $t$ in the domain of definition 
of $p$.  
\label{csestlem}
\end{lemma}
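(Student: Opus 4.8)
The plan is to deduce both inequalities from Cauchy's estimates on one fixed polydisc, combined with two elementary segment integrations: one in the $p$-variable that exploits the hypotheses $w(0,0)=0$ and $\partial w(0,p)/\partial p|_{p=0}=0$, and one in the $u$-variable. Since $v$ is complex analytic on a neighborhood of the origin in $\C^3$ and $w$ differs from $v$ only by a term linear in $p$, the function $w$ is complex analytic on some polydisc $\{|u|\le 2\epsilon_0,\ \|p\|\le 2\delta_0\}$ contained in $D$; let $M$ be the supremum of $\|w\|$ there. Cauchy's inequalities then bound every first- and second-order partial derivative of $w$ on the smaller polydisc $\{|u|\le\epsilon_0,\ \|p\|\le\delta_0\}$ by a fixed multiple of $M$; call this common bound $C$. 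All four constants of the lemma will be extracted from this single $C$, so that one and the same pair $(\epsilon_0,\delta_0)$ serves both estimates.

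For the bound on $\partial w/\partial p$, write $\partial_p w(u,p)=\partial_p w(0,p)+\bigl(\partial_p w(u,p)-\partial_p w(0,p)\bigr)$. The hypothesis gives $\partial_p w(0,0)=0$, so integrating the second $p$-derivatives of $w$ along the segment from $(0,0)$ to $(0,p)$ yields $\|\partial_p w(0,p)\|\le C\,\|p\|$, while integrating $\partial_u\partial_p w$ along the segment from $(0,p)$ to $(u,p)$ yields $\|\partial_p w(u,p)-\partial_p w(0,p)\|\le C\,|u|$. Hence $\|\partial_p w(u,p)\|\le C_3\,\|p\|+C_4\,|u|$ with $C_3=C_4=C$. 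For the bound on $w$ itself I would follow the route indicated in the statement rather than repeat a Taylor argument: integrate the bound just obtained along the segment from $(u,0)$ to $(u,p)$, so that $w(u,p)=w(u,0)+\int_0^1\partial_p w(u,sp)\cdot p\,ds$ and therefore $\|w(u,p)\|\le\|w(u,0)\|+(C_3/2)\,\|p\|^2+C_4\,\delta_0\,|u|$; finally $w(0,0)=0$ together with a segment integration of $\partial_u w$ gives $\|w(u,0)\|\le C\,|u|$. Collecting terms produces $\|w(u,p)\|\le C_1\,\|p\|^2+C_2\,|u|$ with $C_1=C_3/2$ and $C_2=C+C_4\,\delta_0$, precisely as asserted. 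The remaining two sentences of the lemma merely fix notation for the sequel and require nothing further.

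There is essentially no obstacle here; the one point that demands care is the order of the quantifiers. One must shrink the polydisc first, fixing $\epsilon_0$ and $\delta_0$, and only afterwards read off the single constant $C$ bounding the relevant derivatives on that fixed polydisc, so that the same $(\epsilon_0,\delta_0)$ works simultaneously for both estimates and for the normalization $C_1=C_3/2$ with a suitable $C_2$.
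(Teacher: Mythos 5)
Your argument is correct and is essentially the paper's own proof: the paper also splits $w(u,p)$ and $\partial w(u,p)/\partial p$ into a $u=0$ part plus a difference and bounds each piece by Taylor expansion with remainder estimates (your segment integrations of Cauchy-bounded derivatives), which is the same mechanism. The only cosmetic difference is that you obtain the quadratic bound on $w$ from the derivative bound, exactly as the remark ``$C_1=C_3/2$ and a suitable $C_2$'' in the statement indicates, rather than estimating it independently.
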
 
\begin{proof}
We have $\| w(u, p)\|\leq \| w(u,\, p)-w(0,\, p)\| 
+\| w(0,\, p)\|$ and, with the notation 
$\partial _2w(u, p)=\partial w(u,\, p)/\partial p$, 
$\| \partial _2w(u,\, p)\|\leq 
\|\partial _2w(u,\, p)-\partial _2w(0,\, p)\| 
+\|\partial _2w(0,\, p)\|$. Because $w(0,\, 0)=0$ 
and $\partial _2w(0,\, 0)=0$, an application of a 
Taylor expansion with 
estimate for the remainder term to each of the 
terms between the norm signs yields the 
estimates in Lemma \ref{csestlem}. 
\end{proof} 
\begin{lemma}
In the situation of Lemma \ref{csestlem}, 
let $0<\delta <\delta _0$, $0<\epsilon <\epsilon _0$, $t_0,\, a^-\in\C$, 
$|t|\geq 1/\epsilon$ for every $t\in T:=t_0+\R _{\geq 0}$, 
$|a^-|+C_1\,\delta ^2+C_2\,\epsilon <\delta$, and 
$C_3\,\delta +C_4\,\epsilon <1$. 
Then there exists 
a unique $a^+=a^+_{t_0,\, a^-}\in\C$ 
such that the solution $p(t)$ 
of the differential equation (\ref{cssystem}) 
with $p(t_0)=(a^+,\, a^-)$ is defined for all $t\in T$ and 
satisfies $\| p(t)\|\leq\delta$ for every $t\in T$. 
For any $0<\delta <\min\{ \delta _0,\, 1/C_1,\, 1/C_3\}$, 
the set of all $(t_0,\, a^-)$ for which the 
above conclusions hold is a non\--empty open subset of 
$\C ^2$ on which the function  $(t_0,\, a^-)\mapsto a^+_{t_0,\, a^-}$ 
is complex analytic. Similar conclusions hold with 
$t_0+\R _{\geq 0}$ replaced by a curve 
$\tau +\op{i}\,\sigma (\tau )$, where 
$\tau\in\left[\op{Re}t_0,\,\infty\right[$, 
$\sigma :\left[\op{Re}t_0,\,\infty\right[\to\R$ 
is continuously differentiable with 
a bounded derivative, and the curve stays sufficiently 
far away from the origin. 
\label{stablelem}
\end{lemma}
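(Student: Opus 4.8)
The plan is to recast the existence and uniqueness of $a^+$ as a Banach fixed point problem, with the requirement that $p^+$ stay bounded at $t=\infty$ built directly into the integral equation. Parametrize $T=t_0+\R_{\ge 0}$ by $t=t_0+\sigma$, $\sigma\ge 0$. Since the linear part of $v$ is $(p^+,p^-)\mapsto(p^+,-p^-)$, variation of constants shows that a solution with $p(t_0)=(a^+,a^-)$ satisfies
\[
p^-(t)=\op{e}^{-(t-t_0)}\,a^-+\int_{t_0}^{t}\op{e}^{-(t-s)}\,w^-(s^{-1},p(s))\,\op{d}\! s,\qquad
p^+(t)=\op{e}^{\,t-t_0}\Bigl(a^++\int_{t_0}^{t}\op{e}^{\,t_0-s}\,w^+(s^{-1},p(s))\,\op{d}\! s\Bigr),
\]
the integrals running along $T$. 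Because $\op{e}^{\,t-t_0}$ is unbounded on $T$ while $w^+$ is bounded whenever $\|p\|\le\delta$, the solution $p^+$ is bounded on $T$ if and only if $a^+=-\int_{t_0}^{\infty}\op{e}^{\,t_0-s}\,w^+(s^{-1},p(s))\,\op{d}\! s$ (the integral converging absolutely), in which case the $p^+$-equation becomes $p^+(t)=-\int_{t}^{\infty}\op{e}^{\,t-s}\,w^+(s^{-1},p(s))\,\op{d}\! s$. Let $\mathcal F$ denote the map sending $p$ to the pair of right-hand sides (the $p^-$-equation and this last $p^+$-equation). A fixed point of $\mathcal F$ in $X:=\{p\in C(T,\C^2):\sup_{T}\|p\|\le\delta\}$ is, after the routine observation that these right-hand sides are $C^1$ in $t$ and differentiate back to (\ref{cssystem}), exactly a solution of (\ref{cssystem}) on $T$ bounded by $\delta$ with $p^-(t_0)=a^-$, and one then sets $a^+_{t_0,a^-}:=(\mathcal Fp)^+(t_0)$.

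First I would check that $\mathcal F$ is a contraction of $X$ using Lemma \ref{csestlem}. On $T$ one has $|s|\ge 1/\epsilon$, hence $|s^{-1}|\le\epsilon$; and since $\op{Re}(t-s)\le 0$ for $s$ to the right of $t$ on $T$ while $\op{Re}(t-s)\ge 0$ for $s$ between $t_0$ and $t$, both $\int_{t}^{\infty}|\op{e}^{\,t-s}|\,|\op{d}\! s|$ and $\int_{t_0}^{t}|\op{e}^{-(t-s)}|\,|\op{d}\! s|$ are at most $1$. With $\|w^{\pm}(u,p)\|\le C_1\|p\|^2+C_2|u|$ this gives $\|\mathcal Fp\|_\infty\le|a^-|+C_1\delta^2+C_2\epsilon<\delta$, so $\mathcal F(X)\subset X$; with $\|w(u,p_1)-w(u,p_2)\|\le(C_3\delta+C_4\epsilon)\|p_1-p_2\|$ for $\|p_i\|\le\delta$, $|u|\le\epsilon$, it gives $\|\mathcal Fp_1-\mathcal Fp_2\|_\infty\le(C_3\delta+C_4\epsilon)\|p_1-p_2\|_\infty$ with $C_3\delta+C_4\epsilon<1$. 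Hence $\mathcal F$ has a unique fixed point $p\in X$, proving existence. For uniqueness of $a^+$: any solution $\tilde p$ defined on all of $T$ with $\sup\|\tilde p\|\le\delta$ automatically satisfies the $p^-$-identity, and, $\tilde p^+$ being bounded while $\op{e}^{\,t_0-t}\to 0$ along $T$, passing to the limit in $\op{e}^{\,t_0-t}\tilde p^+(t)=\tilde a^++\int_{t_0}^{t}\op{e}^{\,t_0-s}w^+\,\op{d}\! s$ forces $\tilde a^+=-\int_{t_0}^{\infty}\op{e}^{\,t_0-s}w^+\,\op{d}\! s$; thus $\tilde p$ is also a fixed point of $\mathcal F$, so $\tilde p=p$ and $\tilde a^+=a^+$.

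For the openness and analyticity, fix $\delta<\min\{\delta_0,1/C_1,1/C_3\}$, so that $C_1\delta^2<\delta$ and $C_3\delta<1$, and then choose $\epsilon>0$ small enough that $C_1\delta^2+C_2\epsilon<\delta$ and $C_3\delta+C_4\epsilon<1$. The set $\mathcal U$ of $(t_0,a^-)$ satisfying the hypotheses --- $t_0+\R_{\ge 0}$ disjoint from $\{|t|<1/\epsilon\}$ and $|a^-|<\delta-C_1\delta^2-C_2\epsilon$ --- is open and non-empty (take $t_0$ real with $t_0>1/\epsilon$ and $a^-=0$). To obtain holomorphic dependence I would pass to the fixed half-line: set $P(\sigma):=p(t_0+\sigma)$, so $\mathcal F$ becomes the operator
\[
(\mathcal F_{t_0,a^-}P)^+(\sigma)=-\int_{\sigma}^{\infty}\op{e}^{\,\sigma-\sigma'}\,w^+\bigl((t_0+\sigma')^{-1},P(\sigma')\bigr)\,\op{d}\!\sigma',\qquad
(\mathcal F_{t_0,a^-}P)^-(\sigma)=\op{e}^{-\sigma}a^-+\int_{0}^{\sigma}\op{e}^{-(\sigma-\sigma')}\,w^-\bigl((t_0+\sigma')^{-1},P(\sigma')\bigr)\,\op{d}\!\sigma'
\]
on the fixed Banach space of bounded continuous $P:\R_{\ge 0}\to\C^2$; its only dependence on $(t_0,a^-)\in\mathcal U$ is through $w\bigl((t_0+\sigma')^{-1},\cdot\bigr)$ and the term $\op{e}^{-\sigma}a^-$, which are holomorphic in $(t_0,a^-)$. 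The uniform contraction principle with holomorphic dependence on parameters then yields that the fixed point $P_{t_0,a^-}$, and hence $a^+_{t_0,a^-}=-\int_{0}^{\infty}\op{e}^{-\sigma'}\,w^+\bigl((t_0+\sigma')^{-1},P_{t_0,a^-}(\sigma')\bigr)\,\op{d}\!\sigma'$, is holomorphic on $\mathcal U$. For the variant where $T$ is replaced by a curve $\tau\mapsto\tau+\op{i}\,\sigma(\tau)$ with $\sigma$ continuously differentiable, $\sigma'$ bounded, and the curve staying outside $\{|t|<1/\epsilon\}$, the identity $\op{Re}(t-s)=\tau_t-\tau_s$ still holds, so the only change is that $\int|\op{e}^{\,t-s}|\,|\op{d}\! s|$ is bounded by $M:=\sup_\tau\sqrt{1+\sigma'(\tau)^2}$ instead of $1$; the whole argument goes through verbatim after shrinking $\epsilon$ (and $\delta$ if necessary) so that $M(C_1\delta^2+C_2\epsilon)<\delta$ and $M(C_3\delta+C_4\epsilon)<1$.

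The main obstacle is not any individual estimate --- those are routine once the boundary condition at $t=\infty$ is correctly encoded in the $p^+$-equation --- but rather making the analytic dependence on $(t_0,a^-)$ precise when the domain $T$ itself moves with $t_0$; reparametrizing onto the fixed half-line $\R_{\ge 0}$ is the device that puts the problem in the standard form to which the uniform contraction principle with holomorphic parameter dependence applies, and the remaining care is just to check that the sufficient hypotheses cut out an open set there.
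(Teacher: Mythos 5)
Your argument is correct and is essentially the paper's own proof: it is the method of Cotton, converting (\ref{cssystem}) into the pair of integral equations with the $+$ component integrated from $t$ to $+\infty$ (which encodes both the boundedness requirement and the formula for $a^+$), and running the contraction mapping theorem on the ball $\|p\|\leq\delta$ with exactly the estimates $|a^-|+C_1\delta^2+C_2\epsilon<\delta$ and $C_3\delta+C_4\epsilon<1$. The only cosmetic difference is in the final step: the paper invokes the implicit function theorem for the analytic dependence of $a^+_{t_0,a^-}$ on $(t_0,a^-)$, while you reparametrize onto the fixed half-line $\R_{\geq 0}$ and apply the uniform contraction principle with holomorphic parameter dependence, which amounts to the same thing and, if anything, makes the dependence of the domain on $t_0$ more explicit.
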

\begin{proof}
We apply the {\em method of Cotton} \cite{cotton}. 
 
The system (\ref{cssystem}) can be viewed as  
an inhomogeneous system of linear differential equations 
with $w(t^{-1},\, p(t))$ as the inhomogeneous term, 
and therefore it is equivalent to 
the system of integral equations 
\begin{equation}
p(t)^{\pm}=\op{e}^{\pm (t-\tau )}\, p(\tau )^{\pm}
+\int_{\tau}^{t}\, 
\op{e}^{\pm (t-s)}\, w(s^{-1},\, p(s))^{\pm}\,\op{d}\! s.
\label{inteqpm}
\end{equation}
Let ${\mathcal X}$ denote the set of all continuous 
functions $p:T\to\C ^2$ such that  
$(t^{-1},\, p(t))\in D$ for every $t\in T$ 
and $t\mapsto w(t^{-1},\, p(t))^+$ is bounded on $T$. 
If the solution $p(t)$ of (\ref{cssystem}) belongs to ${\mathcal X}$,  
then we can let $\tau\in T$ run to infinity in the equation 
(\ref{inteqpm}) for $\pm = +$, and obtain 
\begin{equation}
p(t)^+=
\, -\int_t^{+\infty}\,\op{e}^{t-s}\, w(s^{-1},\, p(s))^+\,\op{d}\! s,
\label{inteq+2}
\end{equation}
where $\int_t^{+\infty}$ indicates the limit for 
$T\ni \tau\to\infty$ of $\int_t^{\tau}$. 
Conversely, if $a^-\in\C$ and $p\in {\mathcal X}$, 
then the equations (\ref{inteq+2}) and  
\begin{equation}
p(t)^-=\op{e}^{t
_0-t}\, a^-
+\int_{t_0}^{t}\, 
\op{e}^{s-t}\, w(s^{-1},\, p(s))^-\,\op{d}\! s 
\label{inteq-2}
\end{equation}
for all $t\in T$ imply that $p(t)$ is a solution of 
(\ref{cssystem}) such that $p(t_0)^-=a^-$. 

Let $F$ denote the integral operator which 
assigns to each $p\in {\mathcal X}$ the function 
$F(p):T\to\C ^2$
 such that, for each $t\in T$, 
$F(p)(t)^+$ and $F(p)(t)^-$ are equal to the right 
hand side of (\ref{inteq+2}) and (\ref{inteq-2}), respectively.  
Let $X$ denote the set of 
all continuous functions 
$p:T\to\C ^2$ such that 
$\| p(t)\|\leq\delta$ for every $t\in T$. 
Then $X\subset {\mathcal X}$, 
$X$ is a complete space with respect to the metric 
$d(p_1,\, p_2):=\sup_{t\in T}\,\| p_1(t)-p_2(t)\|$, 
and the assumed estimates imply that 
\begin{eqnarray}
|F(p)(t)^+|&\leq&\int_0^{\infty}
\,\op{e}^{-\tau}\, 
(C_1\,\delta ^2+C_2\,\epsilon )\,\op{d}\!\tau\leq\delta ,
\label{F+est}\\
|F(p)(t)^-|&\leq&\op{e}^{t_0-t}\, |a^-|+\int_0^{\infty}
\,\op{e}^{-\tau}\, 
(C_1\,\delta ^2+
C_2\,\epsilon )\,\op{d}\!\tau\leq\delta ,
\label{F-est}\\ 
|F(p_1)(t)^{\pm}-F(p_2)(t)^{\pm}|
&\leq&\int_0^{\infty}
\,\op{e}^{-\tau}\, (C_3\,\delta +C_4\,\epsilon )\, 
\| p_1(t_0+\tau )-p_2(t_0+\tau )\| \op{d}\!\tau 
\nonumber\\
&\leq& (C_3\,\delta +C_4\,\epsilon )\, d(p_1,\, p_2)
\label{dFest}
\end{eqnarray}
for every $p,\, p_1,\, p_2\in X$ and $t\in T$. 
Therefore $F(X)\subset X$  
and $F:X\to X$ is a contraction, with contraction 
factor $\leq C_3\,\delta +C_4\,\epsilon <1$, when 
the contraction mapping theorem 
implies that $F$ has a unique fixed point, 
a unique $p\in X$ such that $F(p)=p$. 
It follows that there is a unique solution  
$p(t)=p_{t_0,\, a^-}(t)$ of (\ref{cssystem}) on $T$ such that 
$p(t_0)^-=a^-$ and $\| p(t)\|\leq\delta$ for every $t\in T$. 
As the arbitrary solutions $p(t)$ of (\ref{cssystem}) 
are uniquely determined by their value $p(t_0)$ at $t=t_0$, 
it follows that for every solution $p(t)$ of (\ref{cssystem}) 
on $T$ with 
$p(t_0)^-=a^-$ and $p(t_0)^+\neq a^+(t_0,\, a^-):=p_{t_0,\, a^-}(t_0)^+$ 
there exists $t\in T$ such that $\| p(t)\| >\delta$. 
The assumptions remain verified upon small perturbations of 
$t_0$ and $a^-$, and an application of the implicit function theorem 
yields that the solution $p=p_{t_0,\, a^-}$ depends in a 
complex analytic way on $(t_0,\, a^-)$. 
This completes the proof of Lemma \ref{stablelem}. 
\end{proof}
\begin{lemma}
In the situation of Lemma \ref{csestlem}, 
let $0<\delta <\delta _0$, $0<\epsilon <\epsilon _0$, 
$C_1\,\delta ^2+C_2\,\epsilon <\delta$, and 
$C_3\,\delta +C_4\,\epsilon <1$. Then there 
exists a unique solution $p(t)=p_{\uparrow}(t)$ 
and $p=p_{\downarrow}(t)$ of (\ref{cssystem}) 
on $\R +\op{i}/\epsilon$ and $\R -\op{i}/\epsilon$ such that 
$\| p(t)\|\leq\delta$ for every $t\in\R +\op{i}/\epsilon$ 
and $t\in\R -\op{i}/\epsilon$, respectively. 
Similar conclusions hold with 
$\R\pm\op{i}/\epsilon$ replaced by a curve 
$\tau +\op{i}\,\sigma (\tau )$, where 
$\tau\in\R$, 
$\sigma :\R\to\R$ is continuously differentiable with 
a bounded derivative, and the curve stays sufficiently 
far away from the origin. 
\label{centerlem}
\end{lemma}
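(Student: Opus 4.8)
The plan is to run the contraction-mapping argument of Lemma~\ref{stablelem} on a \emph{two-sided} domain, where the growing component $p^+$ must stay small as $\op{Re} t\to+\infty$ and the decaying component $p^-$ must stay small as $\op{Re} t\to-\infty$; since both directions are now pinned, no free parameter analogous to $a^-$ survives, and the contraction mapping theorem delivers existence together with uniqueness in one stroke. Concretely, fix the line $T_\uparrow:=\R+\op{i}/\epsilon$, parametrized by $t=\tau+\op{i}/\epsilon$, $\tau\in\R$; then $|t|\ge 1/\epsilon>1/\epsilon_0$, so $(t^{-1},p)\in D$ whenever $\|p\|<\delta_0$, and $|t^{-1}|\le\epsilon$. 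Writing the variation-of-constants formula for each scalar equation in (\ref{cssystem}) and letting the base point tend to $+\infty$ along $T_\uparrow$ in the $+$ equation and to $-\infty$ in the $-$ equation, and using that $w$ is bounded along a bounded solution, one finds that every solution $p$ with $\|p(t)\|\le\delta$ on $T_\uparrow$ satisfies the pair of integral equations
\begin{eqnarray*}
p(t)^+&=&-\int_t^{+\infty}\op{e}^{t-s}\,w(s^{-1},p(s))^+\,\op{d}\! s,\\
p(t)^-&=&\int_{-\infty}^{t}\op{e}^{s-t}\,w(s^{-1},p(s))^-\,\op{d}\! s,
\end{eqnarray*}
the integrals running along $T_\uparrow$; conversely, any continuous solution of this pair solves (\ref{cssystem}).

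Next I would let $X$ be the complete metric space of continuous $p:T_\uparrow\to\C^2$ with $\sup_{t\in T_\uparrow}\|p(t)\|\le\delta$, and $F$ the operator given by the two right-hand sides above. Along $T_\uparrow$ the kernels obey $|\op{e}^{t-s}|=\op{e}^{\op{Re} t-\op{Re} s}\le 1$ on the first range of integration and $|\op{e}^{s-t}|\le 1$ on the second, decaying exponentially in $|\op{Re}(t-s)|$, so the estimates $\|w(s^{-1},p(s))\|\le C_1\delta^2+C_2\epsilon$ and $\|\partial_2 w\|\le C_3\delta+C_4\epsilon$ from Lemma~\ref{csestlem} give, exactly as in (\ref{F+est})--(\ref{dFest}),
\[
\|F(p)(t)\|\le C_1\delta^2+C_2\epsilon\le\delta,\qquad
d(F(p_1),F(p_2))\le(C_3\delta+C_4\epsilon)\,d(p_1,p_2),
\]
so $F:X\to X$ is a contraction with factor $<1$. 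Its unique fixed point is the asserted $p_\uparrow$, and it is the unique solution on $T_\uparrow$ staying $\le\delta$ because any such solution satisfies the integral equations. The line $\R-\op{i}/\epsilon$ is handled identically (e.g.\ by reflecting $\tau\mapsto-\tau$, $\op{i}\mapsto-\op{i}$), producing $p_\downarrow$.

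Finally, for a curve $t=t(\tau)=\tau+\op{i}\,\sigma(\tau)$, $\tau\in\R$, with $\sigma$ continuously differentiable, $|\sigma'|\le M$, staying far enough from the origin that $|t(\tau)|\ge 1/\epsilon$, I would repeat the argument with $T_\uparrow$ replaced by this curve. Because $\op{Re} t(\tau)=\tau$, the exponential kernels behave exactly as on a straight line, while $|\op{d}\! t|=\sqrt{1+\sigma'(\tau)^2}\,\op{d}\!\tau\le\sqrt{1+M^2}\,\op{d}\!\tau$, so every estimate above picks up a factor $\sqrt{1+M^2}$. Shrinking $\delta$ and $\epsilon$ (depending on $M$) so that $\sqrt{1+M^2}\,(C_1\delta^2+C_2\epsilon)\le\delta$ and $\sqrt{1+M^2}\,(C_3\delta+C_4\epsilon)<1$ restores the contraction property, and the conclusion follows. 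I expect no genuine obstacle here: the only points needing care are orienting the two one-sided integrals so that each exponential kernel decays, and noting that ``sufficiently far away from the origin'' is precisely the requirement $|t(\tau)|\ge 1/\epsilon$ that keeps $(t(\tau)^{-1},p)$ in $D$ and lets $C_2|t(\tau)^{-1}|$ be absorbed into $C_2\epsilon$; the disappearance of the free parameter $a^-$ relative to Lemma~\ref{stablelem} is exactly what turns ``existence of a one-parameter family'' into ``existence and uniqueness''.
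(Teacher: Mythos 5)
Your proposal is correct and is essentially the paper's own argument (the variation of Cotton's method): the same pair of one-sided integral equations, with the $+$ component integrated toward $+\infty$ and the $-$ component from $-\infty$ along the horizontal line, and the same contraction on the ball $\sup_t\|p(t)\|\leq\delta$ with factor $C_3\,\delta+C_4\,\epsilon<1$, whose unique fixed point gives both existence and uniqueness. Your explicit treatment of the tilted curve via the $\sqrt{1+M^2}$ factor is a harmless elaboration of what the paper only asserts as "similar conclusions."
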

\begin{proof}
We apply the variation on \cite[p. 483]{cotton} 
of Cotton's method. 

Let $I=\R +\op{i}/\epsilon$, 
and ${\mathcal Y}$ the set of all continuous functions 
$p:I\to\C ^2$ such that $(t^{-1},\, p(t))\in D$ for every 
$t\in I$ and $t\mapsto w(t^{-1},\, p(t))$ is bounded on $I$. 
If the solution $p(t)$ of (\ref{cssystem}) 
belongs to ${\mathcal Y}$, then we can let 
$I\ni \tau\to +\infty$ in the equation (\ref{inteqpm}) 
for $\pm =+$ {\em and} 
let $I\ni\tau\to -\infty$ 
in (\ref{inteqpm}) for $\pm =\, -$, and obtain (\ref{inteq+2}) and 
\begin{equation}
p(t)^-=\int_{-\infty}^{t}\, 
\op{e}^{s-t}\, w(s^{-1},\, p(s))^-\,\op{d}\! s, 
\label{inteq-3}
\end{equation}
respectively. 
Conversely, if $p\in {\mathcal Y}$ satisfies 
(\ref{inteq+2}) and (\ref{inteq-3}), then it 
is a solution of (\ref{cssystem}). 

Let $G$ denote the integral operator which 
assigns to each $p\in {\mathcal Y}$ the function 
$G(p):I\to\C ^2$ such that, for each $t\in I$, 
$G(p)(t)^+$ and $G(p)(t)^-$ are equal to the right 
hand side of (\ref{inteq+2}) and (\ref{inteq-3}), respectively.  
Let $Y$ be the set of 
all continuous functions 
$p:I\to\C ^2$ such that 
$\| p(t)\|\leq\delta$ for every $t\in I$. 
Then $Y\subset {\mathcal Y}$, 
$Y$ is a complete space with respect to the metric 
$d(p_1,\, p_2):=\sup_{t\in U}\,\| p_1(t)-p_2(t)\|$, 
and estimates analogous to  
(\ref{F+est}), (\ref{F-est}), and (\ref{dFest}) 
imply that $G(Y)\subset Y$ and 
$G$ is a contraction in $Y$ 
with contraction factor $\leq C_3\,\delta +C_4\,\epsilon <1$. 
This time the contraction mapping theorem 
yields a unique solution 
$p:I\to\C ^2$ of (\ref{cssystem}) such that 
$\| p(t)\|\leq\delta$ for every $t\in I$. 
This completes the proof of Lemma \ref{centerlem}. 
\end{proof}

\begin{remark}
According to Anosov \cite[\S 4]{anosov}, 
\lq\lq Every five years or so, someone \lq\lq discovers\rq\rq\  the 
theorem of Hadamard and Perron, proving it either by 
Hadamard's method or by Perron's.\rq\rq\  The theorem alluded  
to is the stable manifold theorem in dynamical systems, 
\lq\lq Hadamard's method\rq\rq\  refers to Hadamard \cite{hadamard},  
and \lq\lq Perron's method\rq\rq\  to Perron \cite{perron}. However, 
Perron \cite[p. 130]{perron} made perfectly clear that he used 
the method of Cotton \cite{cotton}. 
The paper \cite{cotton} seems to be little known, 
for no good reason. 
\label{anosovrem} 
\end{remark}

\begin{lemma}
For each $m,\, n,\, r\in\R _{>0}$ such that $m>n$, 
let $\eta =\eta _{m,\, n,\, r}:\R\to\R$ be 
defined by 
$\eta (\tau )=(r^2-\tau ^2)^{1/2}$ when 
$|\tau |\leq r\, m\, (1+m^2)^{-1/2}$, 
$\eta (\tau )=r\, (1+m^2)^{1/2}-m\, |\tau |$ 
when $r\, m\, (1+m^2)^{-1/2}\leq |\tau |\leq 
r\, (1+m^2)^{1/2}/(m-n)$, 
and $\eta (\tau )=\, -n\, |\tau |$ when 
$|\tau |\geq r\, (1+m^2)^{1/2}/(m-n)$. 
Let 
$V:=\{ t\in\C\mid\op{Im}t\geq\eta (\op{Re}t)\}$, 
see Figure \ref{domainfig}. 

If $r$ is sufficiently large, then all solutions 
$p_{\uparrow}(t)$ in Lemma \ref{centerlem} have a 
common extension to a unique solution of 
(\ref{cssystem}), also denoted by $p_{\uparrow}(t)$, 
on the union ${\mathcal V}$ of $V$ with all their 
domains of definition. 
There exists $j\mapsto c_j:\Z _{>0}\to\C ^2$, 
such that the solution $p(t)=p_{\uparrow}(t)$ of 
(\ref{cssystem}) on ${\mathcal V}$ 
has the asymptotic expansion 
\begin{equation}
p(t)\sim\sum_{j=1}^{\infty}\, t^{-j}\, c_j
\label{powerasym}
\end{equation}
as $t\in {\mathcal V}$, $|t|\to\infty$. 
The asymptotic expansion (\ref{powerasym}) 
can be differentiated termwise in the sense that 
\begin{equation}
\frac{\op{d}\! p(t)}{\op{d}\! t}\sim
\sum_{j=1}^{\infty}\, -j\, t^{-j-1}\, c_j
 \label{powerasym'}
\end{equation}
as $t\in {\mathcal V}$,\, $|t|\to\infty$. 
All the coefficients $c_j$ are uniquely determined 
from the equations obtained by substituting 
(\ref{powerasym'}) and (\ref{powerasym}) 
in the left and right hand side of (\ref{cssystem}),  
respectively, and using the power series expansion 
\begin{equation}
v(t^{-1},\, p)=\sum_{i,\, j_+,\, j_-\in\Z _{\geq 0}}
\, t^{-i}\, (p^+)^{j_+}\, (p^-)^{j_-}\, v_{i,\, j_+,\, j_-}, 
\quad v_{i,\, j_+,\, j_-}\in\C ^2.
\label{vexpansion}
\end{equation} 
We have 
\begin{equation}
c_1=\, -\left(\left.\frac{\partial v(0, p)}{\partial p}\right| _{p=0}
\right) ^{-1}\,\left.\frac{\partial v(u,\, 0)}{\partial u}\right| _{u=0}.
\label{c1eq}
\end{equation}
Applying the complex conjugation $t\mapsto\overline{t}$, 
we obtain similar conclusions for the solutions 
$p_{\downarrow}(t)$ in Lemma \ref{centerlem}. 
\label{centerVlem}
\end{lemma}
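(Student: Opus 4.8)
The plan is to build the common extension directly, by a contraction of Cotton's type carried out on the domain $V$ itself, and then to extract the asymptotic expansion by substituting a formal series into (\ref{cssystem}). The geometric point that makes the first step work, and which I would verify first, is that the shape of the curve $\partial V=\{\tau +\op{i}\,\eta (\tau )\}$ is arranged so that, once $r$ is large enough, every $t\in V$ can be joined to $+\infty$ by a path $\gamma _t^+\subset V$ along which $\op{Re}s$ is strictly increasing and $|\op{d}\,\op{Im}s|\le m\,|\op{d}\,\op{Re}s|$, and symmetrically can be reached from $-\infty$ by such a path $\gamma _t^-$. Indeed, when $\op{Re}t\ge 0$ one takes $\gamma _t^+$ to be the horizontal ray to the right, which stays in $V$ because $\eta$ is even and nonincreasing on $[0,\infty[$; when $\op{Re}t<0$ one first follows the translate of $\partial V$ through $t$ rightward up to $\op{Re}=0$ — all its slopes lie in $[0,m]$, and this part of $V$ lies above the circular arc precisely because that arc bulges upward, which is the reason for inserting the arc in the definition of $\eta$ — and then continues horizontally; the $\gamma _t^-$ come from the reflection $\tau\mapsto -\tau$. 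Note also $|s|>r>1/\epsilon _0$ on $V$.

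With such a family of paths fixed, the bounded solutions of (\ref{cssystem}) on $V$ are exactly the fixed points, in the space $X$ of continuous $p:V\to\C ^2$ with $\| p\|\le\delta$ (complete for the sup metric), of the operator $\Phi (p)^{\pm}(t)=\mp\int_{\gamma _t^{\pm}}\op{e}^{\pm (t-s)}\,w(s^{-1},p(s))^{\pm}\,\op{d}\! s$, where $\gamma _t^-$ is oriented from $-\infty$ to $t$. Using the estimates of Lemma \ref{csestlem} together with the uniform slope bound $m$, one gets $\|\Phi (p)\|\le\sqrt{1+m^2}\,(C_1\,\delta ^2+C_2/r)\le\delta$ and Lipschitz constant $\le\sqrt{1+m^2}\,(C_3\,\delta +C_4/r)<1$ provided $\delta$ is small and $r$ large, so $\Phi$ is a contraction; differentiating the integral equation shows that its unique fixed point $p_{\uparrow}$ solves (\ref{cssystem}) and is therefore holomorphic, and uniqueness of the bounded solution on $V$ follows in the usual way since the difference of two such solutions again satisfies the integral equation and is fixed by a contraction. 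Running the identical argument on a thin tube around any admissible curve $C=\{\tau +\op{i}\,\sigma (\tau )\}$ of Lemma \ref{centerlem} (paths taken along $C$) produces there a unique bounded solution which, by uniqueness on the overlap, coincides with both the solution $p_{\uparrow}^{C}$ of Lemma \ref{centerlem} and the solution just built on $V$; gluing all of these gives one holomorphic solution $p_{\uparrow}$ on the union $\mathcal{V}$ of $V$ with their domains of definition.

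For the asymptotic expansion I would substitute $\sum _{j\ge 1}t^{-j}c_j$, together with its termwise derivative, into (\ref{cssystem}) and expand the right hand side via (\ref{vexpansion}): matching the coefficient of $t^{-1}$ gives $0=(\partial v(0,0)/\partial p)\,c_1+\partial v(0,0)/\partial u$, hence (\ref{c1eq}) because $\partial v(0,0)/\partial p=\op{diag}(1,-1)$ is invertible, and matching $t^{-j-1}$ for $j\ge 1$ determines $c_{j+1}$ recursively from $c_1,\dots ,c_j$, again by invertibility of $\partial v(0,0)/\partial p$; this gives the uniqueness of the $c_j$ and of the series in (\ref{powerasym'}). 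To see that $p_{\uparrow}$ is asymptotic to this series, fix $N$, put $q_N:=\sum _{j=1}^{N}t^{-j}c_j=\op{O}(t^{-1})$, and substitute $p=q_N+\tilde p$: by construction of the $c_j$ the equation for $\tilde p$ has the form of Lemma \ref{csestlem} (its linear part is perturbed only by $\op{O}(t^{-1})$) with inhomogeneous term $\op{O}(t^{-N-1})$, so running the contraction of the first step for this system in the space of functions bounded by a constant times $|t|^{-N-1}$ produces a solution of size $\op{O}(t^{-N-1})$ which by uniqueness equals $p_{\uparrow}-q_N$; letting $N\to\infty$ yields (\ref{powerasym}). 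The expansion (\ref{powerasym'}) then follows either from Cauchy's estimate on discs of radius of order $|t|$ contained in $\mathcal{V}$, or by inserting (\ref{powerasym}) back into $\op{d}\! p/\op{d}\! t=v(t^{-1},p(t))$ and expanding with (\ref{vexpansion}). Finally, the substitution $t\mapsto\overline{t}$ combined with complex conjugation turns the whole situation into its mirror image — the roles of $+$ and $-$, and of the upper and lower half planes, being interchanged, and $V$ becoming the domain attached to $p_{\downarrow}$ — so the corresponding statements for the $p_{\downarrow}(t)$ come for free.

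The step I expect to be the real obstacle is the first one: checking that the \emph{non-convex} domain $V$ admits, at every point, the two integration paths with a \emph{uniform} slope bound — this is exactly what the circular arc in $\eta$ is designed to guarantee, since points of $V$ near the imaginary axis are forced to sit high enough to be able to climb over the peak with bounded slope — and then keeping track of the various local solutions, including all those of Lemma \ref{centerlem}, so that they fit together into a single well-defined (monodromy-free) solution on $\mathcal{V}$. Once the solution on $\mathcal{V}$ is in hand, the asymptotic statements are comparatively routine reruns of the same contraction with a weighted norm.
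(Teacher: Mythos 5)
Your overall strategy is the paper's: a Cotton-type contraction on $V$ using rightward and leftward integration paths of slope bounded by $m$ (the paper uses straight rays and a $|t|^{1/2}$-weighted norm, you use boundary-following paths and the sup norm, which also works for the existence step), formal substitution to determine the $c_j$ and (\ref{c1eq}), a weighted remainder argument for (\ref{powerasym}), and conjugation for $p_{\downarrow}$. The genuine problem is the gluing step, the very one you flag as the obstacle: you cannot identify the solution built on a thin tube around a curve $C$ of Lemma \ref{centerlem} with the solution built on $V$ ``by uniqueness on the overlap''. For a curve lying below the height $r$ of the arc, the overlap of the tube with $V$ consists of two connected pieces, each unbounded in only one direction of $\op{Re}t$ (a right end and a left end). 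On such one-sided domains bounded solutions are \emph{not} unique: Lemma \ref{stablelem} produces, for every choice of $a^-$, a solution with $\|p(t)\|\leq\delta$ on a right half-line $t_0+\R_{\geq 0}$ (the members differ by exponentially small terms which certainly stay below $\delta$), and the mirror statement holds on left ends. The contraction/uniqueness mechanism needs a domain which is unbounded in \emph{both} real directions, so that both integral equations (\ref{inteq+2}) and (\ref{inteq-3}) can be imposed with paths inside the domain; the two ends of your overlap do not have this property, and so the key identification is unproved.

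The repair is the paper's route: instead of a thin tube, run the same contraction on the whole region above the curve, e.g.\ for $C=\R +\op{i}/\epsilon$ on the upper half plane $U_{\epsilon}=\{\op{Im}t\geq 1/\epsilon\}$, with horizontal rightward paths for the $+$ component and horizontal leftward paths for the $-$ component (both stay in $U_{\epsilon}$). The unique bounded fixed point there restricts on $C$ to the fixed point of Lemma \ref{centerlem}, so each $p_{\uparrow}$ extends to $U_{\epsilon}$; since $V$ contains an upper half plane and the solution constructed on $V$ is bounded by $\delta$ (indeed tends to $0$ — a fact worth recording, which the paper gets for free from its weighted norm $\|p(t)\|\leq\delta |t|^{-1/2}$), it is itself a bounded solution on some $U_{\epsilon '}$ and hence coincides with that extension by uniqueness on the two-sidedly unbounded domain $U_{\epsilon '}$. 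Two smaller points: termwise differentiation by Cauchy estimates is delicate near the lower boundary rays of $V$ where discs of radius comparable to $|t|$ leave $V$, so use your alternative (substitute (\ref{powerasym}) into $\op{d}\!p/\op{d}\!t=v(t^{-1},p(t))$, as the paper does); and in the remainder step your boundary-following paths dip to $|s|\approx r$ even when $|t|$ is huge, so the bound $\int \op{e}^{-\op{Re}(s-t)}\,|s|^{-N-1}\,|\op{d}\!s|=\op{O}(|t|^{-N-1})$ needs the extra remark that this dip occurs only where $\op{Re}(s-t)$ is of order $|t|$ and the exponential dominates — or simply use the paper's straight rays, along which $|s|\geq (m-n)\,(1+m^2)^{-1/2}\,|t|$ holds uniformly.
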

\begin{proof}
For each $t\in V$ we define half\--lines $L_t^{\pm}$ 
emanating from $t$, as follows. 
If $\op{Re}t\geq 0$, then $L_t^+:=t+\R _{\geq 0}$. 
If $0\leq -\op{Re}t\leq m\,\op{Im}t$, 
then $L_t^+:=\{ t+\tau -\op{i}\, (\op{Re}t/\op{Im}t)\,\tau
\mid\tau\in\R _{\geq 0}\}$. 
If $\op{Re}t\leq 0$ and 
$m\,\op{Im}t\leq -\op{Re}t$, then 
$L_t^+:=\{ t+\tau +\op{i}\, m\,\tau\mid\tau\in\R _{\geq 0}\}$. 
Finally, $L_t^-:=R(L_{R(t)}^+)$ for every $t\in V$, 
where $R$ is the reflection $t\mapsto -\op{Re}t+\op{i}\,\op{Im}t$. 

It follows that, for every $t\in V$ and $s\in L_t^{\pm}$, 
$|t|\geq r$ and $|s|\geq (m-n)\, (1+m^2)^{-1/2}\, |t|$. 
See Figure \ref{domainfig}. 

\begin{figure}[ht]
\centering
\includegraphics[width=9cm]
{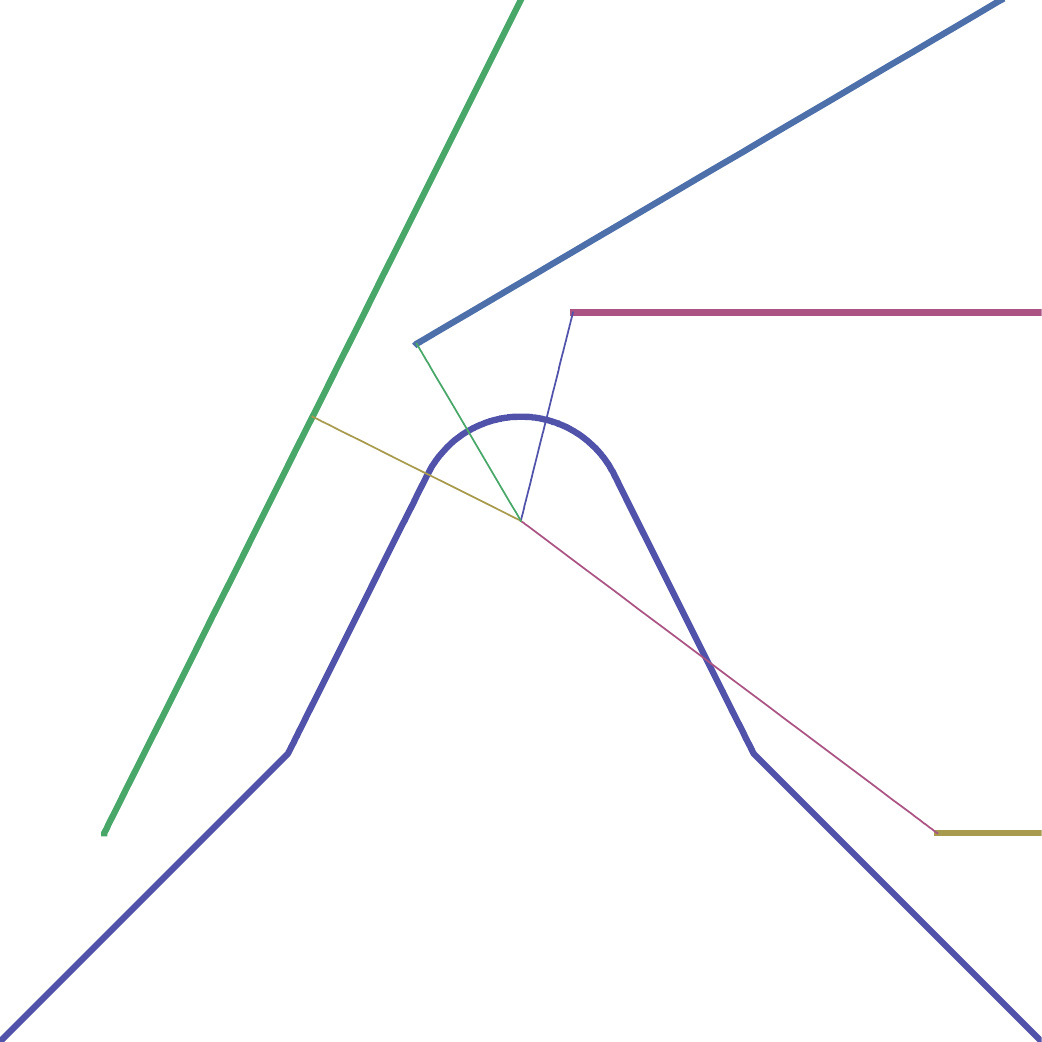}
\caption{The domain $V$, four half\--lines $L_t^+\subset V$, 
and their distances to the origin} 
\label{domainfig}
\end{figure}

On the space of functions $p:V\to\C ^2$ such that 
$p(t)=\op{O}(t^{-1/2})$ as $t\in V$, $|t|\to\infty$, 
we use the norm $\| p\| :=\sup_{t\in V}\, |t|^{1/2}\,\| p(t)\|$. 
Let $r>1/\epsilon _0$ and $0<\delta <r^{1/2}\,\delta _0$, 
and $Z$ the space of continuous functions $p:V\to\C ^2$, 
complex analytic on the interior of $V$, such that 
$\| p\|\leq\delta$. For each $p\in Z$ we define $G(p)$ as above, 
where the integrals in the right hand sides of 
(\ref{inteq+2}) and (\ref{inteq-3}) are over $L_t^+$ and 
$L_t^-$, respectively. Using the Cauchy integral theorem, 
we obtain that $G(p)$ is a complex analytic function 
on the interior of $V$, with derivative equal to  
$\op{d}\! G(p)(t)^{\pm}/\op{d}\! t=
\pm G(p)(t)^{\pm}+w^{\pm}(t^{-1},\, p(t))$. 
Because the velocities of the 
parametrizations of $L_t^{\pm}$ in the previous paragraph 
are at most $(1+m^2)^{1/2}$, we have the estimates 
\begin{eqnarray*}
|G(p)(t)^{\pm}|&\leq&  
(1+m^2)^{1/2}\, \sup_{s\in L_t^{\pm}}\, (C_1\,\delta ^2+C_2)\, |s|^{-1},\\
|G(p_1)(t)^{\pm}-G(p_2)(t)^{\pm}|&\leq&
(1+m^2)^{1/2}\, \,\sup_{s\in L_t^{\pm}}\, 
(C_3\, |s|^{-1/2}+C_4\, |s|^{-1})\,\| p_1-p_2\|\, |s|^{-1/2}
\end{eqnarray*} 
for every $p,\, p_1,\, p_2\in Z$ and $t\in V$. 
Because $|s|^{-1}\leq (m-n)^{-1}\, (1+m^2)^{1/2}\, |t|^{-1}$ 
and $|s|^{-1}\leq r^{-1}$ for every $s\in L_t^{\pm}$, it follows that 
$\| G(p)\|\leq (1+m^2)^{3/4}\, (m-n)^{-1/2}\, (C_1\,\delta ^2+C_2)\, r^{-1/2}$ 
and $\| G(p_1)-G(p_2)\|\leq 
(1+m^2)^{3/4}\, (m-n)^{-1/2}\, (C_3\, r^{-1/2}+C_4\, r^{-1})\,\| p_1-p_2\|$. 
Therefore, if $r>1/\epsilon _0$, $0<\delta <r^{1/2}\,\delta _0$
$(1+m^2)^{3/4}\, (m-n)^{-1/2}\, (C_1\,\delta ^2+C_2)\, r^{-1/2}\leq\delta$ 
and $(1+m^2)^{3/4}\, (m-n)^{-1/2}\, (C_3\, r^{-1/2}+C_4\, r^{-1})<1$, 
which for any $m,\, n,\,\delta\in\R _{>0}$ 
can be arranged by taking $r$ sufficiently 
large, we have $G(Z)\subset Z$, and $G:Z\to Z$ is a contraction. 
This time the contraction theorem implies that there is a unique solution 
$p=p_V:V\to\C ^2$ of (\ref{cssystem}) such that 
$\| p(t)\|\leq\delta\, |t|^{-1/2}$ for every $t\in V$. 

For any $a\in\C$, successive integrations by parts 
yield the asymptotic expansions 
\[
\int_t^{\infty}\,\op{e}^{t-s}\, s^{-a}\,\op{d}\! s 
\sim \sum_{k=0}^{\infty}\, (-1)^k\, \pi _k\, t^{-a-k}
\quad\mbox{\rm and}\quad
\int_{-\infty}^t\,\op{e}^{s-t}\, s^{-a}\,\op{d}\! s 
\sim \sum_{k=0}^{\infty}\, \pi _k\, t^{-a-k}
\]
for $|t|\to\infty$, where $\pi _k:=\prod_{j=0}^{k-1}\, (a+j)$. 
This is one of the oldest examples of 
asymptotic expansions which do not converge.  
Substituting the 
inequality $\| w(s^{-1},\, p(s))\|\leq C_1\, \| p(s)\| ^2
+C_2\, |s|^{-1}\leq 
(C_1\,\delta ^2+C_2)\, |s|^{-1}$ in the integrals in the right 
hand sides of $p(t)^{\pm}=G(p)(t)^{\pm}$, we obtain that 
$p(t)=\op{O}(t^{-1})$ as $t\in V$, $|t|\to\infty$. 
Substituting 

\begin{equation}
p(t)=\sum_{j=1}^{N-1}\, t^{-j}\, c_j+\op{O}(t^{-N})
\label{powerasymN}
\end{equation}  
for $t\in V$,  
the power series expansion $w(u,\, p)=\sum_{i,\, j_+,\, j_-\geq 0}
\, w_{i,\, j_+,\, j_-}\, u^i\, (p^+)^{j_+}\, (p^-)^{j_-}$, 
and $u=t^{-1}$, $p=p(t)$ in the integrals in the right 
hand sides of $p(t)^{\pm}=G(p)(t)^{\pm}$, we obtain 
(\ref{powerasymN}) with $N$ replaced by $N+1$ and a 
unique $c_N\in\C ^2$. 
This inductive procedure yields that $p(t)$ has 
an asymptotic expansion of the form (\ref{powerasym}) 
for $t\in V$, $|t|\to\infty$. 
The substitution of $u=t^{-1}$ and $p=p(t)$  in the power series 
expansion of $v(u,\, p)$ implies that there exist $b_i\in\C ^2$
such that $p'(t)=v(t^{-1},\, p(t))\sim\sum_{i=1}^{\infty}\, t^{-i}\, b_i$ 
as $t\in V$, $|t|\to\infty$, 
when $\int_{t_0}^t\, p'(s)\,\op{d}\! s+p(t_0)=p(t)
\sim\sum_{j=1}^{\infty}\, t^{-j}\, c_j$ yields that 
$b_i=\, -(i-1)\, c_{i-1}$ for every $i\in\Z _{\geq 1}$. 
This proves (\ref{powerasym'}) for $p=p_V$. 

We will prove next that all solutions 
$p_{\uparrow}$ in Lemma \ref{centerlem} 
and $p_V$ have a common extension to a solution 
of (\ref{cssystem}) on ${\mathcal V}$.   
The conditions in Lemma \ref{centerlem} hold if and only if 
\[
\epsilon <\min\{\epsilon _0,\, 1/C_4,\, 1/4\, C_1\, C_2\}
\quad\mbox{\rm and}\quad  
\delta _-(\epsilon )<\delta <\min\{ \delta _0,\, \delta _+(\epsilon )\} ,
\] 
where $\delta _{\pm}(\epsilon ):=
(1\pm (1-4\, C_1\, C_2\,\epsilon )^{1/2})/2\, C_1$. 
If in the proof of Lemma \ref{centerlem} we replace $Y$ by the space 
$Y_{\epsilon ,\,\delta }$ of all continuous functions 
$p$ on the upper half plane 
$U_{\epsilon}:=\{ t\in\C\mid\op{Im}t\geq 1/\epsilon\}$, 
complex analytic in the interior of $U_{\epsilon}$ and 
with $\| p(t)\|\leq\delta$ for all $t\in U_{\epsilon}$, 
then $G$ is a contraction on $Y_{\epsilon ,\,\delta}$ 
and therefore has a unique fixed point 
$p_{\epsilon ,\,\delta}\in Y_{\epsilon ,\,\delta}$. 
The Cauchy integral theorem implies that 
the restriction of $p_{\epsilon ,\,\delta}$ to the interior 
of $U_{\epsilon}$ 
is a complex analytic solution of (\ref{cssystem}). 
As the restriction of $p_{\epsilon ,\,\delta}$ 
to $\R +\op{i}/\epsilon$ is a fixed point of 
$G:Y\to Y$, and the latter is unique, and equal to the 
solution $p_{\uparrow}$ in Lemma \ref{centerlem}, it follows that 
the solution $p_{\uparrow}(t)$ in Lemma \ref{centerlem} 
extends to a solution $p=p_{\uparrow ,\,\epsilon}$ 
of (\ref{cssystem}) on $U_{\epsilon}$, complex analytic 
on the interior of $U_{\epsilon}$, and satisfying 
$\| p(t)\|\leq\delta$ for every $t\in U_{\epsilon}$. 
Note that if $0<\epsilon '<\epsilon$, then the conditions 
in Lemma \ref{centerlem} hold with $\epsilon$ replaced by $\epsilon '$, 
and the restriction to $U_{\epsilon '}$ of 
$p_{\uparrow ,\,\epsilon}$ is equal to 
$p_{\uparrow ,\,\epsilon '}$. 

If $p(t)$ is any 
solution of (\ref{cssystem}) on some upper half plane 
$U$, and $p(t)\to 0$ as $t\in U$, 
$|t|\to\infty$, then there exist $\delta ,\, \epsilon$ 
as above such that $U_{\epsilon}\subset U$ and 
$\| p(t)\|\leq\delta$ for every $t\in U_{\epsilon}$, 
and therefore $p|_{U_{\epsilon}}=p_{\uparrow ,\,\epsilon}$. 
As the sector\--like domain $V$ contains an upper 
half plane $U_{\epsilon}$, and $p_V(t)\to 0$ 
as $t\in V$, $|t|\to\infty$, it follows that 
$(p_V)|_{U_{\epsilon}}=p_{\uparrow ,\,\epsilon}$. 
This completes the proof that all solutions 
$p_{\uparrow}$ in Lemma \ref{centerlem} extend to 
a common solution $p(t)$ of (\ref{cssystem}) on ${\mathcal V}$, 
of which the restriction to $V$ is equal to $p_V$, and 
therefore $p$ has all the properties mentioned in 
Lemma \ref{centerVlem}. 
\end{proof}
\begin{lemma}
In the situation of Lemma \ref{csestlem}, 
let $\alpha$ denote the right lower 
corner in the $2\times 2$\--matrix 
\begin{equation}
L_1=\left.\frac{\partial ^2v(u,\, p)}{\partial u\,\partial p}
\right| _{u=0,\, p=0}
+\left.\frac{\partial ^2v(0,\, p)}{\partial p^2}
\right| _{p=0}\, c_1,  
\label{L1eq}
\end{equation}
with $c_1$ defined by (\ref{c1eq}).  
For $\eta ,\, r\in\R _{>0}$, let  
\begin{equation}
R_{\eta ,\, r}:=\{ t\in\C\mid\; |t|\geq r\;\mbox{\rm and}\;
|\op{e}^{-t}\, t^{\alpha}|\leq\eta
\} , 
\label{Rdef}
\end{equation}
where $t^{\alpha}=\op{e}^{\alpha\log t}$, 
$\log t=\log |t|+\op{i}\op{arg}t$, and 
$\, -\pi\leq\op{arg}t\leq\pi$. 
For every solution $p(t)=p_{t_0,\, a^-}(t)$ 
and $p(t)=p_{\uparrow}(t)$ of (\ref{cssystem}) 
in Lemma \ref{stablelem} and Lemma \ref{centerVlem}, 
there exist $\eta ,\, r\in\R _{>0}$ such that $p(t)$ extends  
to a solution of (\ref{cssystem}) on $R_{\eta ,\, r}$,  
which extension is again denoted by $p_{t_0,\, a^-}$ 
and $p_{\uparrow}$, respectively. 
There exist $j\mapsto d_j:\Z _{\geq 0}\to\C ^2$ with 
$d_0=(0,\, 1)$, and $C=C_{t_0,\, a^-}\in\C$, such that 
on any subdomain $\Sigma$ of $R_{\eta ,\, r}$ on which 
$\op{Re}t/\log |t|\to +\infty$ as $t\in\Sigma$, $|t|\to\infty$, 
we have the asymptotic expansion 
\begin{equation}
p(t)-p_{\uparrow}(t)\sim C\, \op{e}^{-t}\, t^{\alpha}
\sum_{j=0}^{\infty}\, t^{-j}\, d_j
\label{expasym}
\end{equation} 
as $t\in\Sigma$, $|t|\to\infty$. The asymptotic expansion 
(\ref{expasym}) can be differentiated termwise 
in the sense that 
\begin{equation}
p'(t)-p_{\uparrow}'(t)\sim C\, \op{e}^{-t}\, t^{\alpha}
\sum_{j=0}\, t^{-j}\, (-d_j+(\alpha -j+1)\, d_{j-1})
\label{expasym'}
\end{equation} 
as $t\in S$, $|t|\to\infty$. All the coefficients $d_j$ 
are uniquely determined from the equations $d_0=(0,\, 1)$ 
and $-d_j+(\alpha -j+1)\, d_{j-1}=\sum_{i=0}^j\, 
L_i\, d_{j-i}$ for all $j\geq 1$, where the $2\times 2$\--matrices 
$L_i$ are determined from the asymptotic expansion 
\begin{equation}
L(t):=\left.\frac{\partial v(t^{-1},\, p)}{\partial p}
\right|_{p=p_{\uparrow}(t)}\sim\sum_{i=0}^{\infty}
\, t^{-i}\, L_i
\label{Lt}
\end{equation}
as $t\in V$, $|t|\to\infty$. 
Here 
\begin{equation}
L_0:=\left.\frac{\partial v(0,\, p)}{\partial p}\right| _{p=0}
=\left(\begin{array}{cc}1&0\\0&-1\end{array}\right) ,
\label{L0eq}
\end{equation} 
and $L_1$ is given by (\ref{L1eq}). 
The asymptotic expansion (\ref{powerasym}) for 
$p(t)=p_{\uparrow}(t)$ holds for $t\in\Sigma$, $|t|\to\infty$, 
and can be differentiated termwise there. 
In combination with (\ref{expasym}), 
(\ref{expasym'}), it follows that 
$p(t)=p_{t_0,\, a^-}(t)$ has the same asymptotic expansions  
(\ref{powerasym}), (\ref{powerasym'}) for 
$t\in\Sigma$, $|t|\to\infty$.  

The complex number $C=C_{t_0,\, a^-}$, determined by 
\[
\lim_{t\in\Sigma ,\, |t|\to\infty}\, 
\op{e}^t\, t^{-\alpha}\, (p(t)-p_{\uparrow}(t))=(0,\, C),
\]
depends in a complex analytic way on $(t_0,\, a^-)$. 
Conversely, for every $C\in\C$ there exist $\eta ,\, r\in\R _{>0}$ 
and a unique solution $p=p_C:R_{\eta,\, r}\to\C ^2$ of 
(\ref{cssystem}) such that (\ref{expasym}) holds, 
where $p_C(t)$ depends in a complex analytic way on $C$.  
If we choose $t_0$ such that $\| p(t_0)\|$ is sufficiently small, 
then $p=p_{t_0,\, a^-}$ with $a^-=p(t_0)^-$, 
and $a^-\mapsto C_{t_0,\, a^-}$
is a complex analytic diffeomorphism from 
its open domain of definition onto an open subset of $\C$.
In particular we have $p(t)\equiv p_{\uparrow}(t)$ 
if and only if $C=0$ if and only if 
there exists a sequence $t_j$ in $\C$ such that 
$|\op{arg}(t_j)|<\pi$, 
$\op{e} ^{-t_j}\, {t_j}^{\alpha}\to 0$ and 
$\op{e} ^{t_j}\, {t_j}^{-\alpha}\, (p(t_j)-p_{\uparrow}(t_j))\to 0$  
as $j\to\infty$. 

Finally, for every $C\in\C$ and $\varepsilon >0$ there exist 
$\eta ,\, r\in\R _{>0}$ such that 
\begin{equation}
\left\|\op{e}^t\, t^{-\alpha}\, (p_C(t)-p_{\uparrow}(t))-(0,\, C)\right\| 
\leq\varepsilon
\label{nonsmall}
\end{equation}
for every $t\in R_{\eta ,\, r}$. 
In particular, if $C\neq 0$ and we choose 
$0<\varepsilon <|C|$, then $p_C(t)$ is bounded away 
from zero on the set of all $t\in\C$ such that 
$|\op{e}^{-t}\, t^{\alpha}|=\eta$, $|t|\geq r$, and 
$\op{Im}t>0$, the part of the boundary of $V_{\eta ,\, r}$ 
in the upper half plane and sufficiently far away from the origin. 
A similar statement holds in the lower half plane 
with $p_{\uparrow}(t)$ replaced by $p_{\downarrow}(t)$. 
\label{alphalem}
\end{lemma}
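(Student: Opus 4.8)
\emph{Proof proposal.} The plan is to read off $p(t)-p_{\uparrow}(t)$ from the variational equation of (\ref{cssystem}) along the center solution $p_{\uparrow}$, and then to produce the whole family $p_{C}$ by a weighted--sup--norm fixed--point argument on $R_{\eta,\,r}$, continuing the sequence of variations on the method of Cotton used in Lemmas \ref{stablelem}--\ref{centerVlem}. First I would assemble the formal data. Putting $q(t):=p(t)-p_{\uparrow}(t)$ turns (\ref{cssystem}) into $q'=L(t)\,q+N(t,\,q)$, with $L(t)$ the matrix of (\ref{Lt}) and $N(t,\,q):=v(t^{-1},\,p_{\uparrow}(t)+q)-v(t^{-1},\,p_{\uparrow}(t))-L(t)\,q=\op{O}(\norm{q}^{2})$ uniformly for large $|t|$. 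Substituting the expansion $p_{\uparrow}(t)\sim\sum_{j\geq1}t^{-j}c_{j}$ of Lemma \ref{centerVlem} into the power series (\ref{vexpansion}) for $\partial v/\partial p$ gives (\ref{Lt}) with $L_{0}=\op{diag}(1,\,-1)$ (this uses $\partial w(0,\,p)/\partial p|_{p=0}=0$) and with $L_{1}$ given by (\ref{L1eq}); the formula (\ref{c1eq}) for $c_{1}$ comes in the same way from substituting $p_{\uparrow}(t)\sim\sum t^{-j}c_{j}$ into (\ref{cssystem}) and using $v(0,\,0)=0$. Inserting the ansatz $q(t)=C\,\op{e}^{-t}t^{\alpha}\sum_{j\geq0}t^{-j}d_{j}$ into $q'=L(t)q$ and matching powers of $t$ yields $-d_{j}+(\alpha-j+1)\,d_{j-1}=\sum_{i=0}^{j}L_{i}\,d_{j-i}$. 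Since $-I-L_{0}=\op{diag}(-2,\,0)$, the second component of the $j=1$ relation is forced to be the scalar identity $(L_{1})_{22}=\alpha$, which is the defining property of $\alpha$, while reading the two scalar equations of relation $j$ in the appropriate order shows that the normalization $d_{0}=(0,\,1)$ determines all the $d_{j}$ recursively.

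The construction of $p_{C}$ on $R_{\eta,\,r}$ is the core of the argument and the step I expect to be the \emph{main obstacle}. One first extends $p_{\uparrow}$ to $R_{\eta,\,r}$: continuing the solution $p_{\uparrow}$ of Lemma \ref{centerVlem} along a half--line $t_{0}+\R_{\geq0}\subset R_{\eta,\,r}\cap{\mathcal V}$ and running a Cotton--type contraction on $R_{\eta,\,r}$ (for $\eta$ small, $r$ large) shows the continuation stays small throughout the simply connected region $R_{\eta,\,r}$, hence is single--valued and agrees with the original $p_{\uparrow}$ on the connected overlap $R_{\eta,\,r}\cap{\mathcal V}$. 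Then for each $C\in\C$ one writes $p=p_{\uparrow}+q$ and turns $q'=L(t)q+N(t,\,q)$ into an integral equation in which the growing $(+)$--component is recovered by integrating toward infinity and the decaying $(-)$--component carries the prescribed datum $C$, via the homogeneous solution $\sim C\,\op{e}^{-t}t^{\alpha}d_{0}$ of $q'=L(t)q$ (whose existence on $R_{\eta,\,r}$ as an honest, not merely formal, solution follows from the linear, nonlinearity--free instance of the very argument being set up, carried out first), the off--diagonal $\op{O}(1/t)$ part of $L$ being treated as a small perturbation. The integrations run along half--lines $L_{t}^{\pm}\subset R_{\eta,\,r}$ analogous to those in the proof of Lemma \ref{centerVlem} but adapted to the shape of the region (\ref{Rdef}); choosing these paths so that $\op{Re}(s-t)$, respectively $\op{Re}(t-s)$, has the right sign along them while keeping them inside $R_{\eta,\,r}$ is the delicate point, and it is here that one has to use the precise geometry of $R_{\eta,\,r}$. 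On a suitable Banach space of continuous functions $q$ on $R_{\eta,\,r}$, holomorphic in the interior, with a norm of the form $\norm{q}:=\sup_{R_{\eta,\,r}}|\op{e}^{t}t^{-\alpha}|\,\norm{q(t)-C\,\op{e}^{-t}t^{\alpha}d_{0}}$ (and suitable refinements when more terms of (\ref{expasym}) are to be controlled), each Green's--function estimate gains either a negative power of $|t|$ or a factor $\op{O}(\eta)$, so for $\eta$ small and $r$ large the operator maps a ball into itself and is a contraction, giving a unique holomorphic $p_{C}$ on $R_{\eta,\,r}$ depending holomorphically on $C$; taking $C=0$ recovers the extended $p_{\uparrow}$. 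Letting the number of retained terms in the datum grow and bootstrapping in the integral equation on a subdomain $\Sigma$ where $\op{Re}t/\log|t|\to\infty$ produces (\ref{expasym}), and differentiating the integral equation gives (\ref{expasym'}); since on $\Sigma$ the difference $p_{C}-p_{\uparrow}=\op{O}(\op{e}^{-t}t^{\alpha})$ is beyond all powers of $t^{-1}$, $p_{C}$ inherits there the power--series expansions (\ref{powerasym}), (\ref{powerasym'}) of $p_{\uparrow}$. Finally (\ref{nonsmall}) is the instance of the construction with only the $d_{0}$--term kept and the ball radius tied to $\varepsilon$: shrinking $\eta$ forces $\norm{\op{e}^{t}t^{-\alpha}(p_{C}-p_{\uparrow})-(0,\,C)}\leq\varepsilon$ on all of $R_{\eta,\,r}$.

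The remaining assertions are soft. Since $p_{C}$ is small on any half--line $t_{0}+\R_{\geq0}\subset R_{\eta,\,r}$, the uniqueness in Lemma \ref{stablelem} identifies it with $p_{t_{0},\,a^{-}}$ for $a^{-}=p_{C}(t_{0})^{-}$; conversely $p_{t_{0},\,a^{-}}$, small on such a half--line, coincides there with one of the $p_{C}$, which extends it to $R_{\eta,\,r}$ and forces $C=C_{t_{0},\,a^{-}}$ to be the second component of $\lim_{|t|\to\infty}\op{e}^{t}t^{-\alpha}(p_{t_{0},\,a^{-}}(t)-p_{\uparrow}(t))$. Holomorphic dependence of $C_{t_{0},\,a^{-}}$ on $(t_{0},\,a^{-})$ follows from the uniformity in $(t_{0},\,a^{-})$ of the contraction together with the local uniformity of this limit, which is exactly what (\ref{nonsmall}) provides. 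To see that $a^{-}\mapsto C_{t_{0},\,a^{-}}$ is a local biholomorphism near the value $a^{-}_{\uparrow}$ giving $p_{\uparrow}$ (where $C=0$), differentiate in $a^{-}$: the variation $\xi(t):=\partial p_{t_{0},\,a^{-}}(t)/\partial a^{-}$ solves $\xi'=L(t)\xi$ and is small on the half--line, hence by the linear case of the construction above satisfies $\op{e}^{t}t^{-\alpha}\xi(t)\to(0,\,\mu)$, where $\mu\neq0$ because $\mu=0$ would force $\xi\equiv0$ against $\xi(t_{0})^{-}=1$; thus $\partial C_{t_{0},\,a^{-}}/\partial a^{-}=\mu\neq0$ and the inverse function theorem applies. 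For the chain of equivalences, $C=0$ gives $p_{0}=p_{\uparrow}$ by the uniqueness of small solutions, the converse is trivial, and if a sequence $t_{j}$ with $|\op{arg}t_{j}|<\pi$, $\op{e}^{-t_{j}}t_{j}^{\alpha}\to0$ and $\op{e}^{t_{j}}t_{j}^{-\alpha}(p(t_{j})-p_{\uparrow}(t_{j}))\to0$ exists, then $\op{Re}t_{j}\to+\infty$, so $|t_{j}|\to\infty$ and $t_{j}\in R_{\eta,\,r}$ eventually, whence (\ref{nonsmall}) with $\varepsilon\to0$ gives $(0,\,C)=\lim\op{e}^{t_{j}}t_{j}^{-\alpha}(p(t_{j})-p_{\uparrow}(t_{j}))=(0,\,0)$, i.e. $C=0$. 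Lastly, if $C\neq0$ and $0<\varepsilon<|C|$, then (\ref{nonsmall}) gives $\norm{p_{C}(t)-p_{\uparrow}(t)}\geq(|C|-\varepsilon)\,|\op{e}^{-t}t^{\alpha}|=(|C|-\varepsilon)\,\eta$ on $\{\,|\op{e}^{-t}t^{\alpha}|=\eta,\ |t|\geq r,\ \op{Im}t>0\,\}$, while $p_{\uparrow}(t)\to0$ there, so $p_{C}$ is bounded away from $0$ once $r$ is large enough; applying the complex conjugation $t\mapsto\overline{t}$, with $\R+\op{i}/\epsilon$ replaced by $\R-\op{i}/\epsilon$ and $p_{\uparrow}$ by $p_{\downarrow}$, yields the lower--half--plane statement.
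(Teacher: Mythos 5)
Your outline follows the paper's architecture (Cotton-type integral equations with the $+$ component integrated toward $+\infty$, the $-$ component carrying the datum $C$, contraction in a weight $\sup_t|\op{e}^t\,t^{-\alpha}|\,\|\cdot\|$, then identification with the solutions of Lemma \ref{stablelem} and soft arguments for the equivalences), but there is a genuine gap at exactly the step you call the core: the claim that "the off-diagonal $\op{O}(1/t)$ part of $L$" can be treated as a small perturbation, with "each Green's-function estimate gaining either a negative power of $|t|$ or a factor $\op{O}(\eta)$", is false in the neutral direction. In the $-$ component the kernel $\Theta^-(t)/\Theta^-(s)\sim \op{e}^{s-t}(t/s)^{\alpha}$ exactly cancels the decay $\op{e}^{-s}s^{\alpha}$ that your norm allows for $z(s)$ (this cancellation is what makes the method work at all), so a \emph{linear} term whose coefficient is only $\op{O}(1/s)$ contributes an integral of the type $\int_0^{\infty}(|t|+\tau)^{-1}\,\op{d}\!\tau$, which diverges: no negative power of $|t|$ and no factor $\eta$ is gained, and the fixed-point operator is not even bounded on the space you propose. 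For the same reason the parenthetical claim that the "honest" homogeneous solution $\sim C\,\op{e}^{-t}t^{\alpha}(0,1)$ of $q'=L(t)q$ on $R_{\eta,\,r}$ "follows from the linear, nonlinearity-free instance of the very argument" is circular — producing it already requires controlling that same $\op{O}(1/t)$ coupling.

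The paper's proof removes this obstruction \emph{before} setting up the integral equations: a substitution $y=A(t)\,z$ with $A(t)=1+\sum_{j=1}^{N-1}t^{-j}A_j$ (the $A_j$ chosen antidiagonal, order by order) makes the linearization along $p_{\uparrow}$ diagonal modulo $\op{O}(t^{-N})$; the diagonal part is then absorbed exactly into the explicit comparison solutions $\Theta^{\pm}(t)=\op{e}^{\pm t}t^{\mu_1^{\pm}}\theta^{\pm}(t)$, so the remaining inhomogeneity satisfies $h(t^{-1},z)=\op{O}(|t|^{-N}\|z\|)+\op{O}(\|z\|^2)$ with $N>1$, and only then do the gains you describe ($|t|^{1-N}$ or $\eta$) actually hold. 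This finite-order diagonalization is also what delivers the expansion (\ref{expasym}) up to order $N$ with $\op{O}(t^{-N})$ control, the estimate (\ref{nonsmall}), and the clean resonance analysis behind the recursion for the $d_j$ — where, incidentally, your bookkeeping is off: relation $j$ determines only $d_j^+$ (and constrains $d_{j-1}^-$ with coefficient $j-1$); $d_j^-$ is pinned down by relation $j+1$, not by "the two scalar equations of relation $j$". One could conceivably rescue your route with a graded anisotropic norm (weighting $\rho^+$ by $|\op{e}^t t^{1-\alpha}|$ and $\rho^-$ by $|\op{e}^t t^{-\alpha}|$, and so on to higher order), but that is precisely the content your proposal leaves unspecified, and pushed to all orders it amounts to reconstructing the paper's diagonalization.
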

\begin{proof}
Let $W$ be a suitable subdomain of $V$ and $p:W\to\C ^2$ 
a solution of (\ref{cssystem}). Then $y(t):=p(t)-p_{\uparrow}(t)$ 
is a solution of the differential equation 
\begin{equation}
\op{d}\! y/\op{d}\! t=f(t^{-1},\, y):=
v(t^{-1},\, p_{\uparrow}(t)+y)-v(t^{-1},\, p_{\uparrow}(t)), 
\label{ysystem}
\end{equation} 
where $(t,\, y)\mapsto f(t^{-1},\, y)$ is a $\C ^2$\--valued 
complex analytic function on 
$W\times B$, if  
$B:=\{ y\in\C ^2\mid \| y\| <\delta_0'\}$, where 
$\delta _0':=\delta _0-\sup_{t\in W}\, \| p_{\uparrow}(t)\| 
\geq\delta _0-\delta\, r^{-1/2}$. 
Furthermore, 
\begin{equation}
f(t^{-1},\, y)\sim\sum_{j=0}^{\infty}\, t^{-j}\, f_j(y)
\label{fpowerasym}
\end{equation}
for $t\in V$, $|t|\to\infty$, where each of the functions 
$f_j$ is complex analytic on $B$. 
We have $f(t^{-1},\, 0)\equiv 0$, hence $f_j(0)=0$ for every $j$. 
It follows from 
(\ref{Lt}) that $\partial f_j(y)/\partial y|_{y=0}=L_j$ 
for every $j\in\Z _{\geq 0}$. 
All aforementioned asymptotic expansions are 
uniform in $y\in B$, and can be 
differentiated termwise, arbitrarily often, 
with respect to $t$, $y^+$, and $y^-$. 

If $A(t)=\sum_{j=0}^{N-1}\, t^{-j}\, A_j$, where 
the $A_j$ are $2\times 2$\--matrices with $A_0=1$, 
then the substitution $y=A(t)\, z$ transforms the 
linear differential equation $\op{d}\! y/\op{d}\! t=L(t)\, y$ 
into the linear equation $\op{d}\! z/\op{d}\! t=M(t)\, z$, 
where $L(t)\, A(t)=A'(t)+A(t)\, M(t)$. 
It follows that $M(t)\sim\sum_{k=0}^{\infty}\, t^{-k}\, M_k$, 
where, for each $l\in\Z _{\geq 0}$ 
\[
\sum_{j+k=l}\, L_j\, A_k=(1-l)\, A_{l-1}+\sum_{j+k=l}\, A_j\, M_k.
\]
Here $A_j=0$ when $j<0$ or $j\geq N$. 
It follows that $M_0=L_0$ as in (\ref{L0eq}), 
when the linear mapping $A_l\mapsto L_0\, A_l-A_l\, M_0$ 
is surjective from the space of all $2\times 2$\--matrices 
$A_l$ onto the space of all antidiagonal $2\times 2$\--matrices, 
with kernel equal to the space of all diagonal $2\times 2$\--matrices. 
It follows by induction on $l\leq N-1$ that, given 
the $A_j$ and $M_j$ for $j\leq l-1$, there is a 
unique antidiagonal matrix $A_l$ such that the matrix 
$M_l$ is diagonal. In other words, with the substitution 
$y=A(t)\, z$ the differential equation (\ref{ysystem}) 
is equivalent to the differential equation 
\begin{equation}
\op{d}\! z/\op{d}\! t=g(t^{-1},\, z):=A(t)^{-1}
\, f(t^{-1},\, A(t)\, z)-A(t)^{-1}\, A'(t)\, z,  
\label{zsystem}
\end{equation}
where $g(t^{-1},\, z)$ has an asymptotic 
expansion for $|t|\to\infty$ of the same nature as 
$f(t^{-1},\, y)$,  
\begin{eqnarray}
g(t^{-1},\, z)&=&\sum_{j=0}^{N-1}\, t^{-j}\, M_j\, z
+h(t^{-1},\, z), 
\label{Mdiagonal}\\
h(t^{-1},\, z)&=&\op{O}(|t|^{-N}\,\| z\|) 
+\op{O}(\| z\| ^2),\quad\mbox{\rm and}
\label{hest}\\
\frac{\partial h(t^{-1},\, z)}{\partial z}&=&\op{O}(|t|^{-N}) 
+\op{O}(\| z\|)
\label{hzest} 
\end{eqnarray}
as $t\in V$ and $|t|\to\infty$,  
where $M_j$ is a {\em diagonal} 
$2\times 2$\--matrix for each $0\leq j\leq N-1$. 
We have $M_0=L_0$ is as in (\ref{L0eq}), and 
$M_1$ is equal to the diagonal part of the $2\times 2$\--matrix 
$L_1$ in (\ref{L1eq}). 

Let $\mu _j^+$ and $\mu _j^-$ denote the left upper and 
right lower corner of the diagonal matrix $M_j$, 
where $\mu _0^{\pm}=\,\pm 1$ and $\mu _1^-=\alpha$. 
Write 
\begin{eqnarray}
\Theta ^{\pm}(t)&:=&\op{e}^{\pm t}\, t^{\mu _1^{\pm}}\, \theta ^{\pm}(t),
\quad\mbox{\rm where}\nonumber\\
\theta ^{\pm}(t)&:=&\op{exp}\left(
\sum_{j=2}^{N-1}\,\mu _j^{\pm}\, t^{1-j}/(1-j)\right) .
\label{thetapm}
\end{eqnarray}
Note that (\ref{thetapm}) implies that 
$\Theta ^{\pm}(t)\sim\op{e}^{\pm t}\, t^{\mu _1^{\pm}}$ 
for large $|t|$. 
The solutions $\zeta (t)$ 
of the homogeneous linear 
differential equation $\frac{\scriptop{d}\!\zeta}{\scriptop{d}\! t}
=\sum_{j=0}^{N-1}
\, t^{-j}\, M_j\,\zeta$ are given by 
$\zeta (t)^{\pm}=\Theta ^{\pm}(t)\,\Theta ^{\pm}(\tau )^{-1}\, 
\zeta (\tau )^{\pm}$. 
Substituting (\ref{Mdiagonal}) in (\ref{zsystem}), 
we obtain an inhomogeneous linear 
differential equation for $z$ with 
$h(t^{-1},\, z(t))$ as the inhomogeneous term, when 
Lagrange's method of variation of constants yields the integral 
equations 
\[
z(t)^{\pm}=\frac{\Theta ^{\pm}(t)}{\Theta ^{\pm}(\tau )}\, z(\tau )^{\pm}
+\int_{\tau}^t\,\frac{\Theta ^{\pm}(t)}{\Theta ^{\pm}(s)}
\, h(s^{-1}, z(s))^{\pm}\,\op{d}\! s.
\]
The reasoning leading to (\ref{inteq+2}) and (\ref{inteq-2}) 
this time leads to the conclusion that $z(t)$ is a 
uniformly small solution of (\ref{zsystem}) on $T:=t_0+\R _{\geq 0}$ 
such that $z(t_0)^-=b^-$, if and only if $z(t)$ is a uniformly 
small continuous function which satisfies the integral 
equations 
\begin{equation}
z(t)^+=H(z)(t)^+:=
\, -\int_t^{+\infty}\,\frac{\Theta ^+(t)}{\Theta ^+(s)}
\, h(s^{-1},\, z(s))^+\,\op{d}\! s 
\label{inteq+2z}
\end{equation}
and
\begin{equation}
z(t)^-=H(z)(t)^-:=\frac{\Theta ^-(t)}{\Theta ^-(t_0)}\, b^-
+\int_{t_0}^{t}\,\frac{\Theta ^-(t)}{\Theta ^-(s)} 
\, h(s^{-1},\, z(s))^-\,\op{d}\! s. 
\label{inteq-2z}
\end{equation}
Assume that $N>1$, $\gamma >|b^-/\Theta ^-(t_0)|$. 
In the next paragraphs we will prove 
that, if $\sup_{t\in T}\, |t^{-1}|$ and 
$\sup_{t\in T}\, |\op{e}^{-t}\, t^{\alpha}|$ are sufficiently  
small, then $H$ is a 
contraction in the set ${\mathcal Z}$  of all continuous 
functions $z:T\to\C ^2$ such that 
$\| z\| :=\sup_{t\in T}\, |\op{e}^t\, t^{-\alpha}|\,\| z(t\|\leq\gamma$, 
with respect to the metric $(z_1,\, z_2)\mapsto\| z_1-z_2\|$. 

If in the integrand in (\ref{inteq+2z}) 
we substitute $s=t+\tau$, $\tau\in\R _{\geq 0}$, then 
(\ref{hest}) implies that $|\op{e}^t\, t^{-\alpha}|$ times 
the absolute value of the integrand is estimated from above by a 
uniform constant times 
\[
\op{e}^{-2\,\tau}
\, | (\frac{t+\tau}{t})^{\alpha -\mu_1^+}|\, 
\left( |t+\tau |^{-N}
+|\op{e}^{-(t+\tau )}\, (t+\tau )^{\alpha}|\,\| z\|\right)
\, \| z\| .
\]
In view of (\ref{hzest}), a similar estimate with the last factor 
$\| z\|$ replaced by $\| z_1-z_2\|$ holds for 
$|\op{e}^t\, t^{-\alpha}|\, |H(z_1)(t)^+-H(z_2)(t)^+|$. 
The integral of $\tau\mapsto\op{e}^{-2\,\tau}
\, | (\frac{t+\tau}{t})^{\alpha -\mu_1^+}|$ over $\R _{\geq 0}$ 
is uniformly bounded if the distance from $T$ to the origin 
is bounded away from zero. As 
$|s^{-1}|$ and $|\op{e}^{-s}\, s^{\alpha}|$ are uniformly 
small for all $s\in T$, there exists a $0<\beta <1$ such that 
$|\op{e}^t\, t^{-\alpha}|\, |H(z)(t)^+|\leq\gamma$ 
and $|\op{e}^t\, t^{-\alpha}|\, |H(z_1)(t)^+-H(z_2)(t)^+|\leq 
\beta\,\| z_1-z_2\|$ for all $z,\, z_1,\, z_2\in {\mathcal Z}$ 
and all $t\in T$.  

In (\ref{inteq-2z}), $|\op{e}^t\, t^{-\alpha}|$ times 
the absolute value of the integrand is estimated from above by a 
uniform constant times 
$(|s|^{-N}+|\op{e}^{-s}\, s^{\alpha}|)\,\| z\|$. 
A similar estimate, with the last factor $\| z\|$ 
replaced by $\| z_1-z_2\|$, holds for the integrand 
in the integral formula for 
$|\op{e}^t\, t^{-\alpha}|\, |H(z_1)(t)^--H(z_2)(t)^-|$.  
Let $N>1$ and $\gamma >|b^-/\Theta ^-(t_0)|$. As  
$|s^{-1}|$ and $|\op{e}^{-s}\, s^{\alpha}|$ are uniformly 
small for all $s\in T$, there exists a $0<\beta <1$ such that 
$|\op{e}^t\, t^{-\alpha}|\, |H(z)(t)^-|\leq\gamma$ 
and $|\op{e}^t\, t^{-\alpha}|\, |H(z_1)(t)^--H(z_2)(t)^-|\leq 
\beta\,\| z_1-z_2\|$ for all $z,\, z_1,\, z_2\in {\mathcal Z}$ 
and all $t\in T$. This completes the proof 
that $H({\mathcal Z})\subset {\mathcal Z}$ and 
$H:{\mathcal Z}\to {\mathcal Z}$ is a contraction 
with respect to the metric 
$(z_1,\, z_2)\mapsto 
\sup_{t\in T}\, |\op{e}^t\, t^{-\alpha}|\, \| z_1(t)-z_2(t)\|$. 
Because ${\mathcal Z}$ is complete with respect to this metric, 
it follows that there is a unique solution 
$z=z_{t_0,\, b^-}:T\to\C ^2$ of (\ref{zsystem}) such that 
$z(t_0)^-=b^-$ and $\| z(t)\|\leq\gamma\, |\op{e}^{-t}\, t^{\alpha}|$ 
for every $t\in T$. Because the latter estimate implies that 
$\| z(t)\|$ is uniformly small, the solution 
$p(t)=p_{\uparrow}(t)+A(t)\, z(t)$ of (\ref{cssystem}) 
is uniformly small on $T$, and therefore equal to 
the solution $p_{t_0,\, a^-}(t)$ in ii), with 
$a^-=(p_{\uparrow}(t_0)^-+A(t_0)\, z_{t_0,\, b^-}(t_0))^-$ 
and $b^-=A(t_0)^{-1}\, (p_{t_0,\, a^-}(t_0)-p_{\uparrow}(t_0))^-$. 
The assumptions remain valid upon 
small perturbations of $t_0$ and $b^-$, and an application 
of the implicit function theorem yields that the solution 
$z=z_{t_0,\, b^-}$ depends in a complex analytic way on 
$(t_0,\, b^-)$, when $b^-\mapsto a^-$ is a local complex 
analytic diffeomorphism depending in a complex analytic way 
on $t_0$. 

The estimates in the previous paragraph imply that 
the integral $\int_{t_0}^{\infty}\,\Theta ^-(s)^{-1}
\, h(s^{-1},\, z(s))\,\op{d}\! s$ is absolutely 
convergent, when (\ref{inteq-2z}) implies that 
$\Theta ^-(t)^{-1}\, z(t)^-$ converges to 
\begin{equation}
C:=\Theta ^-(t_0)\, b^-+\int_{t_0}^{\infty}
\,\Theta ^-(s)^{-1}\, h(s^{-1},\, z(s))\,\op{d}\! s
\label{Ceq}
\end{equation}
as  $t=t_0+\tau$, $\tau\to +\infty$. Because 
$\Theta ^-(t)\sim\op{e}^{-t}\, t^{\alpha}$, 
the function $\op{e}^t\, t^{-\alpha}\, z(t)^-$ 
has the same limit. 
The previous estimates for $z(t)^+=H(z)(t)^+$ yield that 
$\op{e}^t\, t^{-\alpha}\, z(t)^+$ converges to zero, 
and it follows that $\op{e}^t\, t^{-\alpha}\, z_{t_0,\, b^-}(t)$ 
converges to the vector $(0,\, C)\in\C ^2$ which depends in a complex 
analytic way on $(t_0,\, b^-)$ and on $(t_0,\, a^-)$. 
Because $A(t)$ converges to the identity matrix, 
also $\op{e}^t\, t^{-\alpha}\, (p_{t_0,\, a^-}(t)-p_{\uparrow}(t))
=A(t)\, (\op{e}^t\, t^{-\alpha}\, z_{t_0,\, b^-}(t))$ 
converges to $(0,\, C_{t_0,\, a^-})$ as $t=t_0+\tau$, $\tau\to +\infty$. 
 
As $b^-=z(t_0)^-$, and the previous remains true if 
we replace $t_0$ by any $t\in T=t_0+\R _{\geq 0}$, 
the equation (\ref{inteq-2z}) implies the integral equation 
\begin{equation}
z(t)^-={\mathcal H}(z)(t)^-:=\Theta ^-(t)\,\left( C-\int_t^{\infty}
\,\Theta ^-(s)^{-1}\, h(s^{-1},\, z(s))^-\,\op{d}\! s\right) .
\label{inteq-2zl}
\end{equation}
If ${\mathcal H}(z)(t)^+:=H(z)(t)^+$, then 
then for {\em every} $C\in\C$ the integral operator 
${\mathcal H}={\mathcal H}_{C}$ is a contraction in ${\mathcal Z}$, 
if $\gamma >|C|$ and the numbers $\sup_{t\in T}\, |t^{-1}|$ and 
$\sup_{t\in T}\, |\op{e}^{-t}\, t^{\alpha}|$ are sufficiently  
small. The unique fixed point $z=z_{C}$ of ${\mathcal H}_{C}$ 
is equal to the unique solution $z(t)$ of (\ref{zsystem}) 
such that $\sup_{t\in T}\, |\op{e}^t\, t^{-\alpha}|\, \| z(t)\|
\leq\gamma$, and $\op{e}^t\, t^{\alpha}\, z(t)\to (0,\, l)$ 
as $t\in T$, $|t|\to\infty$.  

Let $R=R_{\eta ,\, r}$ be as in (\ref{Rdef}). 
If $\gamma >|C|$ and $\eta ,\, r^{-1}$ are sufficiently small, 
then the integral operator ${\mathcal H}$ in the previous paragraph 
defines a contraction in the complete space of 
all continuous functions $z:R\cap V\to\C ^2$, complex analytic 
in the interior of $R$,  
such that $|\op{e}^t\, t^{-\alpha}|\, 
\| z(t)\|\leq\gamma$ for every $t\in R\cap V$, 
with respect to the metric $(z_1,\, z_2)\mapsto 
\sup_{t\in R\cap V}\, |\op{e}^t\, t^{-\alpha}|\, 
\| z_1(t)-z_2(t)\|$. For every $t_0\in R\cap V$ the restriction 
of $z$ to $T:=t_0+\R _{\geq 0}$ is equal to the unique 
fixed point $z_{C}$ of the integral operator 
${\mathcal H}:{\mathcal Z}\to {\mathcal Z}$ 
in the previous paragraph, and therefore 
$z_{C}:T\to\C ^2$ extends to 
a solution of (\ref{zsystem}) on the much larger domain 
$R\cap V$, which is denoted by the same letter and 
satisfies $\sup_{t\in R\cap V}|\op{e}^t\, t^{-\alpha}|\, 
\| z(t)\|\leq\gamma$. 

Substituting an asymptotic expansion 
$z(t)\sim \op{e}^{-t}\, t^{\alpha}\,\sum_{j=0}^{k-1}
\, t^{-j}\, e_j$ for $t\in\Sigma$, $|t|\to\infty$ in the right hand 
side of $z(t)={\mathcal H}(z)(t)$ yields a similar expansion 
with $k$ replaced by $k+1$, which procedure stops at $k=N-1$. 
In the induction step it is used that in $\Sigma$ we have 
$\| z(t)\|=\op{O}(|\op{e}^t\, t^{\alpha}|)
=\op{O}(|t|^{-M})$ for every $M>0$. This leads to 
an asymptotic expansion 
$y(t)\sim C\,\op{e}^{-t}\, t^{\alpha}\,\sum_{j=0}^{N-1}
\, t^{-j}\, d_j$, where the $d_j$ satisfy 
the equations $d_0=(0,\, 1)$ and 
$-d_j+(\alpha -j+1)\, d_{j-1}=\sum_{i=0}^j\, 
L_i\, d_{j-i}$ for $0\leq j\leq N-1$. 
For $j=1$ we have $-d_1+\alpha\, d_0=L_0\, d_1+L_1\, d_0$, 
which implies that $(L_1-\alpha )\, (0,\, 1)$ is in the image 
$\C\times\{ 0\}$ of $L_0+1$, and we recover that $\alpha$ 
has to be equal to the right lower corner of $L_1$. 
If $j\geq 1$, then the equation 
$-d_j+(\alpha -j+1)\, d_{j-1}=L_0\, d_j+\sum_{i=1}^j\, L_i\, d_{j-i}$ 
determines $d_j$ only modulo the kernel $\{ 0\}\times\C$ 
of $L_0+1$, but the equation with $j$ replaced by $j+1$ 
implies that $(L_1-\alpha +j)\, d_j+\sum_{i=2}^j\, L_i\, d_{j-i}$ 
belongs to the image $\C \times\{ 0\}$ of $L_0+1$. 
As the right lower corner of $L_1-\alpha +j$ is equal to $j\neq 0$, 
the space $(L_1-\alpha +j)(\{ 0\}\times\C )$ is transversal 
to $\C\times\{ 0\}$, and it follows that $d_j$ is uniquely 
determined by the $j$\--th equation and the $(j+1)$-st equation. 
Because we can take $N$ arbitrarily large, and the $d_j$, 
$0\leq j\leq N-1$ do not depend on $N$, this implies 
(\ref{expasym}), where (\ref{expasym'}) follows from a 
substitution of (\ref{expasym'}) in 
$y'(t)=f(t^{-1},\, y(t))$, $y(t)=p(t)-p_{\uparrow}(t)$. 

This completes the proof of the lemma in the upper half plane, where 
(\ref{nonsmall}) follows from the aforementioned estimates 
for the integrals $\int _t^{\infty}
\, \Theta ^{\pm}(s)^{-1}\, h(s^{-1},\, z(s))^{\pm}\,\op{d}\! s$. 
In turn (\ref{nonsmall}), in combination with 
$p_{\uparrow}(t)=\op{O}(t^{-1})$, implies that 
$p_C(t)$ is bounded away from zero on the part 
of the boundary of $V_{\eta ,\, r}$ in the upper half plane 
and sufficiently far away from the origin. 
The statements regarding the behavior in the lower half plane 
follow by replacing $p_{\uparrow}(t)$ by $p_{\downarrow}(t)$. 
Note that in the lower half plane $p_{\uparrow}(t)$ 
is equal to one of the solutions $p_{t_0,\, a^-}$, and 
therefore $p_{\uparrow}(t)$ extends from $V$ to a domain 
of the form $V\cup R_{\eta ,\, r}$. 
\end{proof}
\begin{remark}
Because $p_{\downarrow}(t)$ is a solution  
of (\ref{cssystem}) as in Lemma \ref{stablelem}, 
it follows from Lemma \ref{alphalem} that 
there is a unique constant $S\in\C$ such that 
\begin{equation}
p_{\downarrow}(t)-p_{\uparrow}(t)
\sim\op{e}^{-t}\, t^{\alpha}\, (0,\, S)
\label{Sdef}
\end{equation} 
as $\op{Re}t/\log |t|\to\infty$, $|t|\to\infty$, 
where $S=0$ implies that $p_{\downarrow}(t)=p_{\uparrow}(t)$ 
for all $t\in R_{\eta ,\, r}$ with $\eta$ and $1/r$ sufficiently small. 
Because this is a nonlinear analogue of the phenomena 
described by Stokes \cite{stokes}, we follow 
Costin \cite[Th. 1]{costinduke} in calling 
$S$ the {\em Stokes constant} 
pertaining to the comparison in the right  
half plane of the solutions 
$p_{\downarrow}(t)$ and $p_{\uparrow}$ of (\ref{cssystem}). 
Generically the Stokes constant is nonzero. 
This happens in particular for the Boutroux\--Painlev\'e system, 
see Lemma \ref{Slem} below. 

Furthermore, for every 
solution $p(t)=p_{t_0,\, a^-}(t)$ 
of (\ref{cssystem}) in Lemma \ref{stablelem} 
there are unique $C_{\uparrow},\, C_{\downarrow}\in\C$ 
such that 
\begin{equation}
p(t)-p_{\uparrow}(t)\sim\op{e}^{-t}\, t^{\alpha}\, 
(0,\, C_{\uparrow}) \quad\mbox{\rm and}\quad 
p(t)-p_{\downarrow}(t)\sim \op{e}^{-t}\, t^{\alpha}
\, (0,\, C_{\downarrow})
\label{Cupdown}
\end{equation} 
as $\op{Re}t/\log |t|\to\infty$, $|t|\to\infty$, 
where conversely the solution $p(t)$ of (\ref{cssystem}) is uniquely  
by any of the two asymptotic identities in (\ref{Cupdown}). 
Because $(p(t)-p_{\uparrow}(t))-(p(t)-p_{\downarrow}(t))
=p_{\downarrow}(t)-p_{\uparrow}(t)$, it follows that 
\begin{equation}
C_{\uparrow}-C_{\downarrow}=S.
\label{CupdownS}
\end{equation} 
Conversely, every complex number occurs as $C_{\uparrow}$ 
in (\ref{Cupdown}) for a unique solution $p(t)$ 
of (\ref{cssystem}) in Lemma \ref{stablelem}. 
The following statements are equivalent:
\begin{itemize}
\item[i)] 
$C_{\downarrow}=C_{\uparrow}$ for some 
solution $p(t)$ of (\ref{cssystem}) in Lemma \ref{stablelem}. 
\item[ii)] $S=0$. 
\item[iii)] $C_{\downarrow}=C_{\uparrow}$ for every  
solution $p(t)$ of (\ref{cssystem}) in Lemma \ref{stablelem}. 
\item[iv)] 
$p_{\downarrow}(t)=p_{\uparrow}(t)$ for all $t$ in a 
domain of the form $R_{\eta,\, r}$. 
\end{itemize} 

Recall the definition of $\mu _1^+$ 
as the upper left corner of the matrix 
$L_1$ in (\ref{L1eq}). In analogy with (\ref{Sdef}), 
the substitution $t\mapsto -t$ leads to the existence of 
a unique constant $S_-\in\C$, the Stokes constant pertaining to the 
comparison of the solutions $p_{\downarrow}(t)$ and 
$p_{\uparrow}(t)$ in the left half plane, such that 
\begin{equation}
p_{\downarrow}(t)-p_{\uparrow}(t)
\sim\op{e}^{t}\, t^{\mu _1^+}\, (S_-,\, 0)
\label{Sleftdef}
\end{equation} 
as $\op{Re}t/\log |t|\to -\infty$, $|t|\to\infty$.  
Here $S_-=0$ implies that $p_{\downarrow}(t)=p_{\uparrow}(t)$ 
for all $t\in -R_{\eta ,\, r}$ with $\eta$ and $1/r$ sufficiently small. 
We have $S_-=S=0$ if and only if 
$p_{\uparrow}(t)$ and $p_{\downarrow}(t)$ have a common extension 
to a single\--valued solution $p(t)$ of (\ref{cssystem}) 
on a neighborhood of $t=\infty$ such that $p(t)\to 0$ as $t\to\infty$. 
\label{stokesconstantrem}
\end{remark}

Write $\tau (t)=\op{e}^{-t}\, t^{\alpha}$, and let 
\begin{equation}
p_{\,\scriptop{formal}}(t)=\sum_{h,\, i\in\Z_{\geq 0}}
\, \tau (t)^h\, t^{-i}\, p_{h,\, i}
\label{formal}
\end{equation} 
be a formal power series in $\tau (t)$ and $t^{-1}$, 
with coefficients $p_{h,\, i}\in\C ^2$. 
The termwise derivative of the right hand side of (\ref{formal}), 
where $\op{d}(\tau (t)^h\, t^{-i})/\op{d}\! t
=\, -h\, \tau (t)^h\, t^{-i}+(h\,\alpha -i)\, \tau (t)^h\, t^{-i-1}$, 
is a formal power series in $\tau (t)$ and $t^{-1}$, 
which by definition is the derivative of $p_{\,\scriptop{formal}}(t)$ 
with respect to $t$. Substitution of $p_{\,\scriptop{formal}}(t)$ 
in the expansion (\ref{vexpansion}) yields 
a formal power series in $\tau (t)$ and $t^{-1}$, 
which by definition is $v(t^{-1},\,p_{\,\scriptop{formal}}(t))$. 
The formal solutions $p(t)=p_{\,\scriptop{formal}}(t)$ of the 
differential equation (\ref{cssystem}) are obtained by 
equating the coefficients of $\tau (t)^h\, t^{-i}$. 

Let $T$ be an unbounded subdomain in the complex upper half plane 
of the domain $R_{\eta ,\,r}$ in Lemma \ref{alphalem},  
and assume that there exist strictly positive constants 
$C_1,\, C_2,\,\epsilon _1,\,\epsilon _2$ such that 
\[
C_1\, |t|^{-\epsilon _1}\leq |\tau (t)|\leq C_2\, |t|^{-\epsilon _2}
\] 
for every $t\in T$. In such a domain any formal 
series (\ref{formal}) is an asymptotic series. 
The proof of Lemma \ref{alphalem} yields that 
the solutions $p_C(t)$ in Lemma \ref{alphalem} 
have a formal series $p_{\,\scriptop{formal},\, C}(t)$ 
as their asymptotic expansion in $T$, in the sense that 
for every $N$ there exists an $m$ such that 
$p_C(t)-\sum_{h<m,\, i<m}\,\tau (t)^h\, t^{-i}\, p_{h,\, i}
=\op{O}(t^{-N})$ as $t\in T$, $|t|\to\infty$. 
The formal series $p_{\,\scriptop{formal},\, C}(t)$ 
is a formal series solution of the differential 
equation (\ref{cssystem}) such that $p_{0,\, 0}=0$ and  
$p_{0,\, 1}=(0,\, C)$. As will be verified in the proof of Lemma 
\ref{Fjlem} below, a formal series solution 
(\ref{formal}) of (\ref{cssystem}) is uniquely determined by 
the initial consitions $p_{0,\, 0}=0$, $p_{0,\, 1}=(0,\, C)$, 
and $p_{h,\, i}=C^h\, c_{h,\, i}$ for uniquely determined 
universal coefficients $c_{h,\, i}\in\C ^2$. 
This is the formal solution in O. and R. Costin 
\cite[(4), (5)]{costin2} for $n=2$ and 
${\bf C}=(0,\, C)$, where the latter implies that only the terms 
with ${\bf k}=(0, h)$ appear in loc. cit. 
The proof of Lemma \ref{Fjlem} also yields an independent 
verification of the existence of the formal series solution of 
(\ref{cssystem}) such that $p_{0,\, 0}=0$ and $p_{0,\, 1}=(0,\, C)$.

In the subdomains $\Sigma$ in Lemma \ref{alphalem}, the function $\tau (t)$ 
is of smaller order than $t^{-i}$ for every $i\in\Z _{\geq 0}$, 
when (\ref{formal}) only is an asymptotic series in the above sense if 
all terms with $h>0$ are deleted, and  
$p_{\,\scriptop{formal},\, C}(t)$ reduces to the right hand side of 
(\ref{powerasym}), which is independent of $C$. 
For $p_C(t)-p_{\uparrow}(t)$ we have the asymptotic expansion 
(\ref{expasym}) in $\Sigma$, where the right hand side 
is equal to (\ref{formal}) with only the terms with $h=1$ retained. 

If $U$ is an unbounded subdomain of $R_{\eta ,\, r}$ 
in the upper half plane on which $\tau (t)\to 0$ 
and $t^{-i}=\op{o}(\tau (t))$ as $t\in U$, $|t|\to\infty$, 
then $p_{\,\scriptop{formal},\, C}(t)$ is only an asymptotic series 
for $t\in U$, $|t|\to\infty$ if all terms with $i>0$ 
are deleted, when $p_C(t)$ is asymptotically equal to 
this asymptotic series for $t\in U$, $|t|\to\infty$. 
For $t$ on the boundary of $R_{\eta ,\, r}$ and $|t|\geq r$, 
the absolute value of $\tau (t)$ is equal to the constant $\eta$, 
$p_{\,\scriptop{formal},\, C}(t)$ is not an asymptotic series 
in the above sense, and we only have the estimate (\ref{nonsmall}). 
As $T$ is a transitional region between the subdomains 
$S$ on the one hand and the subdomains $U$ and the 
boundary of $R_{\eta ,\, r}$ on the other, the formal series 
$p_{\,\scriptop{formal},\, C}(t)$ and the asymptotic expansion 
$p_C(t)\sim p_{\,\scriptop{formal},\, C}(t)$ as $t\in T$, $|t|\to\infty$ 
could be called the {\em transitional series} and 
the {\em transitional expansion}, respectively. 

Lemma \ref{Fjlem} below, which follows from 
O. and R. Costin \cite[Th. 2(i) and Sec. 6.9]{costin2}, 
yields that for each $i\in\Z _{\geq 0}$ the series 
(\ref{Fidef}) converges for small $|\tau |$. 
This allows to formulate the asymptotic expansion 
(\ref{bdyasym}) for the solution $p_C(t)$ of 
(\ref{cssystem}). The domain of $t$'s where the 
asymptotic expansion (\ref{bdyasym}) holds  
extends well beyond the part $R'_{\eta ,\, r}$ of the boundary of 
$R_{\eta ,\, r}$ where $\op{Im}t\geq 0$, 
$\tau (t)=\eta$, and $|t|\geq r$. 
Along $R'_{\eta ,\, r}$, 
Lemma \ref{alphalem} yielded that $p_C(t)$ remains 
bounded away from zero, but did not provide an asymptotic 
expansion for $|t|\to\infty$. 

\begin{lemma}
Let $C\in\C$. Then there is a unique 
formal solution (\ref{formal}) of 
(\ref{cssystem}) such that 
$p_{0,\, 0}=0$ and $p_{1,\, 0}=(0,\, C)$. 
For each $i\in\Z _{\geq 0}$ the series 
\begin{equation}
F_i(\tau )=\sum_{h=0}^{\infty}\,\tau ^h\, p_{h,\, i}
\label{Fidef}
\end{equation}
converges for $\tau$ in a neighborhood of $0$ in $\C$, 
where the complex analytic functions $F_i$ 
satisfy 
\begin{equation}
-\tau\,\frac{\op{d}\! F_0(\tau )}{\op{d}\!\tau}=
v_0(F_0(\tau )):=v(0,\, F_0(\tau ))
\quad\mbox{\rm and}
\label{F0eq}
\end{equation}
\begin{equation}
-\tau\,\frac{\op{d}\! F_i(\tau )}{\op{d}\!\tau} 
=P_i(F_0(\tau ),\,\ldots ,\, F_{i}(\tau ))
-\alpha\,\tau\,\frac{\op{d}\! F_{i-1}(\tau )}{\op{d}\!\tau}
+(i-1)\, F_{i-1}(\tau ) 
\label{Fieq}
\end{equation}
for $i\in\Z _{>0}$.  
Here $P_i(F_0,\,\ldots F_{i})$ denotes the coefficient 
of $t^{-i}$ in the expansion of 
\[
v(t^{-1},\,\sum_{j\geq 0}\, t^{-j}\, F_j)
\] 
in nonnegative integral 
powers of $t^{-1}$; a finite sum of weighted homogeneous polynomials 
of degree $\leq i$ in the $F_j$ with $0\leq j\leq i$, where 
each $F_j$ has the weight $j$. In particular 
\begin{equation}
P_i(F_0,\,\ldots ,\, F_i)=\frac{\partial v(0,\, F_0)}{\partial F_0}\, F_i
+Q_i(F_0,\,\ldots F_{i-1}),
\label{PiFi}
\end{equation}
where $Q_i(F_0,\,\ldots ,\, F_{i-1})$ 
is a similar polynomial in the $F_j$ with $0\leq j\leq i-1$. 
We have 
\begin{equation}
F_i(0)=p_{0,\, i}=c_i, 
\label{Fi0}
\end{equation}
where the $c_i$ are the coefficients in (\ref{powerasym}). 

Conversely, the system (\ref{F0eq}), (\ref{Fieq}) 
has a unique solution $F_i(\tau )$ which is complex analytic 
on a neighborhood of $\tau =0$ in $\C$ such that 
$F_0(0)=0$ and $F_0'(0)=(0,\, C)$. 
If $j\in\Z _{>0}$ then, given the functions 
$F_k(\tau )$ for $0\leq k\leq j-1$, the function $F_j(\tau )$ 
is uniquely determined by the conditions that it is 
a complex analytic solution on a neighborhood of 
$\tau =0$ of the equation (\ref{Fieq}) for $i=j$, and 
that the equation (\ref{Fieq}) for $i=j+1$ admits a 
complex analytic solution $F_{j+1}(\tau )$ on a neighborhood 
of $\tau =0$.  
If (\ref{Fidef}) denotes the 
power series expansion of $F_i(\tau )$, then 
(\ref{formal}) is the formal solution 
of (\ref{cssystem}) in the previous paragraph. 

If $F_{i,\, C}(\tau )$ denotes the solution of 
(\ref{F0eq}), (\ref{Fieq}) such that $F_{0,\, C}(0)=0$ and  
${F_{0,\, C}}'(0)=(0,\, C)$, then 
$F_{i,\, C}(\tau )=F_{i,\, 1}(C\,\tau )$. 
Let ${\mathcal T}_C$ be the maximal domain of definition of 
the, possibly multi\--valued, solution $F_0=F_{0,\, C}$ of (\ref{F0eq}). 
Then ${\mathcal T}_0=\C$, ${\mathcal T}_C=C^{-1}\, {\mathcal T}_1$ if $C\neq 0$, 
and for each $i\in\Z _{>0}$ the function $F_{i,\, C}$ extends to a, 
possibly multi\--valued, solution of 
(\ref{Fieq}) on ${\mathcal T}_C$.  
\label{Fjlem}
\end{lemma}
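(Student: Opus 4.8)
The plan is to establish the existence and uniqueness of the formal solution first, then prove convergence of the series $F_i(\tau)$, and finally address the analytic continuation along ${\mathcal T}_C$. For the formal part, I would substitute the ansatz (\ref{formal}) into (\ref{cssystem}), using the termwise derivative rule $\op{d}(\tau^h\, t^{-i})/\op{d}\! t=-h\,\tau^h\, t^{-i}+(h\,\alpha-i)\,\tau^h\, t^{-i-1}$ together with the power series expansion (\ref{vexpansion}), and collect the coefficient of $\tau^h\, t^{-i}$. Collecting the coefficient of $\tau^h\, t^0$ for each $h\geq 0$ recovers, after summing over $h$, equation (\ref{F0eq}); collecting the coefficient of $\tau^h\, t^{-i}$ for $i\geq 1$ and summing over $h$ gives (\ref{Fieq}), where the $-\alpha\,\tau\,\op{d}\! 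F_{i-1}/\op{d}\!\tau$ and $(i-1)\, F_{i-1}$ terms come precisely from the $t^{-i-1}$ contribution at level $i-1$ in the derivative rule. The splitting (\ref{PiFi}) is just the observation that $v(t^{-1},\cdot)$ is linear in its highest-weight argument $F_i$ modulo lower-weight terms. The initial condition $p_{0,0}=0$ makes the $i=0$, $h=0$ equation trivial, $p_{1,0}=(0,C)$ fixes the next coefficient, and then a straightforward induction on $(h,i)$ ordered lexicographically—using that the linear operator acting on $p_{h,i}$ is $(L_0+1)$ in the ``$+$'' component and $(L_0-1+h)$-type in the ``$-$'' component, which is invertible for $h\geq 1$ except exactly at the spectral value that forces $\alpha$ to be the lower-right corner of $L_1$—shows that all $p_{h,i}$ are uniquely determined, with $p_{h,i}=C^h\, c_{h,i}$ for universal $c_{h,i}$ (the scaling follows because replacing $C$ by $\lambda C$ is absorbed by $\tau\mapsto\lambda\tau$ in the whole formal series). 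This also gives (\ref{Fi0}) since at $\tau=0$ only the $h=0$ terms survive and (\ref{formal}) reduces to (\ref{powerasym}).

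For the convergence statement, which I expect to be the main obstacle, I would invoke O. and R. Costin \cite[Th. 2(i) and Sec. 6.9]{costin2} as stated in the text preceding the lemma: their Borel-summation analysis of systems of the form (\ref{cssystem}) shows precisely that the generating functions $F_i(\tau)=\sum_h\tau^h\, p_{h,i}$ have positive radius of convergence. Alternatively, a self-contained argument: equation (\ref{F0eq}) is a rank-one ODE $-\tau\, F_0'=v_0(F_0)$ with $v_0(0)=0$ and $\partial v_0(0)/\partial F_0=L_0$ having eigenvalues $\pm1$; since $-1$ is a (negative) integer one is in a resonant situation, but the prescribed initial data $F_0(0)=0$, $F_0'(0)=(0,C)$ single out the analytic solution branch, and one obtains a convergent power series by the standard majorant method for such singular ODEs (the singular point $\tau=0$ is of the type handled by Briot–Bouquet). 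Then (\ref{Fieq}) for $i\geq 1$ is, by (\ref{PiFi}), a \emph{linear} singular ODE for $F_i$ with analytic coefficients depending on $F_0,\dots,F_{i-1}$, inhomogeneity $-\alpha\,\tau\, F_{i-1}'+(i-1)F_{i-1}+Q_i(F_0,\dots,F_{i-1})$, and leading linear part $-\tau\, F_i'-L_0\, F_i$; the indicial-type obstruction in the ``$-$'' component at $\tau=0$ is exactly $\alpha-i+1$ shifted, and the careful bookkeeping—already carried out in the proof of Lemma \ref{alphalem} for the $d_j$—shows $F_i$ is uniquely pinned down by requiring analyticity at $\tau=0$ of both the $i$-th and the $(i+1)$-st equation, again via a majorant estimate giving a common radius of convergence. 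Matching Taylor coefficients of the $F_i$ against the formal series reproduces (\ref{formal}), so the formal and analytic pictures agree.

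Finally, for the last paragraph: the scaling relation $F_{i,C}(\tau)=F_{i,1}(C\tau)$ is immediate from $p_{h,i}=C^h\, c_{h,i}$, or equivalently from the invariance of the system (\ref{F0eq})–(\ref{Fieq}) under $(\tau,C)\mapsto(\lambda^{-1}\tau,\lambda C)$ together with uniqueness of the prescribed solution. When $C=0$ the equation (\ref{F0eq}) becomes $-\tau\, F_0'=v_0(F_0)$ with $F_0\equiv0$ the solution (since $F_0'(0)=0$), all higher $F_i$ reduce to the constants $c_i$ of (\ref{powerasym}), and these extend to all of $\C$, so ${\mathcal T}_0=\C$. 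For $C\neq0$ the identity ${\mathcal T}_C=C^{-1}{\mathcal T}_1$ follows directly from $F_{0,C}(\tau)=F_{0,1}(C\tau)$: a point $\tau$ lies in the domain of the (multivalued) continuation of $F_{0,C}$ iff $C\tau$ lies in that of $F_{0,1}$. Once $F_0=F_{0,C}$ is continued along any path in ${\mathcal T}_C$, each $F_{i,C}$ for $i\geq1$ is the solution of the \emph{linear} ODE (\ref{Fieq}) with coefficients that are analytic wherever $F_0,\dots,F_{i-1}$ are, so by induction on $i$ and the standard theory of linear ODEs with analytic coefficients (no finite-time blow-up, continuation along any path avoiding singularities of the coefficients) $F_{i,C}$ extends, possibly as a multivalued function, over all of ${\mathcal T}_C$. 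I expect the genuinely delicate point to be the convergence of the $F_i$ uniformly in $i$ in a fixed disc—this is where I would lean on \cite{costin2} rather than re-derive the majorant bounds, noting (as the text does) that at several points the present formulation is merely more explicit than loc. cit.
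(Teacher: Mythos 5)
Your proposal reproduces the structural skeleton of the paper's argument — the coefficientwise recursion with a resonance only in the minus component of the $\tau^1$ coefficient, the resolution of that resonance by coupling equation (\ref{Fieq}) for $i=j$ with the one for $i=j+1$ (which is exactly why the lemma pins $F_j$ down using both equations), the scaling $F_{i,\,C}(\tau)=F_{i,\,1}(C\,\tau)$ from uniqueness, and the continuation over ${\mathcal T}_C$ by induction using the linearity of (\ref{Fieq}) for $i\geq 1$ — but it takes a genuinely different route on convergence. The paper uses neither majorants nor Costin--Costin there: for $F_0$ it substitutes $\tau=e^{-s}$, turning (\ref{F0eq}) into the autonomous system $dp_0/ds=v_0(p_0)$, applies Lemma \ref{alphalem} with $v$ replaced by $v_0$ (so $c_1=0$, $L_1=0$, $\alpha=0$) to obtain an actual analytic solution with $p_0(s)\sim e^{-s}\,(0,\,C)$, and then removes the singularity at $\tau=0$; for $i\geq 1$ it notes that (\ref{Fieq}) is an inhomogeneous linear equation with a regular singular point at $\tau=0$ and quotes the classical convergence of formal solutions there (G\'erard--Levelt \cite{gl}, Malgrange \cite{malgrange89}). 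Your Briot--Bouquet/majorant alternative for $F_0$ needs care: writing (\ref{F0eq}) as $\tau F_0'=-v_0(F_0)$, the linear part $-L_0$ has the eigenvalue $+1$ (not $-1$, as you say), which is precisely the resonant direction carrying the free constant $C$, so the non-resonant Briot--Bouquet theorem does not apply verbatim; one should either argue as the paper does or invoke a Malgrange-type convergence-of-formal-solutions statement. Two further soft spots: the identical vanishing of the solvability condition at the first resonance $i=1$, which the paper verifies by hand using the definition of $\alpha$ as the lower-right entry of $L_1$ and the symmetry of second-order partial derivatives, is only asserted in your write-up by analogy with the $d_j$ computation; and your worry about a radius of convergence uniform in $i$ is not needed — the lemma claims convergence near $\tau=0$ separately for each $i$, the passage to a fixed domain being handled by the continuation statement over ${\mathcal T}_C$ (this is what Lemma \ref{bdyasymlem} uses), so leaning on \cite{costin2} for uniformity is superfluous, while citing it for the per-$i$ convergence is consistent with the paper's own framing though the paper supplies a self-contained classical proof.
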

\begin{proof}
With the substitutions $\tau =\op{e}^{-s}$, 
$F_0(\tau )=p_0(s)$, the equation 
(\ref{F0eq}) is equivalent to the autonomous limit system 
\begin{equation}
\frac{\op{d}\! p_0(s)}{\op{d}\! s}=v_0(p_0(s)) 
\label{G0eq}
\end{equation}
of the system (\ref{cssystem}). 
Because $v_0(0)=v(0,\, 0)=0$ and ${v_0}'(0)=L_0$, 
an application of Lemma \ref{alphalem} with $v$ replaced by 
$v_0$, when $c_1=0$, $L_1=0$, and $\alpha =0$, leads to 
$p_{0,\,\uparrow}(s)=p_{0,\,\downarrow}(s)=0$ and 
a unique solution $p_0(s)$ of (\ref{G0eq}) on 
$\op{Re}s\geq\sigma >>0$ such that 
$p_0(s)\sim\op{e}^{-s}\, (0,\, C)$ as $\op{Re}s\geq\sigma >>0$, 
$|s|\to\infty$. The corresponding function 
$F_0(\tau )=p_0(s )$ is complex analytic on the punctured disc   
in the complex plane determined by the inequalities 
$0<|\tau |<\epsilon :=\op{e}^{-\sigma}$, and satisfies 
$F_0(\tau )\sim \tau\, (0,\, C)$ as $\tau\to 0$. It therefore 
follows from the theorem on removable singularities 
that $F_0(\tau )$ extends to a unique complex analytic 
function on the disc $|\tau |<\epsilon$, denoted again by 
$F_0(\tau )$, where $F_0(\tau )$ is a solution of 
(\ref{F0eq}) on the disc $|\tau |<\epsilon$ such that 
$F_0(0)=0$ and ${F_0}'(0)=(0,\, C)$. As the formal power series 
solution $\sum_{h\geq 0}\,\tau ^h\, p_{h,\, 0}$ of 
(\ref{F0eq}) with $p_{0,\, 0}=0$, $p_{1,\, 0}=(0,\, C)$ is 
unique, the above complex analytic solution $F_0(\tau )$ 
is unique as well. 

Let $i\in\Z _{>0}$. For given complex analytic functions 
$F_j$, $0\leq j\leq i-1$, in a neighborhood of the origin, 
(\ref{Fieq}) is a linear inhomogeneous differential equations  
of the form 
\begin{equation}
-\tau\,\frac{\op{d}\! F_i}{\op{d}\!\tau}=
\Lambda (\tau )\, F_i+G_i(\tau ),
\label{FLambdaG}
\end{equation}
where $\Lambda (\tau ):=\partial v(0,\, p)/\partial p|_{p=F_0(\tau )}$ 
and $G_i(\tau )$ are complex analytic functions of $\tau$ 
in a neighborhood of 
$\tau =0$. With the substitutions 
$\Lambda (\tau )=\sum_{k\geq 0}\,\tau ^k\,\Lambda _k$, 
where $\Lambda _0=L_0$ as in (\ref{L0eq}),  
and $G_i(\tau )=\sum_{k\geq 0}\,\tau ^k\, G_{i,\, k}$, 
the formal power series $F_i=\sum_{k\geq 0}\, F_{i,\, k}$ 
is a solution of the differential equation if and only if 
\[
-k\, F_{i,\, k}=L_0\, F_{i,\, k}+\sum_{l=1}^{k}
\,\Lambda _l\, F_{i,\, k-l}+G_{i,\, k}
\]
for every $k\in\Z _{\geq 0}$. These equations determine 
the coefficients $F_{i,\, k}$ in terms of the $F_{i,\, j}$ 
with $j<k$ and $G_{i,\, k}$, with the exception of the equation 
for $k=1$, where the $+$ part yields 
$F_{i,\, 1}^+=\, -(1/2)\, (\Lambda _1\, F_{i,\, 0}+G_{i,\, 1})^+$, 
but the resonance in the $-$ part yields no equation for 
$F_{i,\, 1}^-$, but instead the solvability condition 
$(\Lambda _1\, F_{i,\, 0}+G_{i,\, 1})^-=0$. As 
$F_{i,\, 0}=\, -{L_0}^{-1}\, G_{i,\, 0}$, this solvability 
condition is equivalent to the equation 
$G_{i,\, 1}^-=(\Lambda _1\, {L_0}^{-1}\, G_{i,\, 0})^-$ 
for the inhomogeneous term $G_i(\tau )$ in the differential 
equation. Because $G_{i,\, 0}=Q_i(F_{0,\, 0},\,\ldots ,\, F_{i-1,\, 0})
+(i-1)\, F_{i-1,\, 0}$ is determined, the solvability 
condition determines $G_{i,\, 1}^-$.  

For $i=1$ we have $P_1(F_0,\, F_1)=\partial_1v(0,\, F_0)
+\partial_2v(0,\, F_0)\, F_1$, 
hence $G_1(\tau )=\partial_1v(0,\, F_0(\tau ))
-\alpha\,\tau\, F_0'(\tau )=\partial_1v(0,\, F_0(\tau ))
+\alpha\, v(0,\, F_0(\tau ))$. Therefore 
$G_{1,\, 0}=\partial _1v(0,\, 0)$ and 
$G_{1,\, 1}$ is equal to $C$ times 
\[
\left.\frac{\partial ^2v(u,\, p)}{\partial u\partial p^-}
\right|_{u=0,\, p=0}
+\alpha\,\left.\frac{\partial v(0,\, p)}{\partial p^-}\right|_{p=0},
\]
because $F_0'(0)$ is equal to $C$ times the second basis vector. 
As the minus part of the second term 
is equal to $\, -\alpha$, it follows from the 
definition of $\alpha$ in Lemma \ref{alphalem} 
that $G_{1,\, 1}^-$ is equal to $C$ times the 
lower left corner of 
${\partial _2}^2v(0,\, 0)\, {L_0}^{-1}\,\partial _1v(0,\, 0)$.  
On the other hand $\Lambda _1$ is equal to $C$ times 
${\partial _2}^2v(0,\, 0)$ applied to the second basis vector, 
when the symmetry of the second order partial 
derivatives yields the solvability condition 
$G_{1,\, 1}^-=(\Lambda _1\, {L_0}^{-1}\, G_{1,\, 0})^-$. 

For $i\geq 2$ the above computations yield that 
$G_{i,\, 1}^-$ depends in an inhomogeneous linear way 
on $F_{i-1,\, 1}^-$ with coefficient equal to $i-1\neq 0$. 
Therefore the solvability condition is satisfied by a unique 
choice of $F_{i-1,\, 1}^-$. It follows that 
the system (\ref{F0eq}), (\ref{Fieq}) has a unique formal 
solution such that $F_0(0)=0$ and $F_0'(0)=(0,\, C)$, 
where for the unique determination of $F_j(\tau )$, $j>0$,  
one needs (\ref{Fieq}) for $i=j$ and $i=j+1$. 

If in a neighborhood of $\tau =0$ the functions 
$F_j(\tau )$ are complex analytic solutions of 
(\ref{F0eq}), (\ref{Fieq}) for $0\leq j\leq i-1$, 
then (\ref{FLambdaG}) is an inhomogeneous linear 
differential equation with complex analytic coefficients 
$\Lambda (\tau )$ and $G_i(\tau )$, with 
$\tau =0$ as a regular singular point. Therefore  
every formal powers series solution of (\ref{FLambdaG}) 
is convergent. For inhomogeneous linear differential 
equations this theorem, which sounds classical, 
seems to be due to G\'erard and Levelt \cite[Lemme 4.2]{gl}.  
For nonlinear higher order scalar ordinary differential equations, 
it has been obtained by Malgrange \cite[Remarque 4.1]{malgrange89}, 
where the proof also works for nonlinear systems near  
a regular singular point. 

If $g(\tau )=f(C\,\tau )$, 
then $\tau\, g'(\tau )=(C\,\tau )\, f'(C\,\tau )$. 
Therefore the function $\Phi _0:\tau\mapsto F_{0,\, 1}(C\,\tau )$ 
satisfies (\ref{F0eq}) with $\Phi _0(0)=0$, ${\Phi _0}'(0)=C$, 
hence $F_{0,\, C}(\tau )=\Phi _0(\tau )=F_{0,\, 1}(C\,\tau )$. 
Furthermore, if $F_j(\tau )=F_{j,\, 1}(C\,\tau )$ 
for $0\leq j\leq i-1$, then 
$\Phi _i:\tau\mapsto F_{i,\, 1}(C\,\tau )$ satisfies 
(\ref{Fieq}), hence $F_i(\tau )=\Phi _i(\tau )
=F_{i,\, 1}(C\,\tau )$. This proves 
$F_{i,\, C}(\tau )=F_{i,\, 1}(C\,\tau )$ 
by induction on $i$. If the coefficients $p_{h,\, i}$ 
for $C=1$ are denoted by $c_{h,\, i}$, then 
the equations $F_{i,\, C}(\tau )=F_{i,\, 1}(C\,\tau )$ 
are equivalent to the equations $p_{h,\, i}=C^h\, c_{h,\, i}$ 
mentioned in the text preceding Lemma \ref{Fjlem}. 

The last statement in Lemma \ref{Fjlem} follows,   
by induction on $i$, from the description of 
$F_i$ as a solution of an inhomogeneous linear 
differential equation (\ref{FLambdaG}), of which the 
coefficients $\Lambda (\tau )$ and $G_i(\tau )$ 
are complex analytic functions of $\tau \in {\mathcal T}_C$. 
\end{proof}
\begin{remark}
It would have been more precise to define the domain 
${\mathcal T}_C$ as the Riemann surface of the maximal 
solution of (\ref{F0eq}) such that $F_0(0)=0$, 
${F_0}'(0)=(0,\, C)$, as follows. 
Let $D_0:=\{ p\in\C ^2\mid (0,\, p)\in D\}$, where $D$ 
is the domain of definition of the vector field $v$ in (\ref{cssystem}). 
In $M:=\C\times D_0\setminus \{ (\tau,\, p)\in\C\times D_0\mid 
\tau =0\;\mbox{\rm and}\; v(0,\, p)=0\}$ we have the 
regular complex one\--dimensional distribution ${\mathcal D}$ defined by 
the equation $\tau\,\op{d}\! p +v(0,\, p)\,\op{d}\!\tau =0$. 
For the local solution $F_0$ of (\ref{F0eq}) with 
$F_0(0)=0$ and ${F_0}'(0)=(0,\, C)$, the set 
$I_{C,\,\epsilon}:=\{ (\tau ,\, F_0(\tau ))\mid 0<|\tau |<\epsilon\}$ 
is an integral manifold of ${\mathcal D}$; let 
$I_C$ denote the maximal integral manifold of 
${\mathcal D}$ which contains $I_{C,\,\epsilon}$. Then 
${\mathcal T}_C$ is canonically identified with the 
Riemann surface $I_C\cup\{ (0,\, 0)\}$. The inverse of the 
projection $I_C\ni (\tau ,\, p)\to\tau$ 
followed by the projection $(\tau ,\, p)\mapsto p$ 
is the, possibly 
multi\--valued, maximal solution $F_{0,\, C}$ of 
(\ref{F0eq}) mentioned in Lemma \ref{Fjlem}. 
\end{remark}

Lemma \ref{bdyasymlem} below follows from  
O. and R. Costin \cite[Th. 2(ii)]{costin2}. 
Our proof is different. Lemma \ref{bdyasymlem} 
implies all the previous asymptotic expansions in 
Section \ref{perturbss}, but not the explicit estimates such as 
in Lemma \ref{stablelem} and \ref{centerlem}. 

The following Lemma \ref{tnlem} will be used in the last part of 
the proof of Lemma \ref{bdyasymlem}, and in Lemma \ref{polelem}
about the poles of the truncated solutions. 
Lemma \ref{tnlem} is a detailed version of  
the estimate in \cite[(24)]{costin2}. 
\begin{lemma}
Let $\tau\in\C\setminus\{ 0\}$ and $\log\tau$ 
a given solution $\lambda$ of the equation 
$\op{e}^{\lambda}=\tau$. 
Then the solutions $t$ of the equation 
$\op{e}^{-t}\, t^{\alpha}=\tau$, 
where $t^{\alpha}=\op{e}^{\alpha\,\log t}$, 
$\log t=\log |t|+\op{i}\,\op{arg}t$, 
$|\op{arg}t|\leq\pi$, and $|t|$ is large, form two sequences 
$t_n$, where $n\in\Z$, $n>>0$ and $n<<0$ respectively, such that 
\begin{equation}
t_n=2\pi\op{i}n+\alpha\, \log (2\pi\op{i}n)
-\log\tau +s(u,\, w)  
\label{tn}
\end{equation}
as $|n|\to\infty$. Here $\log (2\pi\op{i}n) 
:=\log (2\pi |n|)+\op{i}\,\op{sgn}(n)\,\pi /2$, 
\begin{equation}
u:=\frac{1}{2\pi\op{i}n},\quad v:=\frac{\log (2\pi\op{i}n)}{2\pi\op{i}n}, 
\quad w:=\, -u\,\log\tau +\alpha\, v,
\label{uvw}
\end{equation} 
and $s(u,\, w)$ is a convergent power series in $(u,\, w)$. 
More precisely, $s(u,\, w)$ is  
a complex analytic function of $(u,\, w)$ in a neighborhood 
of $(u,\, w)=(0,\, 0)$, equal to the unique small solution $s$ 
of the equation 
\begin{equation}
s=f(u,\, w,\, s):=\alpha\,\log (1+w+\alpha\, u\,\log  (1+u\, s+w))
\label{sfs}
\end{equation} 
for $u$ and $w$ both small. 
It follows that, with the substitutions (\ref{uvw})
and $\lambda :=\log\tau$,  
\begin{eqnarray*}
s(u,\, w)&=&\alpha\, w+\alpha ^2\, u\, w-\alpha\, w^2/2
+\alpha ^3\, u^2\, w-3\,\alpha ^2\, u\, w^2/2
+\alpha\, w^3/3+\op{O}(n^{-4})\\
&=&-\alpha\,\lambda\, u+\alpha ^2\, v
-\alpha\,\lambda\, (\alpha +\lambda /2)\, u^2
+\alpha ^2\, (\alpha +\lambda )\, u\, v-\alpha ^3\, v^2/2\\
&&-\alpha\,\lambda\, (\alpha ^2+3\,\alpha\,\lambda/2+\lambda ^2/3)\, u^3
+\alpha ^2\, (\alpha ^2+3\,\alpha\,\lambda +\lambda ^2)\, u^2\, v\\
&&-\alpha ^3\, (3\,\alpha /2+\lambda )\, u\, v^2
+\alpha ^4\, v^3/3+\op{O}(n^{-4}).
\end{eqnarray*}

The $t_n$ depend in a complex analytic 
way on $\log\tau$, $\tau\in\C\setminus\{ 0\}$. 
If $\tau$ runs around the origin once in the positive 
direction, then $t_n(\tau )$ moves continuously to $t_{n-1}(\tau )$. 
In this sense the $\tau _n(\tau )$ for $n>>0$ and $n<<0$ 
can be viewed as two multi\--valued complex analytic functions 
of $\tau$.  
\label{tnlem}
\end{lemma}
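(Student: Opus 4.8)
The plan is to take logarithms of $\op{e}^{-t}\, t^{\alpha}=\tau$ and reduce the problem to a fixed-point equation to which the analytic implicit function theorem applies. Since $t^{\alpha}=\op{e}^{\alpha\log t}$ with the stated principal branch, the equation $\op{e}^{-t}\, t^{\alpha}=\tau=\op{e}^{\log\tau}$ is equivalent to $-t+\alpha\log t=\log\tau+2\pi\op{i}n$ for some $n\in\Z$, i.e.\ to
\[
t=2\pi\op{i}n+\alpha\log t-\log\tau .
\]
For $|n|$ large the relevant solutions lie near $2\pi\op{i}n$, where $\op{arg}t$ is close to $\pm\pi/2$, so the principal branch is the correct one and the labelling by $n$ is unambiguous; this produces one sequence for $n\to+\infty$ and one for $n\to-\infty$. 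I would then substitute $t=2\pi\op{i}n+\alpha\log(2\pi\op{i}n)-\log\tau+s$, use $\log(a+b)=\log a+\log(1+b/a)$ with $a=2\pi\op{i}n$, and, after substituting the resulting relation once into itself, arrive exactly at the equation $s=f(u,w,s)$ in (\ref{sfs}), with $u,w$ as in (\ref{uvw}); note that $u=(2\pi\op{i}n)^{-1}\to 0$ and $w=\op{O}(\log|n|/|n|)\to 0$ as $|n|\to\infty$.

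Next, since $f(0,0,0)=0$ and $\partial_s f(0,0,0)=0$, the map $s\mapsto f(u,w,s)$ is a contraction on a small disc for all sufficiently small $(u,w)$, so the contraction mapping theorem (equivalently, the analytic implicit function theorem) yields a unique small solution $s=s(u,w)$, complex analytic near $(u,w)=(0,0)$. This proves (\ref{tn}) for $|n|$ large. The explicit Taylor coefficients displayed in the lemma I would obtain by iterating the unnested relation $s=\alpha\log(1+w+us)$ and expanding $\log(1+\cdot)$, collecting terms through weight three in $(u,w)$; the second displayed expansion then follows by substituting $w=-\lambda u+\alpha v$ with $\lambda=\log\tau$. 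These are routine algebraic computations.

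Finally, analyticity of $t_n$ in $\log\tau$ is immediate from the construction, as $\log\tau$ enters only through $w$. For the monodromy statement I would note that replacing $\tau$ by $\tau\,\op{e}^{2\pi\scriptop{i}}$ replaces $\log\tau$ by $\log\tau+2\pi\op{i}$, and since the exact relation $-t+\alpha\log t=\log\tau+2\pi\op{i}n$ shows that the solution set is permuted by incrementing the integer index, the continuous extension of $t_n(\tau)$ lands on $t_{n-1}(\tau)$; hence the $t_n$ for $n\gg 0$ (resp.\ $n\ll 0$) glue into a single multi-valued analytic function of $\tau$. The only genuinely delicate point is the branch-and-index bookkeeping in the first step --- ensuring (\ref{tn}) holds with $\log(2\pi\op{i}n)=\log(2\pi|n|)+\op{i}\,\op{sgn}(n)\,\pi/2$ and that no large-$|t|$ solutions are overlooked; everything afterwards is a standard contraction argument together with power-series algebra.
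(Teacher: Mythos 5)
Your proposal is correct and its analytic core coincides with the paper's: reduce $\op{e}^{-t}\,t^{\alpha}=\tau$ to $t=2\pi\op{i}n-\log\tau+\alpha\log t$, localize $t$ near $2\pi\op{i}n$, pass to the fixed\--point equation (\ref{sfs}) for $s$, solve it by the analytic implicit function theorem (contraction), and obtain the expansion and the monodromy $t_n\mapsto t_{n-1}$ exactly as in the lemma. The one step you handle differently is the existence/uniqueness bookkeeping that you yourself flag as delicate: the paper settles it with an argument\--principle (winding\--number) count for $\phi(t)=(t-a)\,\bigl(1-(t-a)^{-1}\alpha\log t\bigr)$ on the circle of radius $|a|-1$ about $a=2\pi\op{i}n-\log\tau$, which at once gives existence, uniqueness of the zero in a large disc, and analytic dependence on $a$; the fixed\--point equation (\ref{sfs}) is then used only to extract the asymptotics of the already\--constructed $t_n$. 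Your route, which makes the contraction itself produce $t_n$, works and is arguably more economical, but two small points must then be carried out rather than asserted: (i) the a priori localization, namely that any solution with $|t|$ large and $|\op{arg}t|\leq\pi$ satisfies $\alpha\log t=\op{o}(t)$, hence $t=2\pi\op{i}n\,(1+\op{o}(1))$ with $|n|$ large and the associated $s$ small, so that uniqueness of the small fixed point captures every such solution; and (ii) since (\ref{sfs}) is the once\--nested form of the relation $s=\alpha\,\log(1+w+u\,s)$, a fixed point of the nested map is not automatically a solution of the unnested relation, so to pass from $s(u,w)$ back to a genuine root $t$ you should either run the contraction on the unnested relation (also a contraction for small $(u,w)$, whose fixed point satisfies (\ref{sfs}) and so coincides with $s(u,w)$) or, as the paper does, use (\ref{sfs}) only for the asymptotics of solutions obtained otherwise. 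Finally, note the harmless sign slip: from ``$-t+\alpha\log t=\log\tau+2\pi\op{i}n$'' you pass to $t=2\pi\op{i}n+\alpha\log t-\log\tau$, which is a relabeling $n\mapsto -n$; with the convention you actually use, the monodromy conclusion $t_n(\tau)\mapsto t_{n-1}(\tau)$ is the correct one, matching the paper.
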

\begin{proof}
The equation $\op{e}^{-t}\, t^{\alpha}=\tau$ 
is equivalent to the equation 
$t-\alpha\,\log t=2\pi\op{i}n-\log\tau$ 
for some $n\in\Z$. For large $|t|$, $\log |t|=\op{o}(|t|)$, 
$|n|$ is large, $2\pi\op{i}n/t=1+o(1)$, and 
$t=2\pi\op{i}n\, (1+\op{o}(1))$. 

With the notation 
$a:=2\pi\op{i}n-\log\tau$, the equation for $t$ is equivalent to 
$0=\phi (t):=(t-a)\, (1-(t-a)^{-1}\,\alpha\,\log t)$. 
If $\gamma :[0,\, 1]\to\C\setminus\{ a\}$ is a 
Jordan curve which winds once around 
$a$ in the positive direction and 
$|\alpha |\,|\log s|<|s-a|$ for all $s\in\gamma ([0,\, 1])$, 
then the the winding number = $(2\pi )^{-1}$ times the increase of the 
argument of $\phi\circ\gamma$ 
$a$ is equal to one, hence the function $\phi$ has 
a unique zero in the interior of $\gamma$, 
which is simple, and therefore 
in view of the implicit function theorem depends in 
a complex analytic way on $a$. If $|a|>1$ and $|s-a|=|a|-1$, 
then $1\leq |s|\geq 1$,  
$|\alpha |\,|\log s|\leq |\alpha |\, (\pi +\log |s|+\pi )
\leq |\alpha |\, (\pi +\log (2|a|-1))$ and therefore 
$|\alpha |\,|\log s|<|t-a|$ for all $s$ on the circle 
$\gamma$ around $a$ with radius $|a|-1$ if 
$|\alpha |\, (\pi +\log (2|a|-1))<|a|-1$, which happens if 
$|a|$ is sufficiently large.  
Because $|n|>>0$ implies that $|a|=|2\pi\op{i}n-\log\tau |>>0$, 
and $t-a=\alpha\log t=\alpha\,\log (2\pi\op{i}n)+\op{o}(1)
=\alpha\log a +\op{o}(1)=\op{o}(|a|)$, the conclusion is that 
every solution $t$ 
of the equation $t-\alpha\,\log t=2\pi\op{i}n-\log\tau$
such that $t=2\pi\op{i}n\, (1+\op{o}(1))$ is equal to the 
unique zero of $\phi$ in the disc around $a$ with radius $|a|-1$. 
Therefore the solution $t=t_n(\tau )$ of the equation 
$t-\alpha\,\log t=2\pi\op{i}n-\log\tau$ with 
$|\arg t|\leq\pi$ and $|t|>>0$ is unique, 
depends in a complex analytic way on $\tau$, 
and satisfies $t=2\pi\op{i}n\, (1+\op{o}(1))$. 

If we substitute the latter estimate for $t$ in the right hand 
side of $t=2\pi\op{i}n-\log\tau +\alpha\,\log t
=2\pi\op{i}n-\log\tau +\alpha\, 
(\log (2\pi\op{i}n)+\log (1+(2\pi\op{i}n)^{-1}\, 
(-\log\tau +\alpha\,\log t))$, we obtain 
that $t=2\pi\op{i}n+\alpha\, \log (2\pi\op{i}n)
-\log\tau +s$ with $s=\op{O}((\log |n|)/|n|)\to 0$ as 
$|n|\to\infty$. With the definitions of $u$, $v$, and $w$ in 
(\ref{uvw}), $s$ satifies the equation (\ref{sfs}). 
Because $f(0,\, 0,\, s)\equiv 0$, 
it follows from the implicit function theorem in the complex analytic 
setting that there exist open neighborhoods
$U$, $W$, and $S$ of the origin in $\C$ such that 
for each $(u,\, w)\in U\times W$ the equation 
(\ref{sfs}) has a unique solution $s=s(u,\, w)\in S$, 
and that $(u,\, w)\mapsto s(u,\, w)$ is a complex analytic 
function on $U\times W$, with $s(0,\, 0)=0$. 
Substitution of (\ref{uvw}) in $s=s(u,\, w)$ 
leads to (\ref{tn}). Subsequent differentiations 
of $s(u,\, w)=f(u,\, w,\, s(u,\, w))$ with respect 
to $(u,\, w)$ at $(u,\, w)=(0,\, 0)$ 
up to the order three and subsequent evaluations of 
the results at $(u,\, w)=(0,\, 0)$ lead to the 
asymptotic formulas for $t_n(\tau )$ modulo terms of order $n^{-4}$. 
With the help of a formula manipulation 
computer program one could go on a little bit further, but 
the complexity of the asymptotic formulas 
increases rapidly with growing order. 

If $\tau$ runs around the origin once in the positive 
direction, then $2\pi\op{i}n-\log\tau$ moves continuously 
to $2\pi\op{i}(n-1)-\log\tau$, and therefore 
$t_n(\tau )$ moves continuously to $t_{n-1}(\tau )$.  
\end{proof}
  
\begin{lemma}
Let $K$ be a compact subset of $\C ^2$ such that 
$\{ 0\}\times K$ is contained in the domain of definition 
$D$ of the vector field $v$ in (\ref{cssystem}). 
Let ${\mathcal T}_0$ be a bounded subdomain of 
the domain ${\mathcal T}$ in Lemma \ref{Fjlem}, such that $F_0(\tau )\in K$ 
for every $\tau\in {\mathcal T}_0$. 
Let $V$ be a $t$\--domain in the complex upper half plane 
where $|t|>>0$ and $\tau (t)\in {\mathcal T}_0$. 
Then the solution $p_C(t)$ in Lemma \ref{alphalem} 
extends to a, possibly 
multi\--valued, solution $p(t)$ of (\ref{cssystem}) on $V$. 
This solution has the asymptotic expansion 
$p(t)\sim\sum_{i\geq 0}\, t^{-i}\, F_i(\tau (t))$ in the sense 
that, for every $m\in\Z _{>0}$, 
\begin{equation}
p(t)=\sum_{i=0}^{m-1}\, t^{-i}\, F_i(\tau (t))
+\op{O}(t^{-m})
\label{bdyasym}
\end{equation}
as $t\in V$, $|t|\to\infty$. 
\label{bdyasymlem}
\end{lemma}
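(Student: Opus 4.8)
The plan is to produce, for each truncation order $M$, the explicit composite function $p_{[M]}(t):=\sum_{i=0}^{M-1}t^{-i}\, F_i(\tau (t))$, to show it solves (\ref{cssystem}) up to a residual $\op{O}(t^{-M})$ on the set where $\tau (t)\in {\mathcal T}_0$, and then to show that the genuine solution $p_C(t)$, continued from the transitional part of $R_{\eta ,\, r}$ into $V$, differs from $p_{[M]}(t)$ by $\op{O}(|t|^{-M+\epsilon _2})$ on $V$; choosing $M$ large then gives (\ref{bdyasym}), with $p_{[M]}$ replaced by $p_{[m]}$ because $p_{[M]}-p_{[m]}=\sum_{i=m}^{M-1}t^{-i}F_i(\tau )=\op{O}(t^{-m})$ (the $F_i$ being bounded on ${\mathcal T}_0$). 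For the residual estimate I would differentiate $p_{[M]}$ using $\op{d}\!\tau (t)/\op{d}\! t=(\alpha /t-1)\,\tau (t)$ and expand $v(t^{-1},\, p_{[M]}(t))$ in powers of $t^{-1}$ as in (\ref{vexpansion}). For $0\le i\le M-1$ the coefficient of $t^{-i}$ in $\op{d}\! p_{[M]}/\op{d}\! t$ is $-(i-1)F_{i-1}-\tau F_i'+\alpha\tau F_{i-1}'$ (with $F_{-1}:=0$), while that in $v(t^{-1},\, p_{[M]})$ is $P_i(F_0,\dots ,F_i)$; the equations (\ref{F0eq}) and (\ref{Fieq}) defining the $F_i$ assert precisely that these agree. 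The leftover terms are $\op{O}(t^{-M})$, uniformly for $\tau (t)\in {\mathcal T}_0$ since $F_i$, $F_i'$ and $P_i(F_0,\dots ,F_i)$ are bounded there.

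For the comparison, put $y(t):=p(t)-p_{[M]}(t)$, so that $\op{d}\! y/\op{d}\! t=\Lambda (t)\, y+h(t,\, y)-g_M(t)$, where $\Lambda (t):=(\partial v/\partial p)(t^{-1},\, p_{[M]}(t))$, $h(t,\, y)=\op{O}(\| y\| ^2)$ uniformly, and $g_M(t)=\op{O}(t^{-M})$ is the residual above. Since $p_{[M]}$ is not small on $V$, $\Lambda (t)$ is no longer hyperbolic and the small--solution contractions of the earlier lemmas do not apply directly; instead I would reparametrize by $\tau$. By Lemma \ref{tnlem}, for each large $|n|$ the equation $\op{e}^{-t}\, t^{\alpha}=\tau$ has a unique large solution $t=t_n(\tau )$, complex analytic in $\tau\in {\mathcal T}_0$, with $t_n(\tau )=2\pi\op{i}n+\op{O}(\log |n|)$ uniformly, so that $|t|\asymp |2\pi n|$ on the sheet $\{ t_n(\tau )\mid\tau\in {\mathcal T}_0\}$, and as $n$ runs over the large integers these sheets exhaust the part of $V$ with $|t|$ large. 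On a fixed sheet, viewing $y$ as a function of $\tau$ and using $\op{d}\! t/\op{d}\!\tau=-\tau ^{-1}(1+\op{O}(1/t))$ and $\Lambda (t)=\Lambda _{\infty}(\tau )+\op{O}(1/t)$ with $\Lambda _{\infty}(\tau ):=(\partial v/\partial p)(0,\, F_0(\tau ))$, the equation becomes $\op{d}\! y/\op{d}\!\tau=-\tau ^{-1}\Lambda _{\infty}(\tau )\, y+\op{O}\bigl(\tau ^{-1}t^{-1}\| y\| +\tau ^{-1}\| y\| ^2+\tau ^{-1}|t|^{-M}\bigr)$. The key structural point is that $\op{d}\! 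Y/\op{d}\!\tau=-\tau ^{-1}\Lambda _{\infty}(\tau )\, Y$ is exactly the variational equation of (\ref{F0eq}); it is Fuchsian at $\tau =0$ with exponents $\pm 1$, since $\Lambda _{\infty}(0)=L_0=\op{diag}(1,\, -1)$. Hence its fundamental solution $\Psi (\tau ,\,\tau _0)$ is bounded on compact subsets of ${\mathcal T}_0\setminus\{ 0\}$ and, for $\tau$ of order one, grows no worse than $|\tau _0|^{-1}$ as $\tau _0\to 0$ — tamely, because the exponents are $\pm 1$ and not some large constant depending on ${\mathcal T}_0$.

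The estimate is then run sheet by sheet. Choose a base point $t_*=t_n(\tau _*)$ in the transitional region $T$ at the large end of its $|\tau |$--range, so $|\tau _*|\asymp |t_*|^{-\epsilon _2}$; there the transitional expansion established in the discussion following Lemma \ref{alphalem} (comparing $p_C$ with $p_{\,\scriptop{formal},\, C}$, and $p_{[M]}$ with its power series in $\tau$) gives $\| y(t_*)\| =\op{O}(|t_*|^{-M})$, provided the truncation orders entering that expansion are taken large enough in terms of $M$. Writing the $\tau$--equation along a path in ${\mathcal T}_0$ from $\tau _*$ to an arbitrary $\tau _1\in {\mathcal T}_0$ as an integral equation via variation of constants with $\Psi$, and solving it by a contraction in a weighted supremum norm (the quadratic and $1/t$ corrections, integrated against $\tau ^{-1}$ along the path, contribute only a factor $1+\op{o}(1)$ since $\| y\|$ stays tiny), one obtains $\| y(t_n(\tau _1))\| \lesssim |\tau _*|^{-1}\bigl(\| y(t_*)\| +|t|^{-M}\bigr)\lesssim |t|^{\epsilon _2-M}$, using $|t|\asymp |t_*|$ on the sheet. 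Taking $M:=m+\lceil\epsilon _2\rceil +1$ yields $\| y\| =\op{O}(|t|^{-m})$ on the whole sheet, hence on $V$; the analytic continuation of $p_C$ to $V$ (multivalued exactly as ${\mathcal T}$ is) comes out of continuing the ODE solution along the lifted paths. I expect the main obstacle to be precisely the control of $\Psi (\tau ,\,\tau _0)$: one must verify that the only unbounded behaviour of the variational equation near the puncture $\tau =0$ is the tame $|\tau _0|^{-1}$ forced by the eigenvalues $\pm 1$ of $L_0$, and keep careful track of which amplifying factors are genuine small powers of $1/t$ (which can be traded against $M$) and which are merely bounded constants depending on ${\mathcal T}_0$.
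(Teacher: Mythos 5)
Your first half — showing that $p^{[M]}(t)=\sum_{i<M}t^{-i}F_i(\tau(t))$ satisfies (\ref{cssystem}) up to a residual $\op{O}(t^{-M})$, uniformly for $\tau(t)\in{\mathcal T}_0$, by termwise differentiation and the defining equations (\ref{F0eq}), (\ref{Fieq}) — is exactly what the paper does. The second half is a genuinely different route. The paper splits the estimate of the difference into two regimes: for $|\tau(t)|$ small it subtracts $p_{\uparrow}$ and $p_0^{[m]}$, reduces to the quantity $y^{[m]}$, and reruns the contraction of Lemma \ref{alphalem} (with $C=0$) in the variable $t$ on $R_{\eta,\,r}$; for $\tau(t)$ bounded away from $0$ it switches to $\tau$ as independent variable and uses only a scalar Gronwall inequality along paths in ${\mathcal T}_0$ of uniformly bounded length staying at a uniform distance from the origin, so the amplification is a fixed constant $\op{e}^{C_1 s}$ and no power of $t$ is lost; in particular the same truncation order $m$ serves throughout. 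You instead start from an initial bound in the transitional region (relying on the transitional expansion, which the paper only asserts as a consequence of the proof of Lemma \ref{alphalem}, so you must indeed quantify the uniformity "for every $N$ there is an $m'$" there), and then propagate through the region $\tau\to 0$ by variation of constants with the fundamental solution $\Psi(\tau,\,\tau_0)$ of the variational equation of (\ref{F0eq}), accepting a loss $|\tau_*|^{-1}\asymp|t|^{\epsilon_2}$ which you repair by raising the truncation order. This can be made to work, but note two points you should nail down: the exponents of the Fuchsian point $\tau=0$ are $-1$ and $+1$, which differ by an integer, so resonance may produce a logarithmic solution and the bound on $\Psi$ is $\op{O}(|\tau_0|^{-1})$ only up to a possible $\log|\tau_0|$ factor (harmless for your trade-off, but it must be said); and your perturbative terms $\op{O}(\tau^{-1}t^{-1}\|y\|)$ only integrate to something small if $\epsilon_2<1$, so the choice of the transitional exponent has to be fixed accordingly. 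The paper's two-regime argument buys simplicity — no analysis of $\Psi$ near the singular point, no resonance issue, no order-raising — at the cost of doing the small-$|\tau|$ contraction again; your single sweep in $\tau$ is more unified but carries precisely the extra verification you flag.
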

\begin{proof}
Write 
\begin{equation}
p^{[m]}(t):=\sum_{i=0}^{m-1}\, t^{-i}\, F_i(\tau (t)),
\quad p_0^{[m]}(t):=\sum_{i=0}^{m-1}\, t^{-i}\, F_i(0),
\quad\mbox{\rm and}\quad
\delta (t):=p(t)-p^{[m]}(t). 
\label{deltam}
\end{equation}
According to Lemma \ref{alphalem}, 
$p(t)-p_{\uparrow}(t)\sim C\,\tau (t)$ as $\tau (t)\to 0$. 
The expansion (\ref{expasym}) and equation (\ref{Fi0}) imply that 
$p_{\uparrow}(t)-p_0^{[m]}(t)=\op{O}(t^{-m})$. 
Finally $p^{[m]}(t)-p_0^{[m]}(t)
=F_0(\tau (t))-F_0(0)+\sum_{i=1}^{i-1}\, t^{-i}
\, (F_i(\tau (t))-F_i(0))\sim C\,\tau (t)+\op{O}(t^{-1}\,\tau (t))
\sim C\,\tau (t)$  as $\tau (t)\to 0$, because 
$F_0(\tau )\sim C\,\tau$ as $\tau\to 0$ and 
$\tau (t)\to 0$ implies that $t\to 0$. 
Therefore $y^{[m]}(t):=(p(t)-p_{\uparrow}(t))
-(p^{[m]}(t)-p_0^{[m]}(t))=\op{o}(\tau (t))$, 
as $\tau (t)\to 0$; and 
$\delta (t)=y^{[m]}(t)+\op{O}(t^{-m})$, 
hence (\ref{bdyasym}) $\Leftrightarrow y^{[m]}(t)=\op{O}(t^{-m})$. 

The differential equations 
(\ref{F0eq}) and (\ref{Fieq}) 
imply that 
\begin{eqnarray*}
\frac{\op{d}\! p^{[m]}(t)}{\op{d}\! t}
&=&v(0,\, F_0(\tau (t)))
+\sum_{i=1}^{m-1}\, t^{-i}\, 
\left[L_0\, F_i(\tau (t))+P_i(F_0(\tau (t)),\,\ldots ,
\, F_{i-1}(\tau (t)))\right] \\
&&+t^{-m}\, 
\left.\left(\alpha\,\tau\frac{\op{d}\! F_{m-1}(\tau )}{\op{d}\!\tau} 
-(m-1)\, F_{m-1}(\tau )\right)\right| _{\tau =\tau (t)}
=v(t^{-1},\, p^{[m]}(t))+\op{O}(t^{-m}), 
\end{eqnarray*}
hence 
\begin{eqnarray*}
\frac{\op{d}\! y^{[m]}(t)}{\op{d}\! t} 
&=&\frac{\op{d}\! p(t)}{\op{d}\! t} 
-\frac{\op{d}\! p^{[m]}(t)}{\op{d}\! t}
-\frac{\op{d}(p_{\uparrow}(t)-p_0^{[m]}(t))}{\op{d}\! t}\\
&=&v(t^{-1},\, p(t))-(v(t^{-1},\, p^{[m]}(t))
+\op{O}(t^{-m})) 
+\op{O}(t^{-m-1})\\
&=&v(t^{-1},\, p^{[m]}(t)+y^{[m]}(t))-v(t^{-1},\, p^{[m]}(t))
+\op{O}(t^{-m})
\end{eqnarray*}
as long as $\tau (t)\in {\mathcal T}_0$, where in the last identity 
we have used that 
$p(t)=p^{[m]}(t)+y^{[m]}(t)+(p_{\uparrow}(t)-p_0^{[m]}(t)) 
=p^{[m]}(t)+y^{[m]}(t)+\op{O}(t^{-m})$. 

We start with the proof for $t$ in a domain 
$R_{\eta ,\, r}$ as in (\ref{Rdef}) with $\eta >0$ 
sufficiently small, when (\ref{bdyasym}) is \cite[(20)]{costin2} 
in our situation. 
It follows that $\op{d}\! y^{[m]}(t)/\op{d}\! t
=L(t,\, p^{[m]}(t))\, y^{[m]}(t) 
+\op{O}(\| y^{[m]}(t)\| ^2)+\op{O}(t^{-m})$, 
where $L(t,\, p):=\partial v(t^{-1},\, p)/\partial p$. 
Because $p^{[m]}(t)=p_{\uparrow}(t)
-(p_{\uparrow}(t)-p_0^{[m]}(t))+p^{[m]}(t)-p_0^{[m]}(t))
=p_{\uparrow}(t)+\op{O}(t^{-m})+\op{O}(\tau (t))$, 
we have $L(t,\, p^{[m]}(t))
=L(t)+\op{O}(t^{-m})+\op{O}(\tau (t))$ with $L(t)$ as in 
(\ref{Lt}), hence 
\[
\frac{\op{d}\! y^{[m]}(t)}{\op{d}\! t}
=L(t)\, y^{[m]}(t)+\op{O}(|\tau (t)|\,\| y^{[m]}(t)\| ) 
+\op{O}(\| y^{[m]}(t)\| ^2)+\op{O}(t^{-m}).
\] 
Applying a linear substitution of variables 
$y=A(t)\, z$  as in the proof of Lemma \ref{alphalem} 
we arrive at an integral equation 
$z={\mathcal H}(z)$ for $z:t\mapsto A(t)^{-1}\, y^{[m]}(t)$, 
where ${\mathcal H}(z)(t)^+=H(z)(t)^+$ and 
${\mathcal H}(z)(t)^-$ are as in (\ref{inteq+2z}) and 
(\ref{inteq-2zl}), respectively, 
$C=0$ in view of $z(t)=A(t)^{-1}\, y^{[m]}(t)=\op{o}(\tau (t))$, 
and $h(t^{-1},\, z)=\op{O}(|\tau (t)|\,\| z\| ) 
+\op{O}(\| z\| ^2)+\op{O}(t^{-m})$. 
If $\eta$ and $1/r$ are sufficiently small, 
then ${\mathcal H}$ is a contraction on the space ${\mathcal Z}$ 
of all continuous functions $z:R_{\eta ,\, r}\to\C ^2$  
that are  complex analytic on the interior of $R_{\eta ,\, r}$ and  
satisfy a uniform bound $\| z(t)\|\leq C\, |t|^{-m}$, 
where ${\mathcal H}$ is a contraction with respect to 
the metric $(z_1,\, z_2)\mapsto\sup_{t\in R_{\eta ,\, r}}
\, |t|^m\,\| z_1(t)-z_2(t)\|$. It follows that ${\mathcal H}$ 
has a unique fixed point $z$ in ${\mathcal Z}$, 
when $y^{[m]}(t)=A(t)\, z(t)$ 
satisfies $y^{[m]}(t)=\op{O}(t^{-m})$ 
when $t\in R_{\eta ,\, r}$ and $|t|\to\infty$. 
This completes the proof for $t$ in a domain where 
$|\tau (t)|$ remains sufficiently small. 

For $t$ in a subdomain of $V$ 
where $\tau (t):=\op{e}^{-t}\, t^{\alpha}\in {\mathcal T}_0$  
remains bounded away from zero, 
we use $\tau =\tau (t)$ instead of $t$ as the independent variable. 
The domain ${\mathcal T}_0$ can be arranged such that its points 
$\tau$ can be joined with points $\tau _0$ close to the origin 
by smooth paths $\gamma$ in ${\mathcal T}_0$,  
parametrized by arclength, 
with a uniformly bounded length, and 
staying at a uniform distance away from the origin. 
The solution $y^{[m]}(\tau )$ will be estimated along such $\gamma$. 
With $\tau$ along $\gamma$, we view $t=t_n(\tau )$, $n>>0$, 
as the multi\--valued inverse in Lemma \ref{tnlem}
of the function $t\mapsto\tau (t)$ with large $|t|$ and 
$0<\op{arg}t<\pi$. Then $t=t_n(\tau )=2\pi\op{i}n\, (1+\op{o}(1))$,  
uniformly for $\tau$ along $\gamma$. 
Because  
\[
\frac{\op{d}\! y^{[m]}}{\op{d}\!\tau}
=(-1+\alpha /t)^{-1}\,\tau ^{-1}\,\frac{\op{d}\! y^{[m]}}{\op{d}\! t}
=\op{O}(\| y^{[m]}\|)+\op{O}(|t_n(\tau )|^{-m}), 
\]
we have
\[
\frac{\op{d}\! \| y^{[m]}(\gamma (s))\|}{\op{d}\! s}
\leq\left\|\frac{\op{d}\! y^{[m]}(\gamma (s))}{\op{d}\! s}\right\| 
\leq\left\| (y^{[m]})'(\gamma (s))\right\|   
\leq C_1\, \| y^{[m]}(\gamma (s))\| +C_2\, |t_n(\gamma (s))|^{-m}
\]
for some positive constants $C_1$ and $C_2$. Or, 
\[
\frac{\op{d}\! \| y^{[m]}(\gamma (s))\|}{\op{d}\! s}
=C_1\, \| y^{[m]}(\gamma (s))\| +C_2\, |t_n(\gamma (s))|^{-m}-r(s)
\]
for a non\--negative continuous function $r(s)$. 
Lagrange's variation of constants formula yields 
\begin{eqnarray*}
\| y^{[m]}(\gamma (s))\| &=&\op{e}^{C_1\, s}
\| y^{[m]}(\gamma (0))\| 
+\int_{s_0}^s\,\op{e}^{C_1\, (s-\sigma )}
\, (C_2\, |t_n(\gamma (\sigma ))|^{-m}-r(\sigma ))\,\op{d}\!\sigma \\
&\leq& \op{e}^{C_1\, s}
\| y^{[m]}(\gamma (0))\| 
+C_2\, \int_{0}^s\,\op{e}^{C_1\, (s-\sigma )}
\, |t_n(\gamma (\sigma ))|^{-m}\,\op{d}\!\sigma .
\end{eqnarray*}
Write $t=t_n(\gamma (s))$, where $|t|>>0$. 
Then, uniformly for $0\leq\sigma\leq s$, 
$t_n(\gamma (\sigma ))=2\pi\op{i}n\, (1+\op{o}(1))
=t\, (1+\op{o}(1))$, hence 
$|t_n(\gamma (\sigma ))|^{-m}=\op{O}(|t|^{-m})$. 
Furthermore, because $\gamma (0)=\tau _0$ is small, 
$\|y^{[m]}(\gamma (0))\| =\op{O}(|t_n(\gamma (0))|^{-m})
=\op{O}(|t|^{-m})$. Combination of the 
estimates yields $y^{[m]}(\gamma (s))=\op{O}({t}^{-m})$. 
That is, returning to $t$ as the independent variable, 
$y^{[m]}(t)=\op{O}({t}^{-m})$. 
\end{proof}
Lemma \ref{bdyasymlem} can be used in order to obtain asymptotic 
information about the large $t$ where the solution $p_C(t)$ 
becomes singular, where $\tau (t)$ is close to points 
$\tau$ where the solution $F_0(\tau )$ of (\ref{F0eq}) 
becomes singular. This will be done in more detail for the 
Boutroux\--Painlev\'e system in Subsection \ref{ttss}, 
leading to the asymptotic description in Lemma \ref{polelem} 
of the poles in the domain $R_{\eta ,\, r}$ with large $\eta$ 
and $r$ correspondingly large. These poles correspond to the first 
sequence of poles of the truncated and triply truncated solutions of 
the first Painlev\'e equation which appear beyond the boundary 
of the truncated domains.  

\subsection{Truncated and triply truncated solutions}
\label{ttss}
In this subsection we collect the conclusions from Subsection 
\ref{perturbss} for the Painlev\'e equation. 

Boutroux \cite[\S 13]{boutroux} found a family of solutions 
$u(z)$ of the Boutroux\--Painlev\'e system (\ref{u01dot}), 
depending in a complex analytic way on one complex 
variable, such that $u(z)$ converges to the equilibrium point 
$(\epsilon\op{i}\sqrt{6}, 0)$ of (\ref{u01dotlim}) 
as $t=\lambda _+\, z$ runs to infinity in the complex plane in 
the direction of the positive real axis. On 
\cite[p. 346]{boutroux}, he called these solutions 
{\em truncated in the direction of the positive real axis}. 
These solutions correspond to the solutions 
$p_{t_0,\, a^-}(t)$ and $p_l(t)$ described in the 
lemmas \ref{stablelem} and 
\ref{alphalem}. 

In \cite[\S 14]{boutroux}, Boutroux found a solution $u(z)$ 
which converges to the equilibrium point of (\ref{u01dotlim}) 
as $t=\lambda ^+\, z$ runs to infinity in a sector 
in the complex plane which does not contain the 
positive imaginary axis. Because such a sector 
contains the three orthogonal axes equal to the negative real, 
negative imaginary, and positive real one, he called this solution 
the {\em triply truncated solution}. This solution corresponds 
to the solution $p_{\downarrow}(t)$ described in 
the lemmas \ref{centerlem}, \ref{centerVlem}, and \ref{alphalem}. 
In the sequel we will follow Boutroux's terminology.  
\begin{remark}
The proofs in Boutroux \cite[\S 13, 14]{boutroux} contain gaps    
and errors. In the formula for $Y_1$ following 
\cite[(51)]{boutroux} the factor $X^{-1}$ in the first and 
second integral should be replaced by $\, -X^{-2}/\sqrt{12}$ and 
$X^{-2}/\sqrt{12}$, respectively. A similar correction 
is needed in \cite[(54 bis)]{boutroux}. 
In the right hand side of \cite[(54)]{boutroux} the 
term $6\,\sum_{k=1}^{j-1}\, Y_k\, Y_{j-k}$ is missing. 
More seriously, the inductive assumption that 
$Y_k(X)\to 0$ as $X\to\infty$ for every $k<j$ 
implies that  the second limit relation at the bottom 
of \cite[p. 343]{boutroux} is automatically 
satisfied, when the first one is equivalent to the 
condition that $Y_j(X)\to 0$ as $X\to\infty$. 
This determines $Y_j$ only up to the addition of 
a constant times $\op{e}^{-\sqrt{12}\, X}$. Therefore 
the estimate $l_j<K\, l_{j-1}\,\overline{X}^{\alpha -1}$ 
on \cite[p. 345]{boutroux} cannot be proved for the  
solutions $Y_j$  described by Boutroux. 
It is a bit surprising that Boutroux did not use the 
method of Cotton \cite{cotton}, which appeared two years 
earlier in the same journal as the paper of Boutroux. 
\label{bttrem}
\end{remark}
\begin{remark}
On \cite[p. 261]{boutroux}, Boutroux gave a definition of 
truncated solutions which looks quite different from 
the definition in terms of the convergence to the equilibrium 
point of the limit system. In our notation, we understand 
his definition on \cite[p. 261]{boutroux} in the following way. 
For every $z_0\in\C\setminus\{ 0\}$ and $c\in\C$ there is a 
unique solution $U(z)=U_{z_0,\, c}(z)$ of (\ref{boutrouxeq}) 
which has a pole at $z=z_0$ and $c$ as the coefficient 
of $(z-z_0)^4$ in its Laurent expansion at $z=z_0$. 
The parameter $c$ corresponds in an bijective affine manner 
to the position on the pole line $L_9\setminus L_8^{(1)}$ 
in Okamoto's space, see (\ref{laurentu}). Assume that 
$u_{z_0,\, c}(z)$ has another pole at $z=z_1\neq z_0$. 
Because the vector field is regular, nonzero at and transversal 
to the pole line, an application of the implicit function theorem 
yields that for every $({z_0}',\, c')$ near $(z_0,\, c)$ there 
is a unique ${z_1}'=Z_1({z_0}',\, c')$ near $z_1$ such that the 
solution $u_{{z_0}',\, c'}(z)$ has a pole at $z={z_1}'$, 
and the function $({z_0}',\, c')\mapsto Z_1({z_0}',\, c')$ is 
complex analytic. It seems that Boutroux considered this 
to be evident, as he did not provide further proof. Then, according to 
our interpretation of \cite[p. 261]{boutroux}, 
$u_{{z_0}'',\, c''}(z)$ is a truncated solution if the function 
$Z_1({z_0}',\, c')$ has a complex analytic continuation for 
$({z_0}', c')$ on a path approaching $({z_0}'',\, c'')$, but that 
the superior limit of $|Z_1({z_0}',\, c')|$ is infinite if 
$({z_0}',\, c')\to ({z_0}'',\, c'')$ along the path. 
We find this an interesting definition of truncated 
solutions. However, we did not find any statement or 
proof in Boutroux's paper which relates 
the truncated solutions as defined on \cite[p. 261]{boutroux} with the 
truncated solutions defined in term of their convergence to one of the 
equilibrium points of the limit system. We will stay with the 
second definition, and view it as a challenge to find relations 
with the first one. 
\label{trdefrem}
\end{remark}
According to Remark \ref{boutrouxjkconstantsrem}, 
the substitutions $x=\, -2^{-3/5}\, 3^{-1/5}\,\xi$, 
$y(x)=2^{-4/5}\, 3^{-3/5}\,\eta (\xi )$ turn the first Painlev\'e 
equation (\ref{PI}) into 
\begin{equation}
\op{d}^2\eta/\op{d}\!\xi ^2=(\eta ^2-\xi )/2, 
\label{PIetaxi}
\end{equation}
when the substitutions 
$\xi =((5/4)\, t)^{4/5}$, $\eta (\xi )=\xi ^{1/2}\,\pi _1(t)$, 
and $\eta '(\xi )=\xi ^{3/4}\,\pi _2(t)$ turn 
the corresponding first order system for $(\eta ,\,\eta ')$ into 
\begin{equation}
\begin{array}{lll}
\op{d}\!\pi _1/\op{d}\! t&=&\pi _2-2\, (5\, t)^{-1}\,\pi _1,\\
\op{d}\!\pi _2/\op{d}\! t&=&({\pi _1}^2-1)/2
-3\, (5\, t)^{-1}\,\pi _2.
\end{array}
\label{upsilon01dot}
\end{equation}
At the equilibrium point $(\pi _1,\,\pi _2)=(1,\, 0)$ 
of the limit system for $t=\infty$, the linearization 
has the eigenvalues $\pm 1$, and the 
substitution of variables 
$\pi _1=1+p^++p^-$, $\pi _2=p^+-p^-$ lead to the system 
(\ref{cssystem}) with 
\begin{eqnarray*}
v(1/t,\, p)^+&=&p^+-1/5t-p^+/2t+p^-/10t+(p^++p^-)^2/4,\\
v(1/t,\, p)^-&=&-p^--1/5t-p^-/2t+p^+/10t-(p^++p^-)^2/4.
\end{eqnarray*}
It follows that the vector $c_1$ in (\ref{c1eq}) and the matrix 
$L_1$ in (\ref{L1eq}) are equal to 
\[
c_1=\left(\begin{array}{c}1/5\\-1/5\end{array}\right)
\quad\mbox{\rm and}\quad
L_1=\left(\begin{array}{cc}-1/2 & 1/10 \\
1/10 & -1/2\end{array}\right) ,
\]
respectively. Because $c_1^++c_1^-=0$, the 
asymptotic expansion (\ref{powerasym}) implies 
that for the truncated solutions $p(t)$ we have 
$\pi _1(t)=1+\op{O}(t^{-2})$ as $t\in S$, $|t|\to\infty$. 
Furthermore, {\em both the left upper corner 
and the right lower corner $\alpha$ of $L_1$ are equal 
to $-1/2$}, and therefore $p(t)-p_{\uparrow}(t)$ is asymptotically 
equal to $\tau (t)=\op{e}^{-t}\, t^{-1/2}$ times a series in nonnegative 
powers of $t^{-1}$ as $t\in\Sigma$, $|t|\to\infty$, see (\ref{expasym}). 
The fact that $\op{Re}\alpha <0$ implies that 
{\em the large positive and 
negative parts of the imaginary axis are contained in the 
interior of the domains $R_{\eta ,\, r}$ in (\ref{Rdef})}. 
More precisely, the part in $|t|\geq r$ of the boundary 
of $R_{\eta ,\, r}$ is given by 
the equation 
\begin{equation}
\cos (\op{arg}t)=(-(1/2)\,\log |t|-\log\eta )/|t|,\quad 
|t|\geq r,\quad
-\pi <\op{arg}t<\pi ,
\label{boundaryR}
\end{equation}
which is to the left of the imaginary axis if and only if 
$|t|>\eta ^{-2}$. See Figure \ref{regionfig}. 

\begin{figure}[ht]
\centering
\includegraphics[width=15cm]{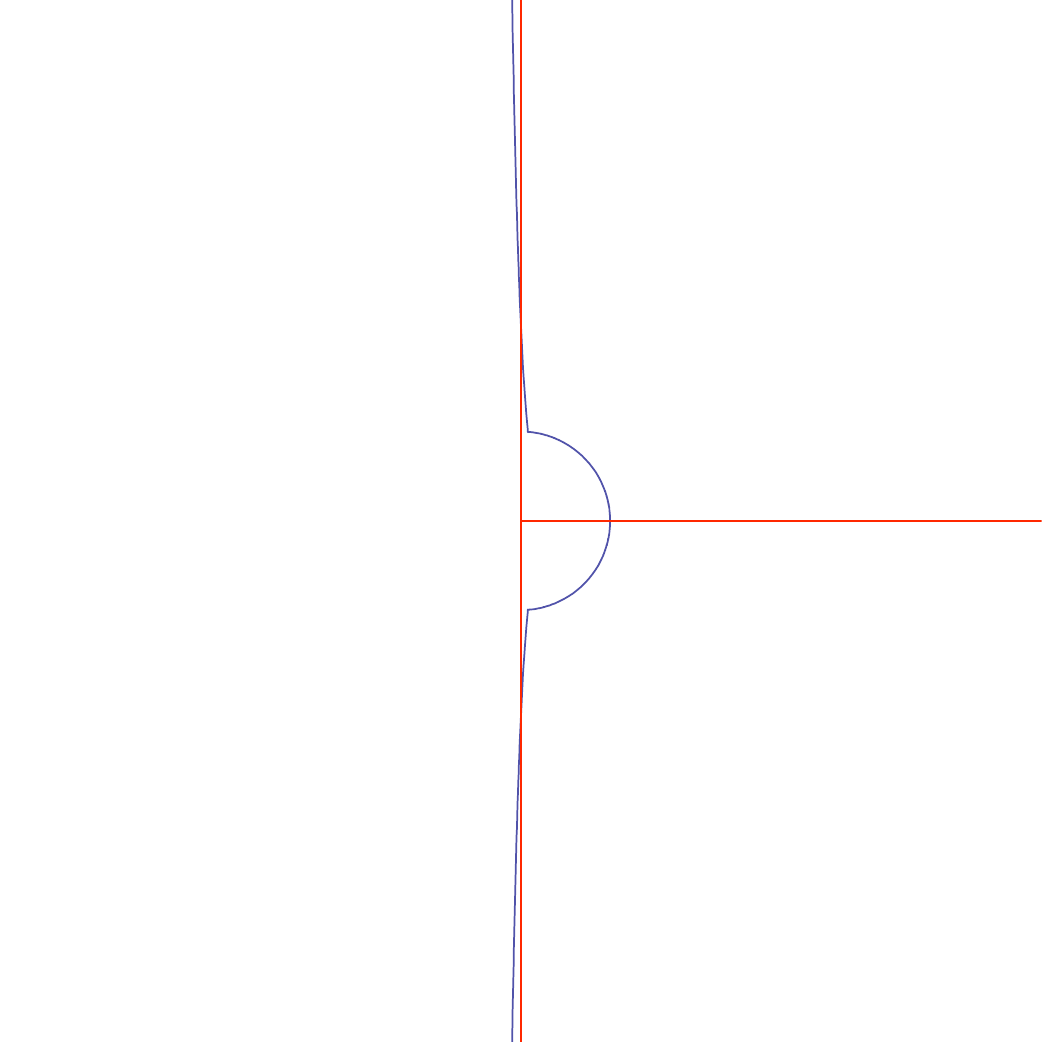}
\caption{The region $R_{\eta,\, r}$, to the right of the curved boundary} 
\label{regionfig}
\end{figure}

Because the Painlev\'e property implies that 
all solutions $y(x)$ of (\ref{PI}) are single\--valued, 
the analytic continuation of 
$(\pi _1(t),\,\pi _2(t))$ along the path $t\,\op{e}^{\scriptop{i}\theta}$, 
where $\theta\in\R$ runs from $0$ to $5/4$ times $2\pi$, 
applied to the substitutions 
\begin{equation}
x=\, -2^{-3/5}\, 3^{-1/5}
\, (5\, t/4)^{4/5},\quad y(x)=2^{-4/5}\, 3^{-3/5}\, 
(-2^{3/5}\, 3^{1/5}\, x)^{1/2}\, \pi _1(t)
\label{xt}
\end{equation}
leaves the 
solution $y(x)$ of (\ref{PI}) invariant. 
We conclude as in (\ref{uangle5/4}) that the analytic continuation of 
$(\pi _1(t),\,\pi _2(t))$ along the aforementioned path 
is equal to $(-\pi _1(t),\,\op{i}\pi _2(t))$. 
This agrees with the symmetry 
$(t,\,\pi _1,\,\pi _2)\mapsto 
(\op{i}t,\, -\pi _1,\,\op{i}\pi _2)$ 
of order four of the system (\ref{upsilon01dot}). 
This analytic continuation converges for 
$t$ running to infinity in the direction of 
the positive imaginary axis to the other equilibrium point 
$(\pi _1,\,\pi _2)=(-1,\, 0)$ of the limit system of 
(\ref{upsilon01dot}) for $t=\infty$. 
Applying this analytic continuation a second and third time,   
one obtains a solution which converges to 
$(\pi _1,\,\pi _2)=(1,\, 0)$ 
and $(\pi _1,\,\pi _2)=(-1,\, 0)$ for $t$ running to 
infinity in the direction of the negative real 
and imaginary axis, respectively. 
In this way each of the four types of truncated solutions, 
for $t$ running to infinity in the direction of the 
positive and negative real axis with 
$(\pi _1(t),\,\pi _2(t))$ converging to 
$(1,\, 0)$, and for $t$ running 
to infinity in the direction of the positive and negative 
imaginary axis 
with $(\pi _1(t),\,\pi _2(t))$ 
converging to $(-1,\, 0)$, 
are obtained by analytic continuation from the solutions 
$p_{t_0,\, a^-}(t)=p_l(t)$ in Lemma \ref{stablelem} and 
\ref{alphalem}. 

Similarly, the analytic continuation of $p_{\uparrow}(t)$ 
along the path $t\,\op{e}^{\scriptop{i}\theta}$ 
is equal to $p_{\downarrow}(t)$ if 
$\theta\in\R$ runs from $0$ to $5\pi$. 
As this agrees with the symmetry 
$(t,\,\pi _1,\,\pi _2)\mapsto 
(-t,\,\pi _1,\, -\pi _2)$ of the system 
(\ref{upsilon01dot}), it follows that 
$p_{\downarrow}(t)^+=p_{\uparrow}(-t)^-$ and 
$p_{\downarrow}(t)^-=p_{\uparrow}(-t)^+$. 
If $\theta$ runs from $0$ to $5\pi /2$ and $15\pi /2$, then 
the analytic continuation of $p_{\uparrow}(t)$ is equal to 
the two triply truncated solutions near $(\pi _1,\,\pi _2)=(-1,\, 0)$. 
In this way each of the four triply truncated solutions, 
for $t$ running to infinity in the upper and lower half plane 
with $(\pi _1(t),\,\pi _2(t))$ converging to $(1,\, 0)$ 
and for $t$ running to infinity in the left and right half plane 
with $(\pi _1(t),\,\pi _2(t))$ converging to $(-1,\, 0)$, 
is obtained by analytic continuation from the 
solution $p_{\uparrow}(t)$ in Lemma \ref{centerlem}, \ref{centerVlem}, 
and \ref{alphalem}.  

Because the system (\ref{upsilon01dot}) is real, 
it has the symmetry $(t,\,\pi _1,\,\pi _2)\mapsto 
(\overline{t},\,\overline{\pi _1},\,\overline{p_2})$. 
Therefore, if $(\pi _1(t),\,\pi _2(t))$ is a truncated 
solution of (\ref{upsilon01dot}) in the sense that it converges to 
$(1,\, 0)$ as $t$ runs to infinity in the direction of the 
positive real axis, then 
$t\mapsto (\overline{\pi _1(\overline{t})}, 
\, \overline{\pi _2(\overline{t})})$ is a solution 
of (\ref{upsilon01dot}) with the same limit behavior, 
and therefore is truncated in the same way. 
The triply truncated solutions satisfy 
$p_{\downarrow}(t)=\overline{p_{\uparrow}(\overline{t})}$, 
which in combination with  
$p_{\downarrow}(t)^{\pm}=p_{\uparrow}(-t)^{\mp}$ 
implies that $p_{\uparrow}(t)^{\pm}
=\overline{p_{\uparrow}(-\overline{t})^{\mp}}$. 
The corresponding triply truncated 
solution of (\ref{upsilon01dot}) satisfies 
$\pi _1(t)=\overline{\pi _1(-\overline{t})}$ 
and $\pi _2(t)=\, -\overline{\pi _2(-\overline{t})}$.

\begin{lemma}
The Stokes constant $S$ in (\ref{Sdef}) is nonzero and purely imaginary. 
\label{Slem}
\end{lemma}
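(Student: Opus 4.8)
The plan is to prove the two assertions separately, using the reflection symmetries of the Painlev\'e--Boutroux system recorded just above.

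\emph{$S$ is purely imaginary.} Since $\alpha$, the right lower corner of $L_1$, equals $-1/2$, the far part of the positive real axis lies in the interior of a domain $R_{\eta,r}$ and satisfies $\op{Re}t/\log|t|\to\infty$, so (\ref{Sdef}) holds there and, more generally, in a subdomain $\Sigma$ of the right half plane invariant under complex conjugation. Taking complex conjugates in (\ref{Sdef}) and replacing $t$ by $\overline t$, using $\alpha\in\R$ to get $\overline{\op{e}^{-\overline t}\,{\overline t}^{\,\alpha}}=\op{e}^{-t}\,t^\alpha$ together with the identity $p_\downarrow(t)=\overline{p_\uparrow(\overline t)}$ (hence also $p_\uparrow(t)=\overline{p_\downarrow(\overline t)}$), one obtains $p_\uparrow(t)-p_\downarrow(t)\sim\op{e}^{-t}\,t^\alpha\,(0,\overline S)$ in $\Sigma$. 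Comparing with (\ref{Sdef}) and using that the coefficient of $\op{e}^{-t}\,t^\alpha$ is determined by the limit $\lim\op{e}^{t}\,t^{-\alpha}(p_\downarrow(t)-p_\uparrow(t))$, we get $\overline S=-S$, i.e.\ $S\in\op{i}\,\R$. (Equivalently, for $t$ real and positive $\op{e}^{-t}\,t^\alpha>0$ while $p_\downarrow(t)-p_\uparrow(t)=\overline{p_\uparrow(t)}-p_\uparrow(t)$ has purely imaginary entries.)

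\emph{$S\neq 0$.} Suppose $S=0$. By Remark \ref{stokesconstantrem} this means $p_\uparrow\equiv p_\downarrow$ on a domain $R_{\eta,r}$ lying in the right half plane. For $t$ in the reflected domain $-R_{\eta,r}$ we have $-t\in R_{\eta,r}$, so $p_\uparrow(-t)=p_\downarrow(-t)$; combining this with the symmetry $p_\uparrow(t)^{\pm}=p_\downarrow(-t)^{\mp}$ of (\ref{upsilon01dot}) (which follows from $p_\downarrow(t)^{\pm}=p_\uparrow(-t)^{\mp}$) gives $p_\uparrow(t)=p_\downarrow(t)$ on $-R_{\eta,r}$, and hence $S_-=0$ by (\ref{Sleftdef}). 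By the last statement of Remark \ref{stokesconstantrem}, $p_\uparrow$ and $p_\downarrow$ then have a common single--valued extension to a solution $p(t)$ of (\ref{cssystem}) on a full punctured neighborhood of $t=\infty$ with $p(t)\to 0$. By the theorem on removable singularities applied in the variable $1/t$, $p$ is holomorphic at $t=\infty$ with $p(\infty)=0$; in particular $\pi_1(t)=1+p^+(t)+p^-(t)$ stays bounded, close to $1$, for all large $|t|$.

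Translating this back through (\ref{xt}) produces a solution $y(x)$ of the first Painlev\'e equation (\ref{PI}) for which $\eta(\xi)=\xi^{1/2}\,\pi_1(t)$, and hence $y(x)$, has no poles once $|x|$ is large; so $y$ has only finitely many poles, contradicting Corollary \ref{infpolecor}. Therefore $S\neq 0$. (Alternatively the contradiction follows from monodromy: the analytic continuation of $p_\uparrow$ along $t\,\op{e}^{\scriptop{i}\theta}$, $0\le\theta\le 5\pi/2$, equals $(-\pi_1,\op{i}\pi_2)$ in the original coordinates and so tends to the equilibrium $(-1,0)$, whereas single--valuedness of $p$ near $t=\infty$ forces this continuation to be $p(\op{i}\,t)\to 0$, i.e.\ to tend to $(1,0)$.) The main obstacle is this second part: one must check that the domains of $p_\uparrow$, $p_\downarrow$ and the regions $R_{\eta,r}$, $-R_{\eta,r}$ genuinely overlap so as to cover a punctured neighborhood of $t=\infty$ once $S=S_-=0$, and that the resulting bounded $p$ indeed corresponds, via (\ref{xt}), to a bona fide meromorphic solution of (\ref{PI}) to which Corollary \ref{infpolecor} applies. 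The purely--imaginary assertion is routine once the reflection identities are in hand.
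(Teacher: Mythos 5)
Your proof is correct, and its first half is the paper's own argument: pure imaginarity comes from $p_{\downarrow}(t)=\overline{p_{\uparrow}(\overline{t})}$ together with $\alpha=-1/2\in\R$, read off along the positive real axis where $\op{e}^{-t}\, t^{\alpha}>0$. For $S\neq 0$ you take a genuinely different, slightly longer route. The paper argues from $S=0$ alone: then $p_{\uparrow}$ and $p_{\downarrow}$ agree on $R_{\eta ,\, r}$ and therefore patch to a single small solution on the sector $-(3/2)\pi\leq\op{arg}t\leq (3/2)\pi$, $|t|\geq r$; since $x\propto t^{4/5}$ by (\ref{xt}), this $t$-sector maps onto an $x$-sector of angular width $(4/5)\cdot 3\pi=(12/5)\pi>2\pi$, so the single-valued solution $y(x)$ of (\ref{PI}) is bounded by a constant times $|x|^{1/2}$ for all large $|x|$, hence pole-free there, contradicting Corollary \ref{infpolecor} -- no information about the left Stokes constant is needed. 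You instead first deduce $S_-=0$ from $S=0$ via the symmetry $p_{\downarrow}(t)^{\pm}=p_{\uparrow}(-t)^{\mp}$ and then invoke the last statement of Remark \ref{stokesconstantrem} to obtain a single-valued small solution near $t=\infty$. This is valid at the paper's level of rigor: your chain of equalities works because both symmetry relations refer to the same right-half-plane point $-t$, where $S=0$ gives $p_{\uparrow}(-t)=p_{\downarrow}(-t)$; one should only keep in mind that $p_{\uparrow}(t)$ and $p_{\downarrow}(t)$ in (\ref{Sleftdef}) denote the branches over the left half plane reached through the upper and lower half plane, respectively. The ``obstacle'' you flag at the end is in fact already discharged by the equivalence you cite in Remark \ref{stokesconstantrem}, and $p_{\uparrow}$ is by construction the Boutroux rescaling of a bona fide solution of (\ref{PI}), so Corollary \ref{infpolecor} applies. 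What the paper's shortcut buys is precisely that your detour through $S_-$ and the reflection symmetry is unnecessary, because the $4/5$-power relation between $t$ and $x$ already converts a $3\pi$-wide $t$-sector into a full punctured neighborhood of $x=\infty$; what your route buys is a cleaner statement (single-valuedness and smallness of $p$ near $t=\infty$) before passing back to the $x$-plane.
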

\begin{proof}
Because $p_{\downarrow}(t)=\overline{p_{\uparrow}(\overline{t})}$, 
the asymptotic identity for $t\in\R$, $t\to\infty$ 
implies that $S$ is purely imaginary. 
If $S=0$, then 
$p_{\downarrow}(t)=p_{\uparrow}(t)$ in the right half plane, 
and therefore $p_{\uparrow}(t)$ and $p_{\downarrow}(t)$ 
would have a common extension to a small solution $p(t)$ 
for $\,- (3/2)\pi\leq\op{arg}(t)\leq (3/2)\pi$, 
$|t|\geq r$, when the corresponding solution 
$y(x)$ would be bounded by a constant times $|x|^{1/2}$ 
for $\, -(6/5)\pi\leq\op{arg}(x)\leq (6/5)\pi $
and $|x|\geq R$ for some $R$. As this implies that 
the single valued function $y(x)$ has no poles for large $|x|$, 
it would follow that $y(x)$ has only finitely many poles, 
in contradiction with Corollary \ref{infpolecor}. 
\end{proof} 

\begin{remark}
Because $\tau (t)=\op{e}^{-t}\, t^{-1/2}$, 
the positive and negative imaginary axis are 
transitional domains as in the paragraphs preceding 
Lemma \ref{Fjlem}, the Stokes constant $S$ is equal to the one 
in Costin \cite[(2.8)]{costin}. 
According to \cite[Note (3) on p. 7]{costin}, 
$S=\op{i}\sqrt{6/5\pi}$.
The formula $p_{\downarrow}(t)-p_{\uparrow}(t)\sim 
\op{i}\sqrt{6/5\pi}\,
\op{e}^{-t}\, t^{-1/2}$ as $-\pi /2\leq\op{arg}t\leq\pi /2$, 
$|t|\to\infty$  
agrees, at least up to the sign, with 
Kapaev \cite[Cor. 2.4]{kapaevstokes}. 
\label{stokesrem}
\end{remark}
Figure \ref{regionfigp} shows a region 
in the $x$\--plane, the plane of definition of the solution of 
(\ref{PI}), where the corresponding 
truncated solution of (\ref{u01dot}) is close 
to one of the two equilibrium points of the limit system 
(\ref{u01dotlim}). This region is the image 
under the mapping $t\mapsto -2^{-3/5}\, 3^{-1/5}\, ((5/4)\, t)^{4/5}$ 
of a domain of the form $R_{\eta ,\, r}$. 
\begin{figure}[ht]
\centering
\includegraphics[width=15cm]{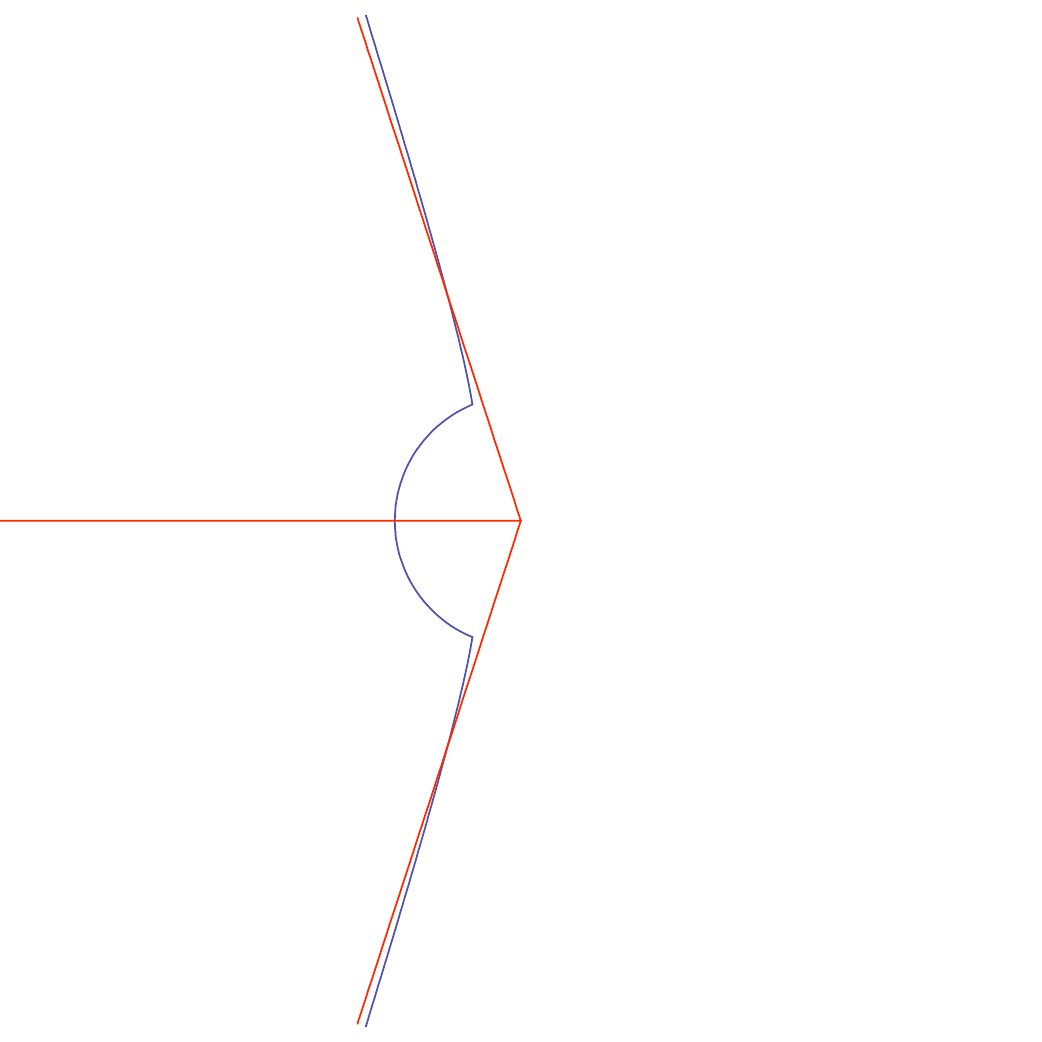}
\caption{Truncated region in the $x$\--plane,  
to the left of the curved boundary} 
\label{regionfigp}
\end{figure}

The triply truncated 
solution $p_{\uparrow}(t)$ 
of (\ref{u01dot}) is defined on a domain determined by the 
inequalities $|t|\geq r$ and  
\[
-\op{arccos}((-(1/2)\,\log |t|-\log\eta )/|t|)\leq\op{arg}t\leq  
\pi +\op{arccos}((-(1/2)\,\log |t|-\log\eta )/|t|) .
\]
Here $r$ and $\eta$ are sufficiently large and small strictly 
positive real numbers, and $\op{arccos}$ denotes the inverse of 
the bijective function $\cos :[0,\,\pi ]\mapsto [-1,\, 1]$. 
As the inequalities for $|t|$ and $\op{arg}t$ allow points 
$t,\, t'$ such that $|t'|=|t|$ and $\op{arg}t'=\op{arg}t+2\pi$, 
the function $p_{\uparrow}(t)$ is interpreted as multi\--valued. 
The properties $p_{\uparrow}(t)^{\pm}
=\overline{p_{\uparrow}(-\overline{t})^{\mp}}$, 
(\ref{Sdef}), and $p_{\downarrow}(t)=\op{O}(t^{-1})$ 
when $t$ runs to infinity in the direction of the 
negative imaginary axis imply that for 
$\op{arg}t=\, -\pi/2$ and $\op{arg}t=(3/2)\,\pi$ 
we have $p_{\uparrow}(t)\sim \op{e}^{-t}\, t^{-1/2}\, (0,\, -S)$ 
and $p_{\uparrow}(t)\sim\op{e}^t\, t^{-1/2}\, (-\op{i}\,\overline{S},\, 0)$
as $|t|\to\infty$, respectively. As Lemma \ref{Slem} implies that $S\neq 0$, 
{\em the two branches of $p_{\uparrow}(t)$ do not 
coincide on the overlap}. 

The image under the mapping 
$t\mapsto x=\, -2^{-3/5}\, 3^{-1/5}\, ((5/4)\, t)^{4/5}$ 
of the aforementioned domain where 
$p_{\uparrow}(t)$ is small is a domain in the $x$\--plane  
where $|x|$ is large and $\op{arg}(x)$ runs from 
slightly smaller than $(3/5)\,\pi$ to slightly larger 
than $(11/5)\,\pi$. The other truncated 
and triply truncated regions are obtained from these by 
applying a rotation in the $x$\--plane 
over $k\, 2\pi/5$, $1\leq k\leq 4$. 
Because the truncated and triply truncated solutions 
of (\ref{u01dot}) are bounded in their domains of definition, 
they have no pole there, and therefore 
{\em the corresponding truncated and triply truncated 
solutions of (\ref{PI}) have no poles in the 
aforementioned truncated and and triply truncated regions 
in the $x$\--plane}.  

Figure \ref{regionfigpt} shows the unique triply 
truncated region in the $x$\--plane which is invariant 
under complex conjugation. If $y(x)$ denotes the 
corresponding triply truncated solution of 
(\ref{PI}), the function $x\mapsto\overline{y(\overline{x})}$ 
is a solution of (\ref{PI}) which is triply truncated 
in the same domain. The uniqueness of 
triply truncated solutions of (\ref{u01dot}) leads to  
the following observation of Joshi and Kitaev 
\cite[Cor. 3]{jkit}. 
\begin{lemma}
Let $D$ denote the triply truncated domain in 
Fig. \ref{regionfigpt} which 
is invariant under complex conjugation.  
Then the  solution $y(x)$ of (\ref{PI}) 
which is triply truncated on $D$ is real in the sense 
that $y(x)=\overline{y(\overline{x})}$ for every $x\in\C$. 
In particular $y(x)\in\R$ for every $x\in\R$ not equal to 
a pole point of $y$. 
\end{lemma}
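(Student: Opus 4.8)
The plan is to exploit the uniqueness of the triply truncated solution established in Lemmas \ref{centerlem}--\ref{alphalem}, together with the reality symmetry of the system (\ref{upsilon01dot}). First I would recall that the triply truncated region $D$ of Fig. \ref{regionfigpt} is, by construction, the image under the map $t\mapsto x=\,-2^{-3/5}\,3^{-1/5}\,((5/4)\,t)^{4/5}$ of a domain of the form $R_{\eta ,\, r}$, and that the triply truncated solution $y(x)$ on $D$ corresponds, via the substitutions (\ref{xt}), to the solution $p_{\uparrow}(t)$ (equivalently $\pi_1(t)=1+p_{\uparrow}(t)^++p_{\uparrow}(t)^-$) described in Lemma \ref{centerlem}, extended as in Lemmas \ref{centerVlem} and \ref{alphalem}. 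The key structural input, already recorded in the paragraph preceding Lemma \ref{Slem}, is the identity $p_{\downarrow}(t)=\overline{p_{\uparrow}(\overline{t})}$, which comes from the reality symmetry $(t,\,\pi_1,\,\pi_2)\mapsto(\overline{t},\,\overline{\pi_1},\,\overline{\pi_2})$ of (\ref{upsilon01dot}); combined with $p_{\downarrow}(t)^{\pm}=p_{\uparrow}(-t)^{\mp}$ this gives $p_{\uparrow}(t)^{\pm}=\overline{p_{\uparrow}(-\overline{t})^{\mp}}$.

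Next I would argue as follows. Set $\widetilde{y}(x):=\overline{y(\overline{x})}$. Since (\ref{PI}) has real coefficients, $\widetilde y$ is again a solution of the first Painlevé equation. Because $D$ is invariant under complex conjugation $x\mapsto\overline{x}$, the solution $\widetilde y$ is bounded (hence pole-free, hence small in Boutroux coordinates near the relevant equilibrium) on exactly the same region $D$, i.e. it is triply truncated on $D$ in precisely the same sense as $y$. Translating back through the substitutions (\ref{xt}) and the Boutroux scaling, $\widetilde y$ corresponds to a small solution of (\ref{upsilon01dot}) on $R_{\eta ,\, r}$ converging to the equilibrium point $(1,0)$ as $t\to\infty$ along the real axis; by the uniqueness clause in Lemma \ref{centerlem} (and its extension in Lemma \ref{centerVlem}), there is only one such solution, namely the one attached to $p_{\uparrow}$. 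Therefore $\widetilde y$ and $y$ correspond to the same $p_{\uparrow}$, so $\widetilde y(x)=y(x)$ on $D$, and by analytic continuation (both sides are single-valued meromorphic functions on $\C$ by the Painlevé property) $y(x)=\overline{y(\overline{x})}$ for all $x\in\C$. Restricting to $x\in\R$, one gets $y(x)=\overline{y(x)}$, i.e. $y(x)\in\R$, wherever $y$ is finite.

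The one point demanding a little care -- and what I expect to be the main obstacle -- is making the reduction "$\widetilde y$ is triply truncated on $D$ $\Rightarrow$ $\widetilde y$ corresponds to the \emph{same} solution of (\ref{upsilon01dot}) as $y$" fully rigorous: one must check that the correspondence between triply truncated solutions of (\ref{PI}) on $D$ and solutions $p_{\uparrow}$ of (\ref{cssystem}) is genuinely a bijection, i.e. that being small on a full domain of the type $R_{\eta ,\, r}$ (which contains both a neighborhood of the positive real $t$-axis in the upper and lower half-planes, by the computation $\cos(\arg t)=(-(1/2)\log|t|-\log\eta)/|t|$ and $\op{Re}\alpha=-1/2<0$) pins down the solution uniquely. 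This is exactly the content of Lemma \ref{centerlem} together with the extension lemmas, so the obstacle is really just bookkeeping: one invokes the uniqueness of $p_{\uparrow}$ on $\R\pm\op{i}/\epsilon$ from Lemma \ref{centerlem}, notes that $\widetilde y$ restricted to $\R+\op{i}/\epsilon$ (pulled back through (\ref{xt}) and the Boutroux scaling) is a small solution there, and concludes equality. The remaining steps -- the reality of (\ref{PI}), conjugation-invariance of $D$, and the final analytic continuation -- are routine.
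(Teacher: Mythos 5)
Your proposal is correct and follows essentially the same route as the paper, which establishes the lemma in the paragraph immediately preceding it: since $x\mapsto\overline{y(\overline{x})}$ is again a solution of (\ref{PI}) triply truncated on the conjugation-invariant domain $D$, the uniqueness of the triply truncated solution (Lemma \ref{centerlem} and its extensions, i.e. the uniqueness of $p_{\uparrow}$) forces $\overline{y(\overline{x})}=y(x)$. The point you flag as needing care is exactly the uniqueness clause the paper invokes, so no further work is required.
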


\begin{figure}[ht]
\centering
\includegraphics[width=15cm]{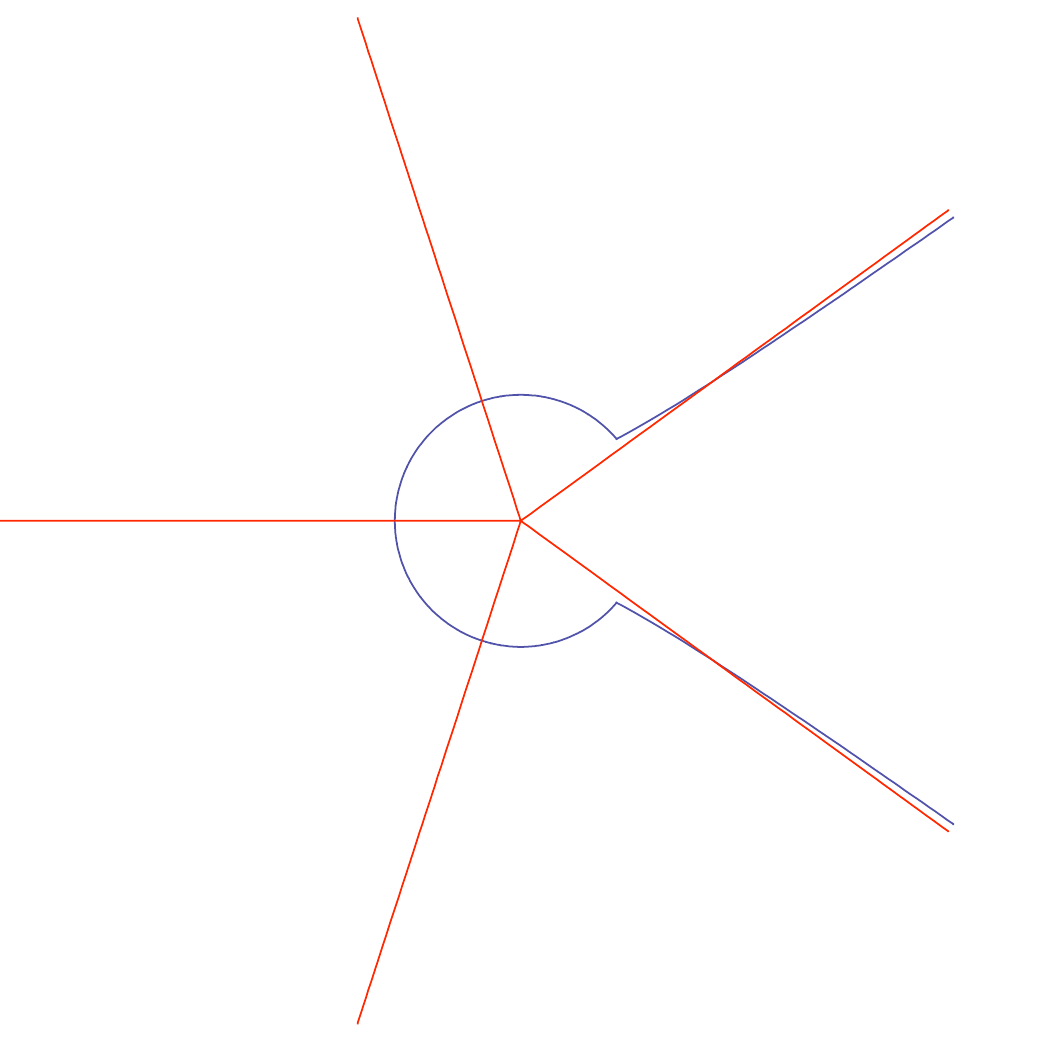}
\caption{Triply truncated region in the $x$\--plane, 
to the left of the curved boundary} 
\label{regionfigpt}
\end{figure}

The next lemma discusses what happens with Lemma \ref{bdyasymlem} 
in the case of the Boutroux\--Painlev\'e system. 
Our results correspond to \cite[(113)]{costin2} 
with the formulas for $H_0(\xi )$ and $H_1(\xi )$ 
on \cite[p. 38]{costin2}, as 
$\pi _1(x)=Y(x)=1-4/25\, x^2+h(x)$, 
see \cite[p. 36]{costin2}, hence 
$\pi _{1,\, 0}(\xi )=H_0(\xi )+1$, 
$\pi _{1,\, 2}(\xi )=H_2(\xi )-4/25$ and 
$\pi _{1,\, l}(\xi )=H_(\xi )$ for all 
$l\notin \{0,\, 2\}$. O. and R. Costin 
\cite[p. 39]{costin2} wrote: \lq\lq We omit the straightforward 
but quite lengthy inductive proof that all $H_k$ are 
rational functions of $\xi$.\rq\rq\  And on p. 40: \lq\lq For large 
$\xi$ induction shows that $H_n\sim\op{Const}_n\,\xi ^n$, 
\dots \rq\rq\ , but did not give further details of the proof.   

\begin{lemma}
With the notation of Lemma \ref{bdyasymlem}, 
the solution $(\pi _1(t),\,\pi _2(t))$ of (\ref{upsilon01dot}) 
corresponding to 
$p_C(t)$ has the asymptotic expansion 
\begin{equation}
\pi _k(t)=\sum_{l=0}^{m-1}\, t^{-l}\, \pi _{k,\, l}(C\,\tau (t))
+\op{O}(t^{-m})
\label{bdyasymh}
\end{equation}
as $t\in V$, $|t|\to\infty$. Here 
$\pi _{1,\, l}(\xi )=(\xi -12)^{-l-2}\, P_{1,\, l}(\xi )$ and 
$\pi _{2,\, l}(\xi )=(\xi -12)^{-l-3}\, P_{2,\, l}(\xi )$, where 
$P_{1,\, l}(\xi )$ and $P_{2,\, l}(\xi )$ is a polynomial 
in $\xi :=C\,\tau$ of degree $\leq 2\, l+2$ and $\leq 2\, l+3$, respectively. 
We have 
\begin{eqnarray}
P_{1,\, 0}(\xi )&=&(\xi -12)^2+144\,\xi ,\label{P10}\\
P_{2,\, 0}(\xi )&=&144\,\xi (\xi +12),\label{P20}\\
P_{1,\, 1}(\xi )&=&\xi\, (216+210\,\xi +3\,\xi ^2-\xi ^3/60),\label{P11}\\
P_{2,\, 1}(\xi )&=&
(497664-134784\,\xi +266112\,\xi ^2+25704\,\xi ^3-24\,\xi ^4+\xi ^5)/60.
\label{P21}
\end{eqnarray}
\label{hlem}
\end{lemma}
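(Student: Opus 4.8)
The plan is to read the functions $\pi_{k,l}$ off Lemma~\ref{bdyasymlem} and then to determine them from an explicit recursion, the first term of which can be integrated in closed form. Applying Lemma~\ref{bdyasymlem} to the system (\ref{upsilon01dot}), with $v$, $c_1$, $L_1$ and $\alpha=-1/2$ as computed in \S\ref{ttss}, the solution $p_C(t)$ has the asymptotic expansion $p_C(t)\sim\sum_{i\ge0}t^{-i}F_{i,C}(\tau(t))$ with $F_{i,C}(\tau)=F_{i,1}(C\tau)$ (Lemma~\ref{Fjlem}). Passing to the coordinates $\pi_1=1+p^++p^-$, $\pi_2=p^+-p^-$ and writing $\xi=C\tau$, one lets $\pi_{1,l}(\xi)$ be the sum of the two components of $F_{l,1}(\xi)$ (plus $1$ when $l=0$) and $\pi_{2,l}(\xi)$ their difference; this is precisely (\ref{bdyasymh}). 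Substituting $\pi_k(t)=\sum_l t^{-l}\pi_{k,l}(C\tau(t))$ into (\ref{upsilon01dot}) and equating coefficients of $t^{-l}$, using $\op{d}\bigl(t^{-l}\phi(\tau(t))\bigr)/\op{d}t=-t^{-l}\tau\phi'(\tau)+t^{-l-1}\bigl(\alpha\tau\phi'(\tau)-l\phi\bigr)$ as in the paragraph preceding Lemma~\ref{Fjlem}, reduces (\ref{F0eq}) and (\ref{Fieq}) to a recursion: for $l=0$ the pair $(\pi_{1,0},\pi_{2,0})$ solves the autonomous limit of (\ref{upsilon01dot}) with independent variable $s=-\log\xi$, and for $l\ge1$ the pair $(\pi_{1,l},\pi_{2,l})$ solves the inhomogeneous variational system of that autonomous limit along $(\pi_{1,0},\pi_{2,0})$, with inhomogeneity $G_l$ a universal polynomial in $\pi_{k,j}$ and $\xi\,\op{d}\pi_{k,j}/\op{d}\xi$, $0\le j\le l-1$.

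For $l=0$ I would integrate the autonomous limit using its energy $\pi_2^2/2-\pi_1^3/6+\pi_1/2$, which equals $1/3$ at the equilibrium $(1,0)$ and hence along the truncated trajectory, so $\pi_{2,0}^2=\frac13(\pi_{1,0}-1)^2(\pi_{1,0}+2)$. Separation of variables, rationalised by $\pi_{1,0}+2=u^2$, gives $2\,\op{d}u/(u^2-3)=\mp\op{d}\xi/(\sqrt3\,\xi)$, and the normalisation $F_{0,1}(0)=0$, $F_{0,1}'(0)=(0,1)$, i.e.\ $\pi_{1,0}(0)=1$, $\op{d}\pi_{1,0}/\op{d}\xi(0)=1$, pins down $\pi_{1,0}(\xi)=1+144\,\xi/(\xi-12)^2$, whence $\pi_{2,0}(\xi)=-\xi\,\op{d}\pi_{1,0}/\op{d}\xi=144\,\xi(\xi+12)/(\xi-12)^3$; these are (\ref{P10}) and (\ref{P20}). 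Thus $\pi_{1,0}$ is rational on $\Proj^1$ with a single finite singularity, a double pole at $\xi=12$, it is invariant under $\xi\mapsto 144/\xi$, and the scalar form $\xi^2\pi''+\xi\pi'=\pi_{1,0}(\xi)\,\pi$ of the homogeneous variational equation is a Fuchsian equation with singular points only at $\xi=0$ (exponents $\pm1$), $\xi=12$ (exponents $4$ and $-3$, from the Euler reduction $(\xi-12)^2\pi''=12\,\pi$) and $\xi=\infty$ (exponents $\pm1$), consistently with the Fuchs relation $1-1+4-3+1-1=1=3-2$.

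The inductive step is variation of parameters on $\Proj^1$. One homogeneous solution of the variational equation is $\op{d}\pi_{1,0}/\op{d}s=\pi_{2,0}(\xi)$, which is rational with simple zeros at $\xi=0$ and $\xi=-12$ and a pole of order $3$ at $\xi=12$; since the coefficient matrix of the variational system is trace-free, the Wronskian of the scalar equation is a constant multiple of $\xi^{-1}$, so $\xi$ times the variation-of-parameters integrands $\psi_i\,G_l$ is a rational $1$-form on $\Proj^1$, and a second homogeneous solution is obtained by one quadrature and is again rational. Assuming inductively that $\pi_{1,j}=P_{1,j}/(\xi-12)^{j+2}$, $\pi_{2,j}=P_{2,j}/(\xi-12)^{j+3}$ with $\deg P_{1,j}\le2j+2$, $\deg P_{2,j}\le2j+3$ for $j<l$, the $1$-forms $\psi_i\,G_l\,\op{d}\xi/\xi$ have poles only at $\xi=0$, $12$, $-12$ and $\infty$; their residues vanish at $\xi=0$ because $F_l$ is analytic there (Lemma~\ref{Fjlem}), at $\xi=12$ because $F_l$ is meromorphic there, which is the regularity of the flow transversally across the pole line $L_9$ in Okamoto's space established in \S\ref{polesec}, at the ordinary point $\xi=-12$ automatically, and therefore at $\xi=\infty$ as well by the residue theorem on $\Proj^1$. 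Hence the antiderivatives are rational, so $\pi_{1,l}$ and $\pi_{2,l}$ are rational with poles only at $\xi=12$; the exponent $-3$ at $\xi=12$ and the orders of $G_l$ at $\xi=12$ and $\xi=\infty$ then give the stated pole orders and degrees. Running the recursion once more, at $l=1$, yields (\ref{P11}) and (\ref{P21}).

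The step I expect to be the main obstacle is the sharp bookkeeping at $\xi=12$: a priori the resonance between the exponents $4$ and $-3$ could introduce a logarithm there, and the convolution $\frac12\sum_{j=1}^{l-1}\pi_{1,j}\pi_{1,l-j}$ occurring in $G_l$ has, term by term, a pole at $\xi=12$ of order up to $l+4$, one unit worse than the claimed order $l+3$ of $\pi_{2,l}$. Both are resolved by \S\ref{polesec}: in the chart $(u_{911},u_{912})$ the Boutroux\--Painlev\'e flow, and its autonomous limit, is holomorphic near $L_9$, so near $\xi=12$ the formal series $\sum_l t^{-l}\pi_{k,l}(C\tau(t))$ must reassemble into the local Laurent expansion of a solution at a pole; this carries no logarithm, makes $F_l$ meromorphic at $\xi=12$ of order exactly $l+2$ resp.\ $l+3$, and forces the cancellations in the convolution sums. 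With that input in place, only the routine verification of the residue identities at $\xi=0,\pm12$, the pole\--order and degree count, and the explicit $l=1$ computation remain.
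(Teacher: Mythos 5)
Your top-level plan coincides with the paper's (reduce to the coefficients $F_{i,1}$ of Lemmas \ref{Fjlem} and \ref{bdyasymlem}, integrate the autonomous limit along the critical energy level to get (\ref{P10})--(\ref{P20}), treat $l\geq 1$ as inhomogeneous variational equations, compute $l=1$ explicitly), but the mechanism you propose for rationality breaks at a concrete point. The second homogeneous solution of the variational equation is \emph{not} rational. With $\varphi _1=\pi _{2,\, 0}(\xi )=144\,\xi\, (\xi +12)/(\xi -12)^3$ and Wronskian $w=c\,\xi ^{-1}$, reduction of order gives $\varphi _2=\varphi _1\int \varphi _1^{-2}\, w\,\op{d}\!\xi$, and the integrand $c\, (\xi -12)^6/\bigl(144^2\,\xi ^3\, (\xi +12)^2\bigr)$ has residue $5c/24\neq 0$ at $\xi =0$ (the exponents $\pm 1$ at $\xi =0$ are resonant and the resonance is effective), so $\varphi _2$ contains a $\log\xi$ term. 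Your chain ``rational $1$\--forms with vanishing residues $\Rightarrow$ rational antiderivatives $\Rightarrow$ rational $\pi _{k,\, l}$'' therefore does not go through as written: with a non\--rational $\varphi _2$, the variation\--of\--parameters representation produces logarithmic terms whose cancellation is exactly what has to be proved, and your residue\--vanishing claims (``because $F_l$ is analytic/meromorphic there'') assume the conclusion rather than derive it.

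The same problem is concentrated at $\xi =12$, which you yourself flag as the main obstacle, but the resolution you offer is not an argument: saying that the two\--scale series $\sum_l t^{-l}\,\pi _{k,\, l}(C\,\tau (t))$ ``must reassemble into the local Laurent expansion at a pole'' conflates an asymptotic expansion in $t^{-1}$ with $\xi$\--dependent coefficients with an actual Laurent series in $\xi$; Section \ref{polesec} gives regularity of the flow across $L_9$ and the Laurent expansion of $u(z)$ at a pole, not meromorphy of the coefficient functions $\pi _{k,\, l}$ at $\xi =12$, which is precisely the content of the lemma. The paper closes this gap by a different route: single\--valuedness of $\pi _{k,\, l}$ on $\C\setminus\{ 12\}$ is obtained inductively from the Painlev\'e property of (\ref{PI}) together with Lemma \ref{tnlem} (a small loop of $\xi$ around $12$ lifts to closed loops $t_n(\xi )$), and then the regular\--singular\--point analysis of the scalar equation (exponents $-3,\, 4$ at $\xi =12$, $\pm 1$ at $\xi =\infty$), combined with the order of the inhomogeneous term supplied by the induction hypothesis, bounds the pole order at $12$ by $l+2$ (resp.\ $l+3$) and the exponent at infinity by $l$; meromorphy plus these bounds forces the stated rational form. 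Some such global single\--valuedness input (or a genuinely worked\--out argument in the Okamoto charts, which you have only asserted) is indispensable before any local bookkeeping at the resonance. Finally, note that at $l=1$ analyticity at $\xi =0$ leaves a one\--parameter family of solutions of (\ref{pi1i})--(\ref{pi2i}); the constant is fixed only by the solvability of the $l=2$ equations (the paper finds $c=678/5$ this way), so ``running the recursion once more'' must include that next\--order condition in order to land on (\ref{P11})--(\ref{P21}).
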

\begin{proof}
It follows from the last paragraph in Lemma \ref{Fjlem} 
that it suffices to prove all the formulas for $C=1$, when $\tau =\xi$. 

The shortest proof of the formulas for 
$\pi _{1,\, 0}(\xi )$ and $\pi _{2,\, 0}(\xi )$ is 
to verify that these functions 
satisfy the differential equations 
$-\xi\,\op{d}\! P_{1,\, 0}/\op{d}\!\xi =P_{2,\, 0}$, 
$-\xi\,\op{d}\! P_{2,\, 0}/\op{d}\!\xi =({P_{1,\, 0}}^2-1)/2$
corresponding to (\ref{F0eq}), with the initial conditions 
$P_{1,\, 0}(0)=1$, $P_{2,\, 0}(0)=0$, and the derivatives 
with respect to $\xi$ at $\xi =0$ of $p^+=(P_{1,\, 0}+P_{2,\, 0}-1)/2$ 
and $p^-=(P_{1,\, 0}-P_{2,\, 0}-1)/2$ equal to $0$ and $C$, 
respectively. The longer proof below explains how 
the formulas for $\pi _{1,\, 0}(\xi )$ and $\pi _{2,\, 0}(\xi )$ 
could have been found. 

The system (\ref{upsilon01dot}) is equivalent to 
the second order differential equation 
\begin{equation}
\frac{\op{d}^2\pi}{\op{d}\! t^2}
=\, -\frac{1}{t}\,\frac{\op{d}\!\pi}{\op{d}\! t}
+\frac12\, (\pi ^2-1)+\frac{4}{25\, t^2}\,\pi 
\label{pidd}
\end{equation}
for $\pi (t)=\pi _1(t)$, when $\pi _2(t)$ is given in terms 
of $\pi (t)$ by means of the formula
\begin{equation}
\pi _2(t)=\frac{\op{d}\!\pi (t)}{\op{d}\! t}+\frac{2}{5\, t}\,\pi (t).
\label{pi2}
\end{equation}
The autonomous limit equation of (\ref{pidd}) for $t\to\infty$ 
is $\Pi ''=(\Pi ^2-1)/2$, a Newton equation with potential 
energy $V(\Pi ):=\, -\Pi ^3/6+\Pi/2$. It follows that the total energy 
$E=(\Pi ')^2/2+V(\Pi )$ is a constant of motion, and the solution 
which converges to the equilibrium point $(\Pi ,\,\Pi ')=(1,\, 0)$ 
has energy $E=V(1)=1/3$. This leads to the first order 
differential equation 
$(\Pi ')^2=2\, (1/3-V(\Pi ))=(\Pi -1)^2\, (\Pi +2)/3$, hence, 
if the independent variable is denoted by $s$,  
$\op{d}\! s/\op{d}\!\Pi =3^{1/2}\, (\Pi -1)^{-1}\, (\Pi +2)^{1/2}$, 
when the substitution $\Pi +2=\Psi ^2$ leads to 
\begin{eqnarray*}
s&=&2\,\sqrt{3}\,\int^{\Psi (s)}\, 
(\Psi ^2-3)^{-1}\,\op{d}\!\Psi +c
=\log \frac{\Psi (s)-\sqrt{3}}{\Psi (s)+\sqrt{3}}+c\\
\Leftrightarrow&&
\frac{\Psi (s)+\sqrt{3}}{\Psi (s)-\sqrt{3}}=c\,\op{e}^{-s}=:c\,\xi 
\Leftrightarrow
\Psi (s)=\sqrt{3}\,\frac{c\,\xi +1}{c\,\xi -1}\\
\Rightarrow &&
\Pi (s)=\Psi (s)^2-2=1+\frac{12\, c\,\xi}{(c\,\xi -1)^2}, 
\end{eqnarray*}
where $c$ denotes a constant which at every place might 
be a different one. 
The function $s\mapsto\Pi (s)$ is the \lq\lq degenerate elliptic 
function\rq\rq\  of \cite[p. 38]{costin2}. 
With the substitution 
$\xi =\op{e}^{-s}$, the differential equation (\ref{F0eq}) 
is equivalent to $\op{d}\! F_0/\op{d}\! s= v(0,\, F_0)$. 
As the derivative of $F_0^+(\xi )$ and $F_0^-(\xi )$ at 
$\xi =0$ have to be equal to $0$ and $1$, respectively, 
the derivative of $\Pi =\pi _1=1+p^++p^-$ with respect to $\xi$ 
at $\xi =0$ has to be equal to $1$.  
Therefore $c=1/12$ and 
$\Pi =1+\xi /(\xi /12-1)^2$, which proves 
$\pi _{1,\, 0}(\xi )=P_{1,\, 0}(\xi )/(\xi -12 )^2$ 
with $P_{1,\, 0}(\xi )$ as in (\ref{P10}). 
The formula (\ref{pi2}) with 
$\op{d}\! /\op{d}\! t=\, (-1-1/2t)\,\xi\,\op{d}\! /\op{d}\!\xi$ 
yields 
\[
\pi _{2,\, 0}=\, -\xi\,\op{d}\!\pi _{1,\, 0}/\op{d}\!\xi 
=P_{2,\, 0}(\xi )/(\xi -12)^3, 
\] 
with $P_{2,\, 0}(\xi )$ as in (\ref{P20}). 

Because $\pi _{1,\, 0}$ and $\pi _{2,\, 0}$ are only singular 
at $\xi =12$, it follows from Lemma \ref{Fjlem} and 
Lemma \ref{bdyasymlem} that the 
functions $\pi _{1,\, l}$ and $\pi _{2,\, l}$ have a complex 
analytic continuation along any path in $\C\setminus\{ 12\}$, 
and that the asymptotic expansion (\ref{bdyasymh}) 
holds along these paths. If $\xi$ runs around 
$12$ along a small circle, then the $t_n(\xi )$ with large 
modulus, see Lemma \ref{tnlem}, return to the same value. 
As the Painlev\'e property implies that the function 
$p_C(t)$ is single valued, the asymptotic expansion 
(\ref{bdyasymh}) implies by induction on $l$ that 
$\pi _{1,\, l}$ and $\pi _{2,\, l}$ are single valued 
complex analytic functions on $\C\setminus\{ 12\}$. 

For our system (\ref{upsilon01dot}) the differential 
equations (\ref{Fieq}) for $i\in\Z _{>0}$ take the form 
\begin{eqnarray}
-\xi\,\frac{\op{d}\!\pi _{1,\, i}}{\op{d}\!\xi} 
&=&\pi _{2,\, i}+\frac12\,\xi\,\frac{\op{d}\!\pi _{1,\, i-1}}{\op{d}\!\xi}
+(i-\frac75)\,\pi _{1,\, i-1}
\label{pi1i}\\
-\xi\,\frac{\op{d}\!\pi _{2,\, i}}{\op{d}\!\xi}
&=&\pi _{1,\, 0}\,\pi _{1,\, i}
+\frac12\,\xi\,\frac{\op{d}\!\pi _{2,\, i-1}}{\op{d}\!\xi}
+(i-\frac85)\,\pi _{2,\, i-1}
+\frac12\,\sum_{j=1}^{i-1}\,\pi _{1,\, j}\,\pi _{1,\, i-j}.
\label{pi2i}
\end{eqnarray}
Given $\pi _{1,\, j}$ and $\pi _{2,\, j}$ for $j<i$, 
(\ref{pi1i}), (\ref{pi2i}) is an inhomogeneous linear system 
of first order differential equations for 
$(\pi _{1,\, i},\, \pi _{2,\, i})$, equivalent to the 
inhomogenous linear second order differential equation 
\begin{eqnarray*}
&&\xi ^2\,\frac{\op{d}^2\pi _{1,\, i}}{\op{d}\!\xi ^2}
+\xi\,\frac{\op{d}\!\pi _{1,\, i}}{\op{d}\!\xi}
=-\xi\,\frac{\op{d}}{\op{d}\!\xi}
\left(-\xi\,\frac{\op{d}\!\pi _{1,\, i}}{\op{d}\!\xi}\right)\\
&=&\pi _{1,\, 0}\,\pi _{1,\, i}
+\frac12\,\xi\,\frac{\op{d}\!\pi _{2,\, i-1}}{\op{d}\!\xi}
+(i-\frac85)\,\pi _{2,\, i-1}
+\frac12\,\sum_{j=1}^{i-1}\,\pi _{1,\, j}\,\pi _{1,\, i-j}\\
&&-\xi\,\frac{\op{d}}{\op{d}\!\xi }
\left(\frac12\,\xi\,\frac{\op{d}\!\pi _{1,\, i-1}}{\op{d}\!\xi}
+(i-\frac75)\,\pi _{1,\, i-1}\right)
\end{eqnarray*}
for $\pi _{1,\, i}$, when $\pi _{2,\, i}$ can be solved 
from (\ref{pi1i}) in terms of $\pi _{1,\, i}$ and 
$\pi _{1,\, i-1}$. 

Let $\varphi _1(\xi )$ and $\varphi _2(\xi )$ 
be a basis of solutions of the homogeneous linear  
second order differential equation 
$\pi ''=a(\xi )\,\pi '+b(\xi )\,\pi$.  
Lagrange's method of variations of constants 
yields that the solutions of the inhomogeneous 
equation $\pi ''=a(\xi )\,\pi '+b(\xi )\,\pi +f(\xi )$ 
are of the form 
\begin{equation}
\pi (\xi )=c_1\,\varphi _1(\xi )+c_2\,\varphi _2(\xi )
+\int_{\xi _0}^{\xi}\, 
(-\varphi _1(\xi )\,\varphi _2(\eta )
+\varphi _2(\xi )\,\varphi _1(\eta )\,\frac{f(\eta )}{w(\eta )}
\,\op{d}\!\eta .
\label{inhomdd}
\end{equation}
Here $w=\varphi _1\,\varphi _2'-\varphi _1'\,\varphi _2$ 
is the Wronskian determinant, which satisfies 
$w'=a\, w$, hence 
\[
w(\xi )
=w(\xi _0)\,\op{e}^{\int_{\xi _0}^{\xi}\, a(\eta )\,\op{d}\!\eta}. 
\]
The differential equation $\pi ''=a(\xi )\,\pi '+b(\xi )\,\pi$ 
has a regular singular point at $\xi =\Xi$ if 
$a(\xi )$ and $b(\xi )$ have a pole of order $\leq 1$ and 
$\leq 2$ at $\xi =\Xi$. If $a(\xi )=A\, (\xi -\Xi )^{-1}
+\op{O}(1)$ and $b(\xi )=B\, (\xi -\Xi )^{-2}+\op{O}((\xi -\Xi )^{-1})$ 
as $\xi\to\Xi$, and the indicial equation 
$\lambda\, (\lambda -1)=A\,\lambda +B$ has to distinct 
solutions $\lambda _1$ and $\lambda _2$, 
then there is a basis of solutions 
$\varphi _1(\xi )$ and $\varphi _2(\xi )$ such that 
$\varphi _k(\xi )=(\xi -\Xi )^{\lambda _k}(1+\op{o}(1))$, 
and $w(\xi )=(\lambda _2-\lambda _1)\, 
(\xi -\Xi )^{\lambda _1+\lambda _2-1}$. 
See for instance Coddington and Levinson \cite[Chap. 4, Sec. 8]{cl}. 
Therefore, if $b(\xi )=\op{O}(\xi -\Xi )^b$, then the integral 
in (\ref{inhomdd}) is of order $(\xi -\Xi )^{b+2}$ as $\xi\to\Xi$. 
If $\lambda _1=\lambda _2$, then one has to replace 
$\varphi _2(\xi )=(\xi -\Xi )^{\lambda _2}(1+\op{o}(1))$ 
by $\varphi _2(\xi )=((\xi -\Xi )^{\lambda _1}\,\log 
(\xi -\Xi ))\, (1+\op{o}(1))$, with a corresponding change in the 
estimate for the integral in (\ref{inhomdd}). 
 
At $\xi =12$ we have $A=0$ and $B=(12)^{-2}\, (12)^3=12$, 
when the solutions of the indicial equation are 
$\lambda _1=\, -3$ and $\lambda _2=4$. 
If, for every $0\leq j\leq i-1$, 
$\pi _{1,\, j}$ and $\pi _{2,\, j}$ are meromorphic 
at $\xi =12$ with a pole of order $\leq j+2$ and 
$\leq j+3$, respectively, then an inspection of the 
inhomogenous terms in the second order diffrential equation 
for $\pi _{1,\, i}$ yields $b=\, -i-4$, and because 
$b+2=-i-2\leq -3$ it follows that 
$\pi _{1,\, i}$ has a pole of order $\leq i+2$ at $\xi =12$. 
For $i=1$ we have $b=\, -4$, but then $\lambda _1=\, -3$ 
yields that $\pi _{1,\, 1}$ has a pole of order $\leq 3$. 
Subsequently (\ref{pi1i}) implies that $\pi _{2,\, i}$ 
has a pole of order $\leq i+3$ at $\xi =12$. 
It follows by induction on $l$ that, at $\xi =12$, 
$\pi _{1,\, l}$ and $\pi _{2,\, l}$ have a pole of order $\leq l+2$ and 
$\leq l+3$, respectively.  

At $\xi =\infty$ we have $A=\, -1$ and $B=1$, when the solutions of 
the indicial equation are $\lambda _1=1$ and $\lambda _2=\, -1$. 
If, for every $0\leq j\leq i-1$, 
$\pi _{1,\, j}$ and $\pi _{2,\, j}$ are meromorphic 
at $\xi =\infty$ with exponents $\leq j$, then an inspection of the 
inhomogenous terms in the second order diffrential equation 
for $\pi _{1,\, i}$ yields $b=i-2$, and because 
$b+2=i\geq 1$ it follows that at $\xi =\infty$ the function  
$\pi _{1,\, i}$ has an exponent $\leq i$. 
Subsequently (\ref{pi1i}) implies that also 
$\pi _{2,\, i}$ has an exponent $\leq i$ at $\xi =\infty$. 
It follows by induction on $l$ that the functions 
$\pi _{1,\, l}$ and $\pi _{2,\, l}$ have exponents $\leq l$ 
at $\xi =\infty$, and therefore are rational functions   
of the form   $\pi _{1,\, l}(\xi )=(\xi -12)^{-l-2}\, P_{1,\, l}(\xi )$ and 
$\pi _{2,\, l}(\xi )=(\xi -12)^{-l-3}\, P_{2,\, l}(\xi )$, 
where $P_{1,\, l}$ and $P_{2,\, l}$ are polynomials  
of degree $\leq 2\, l+2$ and $\leq 2\, l+3$, respectively. 

The functions $\pi _{1,\, 1}(\xi )=(\xi -12)^{-3}\, P_{1,\, 1}(\xi )$ and 
$\pi _{2,\, 1}(\xi )=(\xi -12)^{-4}\, P_{2,\, 1}(\xi )$, with 
the respective polynomials 
$P_{1,\, 1}(\xi )$ and $P_{2,\, 1}(\xi )$ as in 
(\ref{P11}) and (\ref{P21}), 
have been found with the help of a formula manipulation 
computer program, in the following way. 
The solutions of the system (\ref{pi1i}), (\ref{pi2i}) 
for $i=1$ which are complex analytic in a neighborhood 
of $\xi =0$ are of the form 
\begin{eqnarray*}
\pi _{1,\, 1}(\xi )&=&\xi\, ((720\, c-84672)
+(60\, c+4464)\,\xi +180\,\xi ^2-\xi ^3)/(60\, (\xi -12)^3),\\
\pi _{2,\, 1}(\xi )&=&(497664+(8640\, c-1306368)\,\xi 
+(2880\, c-124416)\,\xi ^2+(60\, c+17568)\,\xi ^3\\
&&-24\,\xi ^4+\xi ^5)/
(60\, (\xi -12)^4),
\end{eqnarray*}
where $c$ is free constant. With these functions 
$\pi _{1,\, 1}$ and $\pi _{2,\, 1}$, an investigation 
of the explicit solutions $\pi _{1,\, 2}$, $\pi _{2,\, 2}$ of 
the system (\ref{pi1i}), (\ref{pi2i}) for $i=2$ 
yields that there exist solutions which are complex analytic 
in a neighborhood of $\xi =0$ if and only if $c=678/5$. 
Therefore $\pi _{1,\, 1}(\xi )=(\xi -12)^{-3}\, P_{1,\, 1}(\xi )$ and 
$\pi _{2,\, 1}(\xi )=(\xi -12)^{-4}\, P_{2,\, 1}(\xi )$ with 
$P_{1,\, 1}(\xi )$ and $P_{2,\, 1}(\xi )$ as in 
(\ref{P11}) and (\ref{P21}), respectively. 
\end{proof}

As the system (\ref{upsilon01dot}) is just a rescaled version 
of (\ref{u01dot}), passing to the complex projective plane and 
successively blowing up the base points of the vector fields, 
as in Section \ref{boutrouxsec}, 
leads to surface $S_9$, with a locus $I$ where the vector field 
is infinite equal to the union of nine complex projective lines 
$L_i^{(9-i)}$, $0\leq i\leq 8$, and a pole line 
$L_9\setminus I$, where $L_9$ is the complex projective line 
appearing at the last blowup. In the common domain 
of definition of the coordinate systems $(\pi _{ij1},\,\pi _{ij2})$ 
and $(\pi _1,\,\pi _2)$, an application of the birational transformation 
$(\pi _1,\,\pi _2)\mapsto (\pi _{ij1},\,\pi _{ij2})$ to the asymptotic 
expansion (\ref{bdyasymh}) leads to an asymptotic expansion 
\begin{equation}
\pi _{ijk}(t)=\sum_{l=0}^{m-1}\, t^{-l}\, \pi _{ijk,\, l}(\tau (t))
+\op{O}(t^{-m})
\label{bdyasymij}
\end{equation}
as $t\in V$, $|t|\to\infty$, where the functions $\pi _{ijk,\, l}(\tau )$ are 
rational expressions in the functions 
$\pi _{1, l'}(\tau )$, $\pi _{2,\, l'}(\tau )$ for 
$0\leq l'\leq l$, and therefore are rational functions of $\tau$. 
Because the differential equations for $\pi _{ijk,\, 0}(\tau )$, 
analogous to (\ref{F0eq}), correspond to the autonomous 
limit system, these differential equations are regular 
and its solutions have a complex analytic extension as 
long as they remain in in the complement of 
the inifinity set $I$ in the coordinate system 
$(\pi _{ij1},\, \pi _{ij2})$. Because also the non\--autonomous 
vector field is regular in $S\setminus I$, the functions 
$\pi _{ijk,\, l}(\tau )$ with $l\in\Z _{>0}$ satisfy   
inhomogeneous linear differential equations as 
(\ref{Fieq}), variational equations of the differential equations 
for $\pi _{ijk,\, 0}(\tau )$, where the inhomogeneous term is 
a regular expression in the $\pi _{ijk,\, m}(\tau )$ with $m<l$.  
It follows by induction on $l$ that all the rational functions 
$\pi _{ijk,\, l}(\tau )$ are regular, when the 
perturbation argument in the last paragraph 
of the proof of Lemma \ref{bdyasymlem} yields that the 
asymptotic expansion (\ref{bdyasymij}) extends to the whole 
complement of $I$ in the coordinate system $(\pi _{ij1},\,\pi _{ij2})$. 

The pole line is visible in the coordinate system  
$(\pi _{911},\,\pi _{912})$ as the line $\pi _{912}=0$, 
and therefore the poles are the solutions $T$ of 
the equation $\pi _{912}(T)=0$, where 
$\pi _{912}=\pi _1/\pi _2$. 
This leads to the following asymptotic results for the 
poles, where in view of (\ref{xt}) the poles of the corresponding 
truncated solution 
of (\ref{PI}) are given by $X_n=\, -2^{-3/5}\, 3^{-1/5}\, 
(5\, T_n/4)^{4/5}$. 
\begin{lemma}
There exist universal 
sequences of coefficients $c_j$, $d_{k,\, l}$
$j,\, k,\, l\in\Z _{\geq 0}$, 
with $c_0=12$, $c_1=109/10$, and $d_{0,\, 0}=0$, 
such that following holds. 
Let $C\neq 0$, and let $p_C(t)$ be the solution 
in Lemma \ref{bdyasymlem}, of 
the system (\ref{cssystem}) obtained from (\ref{upsilon01dot}) 
by means of the substitutions $\pi _1=1+p^++p^-$, 
$\pi _2=p^+-p^-$. Let $(\pi _1(t),\,\pi _2(t))$ 
be the corresponding solution of (\ref{upsilon01dot}). 
Then there is a sequence of 
poles $T_n$, $n\in\Z$, $n>>0$ of $\pi _1(t)$, such that 
$T_n=2\pi\op{i}\, n\, (1+\op{o}(1))$ as $n\to\infty$, and 
\begin{equation}
\tau (T_n):=\op{e}^{-T_n}\, {T_n}^{-1/2}
\sim \frac1C\,\sum_{j=0}^{\infty}\, c_j\, {T_n}^{-j}
\quad\mbox{\rm as}\quad n\to\infty .
\label{asymTn}
\end{equation}
Furthermore, with the notations 
\[
u:=\frac{1}{2\pi\op{i}n}, 
\quad v:=\frac{\log (2\pi\op{i}n)}{2\pi\op{i}n},
\quad\mbox{\rm and}\quad 
W:=u\,\log\frac{C}{12}-\frac{v}{2}, 
\]
where $\log (2\pi\op{i}n)=\log (2\pi\, n)+\pi\op{i}/2$, 
we have the more explicit but more complicated asymptotic 
expansion 
\begin{equation}
T_n\sim 2\pi\op{i}n-\frac12\,\log (2\pi\op{i}n)
+\log\frac{C}{12}+\sum_{k,\, l\geq 0}\, d_{k,\, l}\, u^k\, W^l
\quad\mbox{\rm as}\quad n\to\infty , 
\label{asymTn2}
\end{equation}
of which the leading terms yield  
\begin{eqnarray}
T_n&=&2\pi\op{i}n-\frac12\,\log (2\pi\op{i}n)
+\log\frac{C}{12}
+\frac{v}{4}-(\frac12\,\log\frac{C}{12}+\frac{109}{120})\, u
\nonumber\\
&&+\frac{1}{16}\, v^2-(\frac14\,\log\frac{C}{12}
+\frac{139}{240})\, u\, v
+\op{O}(n^{-2})
\label{asymTn3}
\end{eqnarray}
as $n\to\infty$. 
If $C\in\C\setminus\{ 0\}$ runs once around 
the origin in the positive direction, then $T_n$ moves 
continuously to $T_{n+1}$. 

Conversely, for every $\eta >0$ there exists an $r>0$, such that 
these $T_n$ are the only poles $T$ of $\pi _1(t)$ 
such that $|T|\geq r$, $\op{Im}T\geq 0$, and 
$|\op{e}^{-T}\, T^{-1/2}|\leq\eta$.
\label{polelem}
\end{lemma}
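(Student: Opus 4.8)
Proof plan for Lemma \ref{polelem}.

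The plan is to combine the asymptotic expansion of Lemma \ref{bdyasymlem}, applied in the coordinate chart $(\pi_{911},\pi_{912})$, with the inversion result of Lemma \ref{tnlem}. First I would observe that the pole line is the locus $\pi_{912}=0$ in the $(\pi_{911},\pi_{912})$ chart, where $\pi_{912}=\pi_1/\pi_2$, so the poles $T$ of $\pi_1(t)$ are exactly the solutions of $\pi_{912}(T)=0$. By the version of (\ref{bdyasymij}) in this chart, $\pi_{912}(t)=\sum_{l=0}^{m-1}t^{-l}\,\pi_{912,l}(\tau(t))+\op{O}(t^{-m})$, where each $\pi_{912,l}$ is a rational function of $\tau$. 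The leading term $\pi_{912,0}(\tau)=\pi_{1,0}(\tau)/\pi_{2,0}(\tau)$ is computed from Lemma \ref{hlem}: using (\ref{P10}) and (\ref{P20}) with $\xi=C\tau$, one gets $\pi_{912,0}(\tau)=\big((C\tau-12)^2+144\,C\tau\big)\big/\big(144\,C\tau(C\tau+12)\big)\cdot(C\tau-12)^{-1}\cdot(\text{power of }(C\tau-12))$, whose only zero near the origin is simple and located at $C\tau=12$ plus higher-order corrections; the next coefficient $\pi_{912,1}$ contributes the $c_1$ correction. Solving $\pi_{912}(t)=0$ order by order in $t^{-1}$ via the implicit function theorem (the leading zero is simple, so this is legitimate) yields a formal expansion $\tau(T)=\frac1C\sum_{j\ge0}c_j\,T^{-j}$ with $c_0=12$, and the explicit computation with the $i=0,1$ data of Lemma \ref{hlem} gives $c_1=109/10$. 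This is (\ref{asymTn}).

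Next, having (\ref{asymTn}), I would feed it into Lemma \ref{tnlem} with $\alpha=-1/2$. The right-hand side of (\ref{asymTn}) is $\tau=\frac{1}{C}(12+c_1T^{-1}+\cdots)$; writing $\log\tau=\log(12/C)+\log(1+\frac{c_1}{12}T^{-1}+\cdots)$ and substituting into (\ref{tn}) with the self-consistent estimate $T=2\pi\op{i}n(1+\op{o}(1))$, one obtains (\ref{asymTn2}): the $d_{k,l}$ arise by re-expanding $s(u,w)$ from Lemma \ref{tnlem} together with the $\log\tau$-correction, collecting powers of $u$ and of $W:=u\log(C/12)-v/2$. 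The leading terms (\ref{asymTn3}) follow from the order-three expansion of $s(u,w)$ already recorded in Lemma \ref{tnlem}, combined with the one-term correction $\frac{c_1}{12}=\frac{109}{120}$ coming from $\log\tau=\log(12/C)-\frac{109}{120}u+\cdots$; the bookkeeping is routine but must be done carefully, and I would state the values $c_0=12$, $c_1=109/10$, $d_{0,0}=0$ as the outputs of that bookkeeping. The continuity statement — that $T_n\to T_{n+1}$ as $C$ winds once around $0$ — follows because $\log(C/12)$ increases by $2\pi\op{i}$, which by the last paragraph of Lemma \ref{tnlem} moves $t_n(\tau)$ to $t_{n+1}(\tau)$ (the sign/direction convention must be matched; winding of $C$ positively corresponds to $\tau=\op{e}^{-s}$ winding negatively, hence $n\mapsto n+1$).

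For the converse — that these $T_n$ are \emph{all} the poles with $|T|\ge r$, $\op{Im}T\ge0$, $|\op{e}^{-T}T^{-1/2}|\le\eta$ — I would argue as follows. By Lemma \ref{bdyasymlem}, in the region $R_{\eta,r}$ (with $\eta$ small, but here we may take $\eta$ as large as desired and $r$ correspondingly large, using the extension of (\ref{bdyasymij}) to the full complement of $I$ discussed after that lemma) the function $\pi_{912}(t)$ is uniformly close to $\pi_{912,0}(\tau(t))$. The only zero of $\pi_{912,0}$ in the relevant range of $\tau$ is the simple zero at $\tau=12/C$, so any pole $T$ must have $\tau(T)$ close to $12/C$, i.e. $\op{e}^{-T}T^{-1/2}$ close to $12/C$; by Lemma \ref{tnlem} the solutions $T$ of $\op{e}^{-T}T^{-1/2}=\tau$ with $\tau$ near $12/C$, $|\op{arg}T|\le\pi$, $\op{Im}T\ge0$, $|T|\ge r$ form exactly the sequence $t_n(\tau)$, $n>>0$; and by a Rouché/winding-number argument applied to $\pi_{912}(t)-$ (its leading approximation) on small circles around each $t_n(12/C)$ — using that the zero of $\pi_{912,0}$ is simple and the error in (\ref{bdyasymij}) is $\op{O}(t^{-m})$ for $m$ large — there is exactly one pole $T_n$ of $\pi_1$ near each $t_n(12/C)$, and no others. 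The main obstacle I anticipate is making the converse rigorous: one must quantify "close" so that the Rouché argument applies uniformly over the whole tail $n>>0$, which requires that the error term $\op{O}(t^{-m})$ dominated over the gradient of $\pi_{912,0}$ near its zero, i.e. that $|t|^{-m}$ is small compared to the distance between consecutive zeros $t_n$, $t_{n+1}$; since $|t_{n+1}-t_n|\sim 2\pi$ is bounded below while $|t|^{-m}\to0$, this works provided we also control that $\tau(t)$ stays in the domain of analyticity of $\pi_{912,0}$ — i.e. $\tau(t)$ stays away from the other singularity $\tau=\infty$ of $\pi_{912,0}$ — which is exactly the constraint $|\op{e}^{-T}T^{-1/2}|\le\eta$ with $\eta$ not too large, forcing the final quantifier order "for every $\eta$ there exists $r$."
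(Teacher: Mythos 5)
Your plan follows essentially the same route as the paper's proof: identify the poles with the solutions of $\pi_{912}(T)=0$, $\pi_{912}=\pi_1/\pi_2$, expand $\pi_{912}$ via Lemma \ref{hlem}, solve $\sum_l t^{-l}\pi_{912,\, l}(\xi )=0$ by the implicit function theorem at the simple zero $\xi =12$ of $\pi_{912,\, 0}$ (which gives $c_0=12$ and $c_1=-\pi_{912,\, 1}(12)/\pi_{912,\, 0}'(12)=109/10$), then invert $\op{e}^{-T}\, T^{-1/2}=\tau (T_n)$ by Lemma \ref{tnlem} with $\alpha =-1/2$ to obtain (\ref{asymTn2})--(\ref{asymTn3}), and use the monodromy statement of Lemma \ref{tnlem} for the motion $T_n\mapsto T_{n+1}$ as $C$ winds around the origin; this is exactly the paper's argument. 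One concrete slip occurs in your converse: since $\pi_{912,\, 0}(\xi )=(\xi -12)(\xi ^2+120\,\xi +144)/(144\,\xi\, (\xi +12))$, it has further zeros at the roots of $\xi ^2+120\,\xi +144$ (one of them near $\xi\approx -1.2$), so your claim that its only zero in the relevant range of $\tau$ is at $\xi =12$ is false, and the Rouch\'e argument as stated would not by itself exclude spurious ``poles'' near those points. These extra zeros are zeros of $\pi_{1,\, 0}$, hence correspond to zeros, not poles, of $\pi_1$; the clean repair — and what the paper does — is to note first that at a pole $T$ with $\xi (T)$ bounded the expansion of Lemma \ref{hlem} forces $\pi_{1,\, 0}(\xi (T))$ to be large, hence $\xi (T)\to 12$, the unique finite pole of $\pi_{1,\, 0}$, and only then to apply the local uniqueness near $\xi =12$ coming from the implicit function theorem (or your Rouch\'e count).
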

\begin{proof}
It follows from Lemma \ref{hlem} that the poles 
$T$ of $\pi _1(t)$ with bounded 
$\Xi (T):=C\,\op{e}^{-T}\, T^{-1/2}$ satisfy 
$\Xi (T)\to 12$ as $|T|\to\infty$. 

There exists a sequence of rational functions 
$\pi _{912,\, l}$, $l\in\Z _{\geq 0}$, such that, 
with the notation $\xi =C\,\tau =C\,\op{e}^{-t}\, t^{-1/2}$, 
\[
\pi _{912}(t)=\frac{\pi _1(t)}{\pi _2(t)}
=\sum_{l=0}^{m-1}\, t^{-l}\,\pi _{912,\, l}(\xi )+\op{O}(t^m),
\]
for every $m\in\Z _{>0}$. Lemma \ref{hlem} implies that 
\begin{eqnarray*}
\pi _{912,\, 0}(\xi )&=&(\xi -12)\, (144+120\,\xi +\xi ^2)
/(144\,\xi (\xi +12))\quad\mbox{\rm and}\\
\pi _{912,\, 1}(\xi )&=&
(-71663616-40310784\,\xi-248832\,\xi ^2-11860992\,\xi ^3
-1221696\,\xi ^4\\
&&+1224\,\xi ^5-240\,\xi ^6-\xi ^7)
/((1244160\,\xi ^2\, (\xi +12)^2).
\end{eqnarray*}
Because $\pi _{912,\, 0}(12)=0$ and $\pi _{912,\, 0}'(12)
=1/24\neq 0$, an application of the implicit function 
theorem yields that there exist open neighbourhoods 
$A$ and $B$ of $12$ and $0$, respectively, such that 
for every $t^{-1}\in A$ the equation 
\[
\sum_{l=0}^{m-1}\, t^{-l}\,\pi _{912,\, l}(\xi )=0
\]
has a unique solution 
$\Xi _m=\Xi _m(t^{-1})\in B$, which moreover depends in a 
complex analytic fashion on $t^{-1}$, and satisfies 
$\Xi _m(0)=12$. 
Furthermore, $\Xi (T)=\Xi _m(T^{-1})+\op{O}(T^{-m})$, and   
as the left hand side does not depend on $m$, it follows that 
the coefficients $c_j$ for $0\leq j\leq m-1$ in the Taylor 
expansion of the function $\Xi _m$ at the origin 
do not depend on $m$. Because this holds for every 
$m\in\Z _{>0}$, it follows that there is a sequence 
of complex numbers $c_j$, $j\in\Z _{>0}$, such that 
the poles $T$ of $\pi _1(t)$ with bounded 
$\Xi (T):=C\,\op{e}^{-T}\, T^{-1/2}$ satisfy 
\[
C\,\tau (T)=C\,\op{e}^{-T}\, T^{-1/2}\sim 12+\sum_{j>0}\, c_j\, T^{-j}
\quad\mbox{\rm as}\quad |T|\to\infty . 
\]
Because $\pi _{912,\, 1}(12)=\, -109/240$, 
we have $c_1=\, -\pi _{912,\, 1}(12)/\pi _{912,\, 0}'(12)
=109/10$. This completes the proof of (\ref{asymTn}). 

The proof of (\ref{asymTn2}) is analogous to the proof of 
(\ref{tn}). For any $m\in\Z _{>0}$, 
(\ref{asymTn}) yields that 
\[
\tau =\tau (T_n)=\frac{12}{C}\, 
(1+\frac{1}{12}\sum_{j=1}^{m-1}\, c_j\, {T_n}^{-j}+r),
\]
where $r=\op{O}({T_n}^{-m})=\op{O}(n^{-m})$, 
hence 
\[
\log\tau =\log\frac{12}{C}
+\log (1+\sum_{j=1}^{m-1}\,\frac{c_j}{12}\, {T_n}^{-j}+r).
\]
Upon the substitution 
\begin{eqnarray*}
T_n&=&2\pi\op{i}n-\frac12\,\log (2\pi\op{i}n)+\log\frac{C}{12}+S\\
&=&2\pi\op{i}n\, (1-\frac12\, v+u\log (C/12)+u\, S)
=2\pi\op{i}n\, (1+W+u\, S),
\end{eqnarray*}
which implies 
\[
\log T_n=\log (2\pi\op{i}n)+\log (1+W+u\, S)
\]
and
\[ 
{T_n}^{-1}=u\, (1+W+u\, S)^{-1},
\] 
the equation 
$T_n=2\pi\op{i}n-(1/2)\,\log T_n-\log\tau$ 
is equivalent to the equation 
\[
S=F(u,\, W,\, r,\, S):=
-\frac12\,\log (1+W+u\, S)
-\log (1+\sum_{j=0}^{m-1}\, \frac{c_j}{12}\, u^j
\, (1+W+u\, S)^{-j}+r).
\]
Because $F(0,\, 0,\, 0,\, S)\equiv 0$, it follows from the 
implicit function theorem in the complex analytic setting 
that there exist open neighborhoods $U$, ${\mathcal W}$, $R$, 
and ${\mathcal S}$ of the origin in $\C$ such that for every 
$(u,\, W,\, r)\in U\times {\mathcal W}\times R$ the equation 
has a unique solution $S=S_m(u,\, W,\, r)\in {\mathcal S}$, 
and that $S_m$ is a complex analytic function on 
$U\times {\mathcal W}\times R$. In our setting 
\[
S_m(u,\, W,\, r)=S_m(u,\, W,\, 0)+\op{O}(r)
=\sum_{k=0}^{m-1}\,\sum_{l=0}^{m-k}\, d_{k,\, l}\, u^k\, W^l
+\op{O}(n^{-m}). 
\]
Here the coefficients $d_{k,\, l}$ do not depend on $m$ 
because $S$ does not depend on $m$. 
This completes the proof of (\ref{asymTn2}). 

The equation for $m=2$ yields 
\[
S=\, -\frac12\, (W+u\, S)+\frac14\, (W^2+2\, W\, u\, S)
-\frac{c_1}{12}\, u\, (1-W)+\op{O}(n^{-2}),
\]
hence 
\[
S=\, -\frac12\, W-\frac{c_1}{12}\, u
+\frac14\, W^2+(\frac14+\frac{c_1}{12})\, W\, u+\op{O}(n^{-2}),
\]
which implies (\ref{asymTn3}). 

If $C\in\C\setminus\{ 0\}$ runs once around the origin 
in the positive direction, then (\ref{asymTn}) implies that 
$\tau (T_n)$ runs once around the origin in the negative 
direction, when Lemma \ref{tnlem} implies that $T_n$ 
moves continuously to $T_{n+1}$.  
\end{proof}

Figure \ref{polefig} illustrates the asymptotic approximations  
of the poles in (\ref{asymTn3}) in the complex 
$t$\--plane, of Boutroux's triply truncated solution 
$p_{\downarrow}(t)$, when, according to Remark \ref{stokesrem}, 
$C=\op{i}\,\sqrt{6/5\,\pi}$. Shown are the points in the right 
hand side of (\ref{asymTn3}) without the remainder 
term $\op{O}(n^{-2})$, for 
$1\leq n\leq 20$. For clarity of the picture, 
the imaginary part has been multiplied by $1/24$ 
in comparison to the real part. It would be interesting 
to compare the approximate poles in Figure \ref{polefig} 
with the numerical approximations of the 
actual poles of $p_{\downarrow}(t)$. 

\begin{figure}[ht]
\centering
\includegraphics[width=12cm]{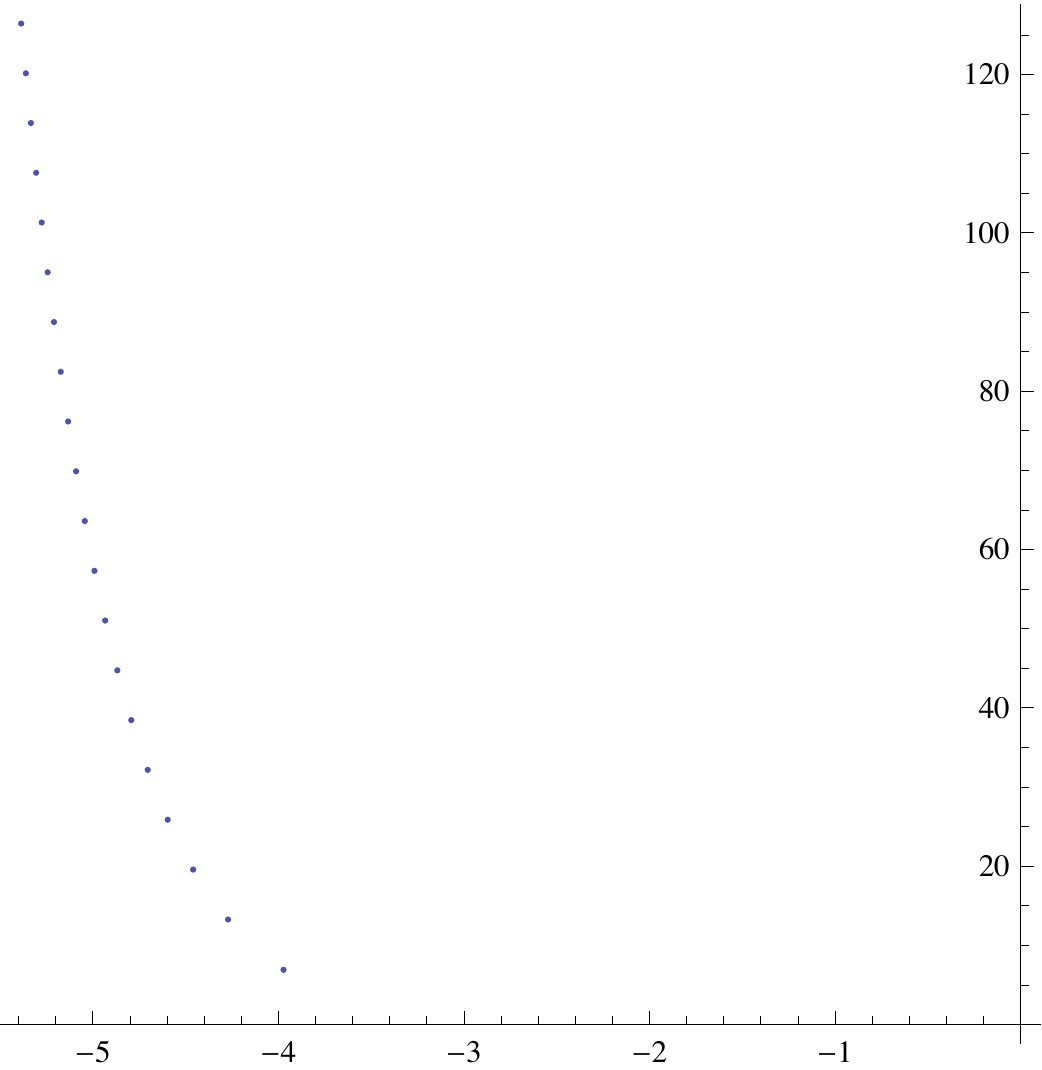}
\caption{The first twenty of the asymptotic approximations 
in (\ref{asymTn3}) of the poles of $p_{\downarrow}(t)$}  
\label{polefig}
\end{figure}

\begin{remark}
The intriguing \lq\lq General comments 2\rq\rq\  of \cite[p. 40]{costin2} say: 

\medskip\noindent 
\lq\lq The pole structure can be explored beyond the first array, 
in much of the same way: For large $\xi$ induction shows that 
$H_n\sim\op{Const}\,\xi ^n$, suggesting a reexpansion for large 
$\xi$ in the form 
\[
h\sim\sum_{k=0}^{\infty}\,\frac{H_k^{[1]}(\xi _2)}{x^k};
\quad\xi _2=C^{[1]}\,\xi\, x^{-1}=C\, C^{[1]}\, x^{-3/2}\,\op{e}^{-x}.
\quad\quad\quad (118)
\]
By the same technique it can be shown that (118) holds and, 
by matching with 
\[
h\sim\sum_{k=0}^{\infty}\, x^{-k}\, H_k(\xi (x))
\quad\quad\quad\quad\quad\quad\quad\quad\quad\quad\quad
\quad\quad\quad\quad\quad\quad (113)
\]
at $\xi _2\sim x^{-2/3}$, we get $H_0^{[1]}=H_0$ 
with $C^{[1]}=\, -1/60$. Hence, if $x_s$ belongs to the 
first line of poles, i.e. 
\[
\xi (x_s)=\xi _s=12+\frac{109}{10\, x}+\op{O}(x^{-2}),
\quad\quad\quad\quad\quad\quad\quad\quad\quad\quad\quad\quad (116) 
\]
the second line of poles is given by the condition
\[
x_1^{-3/2}\,\op{e}^{-x_1}=\, -60\cdot 12\, c
\]
i.e., it is situated at a logarithmic distance from the first one: 
\[
x_1-x_s=\, -\op{ln}x_s+(2\, n+1)\,\pi\op{i}-\op{ln}(60)+\op{o}(1).
\]
Similarly one finds $x_{s,\, 3}$ and in general 
$x_{s,\, n}$. The second scale for the $n$\--array 
is $x^{-n-1/2}\,\op{e}^{-x}$. 

The expansion (113) can however matched directly to an 
{\em adiabatic invariant}\--like expansion valid 
throughout the sector where $h$ has poles, similar to the one in 
Joshi and Kruskal \cite{jk92}. In this language, the successive 
expansions of the form (118) pertain to the separatrix crossing 
region. We will not pursue this issue here.\rq\rq\  

\medskip\noindent

The word \lq\lq suggesting\rq\rq\ preceding (118) indicates that 
(118) is a conjecture, but in the sequel all the statements, 
including (118), are treated as facts, with only some hints
of proofs. The phrase \lq\lq matching with (113) at $\xi _2\sim x^{-2/3}$\rq\rq\ 
suggests that $H_n\sim\op{Const}_n\,\xi ^n$ implies that the 
expansion (113) extends to domains where 
$\xi (x)$ is of order $x^{1/3}$, 
thus allowing a matching with 
(118) for $\xi _2(x)=C^{[1]}\,\xi (x)\, x^{-1}$ 
of order $x^{-2/3}$. 

Write $\tau ^{[N]}(t):=\tau (t)\, t^{-N}=
\op{e}^{-t}\, t^{-N-1/2}$, the second scale for 
the $(N+1)$\--st array of poles. 
Because $\pi _{k,\, l}(\xi )\sim\op{Const}_{k,\, l}\,\xi ^l$ 
for $\xi\to\infty$, see Lemma \ref{hlem}, the 
$t^{-l}\,\pi _{k,\, l}(C\,\tau (t)$, $l\in\Z _{\geq 0}$ 
form an asymptotic sequence 
for $|t|\to\infty$, $|\tau (t)|=\op{o}(|t|)$ and $\tau (t)$ 
bounded away from $12/C$, in the sense that 
$t^{-l}\,\pi _{k,\, l}(C\,\tau (t)=\op{O}(\tau ^{[1]}(t)^l)$, 
where $\tau ^{[1]}(t)\to 0$. A stronger conjecture would be 
that (\ref{bdyasymh}) extends as an asymptotic expansion 
for $|t|\to\infty$ in the aforementioned domain. 
Because (\ref{P10}) and (\ref{P20}) imply that 
$(\pi _{1,\, 0}(\xi ),\,\pi _{2,\, 0}(\xi ))\to 
(1,\, 0)$ as $\xi\to\infty$, it would follow that 
$(\pi _1(t),\,\pi _2(t))$ converges to the equilibrium 
point $(1,\, 0)$ of the autonomous limit system if 
$|t|\to\infty$, $|\tau (t)|\to\infty$, and 
$|\tau (t)|=\op{o}(|t|)$. 
In view of (\ref{P10}), (\ref{P20}), (\ref{P11}), and 
(\ref{P21}), the first two terms of the extended asymptotic expansion 
yield 
\begin{eqnarray*}
\pi _1(t)&=&1+144\, (C\,\tau (t))^{-1}-C\, \tau ^{[1]}(t)/60
+\op{o}(\tau (t)^{-1})+\op{o}(\tau ^{[1]}(t)),\\
\pi _2(t)&=&144\, (C\,\tau (t))^{-1}+C\,\tau ^{[1]}(t)/60
+\op{o}(\tau (t)^{-1})+\op{o}(\tau ^{[1]}(t)).
\end{eqnarray*}
Therefore, if we restrict to $\tau (t)^{-1}=\op{o}(\tau ^{[1]}(t))
=\op{o}(\tau (t)/t)$, that is 
$|\tau (t)|/|t|^{1/2}\to\infty$, then 
\[
(\pi _1(t),\,\pi _2(t))=
(1,\, 0)+\xi ^{[1]}\, (1,\, -1)
+\op{o}((\xi ^{[1]}))\quad\mbox{\rm if}\quad \xi ^{[1]}(t)=\, 
-C\, \tau ^{[1]}(t)/60,
\]
the leading term of an asymptotic expansion 
\[
(\pi _1(t),\, \pi _2(t)\sim\sum _{l\geq 0}\, t^{-l}\, 
(\pi _{1,\, l}^{[1]}(\xi ^{[1]}(t)),
\,\pi _{2,\, l}^{[1]}(\xi ^{[1]}(t))).
\]
As in Lemma \ref{Fjlem}, the function 
$\xi ^{[1]}\mapsto (\pi _{1,\, 0}^{[1]}(\xi ^{[1]}),\, 
\pi _{2,\, 0}^{[1]}(\xi ^{[1]}))$ satisfies the 
same differential equation 
(\ref{F0eq}) as the function 
$\xi\mapsto (\pi _{1,\, 0}(\xi ),\, \pi _{2,\, 0}(\xi ))$, 
where both functions have the same value and first order derivative 
at the origin. Therefore 
$\pi _{k,\, 0}^{[1]}=\pi _{k,\, 0}$, 
with $\pi _{1,\, 0}$ and $\pi _{2,\, 0}$ 
as in Lemma \ref{Fjlem}. It follows, if the 
aforementioned statements about the 
asymptotic expansions hold, that the second sequence of poles 
occurs at points $2\pi\op{i}\, n 
-(3/2)\,\log (2\pi\op{i}\, n)-\log (-720/C)+\op{o}(1)$, 
equal to the first sequence 
$2\pi\op{i}\, n 
-(1/2)\,\log (2\pi\op{i}\, n)-\log (12/C)+\op{o}(1)$ 
plus $-\log (2\pi\op{i}n)-\log (-60)+\op{o}(1)$ as $n\to\infty$. 

The text in \lq\lq General comments 2\rq\rq  of \cite[p. 40]{costin2} 
continues with the statement that for each $N$ there 
is an asymptotic expansion of the form 
\[
(\pi _1(t),\,\pi _2(t))\sim\sum_{l=0}^{\infty}
\, t^{-l}\, (\pi _{1,\, l}^{[N]}(C^{[N]}\,\tau ^{[N]}(t)),
\, \pi _{2,\, l}^{[N]}(C^{[N]}\,\tau ^{[N]}(t))), 
\]
valid, if interpreted in the strong sense, for 
$|t|\to\infty$, $|\tau ^{[N]}(t)|=\op{o}(t)$, 
and $|\tau ^{[N-1]}(t)|\to\infty$, 
where the constant $C^{[N]}$ depends linearly on $C$. 
This would lead to an asymptotic description of the 
$(N+1)$\--st sequence of poles, equal to the $N$\--th sequence 
plus $\, -\log n+\gamma _N+\op{o}(1)$ as $n\to\infty$, 
where the constant $\gamma _N$ neither depends on 
$n$ nor on $C$. For $|t|\to\infty$ and $t$ between 
the $N$\-th and the $(N+1)$\--st sequence of poles, 
the solution $(\pi _1(t),\,\pi _2(t))$ converges to 
the equilibrium point $(1,\, 0)$ of the autonomous 
system, if and only the distance from $t$ to both 
sequences of poles tends to infinity. 
Furthermore, for every $M>0$ we have that 
the energy $E={\pi _2}^2/2-{\pi _1}^3/6+\pi_1/2$ 
converges to the critical level $1/3$, meaning that the 
solution converges to the solution of the autonomous 
limit system at the critical energy level, if 
$\op{Im}t\to\infty$, $\op{Re}t\geq -M\,\log (\op{Im}t)$, 
and $|t|$ times the distance from $t$ to the poles tends to infinity. 
The latter condition is related to the description 
of the energy near the poles in (\ref{qexpansion}). 

We would like to prove statements like those in the 
second paragraph in the \lq\lq General comments 2\rq\rq\  
of \cite[p. 40]{costin2} 
by means of the averaging method. 
This is not a trivial matter, as 
all the asymptotic expansions up till now 
are near one of the critical values of the energy function, 
where solutions of the averaged differential equation 
for the energy function are not uniquely determined. 
One might expect that the energy function 
acquires different limit values from the critical 
value $1/3$ of the energy function, if $t$ runs to infinity 
in the direction of $\op{e}^{\,\scriptop{i}\,\theta}$  
with $\pi/2<\theta <3\,\pi /2$. On the other hand   
the truncated solution converges to the equilibrium point 
of the autonomous limit system (with energy equal to 
the critical value $1/3$) if $\, -\pi/2<\theta <\pi/2$. 
\label{nextpolesrem}
\end{remark}

\appendix
\section{Okamoto's Space}
We construct Okamoto's space 
of initial conditions \cite{okamoto79}  
in the Boutroux rescaling. (See also \cite{d} for 
the original Painlev\'e equation (\ref{PI}).) Recall that the notation $(u_{ij1}, u_{ij2})$ will be used to denote
the coordinates in the $j$-th chart of the $i$-th blowup and that
in each coordinate chart, the Jacobian of the coordinate change from $(u_1, u_2)$ to 
$(u_{ij1}, u_{ij2})$ will be denoted by
\begin{equation*}
 w_{ij}=\frac{\partial u_{ij1}}{\partial u_1}\,\frac{\partial u_{ij2}}{\partial u_2}-\frac{\partial u_{ij1}}{\partial u_2}\,\frac{\partial u_{ij2}}{\partial u_1}.
\end{equation*}

Up to and including the seventh 
blowup, the function $z\, \dot{E}$ is rational with 
$w_{ij}$ in the denominator, and we have 
added the formula for $\dot{E}\, w_{ij}$ in 
each coordinate chart. 
\subsection{Embedding into $\Proj^{2}$}
Recall the embedding of $(u_{1}, u_{2})$ into projective space given in Section \ref{notation}. We have the second affine chart in $\Proj ^2$: 
\begin{eqnarray*}
u_{021}&=&{u_1}^{-1},\\ 
u_{022}&=&{u_1}^{-1}\, u_2,\\
u_1&=&{u_{021}}^{-1},\\
u_2&=&{u_{021}}^{-1}\, u_{022},\\ 
\dot{u}_{021}&=&u_{021}\, (-u_{022}
+2\, (5\, z)^{-1}),\\
\dot{u}_{022}&=&{u_{021}}^{-1}
\, (6+{u_{021}}^2-u_{021}\, {u_{022}}^2
-(5\, z)^{-1}\, u_{021}\, u_{022}),\\
w_{02}&=&-{u_{021}}^3,\\
\dot{w}_{02}&=&3\, {u_{021}}^3\, (u_{022}-2\, (5\, z)^{-1}),\\
E\, w_{02}&=&2+{u_{021}}^2-2^{-1}\, u_{021}\, {u_{022}}^2,\\
\dot{E}\, w_{02}&=&-(5\, z)^{-1}
\, (12+2\, {u_{021}}^2-3\, u_{021}\, {u_{022}}^2).
\end{eqnarray*}
The line at infinity $L_0$ corresponds to $u_{021}=0$. 
In this chart 
there are no base points for the Painlev\'e vector field 
or the anticanonical pencil.  

Third affine chart in $\Proj ^2$: 
\begin{eqnarray*}
u_{031}&=&{u_2}^{-1},\\ 
u_{032}&=&u_1\, {u_2}^{-1},\\ 
u_1&=&{u_{031}}^{-1}\, u_{032},\\
u_2&=&{u_{031}}^{-1},\\ 
\dot{u}_{031}&=&-{u_{031}}^2-6\, {u_{032}}^2
+3\, (5\, z)^{-1}\, u_{031},\\
\dot{u}_{032}&=&{u_{031}}^{-1}\, 
(u_{031}-{u_{031}}^2\, u_{032}-6\, {u_{032}}^3
+(5\, z)^{-1}\, u_{031}\, u_{032}),\\
w_{03}&=&{u_{031}}^3,\\
\left[ w_{03}\, {u_{032}}^{-3}\right] ^{\bullet}&=&
3\, {u_{031}}^3\, (-1+2\, (5\, z)^{-1}\, u_{032})
\, {u_{032}}^{-4},\\
E\, w_{03}&=&2^{-1}\, u_{031}-{u_{031}}^2\, u_{032}
-2\, {u_{032}}^3,\\
\dot{E}\, w_{03}&=&(5\, z)^{-1}
\, (-3 u_{031}+2\, {u_{031}}^2\, u_{032}+12\, {u_{032}}^3).
\end{eqnarray*}
The line at infinity $L_0$ corresponds to 
$u_{031}=0$. Both the 
The Painlev\'e vector field and the anticanonical 
pencil both have a base point $b_0$ given by $u_{031}=0$, $u_{032}=0$. 

\subsection{Resolution of the flow at $b_{0}$}
Blowing up $\Proj ^2$ at $b_0$ leads to $S_1$. 

First coordinate chart: 
\begin{eqnarray*}
u_{031}&=&u_{111}\, u_{032},\\
u_{032}&=&u_{112},\\
u_{111}&=&{u_1}^{-1},\\
u_{112}&=&u_1\, {u_2}^{-1},\\ 
u_1&=&{u_{111}}^{-1},\\ 
u_2&=&{u_{111}}^{-1}\, {u_{112}}^{-1},\\
\dot{u}_{111}&=&{u_{112}}^{-1}\, u_{111}
\, (-1+2\, (5\, z)^{-1}\, u_{112}),\\
\dot{u}_{112}&=&{u_{111}}^{-1}
\, (u_{111}-6\, {u_{112}}^2-{u_{111}}^2\, {u_{112}}^2
+(5\, z)^{-1}\, u_{111}\, u_{112}),\\
w_{11}&=&{u_{111}}^3\, {u_{112}}^2,\\
\left[ w_{11}\, {u_{112}}^{-2}\right] ^{\bullet}&=&
3\, {u_{111}}^3
\, (-1+2\, (5\, z)^{-1}\, u_{112})\, {u_{112}}^{-1},\\
\left[ w_{11}\, {u_{111}}^{-1}\right] ^{\bullet}&=&
2\, u_{111}\, {u_{112}}^2\, 
(-6\, u_{112}-{u_{111}}^2\, u_{112}+3\, (5\, z)^{-1}\, u_{111}),\\
E\, w_{11}&=&2^{-1}\, u_{111}-2\, {u_{112}}^2
-{u_{111}}^2\, {u_{112}}^2,\\
\dot{E}\, w_{11}&=&(5\, z)^{-1}\, 
(-3\, u_{111}+12\, {u_{112}}^2+2\, {u_{111}}^2\, {u_{112}}^2).
\end{eqnarray*}
Then $u_{112}=0$ defines $L_1$ and $u_{111}=0$ defines 
$L_0^{(1)}$. 
The Painlev\'e vector field and the anticanonical pencil 
both have a base point $b_1$ given by $u_{111}=0$, $u_{112}=0$.

The second coordinate chart after the first blowup 
is defined by 
\begin{eqnarray*}
u_{031}&=&u_{121},\\
u_{032}&=&u_{122}\, u_{031},\\
u_{121}&=&{u_2}^{-1}=u_{111}\, u_{112},\\
u_{122}&=&u_1={u_{111}}^{-1},\\
u_1&=&u_{122},\\
u_2&=&{u_{121}}^{-1},\\
\dot{u}_{121}&=&
u_{121}\, (-u_{121}-6\, u_{121}\, {u_{122}}^2
+3\, (5\, z)^{-1}),\\
\dot{u}_{122}&=&{u_{121}}^{-1}
\, (1-2\, (5\, z)^{-1}\, u_{121}\, u_{122}),\\
w_{12}&=&{u_{121}}^2,\\
\dot{w}_{12}&=&2\, {u_{121}}^2
\, (-u_{121}-6\, u_{121}\, {u_{122}}^2+3\, (5\, z)^{-1}),\\
E\, w_{12}&=&2^{-1}-{u_{121}}^2\, u_{122}-2\, {u_{121}}^2\, {u_{122}}^3,\\
\dot{E}\, w_{12}&=&(5\, z)^{-1}\, 
(-3+2\, {u_{121}}^2\, u_{122}+12\, {u_{121}}^2\, {u_{122}}^3). 
\end{eqnarray*}
The equation $u_{121}=0$ defines $L_1$. 
The line $L_0^{(1)}$ is not visible, 
and there are no base points in this chart. 

\subsection{Resolution of the flow at $b_{1}$}
Blowing up $S_1$ at $b_1$ leads to $S_2$. 
First coordinate chart: 
\begin{eqnarray*}
u_{111}&=&u_{211}\, u_{112},\\
u_{112}&=&u_{212},\\
u_{211}&=&{u_1}^{-2}\, u_2,\\
u_{212}&=&{u_2}^{-1}\,  u_1,\\ 
u_1&=&{u_{211}}^{-1}\, {u_{212}}^{-1},\\ 
u_2&=&{u_{211}}^{-1}\, {u_{212}}^{-2},\\
\dot{u}_{211}&=&{u_{212}}^{-1}
\, (-2\, u_{211}+6\, u_{212}
+{u_{211}}^2\, {u_{212}}^3
+(5\, z)^{-1}\, u_{211}\, u_{212}),\\
\dot{u}_{212}&=&{u_{211}}^{-1}
\,  (u_{211}-6\, u_{212}
-{u_{211}}^2\, {u_{212}}^3
+(5\, z)^{-1}\, u_{211}\, u_{212}),\\
w_{21}&=&{u_{211}}^3\, {u_{212}}^4,\\
\left[ w_{21}\, {u_{212}}^{-1}\right] ^{\bullet}&=&
3\, {u_{211}}^3\, {u_{212}}^2
\, (-1+2\, (5\, z)^{-1}\, u_{212}),\\
\left[ w_{21}\, {u_{211}}^{-1}\right] ^{\bullet}&=&
2\, u_{211}\, {u_{212}}^4\, 
(-6-{u_{211}}^2\, {u_{212}}^2+3\, (5\, z)^{-1}\, u_{211}),\\
E\, w_{21}&=&2^{-1}\, u_{211}-2\, u_{212}-{u_{211}}^2\, {u_{212}}^3,\\
\dot{E}\, w_{21}&=&(5\, z)^{-1}\, 
(-3\, u_{211}+12\, u_{212}+2\, {u_{211}}^2\, {u_{212}}^3).
\end{eqnarray*}
Then $u_{212}=0$ defines $L_2$ and $u_{211}=0$ defines 
the proper transform $L_0^{(2)}$ of 
$L_0^{(1)}$. The proper transform $L_1^{(1)}$ 
of $L_1$ is not visible in this chart. 
The Painlev\'e vector field and the anticanonical pencil 
both have a base point $b_2$ given by $u_{211}=0$, $u_{212}=0$.

The second coordinate chart after the second blowup 
is defined by 
\begin{eqnarray*}
u_{111}&=&u_{221},\\
u_{112}&=&u_{222}\, u_{111},\\
u_{221}&=&{u_1}^{-1}=u_{211}\, u_{212},\\
u_{222}&=&{u_1}^2\, {u_2}^{-1}={u_{211}}^{-1},\\
u_1&=&{u_{221}}^{-1},\\
u_2&=&{u_{221}}^{-2}\, {u_{222}}^{-1},\\
\dot{u}_{221}&=&{u_{222}}^{-1}
\, (-1+2\, (5\, z)^{-1}\, u_{221}\, u_{222}),\\
\dot{u}_{222}&=&{u_{221}}^{-1}\, 
(2-6\, u_{221}\, {u_{222}}^2-{u_{221}}^3\, {u_{222}}^2
-(5\, z)^{-1}\, u_{221}\, u_{222}),\\
w_{22}&=&{u_{221}}^4\, {u_{222}}^2,\\
\dot{w}_{22}&=&
2\, {u_{221}}^4\, {u_{222}}^2\, 
(-6\, u_{222}-{u_{221}}^2\, u_{222}+3\, (5\, z)^{-1}),\\
E\, w_{22}&=&2^{-1}-2\, u_{221}\, {u_{222}}^2
-{u_{221}}^3\, {u_{222}}^2,\\
\dot{E}\, w_{22}&=&(5\, z)^{-1}\, 
(-3+12\, u_{221}\, {u_{222}}^2+2\, {u_{221}}^3\, {u_{222}}^2). 
\end{eqnarray*}
The equations $u_{221}=0$ and $u_{222}=0$ define $L_2$ 
and $L_1^{(1)}$, respectively. 
The line $L_0^{(2)}$ is not visible, 
and there are no base points in this chart. 

\subsection{Resolution of the flow at $b_{2}$}
Blowing up $S_2$ at $b_2$ leads to $S_3$. 
First coordinate chart: 
\begin{eqnarray*}
u_{211}&=&u_{311}\, u_{212},\\
u_{212}&=&u_{312},\\
u_{311}&=&{u_1}^{-3}\, {u_2}^2,\\
u_{312}&=&u_1\, {u_2}^{-1},\\ 
u_1&=&{u_{311}}^{-1}\, {u_{312}}^{-2},\\ 
u_2&=&{u_{311}}^{-1}\, {u_{312}}^{-3},\\
\dot{u}_{311}&=&{u_{312}}^{-1}
\, (12-3\, u_{311}+2\, {u_{311}}^2\, {u_{312}}^4),\\
\dot{u}_{312}&=&{u_{311}}^{-1}(-6+u_{311}-{u_{311}}^2\, {u_{312}}^4
+(5\, z)^{-1}\, u_{311}\, u_{312}),\\
w_{31}&=&{u_{311}}^3\, {u_{312}}^6,\\
\left[ w_{31}\, (u_{311}-4)^{-1}\right] ^{\bullet}&=&
2\, {u_{311}}^3\, {u_{312}}^6\, (u_{311}-4)^{-2}
\, (-{u_{311}}^2\, {u_{312}}^3+3\, (5\, z)^{-1}\, (u_{311}-4)),\\
E\, w_{31}&=&-2+2^{-1}\, u_{311}-{u_{311}}^2\, {u_{312}}^4,\\
\dot{E}\, w_{31}&=&(5\, z)^{-1}\, 
(12-3\, u_{311}+2\, {u_{311}}^2\, {u_{312}}^4).
\end{eqnarray*}
Then $u_{312}=0$ defines $L_3$ and $u_{311}=0$ defines 
the proper transform $L_0^{(3)}$ of 
$L_0^{(2)}$. The proper transforms $L_2^{(1)}$ 
of $L_2$ and $L_1^{(2)}$ of $L_1^{(1)}$ 
are not visible in this chart. 
The Painlev\'e vector field and the anticanonical pencil 
both have a base point $b_3$ given by $u_{311}=4$, $u_{312}=0$. 

The second coordinate chart after the third blowup 
is defined by 
\begin{eqnarray*}
u_{211}&=&u_{321},\\
u_{212}&=&u_{322}\, u_{211},\\
u_{321}&=&{u_1}^{-2}\, u_2=u_{311}\, u_{312},\\
u_{322}&=&{u_1}^3\, {u_2}^{-2}={u_{311}}^{-1},\\
u_1&=&{u_{321}}^{-2}\, {u_{322}}^{-1},\\
u_2&=&{u_{321}}^{-3}\, {u_{322}}^{-2},\\
\dot{u}_{321}&=&{u_{322}}^{-1}
\, (-2+6\, u_{322}+{u_{321}}^4\, {u_{322}}^3
+(5\, z)^{-1}\, u_{321}\, u_{322}),\\
\dot{u}_{322}&=&{u_{321}}^{-1}\, (3-12\, u_{322}
-2\, {u_{321}}^4\, {u_{322}}^3),\\
w_{32}&=&{u_{321}}^6\, {u_{322}}^4,\\
\left[ w_{32}\, (1-4\, u_{322})^{-1}\right] ^{\bullet}&=&
2\, {u_{321}}^6\, {u_{322}}^4\, (1-4\, u_{322})^{-2}
\, (-{u_{321}}^3\, {u_{322}}^2+3\, (5\, z)^{-1}
\, (1-4\, u_{322})),\\
E\, w_{32}&=&2^{-1}-2\, u_{322}-{u_{321}}^4\, {u_{322}}^3,\\
\dot{E}\, w_{32}&=&(5\, z)^{-1}\, 
(-3+12\, u_{322}+2\, {u_{321}}^4\, {u_{322}}^3). 
\end{eqnarray*}
The equations $u_{321}=0$ and $u_{322}=0$ define $L_3$ 
and $L_2^{(1)}$, respectively. 
The lines $L_0^{(3)}$ and $L_1^{(2)}$ 
are not visible. The Painlev\'e vector field 
and the anticanonical pencil 
both have a base point $b_3$ given by $u_{321}=0$, $u_{322}=1/4$ 
in this chart. 

\subsection{Resolution of the flow at $b_{3}$}
Blowing up $S_3$ at $b_3$ leads to $S_4$. 
First coordinate chart: 
\begin{eqnarray*}
u_{311}-4&=&u_{411}\, u_{312},\\
u_{312}&=&u_{412},\\
u_{411}&=&{u_1}^{-4}\, u_2
\, (-4\, {u_1}^3+{u_2}^2),\\
u_{412}&=&u_1\, {u_2}^{-1},\\ 
u_1&=&{u_{412}}^{-2}\, (4+u_{411}\, u_{412})^{-1},\\ 
u_2&=&{u_{412}}^{-3}\, (4+u_{411}\, u_{412})^{-1},\\
\dot{u}_{411}&=&{u_{412}}^{-1}\, (4+u_{411}\, u_{412})^{-1}\\
&&\times\, 
(-10\, u_{411}-4\, {u_{411}}^2\, u_{412}
+128\, {u_{412}}^3+112\, u_{411}\, {u_{412}}^4
+32\, {u_{411}}^2 \, {u_{412}}^5\\
&&+3\, {u_{411}}^3\, {u_{412}}^6
-(5\, z)^{-1}\, u_{411}\, u_{412}\, (4+u_{411}\, u_{412})),\\
\dot{u}_{412}&=&(4+u_{411}\, u_{412})^{-1}\\
&&\times\, (-2+u_{411}\, u_{412}-16\, {u_{412}}^4
-8\, u_{411}\, {u_{412}}^5-{u_{411}}^2\, {u_{412}}^6\\
&&+\, (5\, z)^{-1}\, u_{412}\, (4+u_{411}\, u_{412})),\\
w_{41}&=&{u_{412}}^5\, (4+u_{411}\, u_{412})^3,\\
\left[ w_{41}\, {u_{411}}^{-1}\right] ^{\bullet}&=&
2\, {u_{412}}^5\, (4+u_{411}\, {u_{412}})^3
\, {u_{411}}^{-2}
\, (-{u_{412}}^2\, (4+u_{411}\, u_{412})^2
+3 (5\, z)^{-1}\, u_{411}),\\
E\, w_{41}&=&2^{-1}\, u_{411}
-{u_{412}}^3\, (4+u_{411}\, u_{412})^2,\\
\dot{E}\, w_{41}&=&(5\, z)^{-1}\, 
(-3\, u_{411}+2\, {u_{412}}^3\, (4+u_{411}\, u_{412})^2).
\end{eqnarray*}
Then $u_{412}=0$ defines $L_4$ and $4+u_{411}\, u_{412}=0$ 
defines 
the proper transform $L_0^{(4)}$ of 
$L_0^{(3)}$. The proper transforms 
of the other lines on which the Painlev\'e 
vector field is infinite 
are not visible in this chart. 
The Painlev\'e vector field and the anticanonical pencil 
both have a base point $b_4$ given by $u_{411}=0$, $u_{412}=0$. 

The second coordinate chart after the fourth blowup 
is defined by 
\begin{eqnarray*}
u_{311}-4&=&u_{421},\\
u_{312}&=&u_{422}\, (u_{311}-4),\\
u_{421}&=&{u_1}^{-3}\, (-4\, {u_1}^3+{u_2}^2)
=u_{411}\, u_{412},\\
u_{422}&=&{u_1}^4\, {u_2}^{-1}\, (-4\, {u_1}^3+{u_2}^2)^{-1}
={u_{411}}^{-1},\\
u_1&=&{u_{421}}^{-2}\, (4+u_{421})^{-1}\, {u_{422}}^{-2},\\
u_2&=&{u_{421}}^{-3}\, (4+u_{421})^{-1}\, {u_{422}}^{-3},\\
\dot{u}_{421}&=&{u_{422}}^{-1}
\, (-3+32\, {u_{421}}^3\, {u_{422}}^4
+16\, {u_{421}}^4\, {u_{422}}^4+2\, {u_{421}}^5\, {u_{422}}^4),\\
\dot{u}_{422}&=&{u_{421}}^{-1}\, (4+u_{421})^{-1}
\,  (10+4\, u_{421}-128\, {u_{421}}^3\, {u_{422}}^4
-112\, {u_{421}}^4\, {u_{422}}^4\\
&&-32\, {u_{421}}^5\, {u_{422}}^4
-3\, {u_{421}}^6\, {u_{422}}^4
+(5\, z)^{-1}\, u_{421}\, (4+u_{421})\, u_{422}),\\
w_{42}&=&{u_{421}}^5\, (4+u_{421})^3\, {u_{422}}^6,\\
\dot{w}_{42}&=&2\, {u_{421}}^5\, 
(4+u_{421})^3\, {u_{422}}^6
\, (-{u_{421}}^2\, (4+u_{421})^2\, {u_{422}}^3
+3\, (5\, z)^{-1}),\\
E\, w_{42}&=&2^{-1}-{u_{421}}^3\, (4+u_{421})^2\, {u_{422}}^4,\\
\dot{E}\, w_{42}&=&(5\, z)^{-1}\, 
(-3+2\, {u_{421}}^3\, (4+u_{421})^2\, {u_{422}}^4). 
\end{eqnarray*}
The equations $u_{421}=0$, $4+u_{421}=0$, 
and $u_{422}=0$ define $L_4$, $L_0^{(4)}$, and  
and $L_3^{(1)}$, respectively. The proper transforms 
of the other lines on which the Painlev\'e 
vector field is infinite 
are not visible in this chart. 
Both the Painlev\'e vector field 
and the anticanonical pencil 
have no base point in this chart. 

\subsection{Resolution of the flow at $b_{4}$}
Blowing up $S_4$ at $b_4$ leads to $S_5$. 
First coordinate chart: 
\begin{eqnarray*}
u_{411}&=&u_{511}\, u_{412},\\
u_{412}&=&u_{512},\\
u_{511}&=&{u_2}^2\, (-4\, {u_1}^3+{u_2}^2)\, {u_1}^{-5},\\
u_{512}&=&u_1\, {u_2}^{-1},\\ 
u_1&=&{u_{512}}^{-2}\, (4+u_{511}\, {u_{512}}^2)^{-1},\\ 
u_2&=&{u_{512}}^{-3}\, (4+u_{511}\, {u_{512}}^2)^{-1},\\
\dot{u}_{511}&=&{u_{512}}^{-1}\, (4+u_{511}\, {u_{512}}^2)^{-1}\\
&&\times\, (-8\, u_{511}+128\, {u_{512}}^2
-5\, {u_{511}}^2\, {u_{512}}^2
+128\, u_{511}\, {u_{512}}^4
+40\, {u_{511}}^2\, {u_{512}}^6\\
&&+4\, {u_{511}}^3\, {u_{512}}^8
-2\, (5\, z)^{-1}
\, u_{511}\, u_{512}\, (4+u_{511}\, {u_{512}}^2)),\\
\dot{u}_{512}&=&(4+u_{511}\, {u_{512}}^2)^{-1}\\
&&\times\, (-2+u_{511}\, {u_{512}}^2-16\, {u_{512}}^4
-8\, u_{511}\, {u_{512}}^6-{u_{511}}^2\, {u_{512}}^8\\
&&+(5\, z)^{-1}\, u_{512}\, (4+u_{511}\, {u_{512}}^2)),\\
w_{51}&=&{u_{512}}^4\, (4+u_{511}\, {u_{512}}^2)^3,\\
\left[ w_{51}\, {u_{511}}^{-1}\right] ^{\bullet}
&=&2\, {u_{512}}^4\, (4+u_{511}\, {u_{512}}^2)^3\, {u_{511}}^{-2}
\, (-u_{512}\, (4+u_{511}\, {u_{512}}^2)^2
+3\, (5\, z)^{-1}\, u_{511}),\\
E\, w_{51}&=&2^{-1}\, u_{511} 
\, -{u_{512}}^2\, (4+u_{511}\, {u_{512}}^2)^2,\\
\dot{E}\, w_{51}&=&(5\, z)^{-1}\, 
(-3\, u_{511}+2\, {u_{512}}^2\, (4+u_{511}\, {u_{512}}^2)^2).
\end{eqnarray*}
Then $u_{512}=0$ defines $L_5$ and 
$4+u_{511}\, {u_{512}}^2=0$ 
defines 
the proper transform $L_0^{(5)}$ of 
$L_0^{(4)}$. The proper transforms 
of the other lines on which the Painlev\'e 
vector field is infinite 
are not visible in this chart. 
The Painlev\'e vector field and the anticanonical pencil 
both have a base point $b_5$ given by $u_{511}=0$, $u_{512}=0$. 

The second coordinate chart after the fifth blowup 
is defined by 
\begin{eqnarray*}
u_{411}&=&u_{521},\\
u_{412}&=&u_{522}\, u_{411},\\
u_{521}&=&{u_1}^{-4}\, (-4\, {u_1}^3+{u_2}^2)\, u_2
=u_{511}\, u_{512},\\
u_{522}&=&{u_1}^5\, (-4\, {u_1}^3+{u_2}^2)^{-1}\, {u_2}^{-2}
={u_{511}}^{-1},\\
u_1&=&{u_{521}}^{-2}\, {u_{522}}^{-2}
\, (4+{u_{521}}^2\, u_{522})^{-1},\\
u_2&=&{u_{521}}^{-3}\, {u_{522}}^{-3}
\, (4+{u_{521}}^2\, u_{522})^{-1},\\
\dot{u}_{521}&=&{u_{522}}^{-1}
\, (4+{u_{521}}^2\, u_{522})^{-1}\\
&&\times\, (-10-4\, {u_{521}}^2\, u_{522}
+128\, {u_{521}}^2\, {u_{522}}^3
+112\, {u_{521}}^4\, {u_{522}}^4
+32\, {u_{521}}^6\, {u_{522}}^5\\
&&+3\, {u_{521}}^8\, {u_{522}}^6
-(5\, z)^{-1}\, u_{521}\, u_{522}
\, (4+{u_{521}}^2\, u_{522})),\\
\dot{u}_{522}&=&{u_{521}}^{-1}\, (4+{u_{521}}^2\, u_{522})^{-1}\\
&&\times\,
(8+5\, {u_{521}}^2\, u_{522}
-128\, {u_{521}}^2\, {u_{522}}^3
-128\, {u_{521}}^4\, {u_{522}}^4
-40\, {u_{521}}^6\, {u_{522}}^5\\
&&-4\, {u_{521}}^8\, {u_{522}}^6
+2\, (5\, z)^{-1}\, u_{521}\, u_{522}
\, (4+{u_{521}}^2\, u_{522})),\\
w_{52}&=&{u_{521}}^4\, {u_{522}}^5
\, (4+{u_{521}}^2\, u_{522})^3,\\
\dot{w}_{52}&=&2\, {u_{521}}^4\, {u_{522}}^5
\, (4+{u_{521}}^2\, u_{522})^3\, 
(-u_{521}\, {u_{522}}^2
\, (4+{u_{521}}^2\, u_{522})^2+3\, (5\, z)^{-1}),\\
E\, w_{52}&=&2^{-1}-{u_{521}}^2\, {u_{522}}^3
\, (4+{u_{521}}^2\, u_{522})^2,\\
\dot{E}\, w_{52}&=&(5\, z)^{-1}\, 
(-3+2\, {u_{521}}^2\, {u_{522}}^3
\, (4+{u_{521}}^2\, u_{522})^2). 
\end{eqnarray*}
The equations $u_{521}=0$, $4+{u_{521}}^2\, u_{522}=0$, 
and $u_{522}=0$ define $L_5$, $L_0^{(5)}$, and  
and $L_4^{(1)}$, respectively. The proper transforms 
of the other lines on which the Painlev\'e 
vector field is infinite 
are not visible in this chart. 
Both the Painlev\'e vector field 
and the anticanonical pencil 
have no base point in this chart. 

\subsection{Resolution of the flow at $b_{5}$}
Blowing up $S_5$ at $b_5$ leads to $S_6$. 
First coordinate chart: 
\begin{eqnarray*}
u_{511}&=&u_{611}\, u_{512},\\
u_{512}&=&u_{612},\\
u_{611}&=&{u_1}^{-6}\, {u_2}^3\, (-4\, {u_1}^3+{u_2}^2),\\
u_{612}&=&u_1\, {u_2}^{-1},\\ 
u_1&=&{u_{612}}^{-2}\, (4+u_{611}\, {u_{612}}^3)^{-1},\\ 
u_2&=&{u_{612}}^{-3}\, (4+u_{611}\, {u_{612}}^3)^{-1},\\
\dot{u}_{611}&=&{u_{612}}^{-1}\, (4+u_{611}\, {u_{612}}^3)^{-1}\\
&&\times\, (-6\, u_{611}+128\, u_{612}-6\, {u_{611}}^2\, {u_{612}}^3\\
&&\ +144\, u_{611}\, {u_{612}}^4+48\, {u_{611}}^2\, {u_{612}}^7
+5\, {u_{611}}^3\, {u_{612}}^{10}\\
&&\ -3\, (5\, z)^{-1}\, 
u_{611}\, u_{612}\, (4+u_{611}\, {u_{612}}^3)),\\
\dot{u}_{612}&=&(4+u_{611}\, {u_{612}}^3)^{-1}\\
&&\times\, (-2+u_{611}\, {u_{612}}^3-16\, {u_{612}}^4
-8\, u_{611}\, {u_{612}}^7\\
&&-{u_{611}}^2\, {u_{612}}^{10}
+(5\, z)^{-1}\, u_{612}\, (4+u_{611}\, {u_{612}}^3)))\\
w_{61}&=&{u_{612}}^3\, (4+u_{611}\, {u_{612}}^3)^3,\\
\left[ w_{61}\, {u_{611}}^{-1}\right] ^{\bullet}&=&
2\, {u_{612}}^3\, (4+u_{611}\, {u_{612}}^3)^3
\, {u_{611}}^{-2}\\
&&\times\, (-(4+u_{611}\, {u_{612}}^3)^2+3\, (5\, z)^{-1}\, u_{611}),\\
E\, w_{61}&=&2^{-1}\, u_{611}-u_{612}\, (4+u_{611}\, {u_{612}}^3)^2,\\
\dot{E}\, w_{61}&=&(5\, z)^{-1}\, 
(-3\, u_{611}+2\, u_{612}\, (4+u_{611}\, {u_{612}}^3)^2).
\end{eqnarray*}
Then $u_{612}=0$ defines $L_6$ and 
$4+u_{611}\, {u_{612}}^3=0$ 
defines 
the proper transform $L_0^{(6)}$ of 
$L_0^{(5)}$. The proper transforms 
of the other lines on which the Painlev\'e 
vector field is infinite 
are not visible in this chart. 
The Painlev\'e vector field and the anticanonical pencil 
both have a base point $b_6$ given by $u_{611}=0$, $u_{612}=0$. 

The second coordinate chart after the sixth blowup 
is defined by 
\begin{eqnarray*}
u_{511}&=&u_{621},\\
u_{512}&=&u_{622}\, u_{511},\\
u_{621}&=&{u_1}^{-5}\, {u_2}^2\, (-4\, {u_1}^3+{u_2}^2)
=u_{611}\, u_{612},\\
u_{622}&=&{u_1}^6\, {u_2}^{-3}\, (-4\, {u_1}^3+{u_2}^2)^{-1}
={u_{611}}^{-1},\\
u_1&=&{u_{621}}^{-2}\, {u_{622}}^{-2}
\, (4+{u_{621}}^3\, {u_{622}}^2)^{-1},\\
u_2&=&{u_{621}}^{-3}\, {u_{622}}^{-3}
\, (4+{u_{621}}^3\, {u_{622}}^2)^{-1},\\
\dot{u}_{621}&=&{u_{622}}^{-1}
\, (4+{u_{621}}^3\, {u_{622}}^2)^{-1}\\
&&\times\, (-8+128\, u_{621}\, {u_{622}}^2
-5\, {u_{621}}^3\, {u_{622}}^2\\
&&\ 
+128\, {u_{621}}^4\, {u_{622}}^4
+40\, {u_{621}}^7\, {u_{622}}^6+4\, {u_{621}}^{10}\, {u_{622}}^8\\
&&\ 
-2\, (5\, z)^{-1}\, u_{621}\, u_{622}
\, (4+{u_{621}}^3\, {u_{622}}^2)),\\
\dot{u}_{622}&=&
{u_{621}}^{-1}\, (4+{u_{621}}^3\, {u_{622}}^2)^{-1}\\
&&\times\, (6-128\, u_{621}\, {u_{622}}^2
+6\, {u_{621}}^3\, {u_{622}}^2\\
&&\ 
-144\, {u_{621}}^4\, {u_{622}}^4
-48\, {u_{621}}^7\, {u_{622}}^6-5\, {u_{621}}^{10}\, {u_{622}}^8\\
&&\ 
+3\, (5\, z)^{-1}\, u_{621}\, u_{622}
\, (4+{u_{621}}^3\, {u_{622}}^2)),\\
w_{62}&=&{u_{621}}^3\, {u_{622}}^4
\, (4+{u_{621}}^3\, {u_{622}}^2)^3,\\
\dot{w}_{622}&=&2\, {u_{621}}^3\, {u_{622}}^4
\, (4+{u_{621}}^3\, {u_{622}}^2)^3\\
&&\times\, 
\, (-u_{622}\, (4+{u_{621}}^3\, {u_{622}}^2)^2
+3\, (5\, z)^{-1}),\\
E\, w_{62}&=&2^{-1}-u_{621}\, {u_{622}}^2
\, (4+{u_{621}}^3\, {u_{622}}^2)^2,\\
\dot{E}\, w_{62}&=&(5\, z)^{-1}\, 
(-3+2\, u_{621}\, {u_{622}}^2
\, (4+{u_{621}}^3\, {u_{622}}^2)^2). 
\end{eqnarray*}
The equations $u_{621}=0$, $4+{u_{621}}^3\, {u_{622}}^2=0$, 
and $u_{622}=0$ define $L_6$, $L_0^{(6)}$, and  
and $L_5^{(1)}$, respectively. The proper transforms 
of the other lines on which the Painlev\'e 
vector field is infinite 
are not visible in this chart. 
Both the Painlev\'e vector field 
and the anticanonical pencil 
have no base point in this chart. 

\subsection{Resolution of the flow at $b_{6}$}
Blowing up $S_6$ at $b_6$ leads to $S_7$. 
First coordinate chart: 
\begin{eqnarray*}
u_{611}&=&u_{711}\, u_{612},\\
u_{612}&=&u_{712},\\
u_{711}&=&{u_1}^{-7}\, {u_2}^4\, (-4\, {u_1}^3+{u_2}^2),\\
u_{712}&=&u_1\, {u_2}^{-1},\\ 
u_1&=&{u_{712}}^{-2}\, (4+u_{711}\, {u_{712}}^4)^{-1},\\ 
u_2&=&{u_{712}}^{-3}\, (4+u_{711}\, {u_{712}}^4)^{-1},\\
\dot{u}_{711}&=&{u_{712}}^{-1}\, (4+u_{711}\, {u_{712}}^4)^{-1}\\
&&\times\, (128-4\, u_{711}
+160\, u_{711}\, {u_{712}}^4
-7\, {u_{711}}^2\, {u_{712}}^4\\
&&\ +56\, {u_{711}}^2\, {u_{712}}^8+6\, {u_{711}}^3\, {u_{712}}^{12}\\
&&\ -4\, (5\, z)^{-1}\, u_{711}\, u_{712}
\, (4+u_{711}\, {u_{712}}^4)),\\
\dot{u}_{712}&=&(4+u_{711}\, {u_{712}}^4)^{-1}\\
&&\times\, (-2-16\, {u_{712}}^4+u_{711}\, {u_{712}}^4
-8\, u_{711}\, {u_{712}}^8\\
&&-{u_{711}}^2\, {u_{712}}^{12} 
+(5\, z)^{-1}\, u_{712}\, (4+u_{711}\, {u_{712}}^4)),\\
w_{71}&=&{u_{712}}^2\, (4+u_{711}\, {u_{712}}^4)^3,\\
\left[ w_{71}\, (u_{711}-32)^{-1}\right] ^{\bullet}&=&
-2\, {u_{712}}^2\, (4+u_{711}\, {u_{712}}^4)^2
\, (u_{711}-32)^{-2}\\
&&\times\, 
({u_{712}}^3\, 
(1024-64\, u_{711}+512\, u_{711}\, {u_{712}}^4
+12\, {u_{711}}^2\, {u_{712}}^4\\
&&+64\, {u_{711}}^2\, {u_{712}}^8
+{u_{711}}^3\, {u_{712}}^8)\\
&&+(5\, z)^{-1}\, (32-3\, u_{711})\, (4+u_{711}\, {u_{712}}^4)),\\
E\, w_{71}&=&2^{-1}\, u_{711}-(4+u_{711}\, {u_{712}}^4)^2,\\
\dot{E}\, w_{71}&=&(5\, z)^{-1}\, 
(-3\, u_{711}+2\, (4+u_{711}\, {u_{712}}^4)^2).
\end{eqnarray*}
Then $u_{712}=0$ defines $L_7$ and 
$4+u_{711}\, {u_{712}}^4=0$ 
defines 
the proper transform $L_0^{(7)}$ of 
$L_0^{(6)}$. The proper transforms 
of the other lines on which the Painlev\'e 
vector field is infinite 
are not visible in this chart. 
The Painlev\'e vector field and the anticanonical pencil 
both have a base point $b_7$ given by $u_{711}=32$, $u_{712}=0$. 
Remarkably this base point in the Boutroux coordinates 
does not depend on the independent variable $z$, 
whereas the seventh base point in the unscaled system is 
given by $y_{711}=32\, x$, $y_{712}=0$. 

The second coordinate chart after the seventh blowup 
is defined by 
\begin{eqnarray*}
u_{611}&=&u_{721},\\
u_{612}&=&u_{722}\, u_{611},\\
u_{721}&=&{u_1}^{-6}\, {u_2}^3\, (-4\, {u_1}^3+{u_2}^2)
=u_{711}\, u_{712},\\
u_{722}&=&{u_1}^7\, {u_2}^{-4}\, (-4\, {u_1}^3+{u_2}^2)^{-1}
={u_{711}}^{-1},\\
u_1&=&{u_{721}}^{-2}\, {u_{722}}^{-2}
\, (4+{u_{721}}^4\, {u_{722}}^3)^{-1},\\
u_2&=&{u_{721}}^{-3}\, {u_{722}}^{-3}
\, (4+{u_{721}}^4\, {u_{722}}^3)^{-1}
\end{eqnarray*}
\begin{eqnarray*}
\dot{u}_{721}&=&{u_{722}}^{-1}\, (4+{u_{721}}^4\, {u_{722}}^3)^{-1}
\\&&\times\,
(-6+128\, u_{722}-6\, {u_{721}}^4\, {u_{722}}^3
+144\, {u_{721}}^4\, {u_{722}}^4
+48\, {u_{721}}^8\, {u_{722}}^7\\
&&+5\, {u_{721}}^{12}\, {u_{722}}^{10}
-3\, (5\, z)^{-1}\, u_{721}\, u_{722}
\, (4+{u_{721}}^4\, {u_{722}}^3)),\\
\dot{u}_{722}&=&-{u_{721}}^{-1}
\, (4+{u_{721}}^4\, {u_{722}}^3)^{-1}
\\&&\times\,
(4-128\, u_{722}+7\, {u_{721}}^4\, {u_{722}}^3
-160\, {u_{721}}^4\, {u_{722}}^4
-56\, {u_{721}}^8\, {u_{722}}^7\\
&&-6\, {u_{721}}^{12}\, {u_{722}}^{10}
+4\, (5\, z)^{-1}\, u_{721}\, u_{722}
\, (4+{u_{721}}^4\, {u_{722}}^3)),\\
w_{72}&=&{u_{721}}^2\, {u_{722}}^3
\, (4+{u_{721}}^4\, {u_{722}}^3)^3,\\
\left[ w_{72}\, (32\, u_{722}-1)^{-1}\right] ^{\bullet}&=&
2\, {u_{721}}^2\, {u_{722}}^3
\, (4+{u_{721}}^4\, {u_{722}}^3)^2\, (32\, u_{722}-1)^{-2}\\
&&\times\, 
({u_{721}}^3\, {u_{722}}^3\, 
(-64+1024\, u_{722}+12\, {u_{721}}^4\, {u_{722}}^3\\
&&+512\, {u_{721}}^4\, {u_{722}}^4
+{u_{721}}^8\, {u_{722}}^6
+64\, {u_{721}}^8\, {u_{722}}^7)\\
&&+(5\, z)^{-1}\, 
(-3+32\, u_{722})\, (4+{u_{721}}^4\, {u_{722}}^3)),\\
E\, w_{72}&=&2^{-1}-u_{722}\, (4+{u_{721}}^4\, {u_{722}}^3)^2,\\
\dot{E}\, w_{72}&=&(5\, z)^{-1}\, 
(-3+2\, u_{722}\, (4+{u_{721}}^4\, {u_{722}}^3)^2). 
\end{eqnarray*}
The equations $u_{721}=0$, 
$4+{u_{721}}^4\, {u_{722}}^3=0$, 
and $u_{722}=0$ define $L_7$, $L_0^{(7)}$, and  
and $L_6^{(1)}$, respectively. The proper transforms 
of the other lines on which the Painlev\'e 
vector field is infinite 
are not visible in this chart. 
The Painlev\'e vector field 
and the anticanonical pencil 
have the base point $u_{721}=0$, $u_{722}=1/32$ 
in this chart, which is equal to 
the previously found base point $b_7$. 

\subsection{Resolution of the flow at $b_{7}$}
Blowing up $S_7$ at $b_7$ leads to $S_8$. 
First coordinate chart: 
\begin{eqnarray*}
u_{711}-32&=&u_{811}\, u_{712},\\
u_{712}&=&u_{812},\\
u_{811}&=&-{u_1}^{-8}\, u_2\, 
(32\, {u_1}^7+4\, {u_1}^3\, {u_2}^4-{u_2}^6),\\
u_{812}&=&u_1\, {u_2}^{-1},\\ 
u_1&=&{u_{812}}^{-2}
\, (4+32\, {u_{812}}^4+u_{811}\, {u_{812}}^5)^{-1},\\ 
u_2&=&{u_{812}}^{-3}
\, (4+32\, {u_{812}}^4+u_{811}\, {u_{812}}^5)^{-1},\\
\dot{u}_{811}&=&{u_{812}}^{-1}
\, (4+32\, {u_{812}}^4+u_{811}\, {u_{812}}^5)^{-1}\\
&&\times\, 
[-2\, (u_{811}+1024\, {u_{812}}^3
+152\, u_{811}\, {u_{812}}^4
+4\, {u_{811}}^2\, {u_{812}}^5)\\
&&+{u_{812}}^7\, (32+u_{811}\, u_{812})
\, (1792+64\, u_{811}\, u_{812}+6144\, {u_{812}}^4
+416\, u_{811}\, {u_{812}}^5\\
&&+7\, {u_{811}}^2\, {u_{812}}^6)
-(5\, z)^{-1}
\, (128+5\, u_{811}\, u_{812})
\, (4+32\, {u_{812}}^4+u_{811}\, {u_{812}}^5)],\\
\dot{u}_{812}&=&
-(4+32\, {u_{812}}^4+u_{811}\, {u_{812}}^5)^{-1}\\
&&\times\,
[2-16\, {u_{812}}^4-u_{811}\, {u_{812}}^5
+256\, {u_{812}}^8+8\, u_{811}\, {u_{812}}^9
+1024\, {u_{812}}^{12}\\
&&+64\, u_{811}\, {u_{812}}^{13}
+{u_{811}}^2\, {u_{812}}^{14}
-(5\, z)^{-1}\, u_{812}
\, (4+u_{811}\, {u_{812}}^5+32\, {u_{812}}^9)],\\
w_{81}&=&u_{812}
\, (4+32\, {u_{812}}^4+u_{811}\, {u_{812}}^5)^3,\\
E\, w_{81}&=&2^{-1}\, u_{811}-{u_{812}}^3
\, (32+u_{811}\, u_{812})
\, (8+32\, {u_{812}}^4+u_{811}\, {u_{812}}^5),\\
\dot{E}\, w_{81}&=&(5\, z)^{-1}\, 
{u_{812}}^{-1}\, 
(-64-3\, {u_{811}}\, u_{812}\\
&&+
2\, {u_{812}}^4
\, (32+u_{811}\, u_{812})
\, (8+32\, {u_{812}}^4+u_{811}\, {u_{812}}^5)).
\end{eqnarray*}
Furthermore 
\begin{eqnarray*}
&&\left[ w_{81}\, 
(u_{811}+256\, (5\, z)^{-1})^{-1}\right] ^{\bullet}=
2\, u_{812}\, (4+32\, {u_{812}}^4+u_{811}\, {u_{812}}^5)^2
\, (u_{811}+256\, (5\, z)^{-1})^{-2}\\
&&\times\, 
[-{u_{812}}^2\, 
(-2^{10}-2^6\, u_{811}\, u_{812}
+2^{12}\cdot 7\, {u_{812}}^4
+2^8\cdot 5\, u_{811}\, {u_{812}}^5
+2^2\cdot 3\, {u_{811}}^2\, {u_{812}}^6\\
&&+2^{15}\cdot 3\, {u_{812}}^8
+2^{10}\cdot 7\, u_{811}\, {u_{812}}^9
+2^5\cdot 5\, {u_{811}}^2\, {u_{812}}^{10}
+{u_{811}}^3\, {u_{812}}^{11})\\
&&+(5\, z)^{-1}\, 
(2^2\cdot 3\, u_{811}
-2^{12}\cdot 5\, {u_{812}}^3
-2^5\cdot 3^3\, u_{811}\, {u_{812}}^4
+3\, {u_{811}}^2\, {u_{812}}^5
+2^{15}\cdot 5\, {u_{812}}^7\\
&&+2^{10}\cdot 5\, u_{811}\, {u_{812}}^8
+2^{17}\cdot 5\, {u_{812}}^{11}
+2^{13}\cdot 5\, u_{811}\, {u_{812}}^{12}
+2^7\cdot 5\, {u_{811}}^2\, {u_{812}}^{13})\\
&&+768\, (5\, z)^{-2}
\, (4+32\, {u_{812}}^4+u_{811}\, {u_{812}}^5)].
\end{eqnarray*}
The equation $u_{812}=0$ defines $L_8$ and 
$4+32\, {u_{812}}^4+u_{811}\, {u_{812}}^5=0$ 
defines 
the proper transform $L_0^{(8)}$ of 
$L_0^{(7)}$. The proper transforms 
of the other lines on which the Painlev\'e 
vector field is infinite 
are not visible in this chart. 
The Painlev\'e vector field 
has a base point $b_8$ given by 
$u_{811}=\, -256\, (5\, z)^{-1}$, $u_{812}=0$. 
In the Boutroux coordinates, this is the first base 
point which depends on the independent variable $z$. 
The anticanonical pencil has a base point 
$b_8^{\,\scriptop{ell}}$ given by 
$u_{811}=$, $u_{812}=0$. We have 
$b_8^{\,\scriptop{ell}}\neq b_8$ 
with a distance between both base 
point vanishing of order $1/z$ as $z\to\infty$. 

The second coordinate chart after the eighth blowup 
is defined by 
\begin{eqnarray*}
u_{711}-32&=&u_{821},\\
u_{712}&=&u_{822}\, (u_{711}-32),\\
u_{821}&=&-{u_1}^{-7}
\, (32\, {u_1}^7+4\, {u_1}^3\, {u_2}^4-{u_2}^6)
=u_{811}\, u_{812},\\
u_{822}&=&-{u_1}^8\, {u_2}^{-1}
\, (32\, {u_1}^7+4\, {u_1}^3\, {u_2}^4-{u_2}^6)^{-1}
={u_{811}}^{-1},\\
u_1&=&{u_{821}}^{-2}\, {u_{822}}^{-2}\, 
(4+32\, {u_{821}}^4\, {u_{822}}^4+{u_{821}}^5\, {u_{822}}^4)^{-1},\\
u_2&=&{u_{821}}^{-3}\, {u_{822}}^{-3}\, 
(4+32\, {u_{821}}^4\, {u_{822}}^4+{u_{821}}^5\, {u_{822}}^4)^{-1},\\
\dot{u}_{821}&=&{u_{822}}^{-1}\, 
(4+32\, {u_{821}}^4\, {u_{822}}^4+{u_{821}}^5\, {u_{822}}^4)^{-1}
\\
&&\times\, [-4-2048\, {u_{821}}^3\, {u_{822}}^4
-288\, {u_{821}}^4\, {u_{822}}^4
-7\, {u_{821}}^5\, {u_{822}}^4\\
&&+2\, {u_{821}}^7\, 
(32+u_{821})^2\, {u_{822}}^8
\, (28+96\, {u_{821}}^4\, {u_{822}}^4
+3\, {u_{821}}^5\, {u_{822}}^4)\\
&&-4\, (5\, z)^{-1}\, (32+u_{821})\, 
u_{822}\, (4+32\, {u_{821}}^4\, {u_{822}}^4
+{u_{821}}^5\, {u_{822}}^4)],\\
\dot{u}_{822}&=&-{u_{821}}^{-1}\, 
(4+32\, {u_{821}}^4\, {u_{822}}^4+{u_{821}}^5\, {u_{822}}^4)^{-1}
\\&&\times\, [-2
-8\, {u_{821}}^3\, (256+38\, u_{821}+{u_{821}}^2)\, {u_{822}}^4\\
&&+{u_{821}}^7\, (32+u_{821})\, {u_{822}}^8
\, (1792+64\, u_{821}+6144\, {u_{821}}^4\, {u_{822}}^4
+416\, {u_{821}}^5\, {u_{822}}^4\\
&&+7\, {u_{821}}^6\, {u_{822}}^4)-(5\, z)^{-1}
\, (128+5\, u_{821})\, u_{822}\, 
(4+32\, {u_{821}}^4\, {u_{822}}^4+{u_{821}}^5\, {u_{822}}^4)],\\
w_{82}&=&u_{821}\, {u_{822}}^2\, 
(4+32\, {u_{821}}^4\, {u_{822}}^4+{u_{821}}^5\, {u_{822}}^4)^3,\\
E\, w_{82}&=&
2^{-1}-{u_{821}}^3\, (32+u_{821})
\, {u_{822}}^4\, (8+32\, {u_{821}}^4\, {u_{822}}^4
+{u_{821}}^5\, {u_{822}}^4),\\
\dot{E}\, w_{82}&=&(5\, z)^{-1}\, 
{u_{821}}^{-1}\, 
(-64-3\, u_{821}\\
&&+2\, {u_{821}}^4\, (32+u_{821})
\, {u_{822}}^4\, (8+32\, {u_{821}}^4\, {u_{822}}^4
+{u_{821}}^5\, {u_{822}}^4)). 
\end{eqnarray*}
Furthermore, 
\begin{eqnarray*}
&&\left[ w_{82}\, (256\, (5\, z)^{-1}\, u_{822}+1)^{-1}\right] ^{\bullet}=\\
&&2\, {u_{822}}^2\, 
(4+32\, {u_{821}}^4\, {u_{822}}^4+{u_{821}}^5\, {u_{822}}^4)^2
\, (256\, (5\, z)^{-1}\, u_{822}+1)^{-2}\\
&&\times\, [-{u_{821}}^3\, {u_{822}}^3
\, (-2^{10}-2^6\, u_{821}
+2^{17}\cdot 7\, {u_{821}}^4\, {u_{822}}^4
+2^8\cdot 5\, {u_{821}}^5\, {u_{822}}^4
+2^2\cdot 3\, {u_{821}}^6\, {u_{822}}^4\\
&&+2^{15}\cdot 3\, {u_{821}}^8\, {u_{822}}^8
+2^{10}\cdot 7\, {u_{821}}^9\, {u_{822}}^8
+2^5\cdot 5\, {u_{821}}^{10}\, {u_{822}}^8
+{u_{821}}^{11}\, {u_{822}}^8)\\
&&-(5\, z)^{-1}
\, (-2^8-2^2\cdot 3\, u_{821}
-2^{18}\, {u_{821}}^3\, {u_{822}}^4
-2^{11}\cdot 3^2\, {u_{821}}^4\, {u_{822}}^4\\
&&-2^5\cdot 5\, {u_{821}}^5\, {u_{822}}^4
-3\, {u_{821}}^6\, {u_{822}}^4
+2^{20}\cdot 7\, {u_{821}}^7\, {u_{822}}^8\\
&&+2^{16}\cdot 5\, {u_{821}}^8\, {u_{822}}^8
+2^{10}\cdot 3\, {u_{821}}^9\, {u_{822}}^8
+2^{23}\cdot 3\, {u_{821}}^{11}\, {u_{822}}^{12}\\
&&+2^{18}\cdot 7\, {u_{821}}^{12}\, {u_{822}}^{12}
+2^{13}\cdot 5\, {u_{821}}^{13}\, {u_{822}}^{12}
+2^8\, {u_{821}}^{14}\, {u_{822}}^{12})\\
&&+2^7\, (5\, z)^{-2}\, (2^7+11\, u_{821})
\, (4+32\, {u_{821}}^4\, {u_{822}}^4+{u_{821}}^5\, {u_{822}}^4)
\, u_{822}].
\end{eqnarray*}
The equations $u_{821}=0$, 
$4+32\, {u_{821}}^4\, {u_{822}}^4+{u_{821}}^5\, {u_{822}}^4=0$, 
and $u_{822}=0$ define $L_8$, $L_0^{(8)}$, and  
and $L_7^{(1)}$, respectively. The proper transforms 
of the other lines on which the Painlev\'e 
vector field is infinite 
are not visible in this chart. 
The Painlev\'e vector field 
has the base point $b_8$ defined by the 
equations $u_{821}=0$, $256\, (5\, z)^{-1}\, u_{822}+1=0$. 
The base point $b_8^{\,\scriptop{ell}}$ 
of the anticanonical pencil is not visible in this chart. 

\subsection{Resolution of the flow at $b_{8}$}\label{lastblowup}
Blowing up $S_8$ at $b_8=b_8(z)$ leads to $S_9=S_9(z)$. 
First coordinate chart: 
\begin{eqnarray*}
u_{811}+256\, (5\,z)^{-1}&=&u_{911}\, u_{812},\\
u_{812}&=&u_{912},\\
u_{911}&=&{u_1}^{-9}\, u_2\, 
(-32\, {u_1}^7\, u_2-4\, {u_1}^3\, {u_2}^5
+{u_2}^7+256\, (5\, z)^{-1}\, {u_1}^8),\\
u_{912}&=&u_1\, {u_2}^{-1},\\ 
u_1&=&{u_{912}}^{-2}\, 
(4+32\, {u_{912}}^4+u_{911}\, {u_{912}}^6
-256\, (5\, z)^{-1}\, {u_{912}}^5)^{-1},\\ 
u_2&=&{u_{912}}^{-3}\, 
(4+32\, {u_{912}}^4+u_{911}\, {u_{912}}^6
-256\, (5\, z)^{-1}\, {u_{912}}^5)^{-1},\\
\dot{u}_{911}&=&(4+32\, {u_{912}}^4+u_{911}\, {u_{912}}^6
-256\, (5\, z)^{-1}\, {u_{912}}^5)^{-1}\\
&&\times\, 
[u_{912}\, 
(-2^{11}-2^6\cdot 5\, u_{911}\, {u_{912}}^2
+2^{13}\cdot 7\, {u_{912}}^4
-3^2\, {u_{911}}^2\, {u_{912}}^4\\
&&+2^{12}\, u_{911}\, {u_{912}}^6
+2^{16}\cdot 3\, {u_{912}}^8
+2^3\cdot 3^2\, {u_{911}}^2\, {u_{912}}^8\\
&&+2^{12}\cdot 5\, u_{911}\, {u_{912}}^{10}
+2^6\cdot 11\, {u_{911}}^2\, {u_{912}}^{12}
+2^3\, {u_{911}}^3\, {u_{912}}^{14})\\
&&-2\, (5\, z)^{-1}\, 
(2^2\cdot 3\, u_{911}
-2^{12}\cdot 3^2\, {u_{912}}^2
-2^5\cdot 3^2\cdot 7\, u_{911}\, {u_{912}}^4\\
&&+2^{15}\cdot 3\cdot 5\, {u_{912}}^6
+3\, {u_{911}}^2\, {u_{912}}^6
+2^{10}\cdot 17\, u_{911}\, {u_{912}}^8\\
&&+2^{17}\cdot 19\, {u_{912}}^{10}
+2^{13}\cdot 3\cdot 7\, u_{911}\, {u_{912}}^{12}
+2^7\cdot 23\, {u_{911}}^2\, {u_{912}}^{14})\\
&&+2^9\, (5\, z)^{-2}\, {u_{912}}^3
\, (-2^6\cdot 3\cdot 5
+3\, u_{911}\, {u_{912}}^2\\
&&+2^{13}\, {u_{912}}^4
+2^{14}\cdot 5\, {u_{912}}^8
+2^8\cdot 11\, u_{911}\, {u_{912}}^{10})\\
&&-2^{24}\cdot 7\, (5\, z)^{-3}\, {u_{912}}^{12}]
\end{eqnarray*}
\begin{eqnarray*}
\dot{u}_{912}&=&-(4+32\, {u_{912}}^4+u_{911}\, {u_{912}}^6
-256\, (5\, z)^{-1}\, {u_{912}}^5)^{-1}\\
&&\times\, [2-2^4\, {u_{912}}^4
-u_{911}\, {u_{912}}^6+2^8\, {u_{912}}^8
+2^3\, u_{911}\, {u_{912}}^{10}\\
&&+2^{10}\, {u_{912}}^{12}
+2^6\, u_{911}\, {u_{912}}^{14}
+{u_{911}}^2\, {u_{912}}^{16}\\
&&-(5\, z)^{-1}\, u_{912}\, 
(2^2-2^5\cdot 7\, {u_{912}}^4
+u_{911}\, {u_{912}}^6\\
&&+2^{11}\, {u_{912}}^8
+2^{14}\, {u_{912}}^{12}
+2^9\, u_{911}\, {u_{912}}^{14})\\
&&+2^8\, (5\, z)^{-2}\, {u_{912}}^6\, (1+2^8\, {u_{912}}^8)],\\
w_{91}&=&(4+32\, {u_{912}}^4+u_{911}\, {u_{912}}^6
-256\, (5\, z)^{-1}\, {u_{912}}^5)^3,\\
\dot{w}_{91}&=&3\, {u_{912}}^3\, 
(4+32\, {u_{912}}^4+u_{911}\, {u_{912}}^6
-256\, (5\, z)^{-1}\, {u_{912}}^5)^2)\\
&&\times\, [-2^6-3\, u_{911}\, {u_{912}}^2
+2^9\, {u_{912}}^4+2^4\, u_{911}\, {u_{912}}^6
+2^{11}\, {u_{912}}^8\\
&&+2^7\, u_{911}\, {u_{912}}^{10}
+2\, {u_{911}}^2\, {u_{912}}^{12}\\
&&-2^8\, (5\, z)^{-1}\, u_{912}\, 
(-3+2^4\, {u_{912}}^4
+2^7\, {u_{912}}^8
+2^2\, u_{911}\, {u_{912}}^{10})\\
&&+2^{17}\, (5\, z)^{-2}\, {u_{912}}^{10}],\\
E\, w_{91}&=&-2^{-1}\, {u_{912}}^{-1}
\, [u_{912}\, (-u_{911}
+2^9\, {u_{912}}^2
+2^4\, u_{911}\, {u_{912}}^4\\
&&+2^{11}\, {u_{912}}^6
+2^7\, u_{911}\, {u_{912}}^8
+2\, {u_{911}}^2\, {u_{912}}^{10})\\
&&-2^8\, (5\, z)^{-1}\, 
(-1+2^4\, {u_{912}}^4
+2^7\, {u_{912}}^8
+4\, u_{911}\, {u_{912}}^{10})\\
&&+2^{17}\, (5\, z)^{-2}\, {u_{912}}^9],\\
\dot{E}\, w_{91}&=&(5\, z)^{-1}\, {u_{912}}^{-2}
\, [-2^6-3\, u_{911}\, {u_{912}}^2
+2^9\, {u_{912}}^4
+2^4\, u_{911}\, {u_{912}}^6\\
&&+2^{11}\, {u_{912}}^8
+2^7\, u_{911}\, {u_{912}}^{10}
+2\, {u_{911}}^2\, {u_{912}}^{12}\\
&&-2^8\, (5\, z)^{-1}\, u_{912}\, 
(-3+2^4\, {u_{912}}^4
+2^7\, {u_{912}}^8
+2^2\, u_{911}\, {u_{912}}^{10})\\
&&2^{17}\, (5\, z)^{-2}\, {u_{912}}^{10}].
\end{eqnarray*}
As the change of coordinates from $(u_1,\, u_2)$ 
to all previous coordinate systems $(u_{ij1},\, u_{ij2})$ 
for $i\leq 8$ do not depend on $z$ the limiting 
system of differential equations $^0\dot{u}_1=u_2$, 
$^0\dot{u}_2=6\, {u_1}^2+1$ in the coordinate systems 
$(u_{ij1},\, u_{ij2})$ for $i\leq 8$ is obtained by 
deleting the term in $\dot{u}_{ij1}$ and $\dot{u}_{ij2}$ 
which have a factor $1/z$ in front. This is no longer true 
in the coordinate system $(u_{911},\, u_{912})$. 
However, the difference between the Painlev\'e-Boutroux system and the 
limiting system still has a relatively simple expression: 
\begin{equation}
\begin{array}{ccc}
\dot{u}_{911}-^0\dot{u}_{911}&=&-2\, (5\, z)^{-1}\, {u_{912}}^{-2}
\, (64-640\, (5\, z)^{-1}\, u_{912}+3\, u_{911}\, {u_{912}}^2)\\
\dot{u}_{912}-^0\dot{u}_{912}&=&(5\, z)^{-1}\, u_{912}. 
\end{array}
\label{u-0u}
\end{equation}

The equation $u_{912}=0$ defines $L_9$ and 
$4+32\, {u_{912}}^4+u_{911}\, {u_{912}}^6
-256\, (5\, z)^{-1}\, {u_{912}}^5=0$ 
defines 
the proper transform $L_0^{(9)}$ of 
$L_0^{(8)}$. The proper transforms 
of the other lines on which the Painlev\'e 
vector field is infinite 
are not visible in this chart. 
The Painlev\'e vector field 
is regular along $L_9$, nonzero, and transversal to it. 
Moreover, the Painleve\'{e} vector field 
has no base points in this chart. 
On the other hand the blowing up of $S_8$ in 
the point $b_8$, which is not the base point of the 
anticanonical pencil, causes $E\, w_{91}$ to be 
infinite along 
$L_9$, the line determined by the equation $u_{912}=0$. 
The image $^{(1)}b_8^{\,\scriptop{ell}}$ 
of $b_8^{\,\scriptop{ell}}$ in $S_9$ is not 
visible in this coordinate chart.  

The second coordinate chart after the ninth blowup 
is defined by 
\begin{eqnarray*}
u_{921}&=&
u_{811}+2^8\, (5\, z)^{-1},\\
u_{812}&=&u_{922}\, (u_{811}+2^8\, (5\, z)^{-1}),\\
u_{921}&=&{u_1}^{-8}\, 
(-2^5\, {u_1}^7\, u_2-2^2\, {u_1}^3\, {u_2}^5
+{u_2}^7+2^8\, (5\, z)^{-1}\, {u_1}^8)
=u_{911}\, u_{912},\\
u_{922}&=&{u_1}^9\, {u_2}^{-1}
\, (-2^5\, {u_1}^7\, u_2-2^2\, {u_1}^3\, {u_2}^5
+{u_2}^7+2^8\, (5\, z)^{-1}\, {u_1}^8)^{-1}
={u_{911}}^{-1},\\
u_1&=&{u_{921}}^{-2}\, {u_{922}}^{-2}
\, (2^2+2^5\, {u_{921}}^4\, {u_{922}}^4
+{u_{921}}^6\, {u_{922}}^5-2^8\, (5\,z)^{-1}
\, {u_{921}}^5\, {u_{922}}^5)^{-1},\\
u_2&=&{u_{921}}^{-3}\, {u_{922}}^{-3}
\, (2^2+2^5\, {u_{921}}^4\, {u_{922}}^4
+{u_{921}}^6\, {u_{922}}^5-2^8\, (5\,z)^{-1}
\, {u_{921}}^5\, {u_{922}}^5)^{-1},\\
\dot{u}_{921}&=&-{u_{922}}^{-1}\, 
(2^2+2^5\, {u_{921}}^4\, {u_{922}}^4
+{u_{921}}^6\, {u_{922}}^5-2^8\, (5\,z)^{-1}
\, {u_{921}}^5\, {u_{922}}^5)^{-1}\\
&&\times\, [2+2^{11}\, {u_{921}}^2\, {u_{922}}^3
+2^4\cdot 19\, {u_{921}}^4\, {u_{922}}^4
+2^3\, {u_{921}}^6\, {u_{922}}^5\\
&&-2^{13}\cdot 7\, {u_{921}}^6\, {u_{922}}^7
-2^8\cdot 3\cdot 5\, {u_{921}}^8\, {u_{922}}^8
-2^6\, {u_{921}}^{10}\, {u_{922}}^9\\
&&-2^{16}\cdot 3\, {u_{921}}^{10}\, {u_{922}}^{11}
-2^{10}\cdot 19\, {u_{921}}^{12}\, {u_{922}}^{12}
-2^7\cdot 5\, {u_{921}}^{14}\, {u_{922}}^{13}\\
&&-7\, {u_{921}}^{16}\, {u_{922}}^{14}
+(5\, z)^{-1}\, u_{921}\, u_{922}\\ 
&&\times\, (2^2\cdot 5-2^{13}\, 3^2\, {u_{921}}^2\, {u_{922}}^3
-2^5\cdot 7\cdot 17\, {u_{921}}^4\, {u_{922}}^4\\
&&+5\, {u_{921}}^6\, {u_{922}}^5
+2^{16}\cdot 3\cdot 5\, {u_{921}}^6\, {u_{922}}^7
+2^{15}\, {u_{921}}^8\, {u_{922}}^8\\
&&+2^{18}\cdot 19\, {u_{921}}^{10}\, {u_{922}}^{11}
+2^{16}\cdot 5\, {u_{921}}^{12}\, {u_{922}}^{12}
+2^8\cdot 3\cdot 7\, {u_{921}}^{14}\, {u_{922}}^{13})\\
&&-2^8\, (5\, z)^{-2}\, {u_{921}}^4\, {u_{922}}^5
\, (-2^7\cdot 3\cdot 5
+5\, {u_{921}}^2\, u_{922}\\
&&
+2^{14}\, {u_{921}}^4\, {u_{922}}^4
+2^{15}\cdot 5\, {u_{921}}^8\, {u_{922}}^8
+2^8\cdot 3\cdot 7\, {u_{921}}^{10}\, {u_{922}}^9)\\
&&2^{24}\cdot 7\,  (5\, z)^{-3}
\, {u_{921}}^{13}\, {u_{922}}^{14}],\\
\dot{u}_{922}&=&-(2^2+2^5\, {u_{921}}^4\, {u_{922}}^4
+{u_{921}}^6\, {u_{922}}^5-2^8\, (5\,z)^{-1}
\, {u_{921}}^5\, {u_{922}}^5)^{-1}\, u_{922}\\
&&\times\,
[u_{921}\, {u_{922}}^2\, 
(-2^{11}\, -2^6\cdot 5\, {u_{921}}^2\, u_{922}
-3^2\, {u_{921}}^4\, {u_{922}}^2\\
&&+2^{13}\cdot 7\, {u_{921}}^4\, {u_{922}}^4
+2^{12}\, {u_{921}}^6\, {u_{922}}^5
+2^3\cdot 3^2\, {u_{921}}^8\, {u_{922}}^6\\
&&+2^{16}\cdot 3\, {u_{921}}^8\, {u_{922}}^8
+2^{12}\cdot 5\, {u_{921}}^{10}\, {u_{922}}^9
+2^6\cdot 11\, {u_{921}}^{12}\, {u_{922}}^{10}\\
&&+2^3\, {u_{921}}^{14}\, {u_{922}}^{11})
-2\, (5\, z)^{-1}\, 
(2^2\cdot 3-2^{12}\cdot 3^2\, {u_{921}}^2\, {u_{922}}^3\\
&&-2^5\cdot 3^2\cdot 7\, {u_{921}}^4\, {u_{922}}^4
+3\, {u_{921}}^6\, {u_{922}}^5
+2^{15}\cdot 3\cdot 5\, {u_{921}}^6\, {u_{922}}^7\\
&&+2^{10}\cdot 17\, {u_{921}}^8\, {u_{922}}^8
+2^{17}\cdot 19\, {u_{921}}^{10}\, {u_{922}}^{11}
+2^{13}\cdot 3\cdot 7\, {u_{921}}^{12}\, {u_{922}}^{12}\\
&&+2^7\cdot 23\, {u_{921}}^{14}\, {u_{922}}^{13})
+2^9\, (5\, z)^{-2}\, {u_{921}}^3\, {u_{922}}^4
\, (-2^6\cdot 3\cdot 5+3\, {u_{921}}^2\, u_{922}\\
&&+2^{13}\, {u_{921}}^4\, {u_{922}}^4
+2^{14}\cdot 5\, {u_{921}}^8\, {u_{922}}^8
+2^8\cdot 11\, {u_{921}}^{10}\, {u_{922}}^9)\\
&&-2^{24}\cdot 7\, (5\, z)^{-3}\, {u_{921}}^{12}\, {u_{922}}^{13}]\\
w_{92}&=&u_{922}\, (2^2+2^5\, {u_{921}}^4\, {u_{922}}^4
+{u_{921}}^6\, {u_{922}}^5-2^8\, (5\,z)^{-1}
\, {u_{921}}^5\, {u_{922}}^5)^3,\\
\dot{w}_{92}&=&
2\, u_{922}\, (2^2+2^5\, {u_{921}}^4\, {u_{922}}^4
+{u_{921}}^6\, {u_{922}}^5-2^8\, (5\,z)^{-1}
\, {u_{921}}^5\, {u_{922}}^5)^2\\
&&\times\, [-u_{921}\, {u_{922}}^2\, 
(-2^{10}-2^6\, {u_{921}}^2\, u_{922}
+2^{12}\cdot 7\, {u_{921}}^4\, {u_{922}}^4\\
&&+2^8\cdot 5\, {u_{921}}^6\, {u_{922}}^5
+2^2\cdot 3\, {u_{921}}^8\, {u_{922}}^6
+2^{15}\cdot 3\, {u_{921}}^8\, {u_{922}}^8\\
&&+2^{10}\cdot 7\, {u_{921}}^{10}\, {u_{922}}^9
+2^5\cdot 5\, {u_{921}}^{12}\, {u_{922}}^{10}
+{u_{921}}^{14}\, {u_{922}}^{11})\\
&&+(5\, z)^{-1}\, 
(2^2\cdot 3-2^{12}\cdot 3^2\, {u_{921}}^2\, {u_{922}}^3
-2^5\cdot 3^3\, {u_{921}}^4\, {u_{922}}^4\\
&&+3\, {u_{921}}^6\, {u_{922}}^5
+2^{15}\cdot 3\cdot 5\, {u_{921}}^6\, {u_{922}}^7
+2^{10}\cdot 11\, {u_{921}}^8\, {u_{922}}^8\\
&&+2^{17}\cdot 19\, {u_{921}}^{10}\, {u_{922}}^{11}
+2^{13}\cdot 3\cdot 5\, {u_{921}}^{12}\, {u_{922}}^{12}
+2^7\cdot 11\, {u_{921}}^{14}\, {u_{922}}^{13})\\
&&-2^8\, (5\, z)^{-2}\, {u_{921}}^3\, {u_{922}}^4
\, (-2^6\cdot 3\cdot 5+3\, {u_{921}}^2\, u_{922}
+2^{13}\, {u_{921}}^4\, {u_{922}}^4\\
&&+2^{14}\cdot 5\, {u_{921}}^8\, {u_{922}}^8
+2^{11}\, {u_{921}}^{10}\, {u_{922}}^9)
+2^{23}\cdot 7\, (5\, z)^{-3}\, {u_{921}}^{12}\, {u_{922}}^{13}]
\end{eqnarray*}
\begin{eqnarray*}
E\, w_{92}&=&-2^{-1}\, {u_{921}}^{-1}\, [u_{921}\, 
(-1+2^9\, {u_{921}}^2\, {u_{922}}^3
+2^4\, {u_{921}}^4\, {u_{922}}^4\\
&&+2^{11}\, {u_{921}}^6\, {u_{922}}^7
+2^7\, {u_{921}}^8\, {u_{922}}^8
+2\, {u_{921}}^{10}\, {u_{922}}^9)\\
&&-2^8\, (5\, z)^{-1}\, (-1+2^4\, {u_{921}}^4\, {u_{922}}^4
+2^7\, {u_{921}}^8\, {u_{922}}^8
+2^2\, {u_{921}}^{10}\, {u_{922}}^9)\\
&&+2^{17}\, (5\, z)^{-2}\, {u_{921}}^9\, {u_{922}}^9],\\
\dot{E}\, w_{92}&=&(5\, z)^{-1}\, 
{u_{921}}^{-2}\, {u_{922}}^{-1}\\
&&\times\, [-2^6-3\, {u_{921}}^2\, u_{922}
+2^9\, {u_{921}}^4\, {u_{922}}^4
+2^4\, {u_{921}}^6\, {u_{922}}^5\\
&&+2^{11}\, {u_{921}}^8\, {u_{922}}^8
+2^7\, {u_{921}}^{10}\, {u_{922}}^9
+2\, {u_{921}}^{12}\, {u_{922}}^{10}\\
&&-2^8\, (5\, z)^{-1}\, u_{921}\, u_{922}
\, (-3+2^4\, {u_{921}}^4\, {u_{922}}^4
+2^7\, {u_{921}}^8\, {u_{922}}^8\\
&&+2^2\, {u_{921}}^{10}\, {u_{922}}^9)
+2^{17}\, (5\, z)^{-2}\, {u_{921}}^{10}\, {u_{922}}^{10}]. 
\end{eqnarray*}
The difference between the system and the 
limiting system is given by  
\begin{eqnarray*}
\dot{u}_{921}-^0\dot{u}_{921}&=&-(5\, z)^{-1}
\, {u_{921}}^{-1}\, {u_{922}}^{-1}
\, (128-1280\, (5\, z)^{-1}\, u_{921}\, u_{922}+5\, {u_{921}}^2\, u_{922})\\
\dot{u}_{922}-^0\dot{u}_{922}&=&
2\, (5\, z)^{-1}\, {u_{921}}^{-2}
\, (64-640\, (5\, z)^{-1}\, u_{921}\, u_{922}+3\, {u_{921}}^2\, u_{922}).
\end{eqnarray*}

The equations $u_{921}=0$, 
$2^2+2^5\, {u_{921}}^4\, {u_{922}}^4
+{u_{921}}^6\, {u_{922}}^5-2^8\, (5\,z)^{-1}
\, {u_{921}}^5\, {u_{922}}^5=0$, 
and $u_{922}=0$ define $L_9$, $L_0^{(9)}$, and  
and $L_8^{(1)}$, respectively. The proper transforms 
of the other lines on which the Painlev\'e 
vector field is infinite 
are not visible in this chart.

\end{document}